\documentclass[12pt]{amsart}
\usepackage{amssymb}
\usepackage{mathrsfs}
\usepackage{bm}
\usepackage{graphicx}
\usepackage{color}
\usepackage[all]{xy}

\usepackage{hyperref}

\allowdisplaybreaks[4]

\newtheorem{thm}{Theorem}[section]
\newtheorem{lem}[thm]{Lemma}
\newtheorem{cor}[thm]{Corollary}
\newtheorem{prop}[thm]{Proposition}
\newtheorem{definition}[thm]{Definition}

\def \para{\refstepcounter{thm} \par\medskip\noindent
                \textbf{\thethm .} }

\def \remark{\refstepcounter{thm} \par\medskip\noindent
                \textbf{Remark \thethm .} }

\def \remarks{\refstepcounter{thm} \par\medskip\noindent
                \textbf{Remarks \thethm .} }

\def \example{\refstepcounter{thm} \par\medskip\noindent
                \textbf{Example \thethm .} }

\numberwithin{equation}{thm}

\newcommand{\ps}{\vskip .5 \baselineskip} 
\newcommand\BB{\mathbb B}
\newcommand\CC{\mathbb C}
\newcommand\DD{\mathbb D}

\renewcommand\SS{\mathbb S} %######
\newcommand\TT{\mathbb T}

\newcommand\ZZ{\mathbb Z}

\newcommand\bP{\mathbf P}
\newcommand\bQ{\mathbf Q}

\newcommand\bS{\mathbf S}
\newcommand\bT{\mathbf T}

\newcommand\bb{\mathbf b}
\newcommand\bc{\mathbf c}
\newcommand\bd{\mathbf d}

\renewcommand\bm{\mathbf m} %######

\newcommand\bfs{\mathbf s} %!!!!!!!!!!!!
\newcommand\bt{\mathbf t}

\newcommand\cI{\mathcal{I}}

\newcommand\cN{\mathcal{N}}
\newcommand\cO{\mathcal{O}}

\newcommand\cU{\mathcal{U}}

\newcommand\fF{\mathfrak F}

\newcommand\fI{\mathfrak I}

\newcommand\fP{\mathfrak P}
\newcommand\fQ{\mathfrak Q}

\newcommand\fS{\mathfrak S}

\newcommand\fX{\mathfrak X}

\newcommand\fn{\mathfrak n}

\newcommand\sH{\mathscr H}

\newcommand\sS{\mathscr S}

\renewcommand\a{\alpha}  %#####
\renewcommand\b{\beta}   %#####
\newcommand\g{\gamma}  
\renewcommand\d{\delta}  %#####
 
\newcommand\z{\zeta}

\newcommand\la{\lambda}

\newcommand\s{\sigma}
\newcommand\ta{\tau}

\newcommand\w{\omega}

\newcommand\ve{\varepsilon}

\newcommand\vf{\varphi}

\newcommand\D{\Delta}

\renewcommand\Xi{\Xi}

\newcommand\Om{\Omega}

\newcommand\vG{\varGamma}

\newcommand\vL{\varLambda}

\newcommand\Bb{\boldsymbol\beta}  
\newcommand\Bg{\boldsymbol\gamma}

\newcommand\Bz{\boldsymbol\zeta}
\newcommand\Be{\boldsymbol\eta}

\newcommand\Bla{\boldsymbol\lambda}
\newcommand\Bm{\boldsymbol\mu}

\newcommand{\dis}{\displaystyle}

\newcommand\ti{\tilde}
\newcommand\wh{\widehat}
\newcommand\wt{\widetilde}
\newcommand\ol{\overline}

\newcommand\ra{\rightarrow}

\newcommand{\da}{\downarrow}

\newcommand\trr{\triangleright }

\newcommand\trreq{\trianglerighteq}

\newcommand\lan{\langle}
\newcommand\ran{\rangle}

\newcommand\End{\operatorname{End}}

\newcommand\Res{\operatorname{Res}}

\renewcommand\Im{\operatorname{Im}}

\newcommand\Std{\operatorname{Std}}

\newcommand\opp{\operatorname{opp}}

\newcommand\cmod{\operatorname{-mod}}

\newcommand{\owt}{\operatorname{wt}}
\newcommand{\lwt}{\operatorname{\ell-wt}}
\newcommand{\lbwt}{\operatorname{\ell_{[\bb]}-wt}}

\newcommand\Fgl{\mathfrak{gl}}
\newcommand\Fsl{\mathfrak{sl}}

\newcommand\ev{\mathbf{{ev}}}
\newcommand{\SStd}{\operatorname{SStd}} 
\newcommand{\Tab}{\operatorname{Tab}} 
\newcommand{\Sing}{\operatorname{Sing}} 
\newcommand{\fSing}{\operatorname{f-Sing}} 

\newcommand{\shape}{\operatorname{shape}}

\newcommand{\isom}{\,\raise2pt\hbox{$\underrightarrow{\sim}$}\,}

\newcounter{ichi}
\newcommand{\roi}{\roman{ichi}}
\newcounter{ni}
\newcommand{\roii}{\roman{ni}}
\newcounter{san}
\newcommand{\roiii}{\roman{san}}
\newcounter{yon}
\newcommand{\roiv}{\roman{yon}}
\newcounter{go}
\newcommand{\rov}{\roman{go}}
\newcounter{roku}
\newcommand{\rovi}{\roman{roku}}
\newcounter{nana}
\newcommand{\rovii}{\roman{nana}}
\newcounter{hachi}
\newcounter{kyu}
\begin{document}

\setlength{\baselineskip}{4.9mm}
\setlength{\abovedisplayskip}{4.5mm}
\setlength{\belowdisplayskip}{4.5mm}

%%%%%%%%%%%%%%%%%%%%%%%%%%%%%%%%%%%%%%%%%%%%

\renewcommand{\theenumi}{\roman{enumi}}
\renewcommand{\labelenumi}{(\theenumi)}
\renewcommand{\thefootnote}{\fnsymbol{footnote}}
\renewcommand{\thefootnote}{\fnsymbol{footnote}}
\parindent=20pt

%%%%%%%%%%%%%%%%%%%%%%%%%%%%%%%%%%%%%%%%%%%%

\setcounter{section}{-1}

%%%%%%%%%%%%%%%%%%%%%%%%%%%%%%%%%%%%%%%%%%%%

%%%%%%%%%%%%%%%%%%%%%%%%%%%%%%%%%%%%%%%%%%%%%%%%%%%%%%%%%%%%%%%

%%%%%%%%%%%%%%%%%%%%%%%%%%%%%%%%%%%%%%%%%%%%%%%%%%%%%%%%%%%%%%%

\address{R. Kodera : Department of Mathematics and Informatics, Graduate School of Science, 
Chiba University}
\email{kodera@math.s.chiba-u.ac.jp}
		
\address{K. Wada : Department of Mathematics, Faculty of Science, Shinshu University, 
		Asahi 3-1-1, Matsumoto 390-8621, Japan}
		
\email{kwada@shinshu-u.ac.jp}

%%%%%%%%%%%%%%%%%%%%%%%%%%%%%%%%%%%%%%%%%%%%%%%%%%%%%%%%%%%%%%%

%%%%%%%%%%%%%%%%%%%%%%%%%%%%%%%%%%%%%%%%%%%%%%%%%%%%%%%%%%%%%%%

\medskip
\begin{center}
{\large \textbf{  Some finite dimensional representations of shifted quantum affine algebras of type $A$ }}  
\\
\vspace{1cm}
Ryosuke Kodera and Kentaro Wada  
\\[1em]
\end{center}

%%%%%%%%%%%%%%%%%%%%%%%%%%%%%%%%%%%%

\title{} 
%\author{Kentaro Wada} 
%\date{}
\maketitle 

\markboth{R. Kodera and K. Wada}{  Some finite dimensional representations of shifted quantum affine algebras of type $A$}

%%%%%%%%%%%%%%%%%%%%%%%%%%%%%%%%%%%%%%%%%%%%%%%%%%%%%%%%%%%%%%%

%%%%%%%%%%%%%%%%%%%%%%%%%%%%%%%%%%%%%%%%%%%%%%%%%%%%%%%%%%%%%%%
\begin{abstract}
In this paper, we study finite dimensional representations of shifted quantum affine algebras of type $A$. 
We give an explicit description of the tensor product of simple evaluation modules of the quantum loop algebra 
and a one-dimensional representation of the shifted quantum affine algebra under the separation condition. 
As a consequence,  we give the $q$-characters of some finite dimensional simple modules of the shifted quantum affine algebra.  
\end{abstract}

\tableofcontents 
%%%%%%%%%%%%%%%%%%%%%%%%%%%%%%%%%%%%%%%%%%%%%%%%%%%%%%%%%%%%%%%

%%%%%%%%%%%%%%%%%%%%%%%%%%%%%%%%%%%%%%%%%%%%%%%%%%%%%%%%%%%%%%%
\section{Introduction} 

\para 
The shifted quantum affine algebras were introduced in \cite{FT19} 
which were related to the study of the quantized K-theoretic Coulomb branches of $3d \, \cN= 4$ SUSY quiver gauge theories. 
The representation theory of the shifted quantum affine algebra was studied in \cite{Her23}.  

In this paper, we study finite dimensional representations of the shifted quantum affine algebra 
associated with the special linear Lie algebra $\Fsl_m$. 
The shifted quantum affine algebra $U_{q,[\bb]}=U_{q,[\bb]} (L\Fsl_m)$ of the shift $\bb=(b_i)_{1 \leq i \leq m-1} \in \ZZ^{m-1}$ 
is defined as a certain shift of the quantum loop algebra $U_q=U_q (L \Fsl_m)$ 
(see Definition \ref{def SQA} and Remark \ref{rem SQA} for notation).  
By \cite{FT19}, it is known that $U_{q,[\bb]}$ is regarded as a right (resp. left) coideal subalgebra of $U_q $ 
through the shift homomorphism if the shift $\bb$ is dominant (i.e. $\bb \in (\ZZ_{\geq 0})^{m-1}$).
On the other hand, in \cite{Her23}, it is proven that $U_{q,[\bb]}$ admits non-zero finite dimensional representations 
if and only if the shift $\bb$ is dominant. 
Then, we assume that the shift $\bb$ is dominant through this paper. 
We study representations of the quantum loop algebra $U_q$ and also of the shifted quantum affine algebra 
$U_{q,[\bb]}$ under the Drinfeld-Jimbo coproduct of $U_q $ 
although the deformed Drinfeld coproduct of $U_q $ is used in \cite{Her23}. 

%%%%%
\para 
One of the differences between the representations of $U_{q}$ and of $U_{q,[\bb]}$ 
appears in one-dimensional representations. 
The quantum loop algebra $U_q $ has only a few one-dimensional representations (almost \lq\lq trivial" representations), 
and its tensor product with a finite dimensional $U_q $-module gives only a change of type. 
On the other hand, the shifted quantum affine algebra $U_{q,[\bb]}$ has a large number of one-dimensional representations, 
and its tensor product with  a finite dimensional $U_q $-module has a non-trivial structure. 
For example, it does not preserve irreducibility in general. 
Furthermore, we can obtain any finite dimensional simple $U_{q,[\bb]} $-module 
as a subquotient of the tensor product of a finite dimensional simple $U_q$-module 
and a one-dimensional $U_{q,[\bb]}$-module thanks to the argument in \cite{Her23}. 

%%%%%%
\para 
A classification of finite dimensional simple $U_{q,[\bb]}$-modules is given in \cite{Her23} 
which is  a generalization of the classification for the quantum loop algebra $U_q$ in \cite{CP91} and \cite{CP95}. 
We recall some of them. 
Put 
\begin{align*}
\DD = \{ \bP = (P_i(z))_{1 \leq i \leq m-1}  \in \CC [z]^{m-1} \mid  P_i(0) =1\}. 
\end{align*}
The polynomials $P_i(z)$ ($1 \leq i \leq m-1$)  are called Drinfeld polynomials. 
Then, the isomorphism classes of finite dimensional simple $U_q $-modules of type $1$  
are indexed by $\DD$. 
We denote the simple module corresponding to $\bP \in \DD$ by $L(\bP)$ 
(see \ref{par simple Uq}). 

For the dominant shift $\bb =(b_i)_{1 \leq i \leq m-1} \in (\ZZ_{\geq 0})^{m-1}$, 
we consider the set 
\begin{align*}
\SS_{[\bb]} = \{ \bS = (S_i(z))_{1 \leq i \leq m-1} \in \CC [z]^{m-1} \mid \deg S_i(z) = b_i, \, S_i(0) \not=0\}. 
\end{align*}
Then, the isomorphism classes of one-dimensional $U_{q,[\bb]}$-modules are indexed by $\SS_{[\bb]}$. 
Moreover, any finite dimensional simple $U_{q,[\bb]}$-module is isomorphic to the module 
$L(\bS, \bP)$ determined by some $(\bS, \bP) \in \SS_{[\bb]} \times \DD$ (see \ref{simple Uqb}). 
We also call each polynomials $S_i(z)$ and $P_i(z)$ the Drinfeld  polynomials. 
We note that, 
for a finite dimensional simple $U_{q,[\bb]}$-module $L$, 
the Drinfeld polynomials $(\bS, \bP) \in \SS_{[\bb]} \times \DD$ such that $L \cong L(\bS, \bP)$ 
are not unique in general (see Remark \ref{Rem DP}).  

%%%%%%
\para 
The quantum loop algebra $U_q=U_q (L\Fsl_m)$ has the evaluation homomorphism 
$\ev_{\z} : U_q(L\Fsl_m) \ra U_q (\Fgl_m)$ for $\z \in \CC^{\times}$ 
given in \cite{Jim86}. 
For a partition $\la$ such that $\ell (\la) \leq m$, 
let $V(\la)$ be the simple highest weight $U_q(\Fgl_m)$-module of highest weight $\la$. 
We regard $V(\la)$ as a $U_q (L\Fsl_m)$-module through the evaluation homomorphism $\ev_{\z}$, 
and we denote it by $V(\la)^{\ev_\z}$. 
We call $V(\la)^{\ev_\z}$ the evaluation module of $V(\la)$ at the evaluation value $\z$. 
Then, we are interested in the $U_q$-module structure of the tensor product 
$V(\la^{(1)})^{\ev_{\z_1}} \otimes \dots \otimes V(\la^{(r)})^{\ev_{\z_r}}$ of evaluation modules. 

It is well known that the basis of  $V(\la)$ 
is indexed by the set of semi-standard tableaux of shape $\la$. 
Then, the tensor product $V(\la^{(1)})^{\ev_{\z_1}} \otimes \dots \otimes V(\la^{(r)})^{\ev_{\z_r}}$ 
has a basis indexed by the set of $r$-tuple of semi-standard tableaux of corresponding shapes. 
We denote such index set by $\SStd_m(\Bla)$, 
where $\Bla = (\la^{(1)}, \dots, \la^{(r)})$ 
(see \S \ref{comb} for combinatorial definitions).  
For evaluation values $\Bz=(\z_1,\dots, \z_r)$, 
we consider  
the separation condition 
\begin{align*}
\prod_{1 \leq i < j \leq r} \prod_{- n < k <n} (q^{2k} \z_i - \z_j) \not=0, 
\end{align*}  
where $n$ is the sum of the sizes of each partition $\la^{(k)}$ ($1 \leq k \leq r$). 
In this paper, 
we define a $U_q$-module $\D_{\Bz} (\Bla)$ such that 
\begin{itemize}
\item $\D_{\Bz} (\Bla)$ has a basis $\{ v_{\bT} \mid \bT \in \SStd_m (\Bla) \}$, 

\item the action of $U_q$ on $\D_{\Bz}(\Bla)$ is described explicitly by using combinatorics 
\end{itemize}
in Proposition \ref{Prop Def DBz Bla} 
under the separation condition. 
Then, in Theorem \ref{Thm DBz Bla}, we give the following statements under the separation condition.
\begin{itemize}
\item $\D_{\Bz} (\Bla) \cong V(\la^{(1)})^{\ev_{\z_1}} \otimes \dots \otimes V(\la^{(r)})^{\ev_{\z_r}}$ 
	as $U_q$-modules. 

\item Any $\ell$-weight of $\D_{\Bz}(\Bla)$ is multiplicity-free. 

\item The $U_q$-module $\D_{\Bz} (\Bla)$ is simple. 

\item Explicit description of the Drinfeld polynomials and $q$-character of $\D_{\Bz} (\Bla)$ 
	using combinatorics. 
\end{itemize}

We remark that the $q$-character of evaluation module $V(\la)^{\ev_\z}$ 
has been already computed in \cite{FM02}, 
and we already know the $q$-characters of the tensor products of evaluation modules. 
We have also known the simplicity of the tensor product $V(\la^{(1)})^{\ev_{\z_1}} \otimes \dots \otimes V(\la^{(r)})^{\ev_{\z_r}}$ 
under the separation condition 
by \cite{MTZ04} and \cite{Her10}. 
However, in Theorem \ref{Thm DBz Bla}, 
we can directly obtain such results (simplicity and the $q$-character of $\D_{\Bz} (\Bla)$) 
from the explicit description of $\D_{\Bz} (\Bla)$ in  Proposition \ref{Prop Def DBz Bla} 
without using any previous results. 
We also remark that a similar explicit description of the evaluation module $V(\la)^{\ev_\z}$ 
for the Yangian $Y(\Fgl_m)$ (resp. the quantum Yangian $Y_q (\Fgl_m)$) is given by using the Gelfand-Zetlin basis 
in \cite{Mol94} and \cite{NT94}.  
Furthermore, an explicit description of the tensor product of the evaluation modules for the Yangian $Y(\Fgl_m)$ is also given 
in \cite{NT98}. 
On the other hand, we prove Proposition \ref{Prop Def DBz Bla} by directly checking the defining relations, 
and the argument there differs from ones in the above papers for the Yangian $Y(\Fgl_m)$.  
We use the explicit description of $\D_{\Bz} (\Bla)$ 
when we study the tensor product with one-dimensional $U_{q,[\bb]}$-module later. 　

%%%%%
\para 
Let $\bb =(b_i)_{1 \leq i \leq m-1} \in (\ZZ_{\geq 0})^{m-1}$ be a dominant shift, 
and we assume that $\Bz=(\z_1,\dots, \z_r) \in (\CC^{\times})^r$ satisfy the separation condition. 
We can regard $U_{q,[\bb]}$ as a right (resp. left) coideal subalgebra of $U_q$ through the shift homomorphism. 
In particular, we have the algebra homomorphism $\D_{\bb,0} : U_{q,[\bb]} \ra U_{q,[\bb]} \otimes U_q$ 
(resp. $\D_{0, \bb} : U_{q,[\bb]} \ra U_q \otimes U_{q,[\bb]}$).  
Let $L_{\Bb} = \CC w$ be the  one-dimensional $U_{q,[\bb]}$-module 
corresponding to $\bS =(S_i(z))_{1 \leq i \leq m-1} \in \SS_{[\bb]}$,  
where $\Bb= (\b_{i, - b_i+s})_{1 \leq i \leq m-1}^{0 \leq s \leq b_i}$ is determined by 
$S_i(z) = \sum_{s=0}^{b_i} \b_{i, - b_i+s} z^s$ 
(see \eqref{def LBb} for the definition of $L_{\Bb}$). 
Then, we study the $U_{q,[\bb]}$-module 
$L_{\Bb} \otimes \D_{\Bz} (\Bla)$ (resp. $\D_{\Bz}(\Bla) \otimes L_{\Bb}$) 
through the algebra homomorphism $\D_{\bb,0}$ (resp. $\D_{0, \bb}$). 

Since a finite dimensional simple $U_{q,[\bb]}$-module is both $\ell_{[\bb]}$-highest weight module and $\ell_{[\bb]}$-lowest weight module, 
it is important to obtain singular vectors and $f$-singular vectors of 
$L_{\Bb} \otimes \D_{\Bz} (\Bla)$ (resp. $\D_{\Bz}(\Bla) \otimes L_{\Bb}$) 
(see \ref{ellbb hw} for such notions). 

In order to describe the $U_{q,[\bb]}$-module structure of $L_{\Bb} \otimes \D_{\Bz} (\Bla)$  (resp. $\D_{\Bz} (\Bla) \otimes L_{\Bb}$), 
we prepare some combinatorial notation (see \ref{bTi}, \ref{cBz} and \ref{Sing Bla} for details).  
For an $r$-tuple of partitions $\Bla=(\la^{(1)}, \dots, \la^{(r)})$,  
we denote the diagram of $\Bla$ by $[\Bla]$, 
namely we consider the set 
\begin{align*}
[\Bla]= \{(a,b,c) \in (\ZZ_{>0})^3 \mid 1 \leq b \leq \la_a^{(c)}, \, 1 \leq c \leq r\}.
\end{align*}
The elements of $[\Bla]$ are called nodes. 
Then, a tableau $\bT$ of shape $\Bla$ is a map $\bT : [\Bla] \ra \ZZ_{>0}$, 
and the notion of semi-standard tableau is defined in the usual manner. 
We note that the basis of $\D_{\Bz} (\Bla)$ is indexed by $\SStd_m(\Bla)$ 
which is the set of semi-standard tableaux of shape $\Bla$ such that the entries are less than or equal to $m$. 
For a tableau $\bT$ of shape $\Bla$ and $x \in [\Bla]$, 
we define the tableaux $\bT_x^+$ and $\bT_x^-$ by 
\begin{align*}
\bT_x^{\pm} (y) = \begin{cases} \bT(x) \pm 1 & \text{ if } y=x, \\ \bT(y) & \text{ if } y \not=x \end{cases} 
	\text{ for } y \in [\Bla], 
\end{align*}
where we do not consider the tableau $\bT_x^-$ if $\bT(x) =1$. 
For $\bT \in \SStd_m(\Bla)$, 
put 
\begin{align*}
&[\bT]_i =\{x \in [\Bla] \mid \bT(x) =i\} 
\quad (1 \leq i \leq m), 
\\
&[\bT]_{\a_i} = \{x \in [\bT]_{i+1} \mid \bT_x^- \in \SStd_m (\Bla) \} 
	\quad (1 \leq i \leq m-1), 
\\
& [\bT]_{-\a_i} = \{ x \in [\bT]_i \mid \bT_x^+ \in \SStd_m(\Bla) \} 
	\quad (1 \leq i \leq m-1), 
\\
& \SStd_m (\Bla ; \leq \bT) = \{\bS \in \SStd_m(\Bla) \mid \bS(x) \leq \bT(x) \text{ for all } x \in [\Bla]\}, 
\\
& \SStd_m (\Bla ; \geq \bT) = \{\bS \in \SStd_m(\Bla) \mid \bS(x) \geq \bT(x) \text{ for all } x \in [\Bla]\}. 
\end{align*}
For $\bS=(S_i(z))_{1 \leq i \leq m-1} \in \SS_{[\bb]}$ with $S_i(z) = \sum_{s=0}^{b_i} \b_{i, - b_i +s} z^s$ ($1 \leq i \leq m-1$), 
put 
\begin{align*}
&\Sing_m^{\Bb} (\Bla) 
\\
&= \{ \bT\in \SStd_m(\Bla) \mid 
	\prod_{x \in [\bT]_{\a_i}}  (1 - q^i c_{\Bz} (x) z)  \text{ divides } S_i(z)  \text{ for each }  1 \leq i \leq m-1  \}, 
\\
&\fSing_m^{\Bb} (\Bla) 
\\
&= \{ \bT\in \SStd_m(\Bla) \mid 
	\prod_{x \in [\bT]_{-\a_i}} (1 - q^i c_{\Bz} (x) z)  \text{ divides } S_i(z)  \text{ for each }  1 \leq i \leq m-1 \}, 
\end{align*}
where $c_{\Bz}(x)$ is the content of $x$ with respect to $\Bz$, 
namely we have 
$c_{\Bz} (x) = q^{2 (b-a)} \z_{c}$ when $x=(a,b,c)$. 
Then, 
we obtain the following statements for $U_{q,[\bb]}$-module $L_{\Bb} \otimes \D_{\Bz}(\Bla)$ 
(resp. $\D_{\Bz} (\Bla) \otimes L_{\Bb}$) 
in Theorem \ref{Thm simple LBb Dla} (resp. in Theorem \ref{Thm D Bla otimes LBb}). 
\begin{itemize}
\item 
Any $\ell_{[\bb]}$-weight of $L_{\Bb} \otimes \D_{\Bz} (\Bla)$ (resp. $\D_{\Bz} (\Bla) \otimes L_{\Bb}$) 
is multiplicity-free. 

\item 
- The singular vector of $L_{\Bb} \otimes \D_{\Bz}(\Bla)$ is unique up to scalar multiples.  
\\
- The $f$-singular vector of $\D_{\Bz} (\Bla) \otimes L_{\Bb}$ is unique up to scalar multiples. 
\\
(This statement follows from Lemma \ref{Lemma sing LBb M}.)  

\item 
- All $f$-singular vectors of $L_{\Bb} \otimes \D_{\Bz} (\Bla)$ 
	are given by $\{ w \otimes v_{\bT} \mid \bT \in \fSing_m^{\Bb} (\Bla)\}$ 
	up to scalar multiples.  
\\
- All singular vector of $\D_{\Bz}(\Bla) \otimes L_{\Bb}$ 
	are given by $\{ v_{\bT} \otimes w \mid \bT \in \Sing_m^{\Bb} (\Bla)\}$ 
	up to scalar multiples.
	
\item 
- For $\bT \in \fSing_m^{\Bb} (\Bla)$, 
	the subspace $L_{\Bb} \otimes \D_{\Bz} (\Bla ; \leq \bT)$ of $L_{\Bb} \otimes \D_{\Bz} (\Bla)$ 
	spanned by $\{w \otimes v_{\bS} \mid \bS \in \SStd_m (\Bla ; \leq \bT)\}$ 
	is a $U_{q,[\bb]}$-submodule of $L_{\Bb} \otimes \D_{\Bz} (\Bla)$. 
\\
- For $\bT \in \Sing_m^{\Bb} (\Bla)$, 
	the subspace $\D_{\Bz} (\Bla ; \geq \bT) \otimes L_{\Bb}$ of $\D_{\Bz} (\Bla) \otimes L_{\Bb}$ 
	spanned by $\{v_{\bS} \otimes w \mid \bS \in \SStd_m (\Bla ; \geq \bT)\}$ 
	is a $U_{q,[\bb]}$-submodule of $\D_{\Bz} (\Bla) \otimes L_{\Bb}$. 

\item 
- Any $U_{q,[\bb]}$-submodule of $L_{\Bb} \otimes \D_{\Bz} (\Bla)$  coincides with 
	 $L_{\Bb} \otimes \D_{\Bz} (\Bla ; \leq \bT)$ for some $\bT \in \fSing_m^{\Bb} (\Bla)$. 
\\
- Any $U_{q,[\bb]}$-submodule of $\D_{\Bz} (\Bla) \otimes L_{\Bb}$ coincides with 
	$\D_{\Bz} (\Bla ; \geq \bT) \otimes L_{\Bb}$ for some $\bT \in \Sing_m^{\Bb} (\Bla)$. 

\item 
- There exists a unique semi-standard tableau $\bT_{\Bla}^{\Bb} \in \fSing_m^{\Bb} (\Bla)$ 
	such that the $U_{q,[\bb]}$-submodule $L_{\Bb} \otimes \D_{\Bz} (\Bla ; \leq \bT_{\Bla}^{\Bb})$ is simple. 
\\
- There exists a unique semi-standard tableau $\bT^{\Bla}_{\Bb} \in \Sing_m^{\Bb} (\Bla)$ 
	such that the $U_{q,[\bb]}$-submodule $\D_{\Bz} (\Bla ; \geq \bT^{\Bla}_{\Bb}) \otimes L_{\Bb}$ is simple.
\item 
We describe the Drinfeld polynomials and $q$-character of the simple $U_{q,[\bb]}$-module 
	$L_{\Bb} \otimes \D_{\Bz} (\Bla ; \leq \bT_{\Bla}^{\Bb})$ 
	(resp.  the simple $U_{q,[\bb]}$-module  $\D_{\Bz} (\Bla ; \geq \bT^{\Bla}_{\Bb}) \otimes L_{\Bb}$). 
\end{itemize}
In particular, we obtain the explicit description and $q$-character of the finite dimensional simple $U_{q,[\bb]}$-module 
whose Drinfeld polynomials are $(\bS, \bP =(P_i(z))_{1 \leq i \leq m-1})$ such that 
$\bS \in \SS_{[\bb]}$ and 
\begin{align*}
P_i(z) = \prod_{c=1}^r \prod_{k=1}^{\la_i^{(c)} - \la_{i+1}^{(c)}} (1 - q^{2 \la_i^{(c)} - i +1 - 2k} \z_{c} z) 
	\quad (1 \leq i \leq m-1) 
\end{align*}
for some $r$-tuple of partitions $\Bla=(\la^{(1)}, \dots, \la^{(r)})$ 
and $\Bz =(\z_1,\dots, \z_r)$ satisfying the separation condition.

%%%%%%
\para
Finally, 
we study cell modules of cyclotomic $q$-Schur algebras 
via Schur-Weyl dualities established in \cite{Wad24}. 
By \cite{DJM98}, 
it is known that the cyclotomic $q$-Schur algebra $\sS_{n,r}(\bm)$ is a cellular algebra in the sense of \cite{GL96}. 
Then, cell modules of $\sS_{n,r}(\bm)$ have central role in the representation theory of $\sS_{n,r}(\bm)$. 
On the other hand, we can regard $\sS_{n,r}(\bm)$-modules as $U_{q,[\bb_{\bm}]}$-modules 
through the Schur-Weyl duality given in \cite{Wad24}, 
where $\bb_{\bm}$ is a specific shift determined by $\bm=(m_1,\dots, m_r) \in (\ZZ_{>0})^r$ 
(see \ref{uqnnm Scnr} for details). 

In Theorem \ref{Thm qcha cell}, 
we give the $q$-character of  a cell module of $\sS_{n,r}(\bm)$, 
and also give Drinfeld polynomials of the unique simple quotients of the cell module 
when we regard the cell module as a $U_{q, [\bb_{\bm}]}$-module through the Schur-Weyl duality. 
We also give several realizations of the cell module as  a subquotient of the tensor product of a one-dimensional 
$U_{q,[\bb_{[\bm]}]}$-module and evaluation modules in Proposition \ref{Prop WBla D Bla} and Proposition \ref{Prop WBla D wh Bla}. 

The representation theory of the cyclotomic $q$-Schur algebra is related to the parabolic category $\cO$ of 
the affine Lie algebra $\wh{\Fgl}_N$, and also related to the category $\cO$ of the rational Cherednik algebra of typr $G(r,1,n)$ 
by \cite{RSVV16} and \cite{Los16}. 
Therefore, the above results on cell modules may be useful in future works. 
\\

{\bf Acknowledgements:} 
The first author was supported by JSPS KAKENHI Grant Number JP21K03155 and JP25K06916. 
The second author was supported by JSPS KAKENHI Grant Number JP21K03178. 
This work was supported by the Research Institute for Mathematical Sciences, 
an International Joint Usage/Research Center located in Kyoto University.

%%%%%%%%%%%%%%%%%%%%%%%%%%%%%%%%%%%%%%%%%%%%%%%%%%%%%%%%%%%%%%%

%%%%%%%%%%%%%%%%%%%%%%%%%%%%%%%%%%%%%%%%%%%%%%%%%%%%%%%%%%%%%%%
\section{Shifted quantum affine algebras}

Throughout this paper, we assume that $q \in \CC^{\times}$ is not a root of unity. 
\begin{definition}[{\cite[\S 5.1]{FT19}}] 
\label{def SQA} 
For $\bb =(b_1,b_2,\dots, b_{m-1}) \in \ZZ^{m-1}$,  
the shifted quantum affine algebra $U_{q,[\bb]} = U_{q,[\bb]}(L\Fsl_m)$ 
associated with the special linear Lie algebra $\Fsl_m$ 
is the associative algebra over $\CC$ defined by the following generators and defining relations: 
\begin{description}
\item[generators]  
$e_{i,t}, \, f_{i,t}, \, \psi_{i, -b_i +s}^+, \, (\psi^+_{i, - b_i})^{-1}, \, \psi_{i, -s}^-, \, (\psi_{i,0}^-)^{-1} $ 
($1\leq i \leq m-1$, $t \in \ZZ$, $s \in \ZZ_{\geq 0}$) 

\item[defining relations]  
\begin{align*}
&\tag{U1} 
\psi_{i, - b_i}^+ (\psi_{i, - b_i}^+)^{-1} = 1 = (\psi_{i, -b_i}^+)^{-1} \psi_{i,-b_i}^+,  
	\quad 
	\psi_{i,0}^- (\psi_{i,0}^-)^{-1} = 1 = (\psi_{i,0}^-)^{-1} \psi_{i,0}^-, 
	\\
	& [\psi_{i}^{\ve} (z), \psi_j^{\ve'} (w)] =0 \quad (\ve, \ve' \in \{ \pm\}), 
\\
&\tag{U2} 
(w - q^{a_{ij}} z) e_i (z) e_j (w) = (q^{a_{ij}} w - z) e_j (w) e_i (z), 
\\
&\tag{U3} 
(q^{a_{ij}} w - z) f_i(z) f_j(w) = (w- q^{a_{ij}} z) f_j(w) f_i(z), 
\\
&\tag{U4} 
(w - q^{a_{ij}} z) \psi_i^{\ve} (z) e_j(w) = (q^{a_{ij}} w - z) e_j (w) \psi_i^{\ve} (z) 
	\quad (\ve \in \{\pm\}), 
\\
&\tag{U5}  
(q^{a_{ij}} w - z) \psi_i^{\ve} (z) f_j (w) = (w - q^{a_{ij}} z) f_j (w) \psi_i^{\ve} (z) 
	\quad ( \ve \in \{\pm\}), 
\\
&\tag{U6} 
[e_i(z), f_j(w)] 
	= \frac{\d_{i,j}}{q-q^{-1}} \d \left( \frac{w}{z} \right) (\psi_i^+ (z) - \psi_i^- (z)), 
\\
&\tag{U7} 
e_i(z) e_j(w) = e_j(w) e_i(z)  \text{ if } j \not= i,  i \pm 1, 
	\\ & 
	e_{i\pm 1}(w) \big( e_i(z_1) e_i(z_2) + e_i(z_2) e_i(z_1) \big) + \big( e_i(z_1) e_i(z_2) + e_i(z_2) e_i(z_1) \big) e_{i\pm 1}(w) 
	\\ & \quad 
	= [2] \big( e_i (z_1) e_{i \pm 1} (w) e_i (z_2) + e_i (z_2) e_{i \pm 1} (w) e_i (z_1) \big),   
\\
&\tag{U8}   
f_i(z) f_j(w) = f_j(w) f_i(z)  \text{ if } j \not= i,  i \pm 1, 
	\\ & 
	f_{i\pm 1}(w) \big( f_i(z_1) f_i(z_2) + f_i(z_2) f_i(z_1) \big) + \big( f_i(z_1) f_i(z_2) + f_i(z_2) f_i(z_1) \big) f_{i\pm 1}(w) 
	\\ & \quad 
	= [2] \big( f_i (z_1) f_{i \pm 1} (w) f_i (z_2) + f_i (z_2) f_{i \pm 1} (w) f_i (z_1) \big),   
\end{align*}
where $(a_{ij})_{1\leq i,j \leq m-1}$ is the Cartan matrix of type $A_{m-1}$, 
and we consider the generating series 
\begin{align*}
&e_i(z) = \sum_{t \in \ZZ} e_{i,t} z^{t}, 
\quad 
f_i(z) = \sum_{t \in \ZZ} f_{i,t} z^{t},  
\\
&\psi_i^+(z) = \sum_{s \geq 0} \psi_{i, - b_i +s}^+ z^{- b_i +s}, 
\quad 
\psi_i^- (z) = \sum_{s \geq 0} \psi_{i,-s}^- z^{ - s}, 
\quad 
\d(z) = \sum_{t \in \ZZ} z^t.
\end{align*}
\end{description}
\end{definition}

%%%%
\remark 
\label{rem SQA} 
In \cite{FT19}, 
the shifted quantum affine algebra $U_{q,[\bb]}$ is denoted by 
$\cU^{\mathbf{sc}}_{\mu,0}$ with the coweight $\mu$ such that $b_i = \a_i^{\vee} ( \mu)$. 

%%%%%
\para  
We define elements $h_{i, \pm t} \in U_{q,[\bb]}$ ($1\leq i \leq m-1$, $t \in \ZZ_{>0}$) by 
\begin{align*}
& (\psi_{i,- b_i}^+ z^{ - b_i} )^{-1} \psi_i^+(z)  = \exp \big( (q-q^{-1}) \sum_{t >0} h_{i,t} z^{t} \big), 
\\
& (\psi_{i,0}^-)^{-1} \psi_i^-(z) = \exp \big( - (q-q^{-1}) \sum_{t >0} h_{i,-t}z^{-t} \big). 
\end{align*}
%%%
Then, the relations (U4) and (U5) are replaced by 
\begin{align*}
\tag{U4'} 
& \psi_{i,-b_i}^+ e_{j,s} (\psi_{i,-b_i}^+)^{-1} = q^{a_{ij}} e_{j,s}, 
	\quad 
	\psi_{i,0}^- e_{j,s} ( \psi_{i,0} )^{-1} = q^{- a_{ij}} e_{j,s}, 
	\quad 
	[h_{i,t}, e_{j,s}] = \frac{ [ t a_{ij}] }{t} e_{j,s+t}, 
\\
\tag{U5'} 
& \psi_{i,-b_i}^+ f_{j,s} (\psi_{i,-b_i}^+)^{-1} = q^{ - a_{ij}} f_{j,s}, 
	\quad 
	\psi_{i,0}^- f_{j,s} ( \psi_{i,0} )^{-1} = q^{ a_{ij}} f_{j,s}, 
	\quad 
	[h_{i,t}, f_{j,s}] = - \frac{ [ t a_{ij}] }{t} f_{j,s+t} 
\end{align*}
respectively. 
In particular, we have 
\begin{align}
\label{ei spm1 h e}
e_{i,s \pm 1} =  \frac{1}{[2]} [h_{i, \pm 1}, e_{i,s}], 
\quad 
f_{i, s \pm 1} = - \frac{1}{[2]} [h_{i, \pm 1}, f_{i, s }]  
\quad (s \in \ZZ). 
\end{align}
We also have 
\begin{align}
\label{psiis+}
\begin{split}
&\psi_{i,s}^+ = (q-q^{-1}) [e_{i,s}, f_{i,0}] \quad (s >0), 
\quad 
\psi_{i,s}^- = - (q-q^{-1}) [e_{i,s}, f_{i,0}] \quad ( s < - b_i), 
\\
& \psi_{i,s}^+ - \psi_{i,s}^- = (q-q^{-1}) [e_{i, s}, f_{i,0}] \quad ( - b_i \leq s \leq 0)
\end{split}
\end{align}
by the relation (U6). 

%%%%%
\para  
We can easily check that 
the elements $\psi_{i, - b_i}^+ \psi_{i,0}^-$ ($1\leq i \leq m-1$) are central elements of $U_{q,[\bb]}$. 
For $\Be = (\eta_1, \eta_2,\dots, \eta_{m-1}) \in (\CC^{\times})^{m-1}$, 
let $\cI_{q,[\bb]}^{\Be}$ be the two-sided ideal of $U_{q,[\bb]}$ generated by 
$\{\psi_{i, - b_i}^+ \psi_{i,0}^- - (- \eta_i )^{b_i} \mid 1\leq i \leq m-1 \} $, 
and we consider the quotient algebra 
\begin{align*}
U_{q,[\bb]}^{\Be} = U_{q,[\bb]} / \cI_{q,[\bb]}^{\Be}. 
\end{align*}

In the case where $\bb= \mathbf{0} = (0,0, \dots,0)$, 
the ideal $\cI_{q,[\bb]}^{\Be}$ does not depend on the choice of $\Be$, 
and we have 
\begin{align}
\label{iso Uq0 UqLsl}
U_{q,[\mathbf{0}]}^{\Be} 
\cong U_q (L \Fsl_m) 
\end{align} 
as algebras, 
where $U_q (L \Fsl_m)$ is the quantum loop algebra associated with $\Fsl_m$. 
We denote corresponding generators of $U_q (L \Fsl_m)$ via the above isomorphism 
by the same symbols. 

%%%%%
\para 
Let 
$U_{q,[\bb]}^{>}$, $U_{q,[\bb]}^{<}$ and $U_{q,[\bb]}^0$ be 
the subalgebra of $U_{q,[\bb]}$ 
generated by 
$\{e_{i,t} \mid 1\leq i \leq m-1, t\in \ZZ\}$, 
$\{f_{i,t} \mid 1\leq i \leq m-1, t\in \ZZ\}$ 
and 
$\{ \psi_{i, -b_i +s}^+, (\psi_{i, - b_i}^+)^{-1}, \psi_{i, -s}^-, (\psi_{i,0}^-)^{-1} \mid 1 \leq i \leq m-1, s \in \ZZ_{\geq 0} \}$ 
respectively.  
Then we have the following triangular decomposition. 

\begin{prop}[{\cite[Proposition 5.1]{FT19}}] 
The multiplication map 
\begin{align}
\label{tri decom}
U_{q,[\bb]}^{<} \otimes U_{q,[\bb]}^0 \otimes U_{q,[\bb]}^{>} 
\ra 
U_{q,[\bb]}
\end{align}
is an isomorphism of $\CC$-vector spaces. 
\end{prop}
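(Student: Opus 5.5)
The plan is the standard two-part argument for triangular decompositions: surjectivity by a straightening argument inside $U_{q,[\bb]}$, and injectivity by building a big enough representation.

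\emph{Surjectivity.} Let $\mathcal U'$ be the image of the multiplication map, i.e. the span of products $f\,\psi\,e$ with $f\in U_{q,[\bb]}^{<}$, $\psi\in U_{q,[\bb]}^0$, $e\in U_{q,[\bb]}^{>}$. Since $\mathcal U'$ contains $1$ and all generators, it suffices to show that $\mathcal U'$ is stable under left multiplication by every generator.
\begin{itemize}
\item Left multiplication by $f_{j,t}$ preserves $\mathcal U'$ trivially.
\item For $\psi^{\pm}$-generators, we move them past $U_{q,[\bb]}^{<}$. The invertible Cartan elements $\psi^+_{i,-b_i}$ and $\psi^-_{i,0}$ rescale $f_{j,s}$ by (U5'). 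The relation $[h_{i,t},f_{j,s}]=-\frac{[ta_{ij}]}{t}f_{j,s+t}$ lets us commute the $h_{i,\pm t}$ through a monomial in $U_{q,[\bb]}^{<}$ at the cost of terms that remain in $U_{q,[\bb]}^{<}$. The $\psi^{\pm}_{i,\cdot}$ are polynomials in the $h$'s and the invertible elements, so the same holds for them.
\item For $e_{j,t}$, we commute it rightward past each $f_{i,s}$ using (U6). Each exchange produces either $\delta_{ij}$ times a $\psi^{\pm}$-coefficient, which is handled by the previous case, or $f_{i,s}e_{j,t}$ continued inductively. An induction on the length of the $U_{q,[\bb]}^{<}$-monomial then gives $e_{j,t}\,\mathcal U'\subset \mathcal U'$.
\end{itemize}

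\emph{Injectivity.} This is the substantial part. I would not try to use a PBW basis of $U_{q,[\bb]}$ directly. Instead I would construct a faithful-enough representation, following the shuffle-algebra or Verma-type approach of Hernandez/FT.
\begin{enumerate}
\item Let $\widetilde U^{>}$ and $\widetilde U^{<}$ be the algebras generated by the $e$'s (resp.\ $f$'s) with relations (U2), (U7) (resp.\ (U3), (U8)). Let $\widetilde U^0$ be the commutative algebra given by (U1): it is a polynomial algebra in the $\psi$'s with $\psi^+_{i,-b_i}$ and $\psi^-_{i,0}$ inverted.
\item Define an action of the generators on $W=\widetilde U^{<}\otimes \widetilde U^{0}\otimes \widetilde U^{>}$, or more economically on $\widetilde U^{<}\otimes\widetilde U^{0}$ viewed as a universal Verma module. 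The $f$'s act by left multiplication and the $\psi$'s act by the commutation rule (U5). The $e$'s act by the derivation-like formula forced by (U6), killing $1\otimes\psi$.
\item Verify (U1)–(U8) on $W$. The relations (U2) and (U7) for the $e$-action are the delicate ones. They reduce, exactly as in the unshifted case, to identities of rational functions in which the shift enters only through the leading exponent $z^{-b_i}$ of $\psi_i^+(z)$. I expect this verification to be the main obstacle. I would handle it by transporting the known unshifted computation (Beck/Hernandez, or the shuffle realization of $U_q(L\Fsl_m)^{>}$). I would argue that it is insensitive to the shift, since $\psi^+_i(z)$ only appears through the products $\psi_i^+(z)f_j(w)$, whose commutation (U5) is shift-independent.
\item The composite $U_{q,[\bb]}^{<}\otimes U_{q,[\bb]}^0\otimes U_{q,[\bb]}^{>}\to U_{q,[\bb]}\to\End(W)$, applied to the vector $1\otimes1\otimes1$, recovers the identity-like map $f\otimes\psi\otimes e\mapsto f\otimes\psi\otimes e$. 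This shows the multiplication map is injective.
\item It also shows $U_{q,[\bb]}^{<}\cong\widetilde U^{<}$, and likewise for the other two factors.
\end{enumerate}
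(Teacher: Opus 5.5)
The paper gives no proof here; it quotes Finkelberg--Tsymbaliuk. Your overall plan (straightening for spanning, a large representation for injectivity) is the standard strategy for such triangular decompositions. The surjectivity half is fine; its base case also needs $U_{q,[\bb]}^{>}U_{q,[\bb]}^{0}\subset U_{q,[\bb]}^{0}U_{q,[\bb]}^{>}$, which follows from (U4').

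The injectivity half does not go through as written. The $e$-action you specify (``killing $1\otimes\psi$'') is the action on the universal Verma module $\widetilde U^{<}\otimes\widetilde U^{0}$. There, evaluation at $1\otimes 1$ sends $\sum_k f_k\otimes\psi_k\otimes e_k$ to $\sum_k\varepsilon(e_k)\,f_k\otimes\psi_k$, where $\varepsilon$ is the augmentation of $\widetilde U^{>}$. So it can at best give injectivity on $U^{<}\otimes U^{0}$ and never sees the third factor. Step 4 needs the three-factor module $W$, with the following action:
\begin{itemize}
\item on $1\otimes\psi\otimes u$, $e_{j,t}$ acts by rewriting $e_{j,t}\psi$ in $\widetilde U^{0}\widetilde U^{>}$ via (U4'), e.g.\ $e_{j,t}h_{i,s}=h_{i,s}e_{j,t}-\frac{[sa_{ij}]}{s}e_{j,t+s}$, and then multiplying on the left in $\widetilde U^{>}$;
\item on $f\otimes\psi\otimes u$, $e_{j,t}$ acts by the recursion forced by (U6).
\end{itemize}
For this to make sense, the (U4') twist must preserve the ideal of (U2), (U7). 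It does, since $[h_{i,t},e_j(w)]=\frac{[ta_{ij}]}{t}w^{-t}e_j(w)$ only rescales the generating series.

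Because you take $\widetilde U^{<}$ already modulo (U3), (U8), that recursion is well defined only if it preserves $J^{<}\otimes\widetilde U^{0}\otimes\widetilde U^{>}$, where $J^{<}$ is the ideal of those relations. That is, you need $[e_{k,t},J^{<}]\subset J^{<}\widetilde U^{0}$. This mirrors the check $[J^{>},f_{k,s}]\subset \widetilde U^{0}J^{>}$ you flagged for (U2), (U7), and it is equally not automatic.

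The cleanest way to organize this, as in Jantzen's or Hernandez's treatment, is:
\begin{enumerate}
\item Build $W$ from the free algebras on the $f$'s and $e$'s, where the module axioms are routine.
\item Get the decomposition for the algebra without (U2), (U3), (U7), (U8).
\item Use the two inclusions above to show that the ideal generated by these relations equals $J^{<}\widetilde U^{0}\widetilde U^{>}+\widetilde U^{<}\widetilde U^{0}J^{>}$, so the decomposition descends.
\end{enumerate}

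Your justification of shift-independence via (U5) alone is incomplete. In the (U2), (U7) check, the $\psi_i(z)$ produced by (U6) must be moved past $e$'s using (U4). What actually lets the unshifted computation carry over verbatim is one-sided boundedness: $\psi_i^{+}(z)$ still has exponents bounded below (by $-b_i$ instead of $0$), and $\psi_i^{-}(z)$ bounded above. Hence multiplication by factors such as $(z-q^{a_{ij}}w)$ stays injective on the relevant spaces of formal series, and rewriting (U4), (U5) as one-sided expansions of rational functions remains legitimate.
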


%%%
\para 
For $\bb =(b_1,\dots, b_{m-1}) \in \ZZ^{m-1}$, 
$\bc =(c_1,\dots, c_{m-1}), \bd=(d_1,\dots, d_{m-1}) \in (\ZZ_{\geq 0})^{m-1}$ 
and $\Be =( \eta_1, \dots, \eta_{m-1}) \in (\CC^{\times})^{m-1}$, 
there exists an injective algebra homomorphism 
\begin{align*}
\iota^{\Be}_{[\bc, \bd]} : U_{q,[\bb]} \ra U_{q,[\bb - (\bc+ \bd)]}
\end{align*} 
such that 
\begin{align*}
&e_i(z) \mapsto (1 - \eta_i  z^{-1})^{c_i} e_i(z), 
\quad 
f_i(z) \mapsto (1 - \eta_i  z^{-1})^{d_i} f_i(z), 
\\
&\psi_i^{\pm}(z) \mapsto (1- \eta_i  z^{-1})^{c_i + d_i} \psi_i^{\pm} (z) 
\end{align*}
by \cite[Lemma 10.18, Theorem 10.19]{FT19} (see also \cite[\S 4.5]{Her23}). 
More precisely, we have 
\begin{align}
\label{exp iotacdeta}
\begin{split}
&\iota_{[\bc, \bd]}^{\Be} (e_{i,t}) = \sum_{k=0}^{c_i} \begin{pmatrix} c_i \\ k \end{pmatrix} (-\eta_i)^k e_{i, t+k}, 
	\quad 
\iota_{[\bc, \bd]}^{\Be} (f_{i,t} ) = \sum_{k=0}^{d_i} \begin{pmatrix} d_i \\ k \end{pmatrix} ( - \eta_i)^k f_{i, t+k}, 
\\
& \iota_{[\bc, \bd]}^{\Be} (\psi_{i, -b_i +s}^+) 
	= \sum_{ k = \max \{ (c_i+d_i) - s, \, 0\}}^{c_i+d_i} 
	\begin{pmatrix} c_i + d_i \\ k \end{pmatrix} (- \eta_i)^k \psi_{i, -b_i +s +k}^+, 
\\
& \iota_{[\bc, \bd]}^{\Be} ( \psi_{i, - s}^-) 
	= \sum_{k= \max\{(c_i + d_i) - s, \, 0\}}^{c_i+d_i} \begin{pmatrix} c_i + d_i \\ k \end{pmatrix} 
	(- \eta_i)^{(c_i+d_i) -k} \psi_{i, - s+ (c_i+d_i) -k}^-. 
\end{split}
\end{align}
In particular, we have 
\begin{align}
\label{iota cdeta psii-bi+}
\iota_{[\bc, \bd]}^{\Be} ( \psi_{i, -b_i}^+)= ( - \eta_i )^{c_i+d_i} \psi_{i, - b_i + (c_i+d_i)}^+, 
\quad 
\iota_{[\bc, \bd]}^{\Be} (\psi_{i,0}^-) = \psi_{i,0}^-. 
\end{align}
By \eqref{iota cdeta psii-bi+}, we see that 
$\iota_{[\bc,\bd]}^{\Be} ( \cI_{q,[\bb]}^{\Be}) = \cI_{q,[\bb-(\bc+\bd)]}^{\Be} \cap \Im \iota_{[\bc, \bd]}^{\Be}$, 
and the homomorphism $\iota_{[\bc, \bd]}^{\Be} : U_{q,[\bb]} \ra U_{q, [\bb-(\bc+ \bd)]}$ induces an injective homomorphism 
\begin{align*}
\iota_{[\bc,\bd]}^{\Be} : U_{q,[\bb]}^{\Be} \ra U_{q,[\bb - (\bc+\bd)]}^{\Be}. 
\end{align*}

%%%
\para 
We denote by $U_q(\wh{\Fsl}_m)$ the quantum affine algebra associated with the affine Lie algebra $\wh{\Fsl}_m$. 
The quantum affine algebra $U_q (\wh{\Fsl}_m)$ is defined by using Chevalley generators 
$e_i, f_i, k_i^{\pm} $ ($0 \leq i \leq m-1$) with the usual defining relations for the Drinfeld-Jimbo quantum groups. 
The quantum affine algebra $U_q (\wh{\Fsl}_m)$ has the coproduct 
$\D : U_q (\wh{\Fsl}_m) \ra U_q (\wh{\Fsl}_m) \otimes U_q (\wh{\Fsl}_m)$ such that 
\begin{align*}
\D (e_i) = e_i \otimes k_i^+ + 1 \otimes e_i, 
\quad 
\D(f_i) = f_i \otimes 1 + k_i^- \otimes f_i, 
\quad 
\D(k_i^{\pm}) = k_i^{\pm} \otimes k_i^{\pm}. 
\end{align*}

%%%
Put $c = k_0^+ k_1^+ \dots k_{m-1}^+ \in U_q (\wh{\Fsl}_m)$, 
then $c$ is the canonical central element of $U_q (\wh{\Fsl}_m)$. 
By \cite{Dri87} and \cite{Bec94}, it is known that 
there exists an isomorphism of algebras 
\begin{align}
\label{iso Uq whsl UqLsl}
U_q (\wh{\Fsl}_m)/ (c-1) \ra U_q (L \Fsl_m)
\end{align} 

%%%
In \cite[Theorem 10.13]{FT19}, 
the coproduct $\D$ of $U_q (\wh{\Fsl}_m)$ is described by using some generators of $U_q ( L \Fsl_m)$  
through the isomorphism $U_q (\wh{\Fsl}_m)/ (c-1) \cong U_q (L \Fsl_m)$, 
and we can naturally lift this coproduct to the coproduct of $U_{q,[\mathbf{0}]}$. 
We denote this coproduct of $U_{q,[\mathbf{0}]}$ by the same symbol $\D$. 

%%%%%
\para 
For $\bb, \bc, \bd \in (\ZZ_{\geq 0})^{m-1}$ such that $\bb = \bc + \bd$ and $\Be \in (\CC^{\times})^{m-1}$,  
there exists an algebra homomorphism 
\begin{align}
\label{hom Ddc}
\D_{\bd, \bc} : U_{q,{[ \bb]}} \ra U_{q,{[ \bd ]}} \otimes U_{q, { [ \bc ]}}
\end{align}
such that the diagram
\begin{align}
\label{comm diagram Ddc}
\xymatrix{
U_{q,[\bb]} \ar[r]^{\D_{\bd, \bc} \quad \quad }  \ar[d]_{\iota_{[\bc, \bd]}^{\Be}} 
	& U_{q,[\bd]} \otimes U_{q, [\bc]} \ar[d]^{\iota_{[\mathbf{0}, \bd]}^{\Be} \otimes \iota_{[\bc, \mathbf{0}]}^{\Be}}
\\
U_{q, [\mathbf{0}]} \ar[r]^{\D \qquad } & U_{q, [\mathbf{0}]} \otimes U_{q, [\mathbf{0}]  } 
}
\end{align}
commutes by \cite[Theorem 10.20]{FT19}. 

We can obtain the following proposition from the corresponding results for $U_q ( L \Fsl_m)$ 
(e.g. \cite[Proposition 4.4]{CP91})  
by using \eqref{exp iotacdeta} and the commutative diagram \eqref{comm diagram Ddc}. 
%%%%%%%%%%

\begin{prop}[{cf. \cite[Lemma 12.4]{Zha24}}]
\label{Prop Ddc}
For $\bb, \bc, \bd \in (\ZZ_{\geq 0})^{m-1}$ such that $\bb = \bc + \bd$ and $\Be \in (\CC^{\times})^{m-1}$, 
we have the following. 
\begin{enumerate}
\item For $1\leq i \leq m-1$ and $s \geq 0$, 
\begin{align*}
&\D_{\bd, \bc} (\psi_{i, - b_i +s}^+) 
	\equiv \sum_{k=0}^s \psi_{i, - d_i +k}^+ \otimes \psi_{i, - c_i + s -k}^+ 
		\mod \fX^+_{[\bd]} \otimes \fX_{[\bc]}^-, 
\\
& \D_{\bd, \bc} (\psi_{i, -s}^-) 
	\equiv \sum_{k=0}^s \psi_{i, - k}^- \otimes \psi_{i, - s +k}^-  
		\mod \fX^+_{[\bd]} \otimes \fX_{[\bc]}^-, 
\end{align*}
where 
$\fX_{[\bd]}^+$ (resp. $\fX_{[\bc]}^-$) is the left ideal of $U_{q,[\bd]}$ (resp. $U_{q,[\bc]}$) 
generated by $\{ e_{i,t} \mid 1\leq i \leq m-1, \, t \in \ZZ\}$ 
(resp. $\{ f_{i,t} \mid 1\leq i \leq m-1, \, t \in \ZZ\}$). 

\item 
For $1\leq  i \leq m-1$ and $t \in \ZZ$, 
\begin{align*}
\D_{\bd, \bc} (e_{i,t}) 
	&\equiv \begin{cases}
		\dis 
		1 \otimes e_{i,t} + \sum_{k=1}^t e_{i, t-k} \otimes \psi_{i,k}^+ + \sum_{k=0}^{c_i} e_{i, t+k} \otimes \psi_{i, -k}^+ 
			& \text{ if } t \geq 0, 
		\\
		\dis 
		1 \otimes e_{i,t} + \sum_{k=0}^{-t-1} e_{i, t +k} \otimes \psi_{i, -k}^- 
			+ \d_{( - t < c_i)} \sum_{k=-t}^{c_i} e_{i,t+k} \otimes \psi_{i, -k}^+ 
		& \text{ if } t <0 
	\end{cases}
	\\
	& \mod \fX_{[\bd]}^{+2} \otimes \fX_{[\bc]}^-, 
\end{align*}
where $\fX_{[\bd]}^{+2}$  is the left ideal of $U_{q,[\bd]}$  
generated by $\{ e_{i_1,t_1} e_{i_2,t_2} \mid 1\leq i_1, i_2\leq m-1, \, t_1,t_2 \in \ZZ\}$. 

\item 
For $1\leq  i \leq m-1$ and $t \in \ZZ$,  
\begin{align*}
\D_{\bd, \bc} (f_{i,t}) 
	& \equiv \begin{cases}
		\dis f_{i,t} \otimes 1 + \sum_{k=0}^{t-1} \psi_{i,k}^+ \otimes f_{i, t-k} + \sum_{k=1}^{d_i} \psi_{i, -k}^+ \otimes f_{i, t+k} 
			& \text{ if } t >0, 
		\\
		\dis 
		f_{i,t} \otimes 1 + \sum_{k=0}^{-t} \psi_{i, -k}^- \otimes f_{i, t+k} 
			+ \d_{( -  t < d_i)} \sum_{k=-t+1}^{d_i} \psi_{i, -k}^+ \otimes f_{i, t+k} 
			& \text{ if } t \leq 0 
	\end{cases}
	\\
	& \mod \fX_{[\bd]}^+ \otimes \fX_{[\bc]}^{-2}, 
\end{align*}
where $\fX_{[\bd]}^{-2}$  is the left ideal of $U_{q,[\bd]}$  
generated by $\{ f_{i_1,t_1} f_{i_2,t_2} \mid 1\leq i_1, i_2\leq m-1, \, t_1,t_2 \in \ZZ\}$. 
\end{enumerate}
\end{prop}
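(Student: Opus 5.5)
The plan is to reduce everything to the unshifted case through the commutative diagram \eqref{comm diagram Ddc} and injectivity of the shift homomorphisms. For $U_q(L\Fsl_m)\cong U_{q,[\mathbf{0}]}^{\Be}$, and after lifting to $U_{q,[\mathbf{0}]}$, the known formulas (\cite[Proposition 4.4]{CP91}, together with \cite[Theorem 10.13]{FT19}) give
\begin{align*}
\D(\psi_{i,\pm s}^{\pm}) &\equiv \sum_{k=0}^{s}\psi^{\pm}_{i,\pm k}\otimes\psi^{\pm}_{i,\pm(s-k)}, \\
\D(e_{i,t}) &\equiv 1\otimes e_{i,t}+\sum_{k\ge0} e_{i,t-k}\otimes\psi^{\pm}_{i,\pm k}, \\
\D(f_{i,t}) &\equiv f_{i,t}\otimes1+\sum_{k\ge0}\psi^{\pm}_{i,\pm k}\otimes f_{i,t-k},
\end{align*}
modulo $\fX^+\otimes\fX^-$, $\fX^{+2}\otimes\fX^-$ and $\fX^+\otimes\fX^{-2}$ respectively. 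The sign choice is $+$ for $t\ge0$ (resp.\ $t>0$), and for negative $t$ the finite sum runs over the $\psi^-$ range. I will use these as given.

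First I apply $\iota^{\Be}_{[\bc,\bd]}$ to a generator $x$ of $U_{q,[\bb]}$ using \eqref{exp iotacdeta}, and then $\D$ via the unshifted formulas. Next I must show that the result lies in the image of $\iota^{\Be}_{[\mathbf 0,\bd]}\otimes\iota^{\Be}_{[\bc,\mathbf 0]}$ modulo the appropriate ideals, and identify the preimage. Because of the commutative diagram and the injectivity of the vertical maps, this preimage is the candidate $\D_{\bd,\bc}(x)$.

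The computation is cleanest in generating series. On the left, $\iota(\psi_i^+(z))=(1-\eta_iz^{-1})^{b_i}\psi_i^+(z)$. On the right, the leading terms of $\D$ on Cartan currents are $\psi_i^\pm(z)\otimes\psi_i^\pm(z)$. The target maps send $\psi_i^\pm(z)\otimes\psi_i^\pm(z)$ to $(1-\eta_iz^{-1})^{d_i}\psi_i^\pm(z)\otimes(1-\eta_iz^{-1})^{c_i}\psi_i^\pm(z)$. Since $b_i=c_i+d_i$, the factors match, and comparing coefficients gives (i).

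For $e_i$ I write the unshifted formula as $1\otimes e_i(z)+e_i(z)\otimes\psi_i^{+}(z)$ truncated appropriately, more precisely as a Laurent-half split. The factor $(1-\eta_iz^{-1})^{c_i}$ coming from $\iota_{[\bc,\bd]}$ is absorbed into $\iota_{[\bc,\mathbf0]}$ on the second factor, which acts on both $e_i$ and $\psi_i^+$ with exactly $c_i$ powers. The first factor is acted on by $\iota_{[\mathbf0,\bd]}$, which is the identity on $e_i$. Extracting the coefficient of $z^t$ then produces the stated sums. The term $\sum_{k=0}^{c_i}e_{i,t+k}\otimes\psi^+_{i,-k}$ appears because in $U_{q,[\bc]}$ the series $\psi_i^+$ now begins at $z^{-c_i}$. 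Part (iii) is symmetric, with $\bd$ playing the role of $\bc$.

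I also need the congruence ideals to be compatible. I will check that $\iota_{[\mathbf0,\bd]}$ maps $\fX^+_{[\bd]}$ into $\fX^+_{[\mathbf 0]}$ (it fixes the $e$'s), and that $\iota_{[\bc,\mathbf0]}$ maps $\fX^-_{[\bc]}$ into $\fX^-_{[\mathbf0]}$ (it fixes the $f$'s). Conversely, an element of the image that lies in the ideal must come from the ideal. This last point follows from the triangular decomposition \eqref{tri decom} together with the PBW-compatibility of $\iota$.

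The main obstacle is the negative-$t$ cases and the boundary indices. There the unshifted formula mixes $\psi^-$ terms, for $k<-t$, with the truncation, so one must carefully re-expand $(1-\eta_iz^{-1})^{c_i}\psi_i^-(z)$ versus $\psi_i^+(z)$. The middle range $-b_i\le s\le0$, where $\psi^+_{i,s}$ and $\psi^-_{i,s}$ both occur (cf.\ \eqref{psiis+}), is what produces the term with $\d_{(-t<c_i)}$. I would handle this by working with the full currents and the relation (U6) to rewrite $\psi^-$ in terms of $\psi^+$ and commutators, whose extra pieces lie in the ideals.
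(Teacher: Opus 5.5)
Your plan is the paper's own argument: the paper only asserts that the proposition follows from the unshifted formulas via \eqref{exp iotacdeta} and the commutative diagram \eqref{comm diagram Ddc}. Your triangular-decomposition argument for pulling the congruences back along the injective map $\iota^{\Be}_{[\mathbf 0,\bd]}\otimes\iota^{\Be}_{[\bc,\mathbf 0]}$ is correct, and it supplies a step the paper leaves implicit. For $\fX^-_{[\bc]}$ and $\fX^{-2}_{[\bc]}$ you should use the order $U^>\otimes U^0\otimes U^<$, which follows from \eqref{tri decom} via the anti-automorphism that fixes $\psi_i^\pm(z)$ and exchanges $e_i(z)$ with $f_i(z)$. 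Two things need correcting.

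First, the remedy in your last paragraph would not work, and nothing of that kind is needed. Modulo $\fX^{+2}_{[\bd]}\otimes\fX^-_{[\bc]}$, the elements $e_{i,u}\otimes\psi^+_{i,-k}$ and $e_{i,u}\otimes\psi^-_{i,-k}$ are not congruent. Indeed $e_{i,u}\notin\fX^{+2}_{[\bd]}$, while $\psi^+_{i,-k}-\psi^-_{i,-k}$ is a nonzero element of $U^0_{q,[\bc]}$ (it equals $(q-q^{-1})[e_{i,-k},f_{i,0}]$ by \eqref{psiis+}). So the pieces produced by (U6) do not lie in the ideal. The $\psi^-/\psi^+$ split must come out of the computation directly, and it does once the unshifted formula is written with half-currents, coefficientwise modulo $\fX^{+2}_{[\mathbf 0]}\otimes\fX^-_{[\mathbf 0]}$:
\[
\D(e_i(z))\equiv 1\otimes e_i(z)+e_i^{\geq 0}(z)\otimes\psi_i^+(z)+e_i^{<0}(z)\otimes\psi_i^-(z),
\quad
e_i^{\geq 0}(z)=\sum_{t\geq 0}e_{i,t}z^t,
\quad
e_i^{<0}(z)=\sum_{t<0}e_{i,t}z^t .
\]
This encodes $\sum_{k=0}^{t}e_{i,t-k}\otimes\psi^+_{i,k}$ for $t\geq 0$ and $\sum_{k=0}^{-t-1}e_{i,t+k}\otimes\psi^-_{i,-k}$ for $t<0$. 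Note the index $t+k$, not $t-k$, in the $\psi^-$ parts of your displayed formulas; likewise the $f$ formula needs $\psi^-_{i,-k}\otimes f_{i,t+k}$.

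Multiplication by $(1-\eta_i z^{-1})^{c_i}$ touches only the second tensor factors. There it is exactly $\iota^{\Be}_{[\bc,\mathbf 0]}$ applied to the $U_{q,[\bc]}$-currents $e_i(z)$, $\psi_i^+(z)$ and $\psi_i^-(z)$ alike, and the split in the first factor is untouched. Hence $\D_{\bd,\bc}(e_i(z))$ is given by the same expression with $\psi_i^\pm(z)$ the currents of $U_{q,[\bc]}$, and a single coefficient extraction handles every $t$. Part (iii) is the mirror image, using $\D(f_i(z))\equiv f_i(z)\otimes 1+\psi_i^+(z)\otimes f_i^{>0}(z)+\psi_i^-(z)\otimes f_i^{\leq 0}(z)$ with $\iota^{\Be}_{[\mathbf 0,\bd]}$ acting on the first factor.

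Second, this extraction does not reproduce (ii) as printed. The coefficient of $z^t$ in $e_i^{\geq 0}(z)\otimes\psi_i^+(z)$, where $\psi_i^+(z)$ now starts at $z^{-c_i}$, is $\sum_{k=-t}^{c_i}e_{i,t+k}\otimes\psi^+_{i,-k}$ for every $t\geq -c_i$. For $t=-c_i\leq -1$ this is the single term $e_{i,0}\otimes\psi^+_{i,-c_i}$, which the factor $\d_{(-t<c_i)}$ discards. The indicator should be $\d_{(-t\leq c_i)}$; in (iii), by contrast, $\d_{(-t<d_i)}$ is exactly the condition for $\sum_{k=-t+1}^{d_i}$ to be nonempty.

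You can confirm this independently. Take $\bd=\mathbf 0$ and $c_i=1$, and test $[e_{i,-1},f_{i,0}]=(\psi^+_{i,-1}-\psi^-_{i,-1})/(q-q^{-1})$ on $M\otimes L_{\Bb}$ using (i) and (iii). This shows that $e_{i,-1}$ cannot act by $\b_{i,0}e_{i,-1}\otimes 1$ alone; the term $\b_{i,-1}e_{i,0}\otimes 1$ coming from $e_{i,0}\otimes\psi^+_{i,-1}$ is required. So your sentence that the extraction ``produces the stated sums'' fails at this boundary value. The slip does not affect the rest of the paper, which uses (ii) only at $t=0$ and through the fact that every term other than $1\otimes e_{i,t}$ has an $e$ in its first factor.
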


%%%%%%%%%%%%%%%%%%%%%%%%%%%%%%%%%%%%%%%%%%%%%%%%%%%%%%%%%%%%%%%

%%%%%%%%%%%%%%%%%%%%%%%%%%%%%%%%%%%%%%%%%%%%%%%%%%%%%%%%%%%%%%%

\section{Finite dimensional simple modules of shifted quantum affine algebras} 

In this section, we recall some results for finite dimensional simple modules of the shifted quantum affine algebra 
$U_{q,[\bb]} = U_{q, [\bb]}(L\Fsl_m)$ 
obtained in \cite{Her23}. 
By \cite[Proposition 6.3]{Her23}, 
there is no non-zero finite dimensional modules of $U_{q,[\bb]}$ if $\bb \not\in (\ZZ_{\geq 0})^{m-1}$, 
and we assume that $\bb \in (\ZZ_{\geq 0})^{m-1}$ in the remainder of this paper. 

\para 
First, we give a classification of one-dimensional modules. 
Put 
\begin{align*}
\BB_{[\bb]} = \{ \Bb = (\b_{i, - b_i +s})^{0 \leq s \leq b_i}_{1 \leq i \leq m-1} 
 	\mid \b_{i, - b_i}, \b_{i,0} \in \CC^{\times}, \, \b_{i, - b_i +s} \in \CC \, (0 <s< b_i) \}. 
\end{align*}
For $\Bb \in \BB_{[\bb]}$, 
we can define a one-dimensional $U_{q,[\bb]}$-module 
$L_{\Bb} = \CC w$ by 
\begin{align}
\label{def LBb} 
\begin{split}
&e_{i,t} \cdot w = f_{i,t} \cdot w =0 \quad (1\leq i \leq m-1, \, t \in \ZZ), 
\\
& \psi_{i, - b_i+s}^+ \cdot w = 
	\begin{cases} 
		\b_{i, - b_i +s} w & \text{ if } 0 \leq s \leq b_i, 
		\\
		0 & \text{ if } s > b_i 
	\end{cases} 
	\quad ( 1 \leq i \leq m-1, \, s \in \ZZ_{\geq 0} ), 
\\
& \psi_{i, -s }^- \cdot w = 
	\begin{cases}
	\b_{i, -s } w & \text{ if } 0 \leq s \leq b_i, 
	\\
	0 & \text{ if } s > b_i
	\end{cases} 
	\quad ( 1 \leq i \leq m-1, \, s \in \ZZ_{\geq 0} ).
\end{split}
\end{align}
Then we see that 
$\{ L_{\Bb} \mid \Bb \in \BB_{[\bb]}\}$ gives a set of all one-dimensional $U_{q,[\bb]}$-modules up to isomorphism 
by direct calculations. 

%%%%%
\para \label{ellbb hw} 
We can consider the following notion of highest weight modules (resp. lowest weight modules) 
associated with the triangular decomposition \ref{tri decom}. 

Let $M$ be a $U_{q,[\bb]}$-module. 
For $\Bg =(\g_{i, - b_i+s}^+, \g_{i, -s}^-)^{s \in \ZZ_{\geq 0}}_{1 \leq i \leq m-1}$, 
we say that $v \in M$ is an $\ell_{[\bb]}$-weight vector of $\ell_{[\bb]}$-weight $\Bg$ if 
\begin{align*}
\psi_{i, - b_i +s}^+ \cdot v = \g_{i, - b_i +s}^+ v, 
\quad 
\psi_{i,-s}^- \cdot v = \g_{i, -s}^- v 
\quad (1\leq i \leq m-1, \, s \in \ZZ_{\geq 0}), 
\end{align*}
and say that $v \in M$ is a singular vector (resp. an $f$-singular vector) 
if $e_{i,t} \cdot v =0$ (resp. $f_{i,t} \cdot v =0$) for all $1\leq i \leq m-1$ and $t \in \ZZ$. 
We also say that 
$M$ is an $\ell_{[\bb]}$-highest weight module of $\ell_{[\bb]}$-highest weight $\Bg$ 
(resp. an $\ell_{[\bb]}$-lowest weight module of $\ell_{[\bb]}$-lowest weight $\Bg$) 
if 
there exists a vector $v_0 \in M$ satisfying the following conditions: 
\begin{enumerate}
\item 
$M$ is generated by $v_0$ as a $U_{q,[\bb]}$-module. 

\item 
$v_0$ is an $\ell_{[\bb]}$-weight vector of $\ell_{[\bb]}$-weight $\Bg$. 

\item 
$v_0$ is a singular vector (resp. an $f$-singular vector). 
\end{enumerate}
In this case, we say that $v_0$ is an $\ell_{[\bb]}$-highest weight vector (resp. an $\ell_{[\bb]}$-lowest weight vector). 
We sometimes drop \lq\lq$\ell_{[\bb]}$" in the above notions if there is no risk of confusion. 
We also denote \lq\lq $\ell_{[\bb]}$" by \lq \lq $\ell$" simply for $U_q (L\Fsl_m)$-modules. 

By standard arguments, 
we see that any finite dimensional simple $U_{q,[\bb]}$-module is both an $\ell_{[\bb]}$-highest weight module 
and an $\ell_{[\bb]}$-lowest weight module. 

%%%%%
\para 
\label{par simple Uq} 
We recall a classification of finite dimensional simple modules of the quantum loop algebra $U_q(L\Fsl_m)$ in 
\cite{CP91} and \cite{CP95}.
The subalgebra of $U_{q}(L\Fsl_m)$ generated by 
$\{e_{i,0}, f_{i,0}, \psi_{i,0}^{\pm} \mid 1 \leq i \leq m-1\}$ is isomorphic to the quantum group 
$U_q(\Fsl_m)$ associated with the special linear Lie algebra $\Fsl_m$. 
We define the type of a $U_q (L\Fsl_m)$-module by the type as $U_q (\Fsl_m)$-module through the restriction. 

Recall that a finite dimensional simple $U_q (L\Fsl_m)$-module of type $1$ is a highest weight module, and 
let $\Bg =(\g_{i, s}^+, \g_{i, -s}^-)^{s \in \ZZ_{\geq 0}}_{1 \leq i \leq m-1}$ be its highest weight.  
Then there exists a tuple of polynomials $\bP = (P_i (z))_{1 \leq i \leq m-1} $ such that 
$P_i(0)=1$ and 
\begin{align}
\label{hw simple UqLlm} 
\sum_{s \geq 0 } \g_{i,s}^+ z^s = q^{\deg P_i(z)} \frac{P_i(q^{-1} z)}{P_i(qz)} = \sum_{s \geq 0} \g_{i, -s}^- z^{-s}. 
\end{align}
On the other hand, 
for any tuple of polynomials $\bP = (P_i (z))_{1 \leq i \leq m-1} $ such that $P_i(0)=1$, 
there exists a finite dimensional simple $U_q (L\Fsl_m)$-module of type $1$ with the highest weight $\Bg$ given by 
\eqref{hw simple UqLlm}. 
We denote by $L(\bP)$ the finite dimensional simple $U_q (L\Fsl_m)$ of type $1$ corresponding to $\bP$. 
Each polynomial $P_i(z)$ is called the Drinfeld polynomial. 
Put 
\begin{align*}
\DD = \{ \bP = (P_i(z))_{1 \leq i \leq m-1} \mid P_i(z) \in \CC [z] \text{ such that } P_i(0) =1\}, 
\end{align*}
and we have the following. 

\begin{thm}[{\cite{CP91},  \cite{CP95}}]  
The set of isomorphism classes of finite dimensional simple $U_q (L\Fsl_m)$-modules of type $1$ is given by 
$\{L(\bP) \mid \bP \in \DD\}$. 
\end{thm}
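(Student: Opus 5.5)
This is the Chari--Pressley classification, and I would prove it in the standard three steps: every finite dimensional simple module of type $1$ is $\ell$-highest weight; its highest $\ell$-weight has the form \eqref{hw simple UqLlm} for some $\bP\in\DD$; and conversely every $\bP\in\DD$ occurs. Uniqueness, i.e. that $\bP$ determines the module, comes from the usual Verma-type argument. Using the triangular decomposition \eqref{tri decom} with $\bb=\mathbf{0}$ and the relation $U_{q,[\mathbf 0]}\cong U_q(L\Fsl_m)$ from \eqref{iso Uq0 UqLsl}, I would form the module $U_q\otimes_{U_q^{\geq}}\CC_{\Bg}$ induced from a one-dimensional $U_q^0U_q^>$-module. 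It has a unique simple quotient, so a simple $\ell$-highest weight module is determined up to isomorphism by $\Bg$, and hence by $\bP$.

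\emph{Step 1 (highest weight).} Let $L$ be finite dimensional, simple and of type $1$. The operators $\psi_{i,0}^{\pm}$ act semisimply with eigenvalues $q^{\mathbb Z}$, so $L$ decomposes into $U_q(\Fsl_m)$-weight spaces. Take a weight $\mu$ of $L$ that is maximal in the dominance order. By (U4') every $e_{i,t}$ raises the weight by $\a_i$, so $e_{i,t}$ kills $L_\mu$. The commutative algebra $U_q^0$ preserves $L_\mu$, so it has a common eigenvector $v_0$ there, which is then a singular $\ell$-weight vector. Since $L$ is simple, $U_q v_0 = L$, and therefore $L$ is $\ell$-highest weight of some weight $\Bg$.

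\emph{Step 2 (polynomiality).} This is the main obstacle. For each $i$, the elements $e_{i,t},f_{i,t},\psi_{i,\pm s}^{\pm}$ generate a copy of $U_q(L\Fsl_2)$, and $U_q(L\Fsl_2)\,v_0$ is a finite dimensional $\ell$-highest weight module. So it suffices to treat $m=2$. Let $\psi_{0}^+v_0=q^{n}v_0$ with $n\ge0$; this holds by $U_q(\Fsl_2)$ theory applied to the $e_0,f_0$ action. Then $f_{0}^{\,n+1}v_0=0$. I would apply $e_{t_1}\cdots e_{t_{n+1}}$ to $f_{0}^{\,n+1}v_0$ and use (U6) together with the commutation relations to derive Chari--Pressley's identity. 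In its cleanest form this identity says that the generating series $\sum_s\g_s^+z^s$ satisfies a linear recurrence of order $n$: the $n\times n$ Hankel-type determinants built from the $\g^\pm_s$ vanish, and the series equals a ratio $q^{n}P(q^{-1}z)/P(qz)$ with $\deg P=n$. One also needs the companion claim that the expansions at $0$ and $\infty$ are of the same rational function. This follows because finite dimensionality forces $\psi^\pm(z)v_0$ to be expansions of one rational function whose degree is bounded by $\dim L$. The delicate point is the combinatorial identity for $e^{(n+1)}f^{(n+1)}$ acting on $v_0$; I would follow \cite{CP91} and use the elements $P_{\pm s}$ defined by $\sum P_{\pm s}u^s=\exp(-\sum_{k} h_{\pm k}u^k/[k])$, showing that $(e_{1})^{(r)}(f_0)^{(r)}v_0 = (-1)^r q^{\ast}P_r v_0$ and that $P_r v_0=0$ for $r>n$.

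\emph{Step 3 (existence).} For existence I would use the Drinfeld--Jimbo coproduct $\D$. The fundamental modules are evaluation modules $V(\w_k)^{\ev_\z}$ obtained via $\ev_\z$ from \cite{Jim86}, and their Drinfeld polynomials are $P_k(z)=1-a z$ for a suitable $a$, with $P_i=1$ for $i\ne k$. By Proposition~\ref{Prop Ddc}(i) applied with $\bb=\bc=\bd=\mathbf 0$, the tensor product of the highest weight vectors is an $\ell$-weight vector whose $\ell$-weight is the product of the factors' $\ell$-weights, and it is singular by (ii). Factoring an arbitrary $\bP$ into linear factors, the tensor product of the corresponding fundamental modules contains a highest weight vector of weight \eqref{hw simple UqLlm}. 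The simple subquotient containing this vector is $L(\bP)$.
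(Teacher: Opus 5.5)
Your outline is the standard Chari--Pressley argument (an $\ell$-highest weight vector taken in a maximal weight space, reduction to $U_q(L\Fsl_2)$ and the Garland-type identity for $(e_{1})^{(r)}(f_0)^{(r)}v_0$, existence via tensor products of fundamental evaluation modules). That is exactly the route the paper relies on, since it cites \cite{CP91}, \cite{CP95} and gives no proof of its own, and your sketch of it is correct. The one point you assert rather than argue is that $\psi_i^{\pm}(z)v_0$ expand the same rational function. It follows by applying Cayley--Hamilton to the invertible shift $f_{i,l}v_0\mapsto f_{i,l+1}v_0$ on the finite dimensional span of the $f_{i,l}v_0$; this shift equals $-\frac{1}{[2]}(h_{i,1}-\eta)$, where $h_{i,1}v_0=\eta v_0$, by \eqref{ei spm1 h e}. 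Applying $e_{i,k}$ via (U6) then turns this into a recurrence for $\g^+_{i,s}-\g^-_{i,s}$, and comparing values at $z=\infty$ gives $\deg P_i=n$. Also, a recurrence of order $n$ makes the $(n+1)\times(n+1)$ Hankel determinants vanish, not the $n\times n$ ones.
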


%%%%%
\para 
\label{simple Uqb}
A classification of finite dimensional simple modules of the shifted quantum affine algebra $U_{q,[\bb]}=U_{q,[\bb]}(L\Fsl_m)$ 
is given in \cite{Her23}, and we reformulate it as follows. 

For a one-dimensional $U_{q,[\bb]}$-module $L_{\Bb} = \CC w$ ($\Bb \in \BB_{[\bb]}$) 
and a finite dimensional simple $U_q (L\Fsl_m)$-module $L(\bP)$ ($\bP \in \DD$), 
we obtain a $U_{q,[\bb]}$-module $L_{\Bb} \otimes L(\bP)$ through the homomorphism 
$\D_{\bb,0} : U_{q,[\bb]} \ra U_{q,[\bb]} \otimes U_{q,[0]}$ in \eqref{hom Ddc}. 
Let $v_0 \in L(\bP)$ be an $\ell_{[0]}$-highest weight vector of $\ell_{[0]}$-highest weight 
$\Bg = (\g_{i,s}^+, \g_{i,-s}^-)^{s \in \ZZ_{\geq 0}}_{1 \leq i \leq m-1}$. 
By Proposition \ref{Prop Ddc}, we see that 
$w \otimes v_0 \in L_{\Bb} \otimes L(\bP)$ is both an $\ell_{[\bb]}$-singular vector and  an $\ell_{[\bb]}$-weight vector. 
Let $\wt{\Bg} = (\wt{\g}_{i, - b_i +s}^+, \wt{\g}_{i, -s}^-)^{s \in \ZZ_{\geq 0}}_{1 \leq i \leq m-1}$ 
be the $\ell_{[\bb]}$-weight of $w \otimes v_0$. 
Then the simple quotient of $U_{q,[\bb]} \cdot (w \otimes v_0)$ 
is a finite dimensional simple highest weight module of $\ell_{[\bb]}$-highest weight $\wt{\Bg}$. 
By direct calculations using Proposition \ref{Prop Ddc} together with \eqref{hw simple UqLlm}, 
we have 
\begin{align}
\label{hw simple Uqb}
\sum_{ t \geq - b_i} \wt{\g}_{i,t} z^t 
= q^{\deg P_i} \frac{P_i(q^{-1} z)}{P_i(qz)} \frac{S_i(z)}{z^{b_i}} 
= \sum_{t \geq 0} \wt{\g}_{i, -t} z^{-t},  
\end{align}
where 
$S_i(z) = \sum_{r=0}^{b_i} \b_{i, - b_i +r} z^r$.

Put 
\begin{align*}
\SS_{[\bb]} = \{ \bS = (S_i(z))_{1\leq i \leq m-1} \in \CC[z]^{m-1} \mid 
	\deg S_i(z) = b_i, \, S_i(0) \not= 0 \}, 
\end{align*} 
and we have a bijection 
$\BB_{[\bb]} \ra \SS_{[\bb]} $ ($\Bb=(\b_{i,- b_i +s})^{0 \leq s \leq b_i}_{1 \leq i \leq m-1}  \mapsto \bS=(S_i(z))_{1\leq i \leq m-1}$ ) 
by $S_i(z) = \sum_{r=0}^{b_i} \b_{i, - b_i +r} z^r$. 
As a consequence of the above argument,  
we see that there exists a finite dimensional simple $U_{q,[\bb]}$-module whose highest weight is given by \eqref{hw simple Uqb} 
for $(\bS, \bP) \in \SS_{[\bb]} \times \DD$. 
We denote this simple module by $L(\bS, \bP)$. 
In \cite{Her23}, the converse statement is also proven, and we have the following theorem. 
%%%%%%
\begin{thm}[{\cite[Theorem 6.5]{Her23}}] 
\label{Thm simple Uqb}
For each $(\bS, \bP) \in \SS \times \DD$, there exists a finite dimensional simple $U_{q,[\bb]}$-module $L(\bS, \bP)$ 
whose highest weight is given by \eqref{hw simple Uqb}. 
Conversely, any finite dimensional simple $U_{q,[\bb]}$-module is isomorphic to $L( \bS, \bP)$ 
for some $(\bS, \bP) \in \SS_{[\bb]} \times \DD$. 
\end{thm}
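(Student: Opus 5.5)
The statement has two halves. The existence half is essentially the construction in \ref{simple Uqb}, and the converse is the classification of \cite{Her23}.

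For existence, fix $(\bS,\bP)\in\SS_{[\bb]}\times\DD$. I would take $L_{\Bb}\otimes L(\bP)$ through $\D_{\bb,0}$, where $\Bb$ corresponds to $\bS$ under the bijection $\BB_{[\bb]}\to\SS_{[\bb]}$. By Proposition \ref{Prop Ddc}(ii), every term of $\D_{\bb,0}(e_{i,t})$ modulo $\fX^{+2}_{[\bb]}\otimes\fX^-_{[0]}$ has either $e$ in the first factor, which kills $w$, or $1\otimes e_{i,t}$, which kills $v_0$. The terms of the ideal also vanish on $w\otimes v_0$: elements of $\fX^{+2}_{[\bb]}$ kill $w$, and elements of $\fX^-_{[0]}$ kill $v_0$ only if they have $f$'s on the right. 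So I would check that the left ideal is really the right-ideal-type congruence needed, or use the exact form from \cite{FT19}; the paper already asserts this conclusion.

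Similarly, Proposition \ref{Prop Ddc}(i) shows that $w\otimes v_0$ is an $\ell_{[\bb]}$-weight vector. Its weight series is the product of $\sum_s\b_{i,-b_i+s}z^{-b_i+s}=S_i(z)/z^{b_i}$ with $\sum_s\g^+_{i,s}z^s$. This product gives \eqref{hw simple Uqb}, and the same computation applies on the $\psi^-$ side.

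The submodule $U_{q,[\bb]}\cdot(w\otimes v_0)$ is finite dimensional and is a highest weight module by the triangular decomposition \eqref{tri decom}. Its weight space for $\wt\Bg$ is one dimensional, spanned by $w\otimes v_0$. Hence the sum of all proper submodules is proper, and the module has a unique simple quotient $L(\bS,\bP)$ with highest weight $\wt\Bg$.

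For the converse, let $L$ be finite dimensional and simple. By \ref{ellbb hw}, $L$ is $\ell_{[\bb]}$-highest weight with some weight $\Bg'$. The key step is to show that the series $\Psi_i(z)=\sum_t\g'^+_{i,t}z^t$ is the expansion of a rational function of the form $q^{\deg P}P(q^{-1}z)S(z)/(P(qz)z^{b_i})$.

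First, finite dimensionality makes the $\psi^\pm_i$-eigenvalue series on $L$ rational. The standard Hankel-determinant argument on the spans of $e_{i,t}f_{i,s}v$ gives this, using \eqref{psiis+}. The expansions at $z=0$ and $z=\infty$ of this rational function give $\psi^+$ and $\psi^-$, with the orders fixed by the shift $b_i$.

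Next, the dominant-shift rank-one reduction determines the form of that rational function. Restricting to the subalgebra generated by $e_{i,t},f_{i,t},\psi^\pm_i$ gives a shifted $U_q(L\Fsl_2)$, and there I would prove the factorization $\Psi_i(z)=q^{\deg P_i}\frac{P_i(q^{-1}z)}{P_i(qz)}\frac{S_i(z)}{z^{b_i}}$ with $\deg S_i=b_i$. I expect this to be the main obstacle. I would attempt it as in \cite{Her23}, using the injective homomorphism $\iota_{[\bc,\bd]}^{\Be}$ to compare with the unshifted case and analysing the finite-dimensional $\Fsl_2$-string through $v_0$.

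There is a remaining ambiguity: a common factor of $S_i$ and $P_i(q z)$ can be moved between them. I would resolve it by choosing $P_i$ minimal, which gives $(\bS,\bP)$. Then $L$ and $L(\bS,\bP)$ are simple highest weight modules with the same highest weight. Uniqueness of the simple quotient of the Verma-type module, which comes from the triangular decomposition, gives $L\cong L(\bS,\bP)$.
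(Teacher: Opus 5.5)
Your existence half is the paper's own argument from \ref{simple Uqb}. You pull $L_{\Bb}\otimes L(\bP)$ back along $\D_{\bb,\mathbf{0}}$, read off from Proposition \ref{Prop Ddc} that $w\otimes v_0$ is a singular $\ell_{[\bb]}$-weight vector with weight \eqref{hw simple Uqb}, and pass to the simple quotient of the cyclic submodule. Your hesitation about $\fX^-_{[0]}$ is unnecessary, and the idea behind it is off: elements of $\fX^-_{[0]}$ do not annihilate $v_0$ in general, since $f_{i,t}v_0\neq 0$. What you need is that every error term in Proposition \ref{Prop Ddc}(i),(ii) has its first tensor factor in $\fX^{+}_{[\bb]}$. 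Since $e_{i,t}w=0$, these terms annihilate $w\otimes v$ for every $v$. Likewise, no minimal choice of $P_i$ is needed at the end, because only the existence of some pair is asserted (cf.\ Remark \ref{Rem DP}).

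The converse is where your proposal falls short. The paper gives no argument for it and simply imports \cite[Theorem 6.5]{Her23}. Your outline also defers the decisive step to \cite{Her23}: the rank-one statement that the highest $\ell_{[\bb]}$-weight factors as $q^{\deg P_i}P_i(q^{-1}z)S_i(z)/(P_i(qz)z^{b_i})$ with $\deg S_i=b_i$. So it is not an independent proof, and the tools you name would not supply that step.

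First, $\iota^{\Be}_{[\bc,\bd]}$ maps $U_{q,[\bb]}$ into the less shifted algebra $U_{q,[\bb-(\bc+\bd)]}$. It restricts modules from the unshifted side to $U_{q,[\bb]}$, but an arbitrary finite-dimensional $U_{q,[\bb]}$-module has no reason to extend along it. For instance, with $\bc+\bd=\bb$, a one-dimensional $U_{q,[\mathbf 0]}$-module restricts only to an $L_{\Bb}$ with $S_i(z)$ proportional to $(z-\eta_i)^{b_i}$. Hence for $b_i\geq 2$ most $L_{\Bb}$ do not arise this way.

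Second, the $U_q(\Fsl_2)$-string argument of \cite{CP91} is unavailable in its usual form when $b_i>0$. By (U6), $[e_{i,t},f_{i,s}]=(\psi^+_{i,t+s}-\psi^-_{i,t+s})/(q-q^{-1})$, and the invertible modes $\psi^+_{i,-b_i}$ and $\psi^-_{i,0}$ never occur at the same index. The modules $L_{\Bb}$, on which $\psi^-_{i,0}$ acts by an arbitrary $\b_{i,0}\in\CC^{\times}$, also show that weights of finite-dimensional modules need not be of type $1$.

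So either cite \cite{Her23} for this step, as the paper does, or supply a genuinely new rank-one argument. Your rationality step and the final identification through the unique simple highest weight quotient are fine.
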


We also call  each polynomials $S_i(z)$ and $P_i(z)$  the Drinfeld polynomials. 

\remark 
\label{Rem DP} 
For $P(z), S(z) \in \CC[z]$ such that $P(0)=1$, $\deg S(z) = b$, $\deg S(0) \not=0$, 
if $P(qz)$ and $S(z)$ has a common divisor $(1 - az)$, 
we have 
\begin{align*}
q^{\deg P} \frac{ P (q^{-1} z)}{ P(qz)} \frac{ S(z)}{z^{b_i}} 
	= q^{\deg \wt{P}} \frac{ \wt{P} (q^{-1} z)}{ \wt{P} (q z)} \frac{ \wt{S}(z)}{z^{b_i}}, 
\end{align*}
where 
\begin{align*}
\wt{P} (z) = \frac{ P(z)}{(1- q^{-1} a z)}, 
\quad 
\wt{S}(z) = \frac{ q ( 1 - q^{-2} az) S(z)}{(1 - a z)}.
\end{align*}
Thus, the surjective map 
\begin{align*}
\SS_{[\bb]} \times \DD \ra \{ \text{finite dimensional simple $U_{q,[\bb]}$-module} \}/_{\cong}, 
\quad 
(\bS, \bP) \mapsto L(\bS, \bP)
\end{align*}
is not injective if $\bb\not= (0, \dots, 0)$. 

%%%%%%%%%%%%%%%%%%%%%%%%%%%%%%%%%%%%%%%%%%%%%%%%%%%%%%%%%%%%%%%

%%%%%%%%%%%%%%%%%%%%%%%%%%%%%%%%%%%%%%%%%%%%%%%%%%%%%%%%%%%%%%%
\section{$q$-characters for finite dimensional modules of shifted quantum affine algebras} 

\para 
First, we recall some statements for $q$-characters of finite dimensional modules of the quantum loop algebra $U_{q}(L\Fsl_m)$ 
according to \cite{FR99} and \cite{FM01}. 
For a finite dimensional $U_q(L\Fsl_m)$-module $M$ of type $1$, 
we consider the decomposition of $M$ into generalized simultaneous eigenspaces for the actions of 
$\{\psi_{i, \pm s }^{\pm} \mid 1 \leq i \leq m-1, \, s \in \ZZ_{\geq 0}\}$:
\begin{align}
\label{decom wll-wt sp}
\begin{split}
&M = \bigoplus_{\Bg=(\g^{\pm}_{i, \pm s} )^{s \in \ZZ_{\geq 0}}_{1 \leq i \leq m-1}} M_{\Bg}, 
\\
& M_{\Bg} = \left\{ v \in M \mid (\psi_{i, \pm s}^{\pm} - \g_{i, \pm s}^{\pm})^{N_{i, \pm s}} \cdot v =0 
	 \begin{array}{l}
	 \text{ for some } 
	 N_{i, \pm s} \in \ZZ_{>0} 
	 \\  
	 \quad 
	 (1 \leq i \leq m-1, \, s \in \ZZ_{\geq 0} )
	 \end{array}
	 \right\}. 
\end{split} 
\end{align}
Then, we say that $\Bg = (\g_{i, \pm s}^{\pm})^{s \in \ZZ_{\geq 0}}_{1 \leq i \leq m-1} $ is an $\ell$-weight of $M$ if 
$M_{\Bg} \not=0$, 
and we denote the set of $\ell$-weights of $M$ by $\lwt M $. 
In \cite{FR99}, it is proven that, 
for $\Bg = (\g_{i, \pm s}^{\pm}) \in \lwt M$, 
there exist polynomials $Q_i(z) , R_i(z) \in \CC[z]$ ($1 \leq i \leq m-1$) such that $Q_i(0)=R_i(0) =1$ and 
\begin{align}
\label{Qi Ri} 
\sum_{s \geq 0} \g_{i,s}^+ z^s = q^{\deg Q_i - \deg R_i} 
	\frac{ Q_i (q^{-1} z)}{Q_i(qz)} \frac{ R_i(qz) }{R_i(q^{-1} z)} 
= \sum_{s \geq 0} \g_{i,-s}^- z^{-s}. 
\end{align}

For $\Bg \in \lwt M$, 
we define a monomial 
\begin{align*}
Y^{\Bg} = \prod_{i=1}^{m-1} \Big( \prod_{j=1}^{k_i} Y_{i, \z_{i,j}} \prod_{j'=1}^{l_i} Y_{i, \xi_{i, j'}}^{-1} \Big) 
\end{align*}
in the Laurent polynomial ring $\ZZ [Y_{i,\z}^{\pm}]^{\z \in \CC^{\times}}_{1 \leq i \leq m-1}$ 
if the polynomials $Q_i(z)$ and $R_i(z)$  satisfying \eqref{Qi Ri} are written as 
\begin{align*}
\begin{split} 
& Q_i(z) = (1 - \z_{i,1} z) (1 - \z_{i,2} z) \dots (1 -\z_{i, k_i} z), 
\\
& R_i (z) = (1 - \xi_{i,1} z) (1 - \xi_{i,2} z) \dots (1 - \xi_{i, l_i} z)
\end{split} 
\quad 
(\z_{i,j}, \xi_{i,j'} \in \CC^{\times}). 
\end{align*}

For a finite dimensional $U_q(L \Fsl_m)$-module $M$ of type $1$, 
we define the $q$-character  of $M$ as 
\begin{align*}
\chi_q (M) = \sum_{\Bg \in \lwt M} ( \dim M_{\Bg} ) Y^{\Bg} 
\in \ZZ [Y_{i,\z}^{\pm}]^{\z \in \CC^{\times}}_{1 \leq i \leq m-1}. 
\end{align*}
Then we have the following proposition.  

\begin{prop}[{\cite{FR99}, \cite{FM01}}] 
Let $U_q (L\Fsl_m) \cmod_f^1$ be the category of finite dimensional $U_q (L\Fsl_m)$-modules of type $1$, 
and $R_0 (U_q (L\Fsl_m) \cmod_f^1)$ be its Grothendieck ring. 
Then the map 
\begin{align*}
\chi_q : R_0 (U_q (L\Fsl_m)\cmod_f^1) \ra \ZZ [Y_{i,\z}^{\pm}]^{\z \in\CC^{\times}}_{1 \leq i \leq m-1} 
\quad 
(M \mapsto \chi_q (M) )
\end{align*}
is an injective ring homomorphism. 
\end{prop}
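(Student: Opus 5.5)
The plan is to prove the two assertions separately: first that $\chi_q$ is a ring homomorphism, then that it is injective. Well-definedness on $R_0 (U_q (L\Fsl_m)\cmod_f^1)$ comes first. The elements $\psi_{i,\pm s}^{\pm}$ pairwise commute by (U1), so taking generalized simultaneous eigenspaces as in \eqref{decom wll-wt sp} is an exact functor on finite dimensional modules. Hence for a short exact sequence $0 \ra M' \ra M \ra M'' \ra 0$ we get $\dim M_{\Bg} = \dim M'_{\Bg} + \dim M''_{\Bg}$ for every $\Bg$. So $\chi_q$ is additive and factors through the Grothendieck group. The existence of $Q_i, R_i$ in \eqref{Qi Ri} is taken from \cite{FR99}, so that $Y^{\Bg}$ makes sense.

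For multiplicativity I would use Proposition \ref{Prop Ddc}(i) with $\bb=\bc=\bd=\mathbf{0}$, which gives
\begin{align*}
\D(\psi_i^{\pm}(z)) \equiv \psi_i^{\pm}(z) \otimes \psi_i^{\pm}(z) \mod \fX^+_{[\mathbf{0}]} \otimes \fX^-_{[\mathbf{0}]}.
\end{align*}
Let $M,N$ be finite dimensional of type $1$. Filter $M$ by $\ell$-weight spaces ordered so that $e_{j,t}$ strictly raises the ordinary $U_q(\Fsl_m)$-weight. Filter $N$ so that $f_{j,t}$ strictly lowers weight. Choose bases adapted to the generalized eigenspace decompositions and to these orders. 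Then $\D(\psi_{i,\pm s}^{\pm})$ acts on $M \otimes N$ by the operator $\sum_k \psi^{\pm}_{i,\pm k} \otimes \psi^{\pm}_{i,\pm(s-k)}$ plus terms which strictly raise the weight in the first factor while lowering it in the second. These correction terms are strictly triangular with respect to a suitable total order on pairs of basis vectors (order by the first factor's weight). So the generalized simultaneous eigenvalues on $M_{\Bg}\otimes N_{\Bg'}$ are given by the product series $\big(\sum_s \g_{i,s}^+ z^s\big)\big(\sum_s \g_{i,s}^{\prime +} z^s\big)$ (and likewise for $-$), and the dimensions add up correctly.

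Since the series in \eqref{Qi Ri} multiply by multiplying the $Q$'s and the $R$'s, the monomial attached to the product weight is $Y^{\Bg}Y^{\Bg'}$. This gives $\chi_q(M\otimes N)=\chi_q(M)\chi_q(N)$. The unit $[\CC]$ goes to $1$, because the trivial module has $Q_i=R_i=1$. I expect this triangularity step to be the main obstacle. The congruence is only modulo a left ideal, so one must check that the correction terms really act nilpotently, in a uniform order, on the tensor product of filtrations; I would handle it by the weight argument above, since $e$'s and $f$'s shift ordinary weights by $\pm\a_j$.

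For injectivity, the classes $[L(\bP)]$, $\bP\in\DD$, form a $\ZZ$-basis of $R_0$. Suppose $\chi_q\big(\sum_{\bP} a_{\bP}[L(\bP)]\big)=0$ with some $a_{\bP}\neq 0$. Among the $\bP$ with $a_{\bP}\neq0$, choose one whose ordinary weight $\sum_i \deg P_i\,\varpi_i$ is maximal. By \eqref{hw simple UqLlm}, the highest weight vector of $L(\bP)$ contributes the monomial $m_{\bP}=\prod_i\prod_{P_i(\z^{-1})=0}Y_{i,\z}$, and it is the only monomial of top ordinary weight in $\chi_q(L(\bP))$, since the top weight space is one-dimensional. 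Any other $L(\bP')$ with $a_{\bP'}\neq0$ that contains $m_{\bP}$ would have weight $\geq$ the weight of $\bP$, hence equal weight by maximality. Then $m_{\bP}$ would be its top monomial $m_{\bP'}\neq m_{\bP}$, a contradiction. So the coefficient of $m_{\bP}$ is $a_{\bP}\neq0$, contradicting the assumption, and $\chi_q$ is injective.
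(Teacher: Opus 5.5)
Your additivity and injectivity arguments are fine (the latter is the standard highest-monomial argument). The multiplicativity step, however, has a genuine gap, and it is exactly at the point you flagged.

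Proposition \ref{Prop Ddc}(i) only gives $\D(\psi^{\pm}_i(z))\equiv\psi^{\pm}_i(z)\otimes\psi^{\pm}_i(z)$ modulo $\fX^+_{[\mathbf{0}]}\otimes\fX^-_{[\mathbf{0}]}$, and these are \emph{left} ideals. An element of $\fX^+_{[\mathbf{0}]}$ has the form $\sum_k a_k e_{j_k,t_k}$ with the $a_k$ arbitrary, so it need not raise weights at all. For example, $f_{j,1}e_{j,0}\otimes e_{j,0}f_{j,s-1}$ lies in $\fX^+_{[\mathbf{0}]}\otimes\fX^-_{[\mathbf{0}]}$. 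It has weight $0$ in each factor and total loop degree $s$, so neither grading forbids it from occurring in $\D(\psi^+_{i,s})$. It maps every block $M_\lambda\otimes N_\mu$ into itself. The congruence therefore gives no control over the action of $\D(\psi^{\pm}_{i,\pm s})$ inside a block, which is exactly where you need it to read off generalized eigenvalues. Your remark that the $e$'s and $f$'s shift weights by $\pm\alpha_j$ does not address this, since the prefactors $a_k$ can shift weights arbitrarily, including downward. In the paper the left-ideal form suffices because it is only applied when one tensor factor is killed by all $e_{j,t}$ or all $f_{j,t}$ (the module $L_{\Bb}$, or a singular vector).

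To complete your route you need an input strictly stronger than Proposition \ref{Prop Ddc}(i). This is the graded form of Damiani's formula: for the coproduct used here, $\D(\psi^{\pm}_i(z))-\psi^{\pm}_i(z)\otimes\psi^{\pm}_i(z)$ lies in $\bigoplus_{\beta\in Q_+\setminus\{0\}}U_q(L\Fsl_m)_{\beta}\otimes U_q(L\Fsl_m)_{-\beta}$. Here the grading is by $Q=\bigoplus_j\ZZ\alpha_j$, with $e_{j,t}$, $f_{j,t}$, $\psi^{\pm}_{j,\pm s}$ of degrees $\alpha_j$, $-\alpha_j$, $0$. With that statement, the rest of your argument goes through and gives $\chi_q(M\otimes N)=\chi_q(M)\chi_q(N)$: the filtration by the weight of the first factor, exactness of generalized simultaneous eigenspaces, and the convolution of eigenvalues on $M_{\Bg}\otimes N_{\Bg'}$.

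The paper itself gives no proof of this proposition; it simply quotes \cite{FR99}, \cite{FM01}. There, as far as I recall, multiplicativity is obtained differently. The $q$-character is defined via transfer matrices of the universal $R$-matrix followed by a Harish-Chandra type projection, which makes multiplicativity formal. The result is then identified with the generalized-eigenvalue description using the factorized form of the universal $R$-matrix.
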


%%%%%%%%%%%%%
\para 
We introduce a similar notation of $q$-characters for finite dimensional modules of the shifted quantum affine algebra 
$U_{q,[\bb]} (L\Fsl_m)$. 
For finite dimensional $U_{q,[\bb]}$-module $M$, 
we consider the generalized simultaneous  eigenspace decomposition of $M$ 
for the actions of $\{ \psi_{i, - b_i +s}^+, \, \psi_{i -s}^- \mid 1 \leq i \leq m-1, \, s \in \ZZ_{\geq 0} \} $, 
and those simultaneous eigenvalues 
$\Bg = (\g_{i, - b_i +s}^+, \g_{i,-s}^-)^{s \in \ZZ}_{1 \leq i \leq m-1}$ are called $\ell_{[\bb]}$-weights of $M$. 
We denote the set of $\ell_{[\bb]}$-weights of $M$ by $\lbwt M$. 
Then we have the following proposition. 
%%%
\begin{prop}
\label{Prop lb-weight} 
Let $M$ be a finite dimensional $U_{q,[\bb]}$-module. 
For $\Bg \in \lbwt M$, 
there exist polynomials $Q_i(z)$, $R_i(z)$ and $S_i(z)$ ($1 \leq i \leq m-1$) 
such that 
$Q_i(0)=R_i(0)=1$, $S_i ( 0) \not=0$, $\deg S_i(z) =b_i$ 
and the generating functions of $\ell_{[\bb]}$-weight $\Bg = (\g_{i, - b_i +s}^+, \g_{i,-s}^-)^{s\in \ZZ}_{1 \leq i \leq m-1}$ 
are given as 
\begin{align}
\label{ratinal func ellb wt}
\sum_{s \geq 0} \g_{i, - b_i+s}^+ z^{- b_i+s} 
	= q^{\deg Q_i - \deg R_i} \frac{ Q_i ( q^{-1} z)}{ Q_i (q z)} \frac{ R_i (q z)}{ R_i (q^{-1} z)} \frac{ S_i (z) }{ z^{b_i}} 
	= \sum_{ s \geq 0} \g_{i, -s}^- z^{-s} 
\end{align}
for $1 \leq i \leq m-1$. 

\begin{proof}
As seen in the previous section, 
any finite dimensional simple $U_{q,[\bb]}$-module is obtained as a subquotient of the module 
$L_{\Bb} \otimes L(\bP)$, 
where $L_{\Bb} = \CC w$ is the one-dimensional $U_{q,[\bb]}$-module corresponding to 
$\Bb =(\b_{i, - b_i +s})^{ 0 \leq s \leq b_i}_{1 \leq i \leq m-1} \in \BB_{[\bb]}$ 
and  $L(\bP)$ is the finite dimensional simple $U_q (L\Fsl_m)$-module corresponding to $\bP \in \DD$. 
Thus, it is enough to show the statement for $\ell_{[\bb]}$-weights of $L_{\Bb} \otimes L (\bP)$.  
Let 
$L(\bP) = \bigoplus_{ \wt{\Bg}  \in \lwt L(\bP)} L(\bP)_{\wt{\Bg}} $ 
be the decomposition of $ L(\bP)$ as in \eqref{decom wll-wt sp}, 
and we have the decomposition 
\begin{align*}
L_{\Bb} \otimes L(\bP) = \bigoplus_{\wt{\Bg} \in \lwt L(\bP)} ( L_{\Bb} \otimes L(\bP)_{\wt{\Bg}})
\end{align*}
as vector spaces. 
%%%%%%
For $\wt{\Bg} = (\wt{\g}_{i, \pm s}^{\pm}) \in \lwt L(\bP)$, 
we take $\Bg = (\g_{i, - b_i +s}^+, \g_{i, -s}^-)^{s \in \ZZ_{\geq 0}}_{1 \leq i \leq m-1}$ as 
\begin{align}
\label{def lbwt LbB otimes LP}
\g_{i, - b_i +s}^+ = \sum_{p=0}^{\min \{ s, b_i \}} \b_{i, - b_i +p} \wt{\g}_{i, s -p}^+, 
\quad 
\g_{i, -s}^- = \sum_{p=0}^{\min \{ s, b_i\}} \b_{i, - p} \wt{\g}_{i, - s+p}^-. 
\end{align}
For $v \in L(\bP)_{\wt{\Bg}}$, we have 
\begin{align*}
( \psi_{i, - b_i +s}^+ - \g_{i, - b_i+s}^+ ) \cdot (w \otimes v) 
= \sum_{p=0}^{\min \{s, b_i\}} \b_{i,- b_i +p} \big( w \otimes (\psi_{i, s-p}^+ - \wt{\g}_{i, s -p}^+) \cdot v \big)
\end{align*} 
by direct calculation using Proposition \ref{Prop Ddc}. 
Note that $(\psi_{i, s-p}^+ - \wt{\g}_{i, s-p}^+) \cdot v $  belongs to $L(\bP)_{\wt{\Bg}}$ again. 
Then we have 
\begin{align*}
&(\psi_{i, - b_i+s}^+ - \g_{i, - b_i +s}^+ )^N \cdot (w \otimes v) 
\\
&= \sum_{(n_0, n_1,\dots, n_l) \in (\ZZ_{\geq 0})^{l+1} \atop n_0 + n_1 + \dots + n_l =N} 
	\b_{(n_0, n_1, \dots, n_l)} w \otimes \big( \prod_{p=0}^l (\psi_{i, s- p}^+ - \wt{\g}_{i, s-p}^+)^{n_p} \cdot v \big) 
\end{align*}
with $l = \min\{s, b_i\}$ and  $\b_{(n_0, n_1, \dots, n_l)} \in \CC$. 
Thus, we see that 
\begin{align*}
(\psi_{i, - b_i+s}^+ - \g_{i, - b_i+s}^+)^N \cdot (w \otimes v)=0
\end{align*} 
for large enough  $N$. 
Similarly, we can check that $(\psi_{i, -s}^- - \g_{i, -s}^-)^N \cdot (w \otimes v) =0$ for large enough  $N$. 
As a consequence, we have 
$L_{\Bb} \otimes L(\bP)_{\wt{\Bg}} \subset \big( L_{\Bb} \otimes L(\bP) \big)_{\Bg}$, 
and we see that 
any $\ell_{[\bb]}$-weight $\Bg = (\g_{i, - b_i +s}^+, \g_{i, -s}^-)$ 
of $L_{\Bb} \otimes L(\bP)$ 
is written as in \eqref{def lbwt LbB otimes LP}. 

For $\wt{\Bg} = (\wt{\g}_{i, \pm s}^{\pm}) \in \lwt L(\bP)$, 
let $Q_i(z)$ and $R_i(z)$ ($1 \leq i \leq m-1$) be the polynomials 
which give the generating functions of $\ell$-weight $\wt{\Bg}$ as in  \eqref{Qi Ri}, 
and we take $\Bg = (\g_{i, - b_i +s}^+, \g_{i, -s}^-)^{s \in \ZZ_{\geq 0}}_{1 \leq i \leq m-1}$ as \eqref{def lbwt LbB otimes LP}. 
Then, we have 
\begin{align*}
\sum_{s \geq 0} \g_{i, - b_i +s}^+ z^{- b_i +s} 
&= \sum_{s \geq 0} \big( \sum_{p=0}^{\min\{ s, b_i\}} \b_{i, - b_i +p} \wt{\g}_{i, s-p}^+ \big) z^{- b_i +s} 
\\
&= \sum_{ p=0}^{b_i} \sum_{ s \geq p} \b_{i, - b_i +p} \wt{\g}_{i, s-p}^+  z^{(- b_i +p) + (s -p)} 
\\
&= \sum_{p=0}^{b_i} \b_{i,- b_i +p} z^{- b_i +p} \sum_{ s \geq p} \wt{\g}_{i, s-p}^+ z^{s-p}  
\\
&= \Big( \sum_{p=0}^{b_i} \b_{i, - b_i+p} z^{- b_i +p} \Big) \Big( \sum_{s \geq 0} \wt{\g}_{i, s}^+ z^s \Big)  
\\
&= \frac{S_i(z)}{z^{b_i}} \cdot q^{\deg Q_i - \deg R_i} \frac{ Q_i(q^{-1} z)}{Q_i(qz)} \frac{R_i (qz)}{R_i (q^{-1} z)}, 
\end{align*}
where $S_i(z) = \sum_{p=0}^{b_i} \b_{i, - b_i +p} z^p$. 
We remark that $S_i(0) \not=0$ and $\deg S_i (z) = b_i$ since $\Bb \in \BB_{[\bb]}$. 
Similarly, we have 
\begin{align*}
\sum_{s \geq 0} \g_{i, -s}^- z^{-s} 
= \frac{S_i(z)}{z^{b_i}} \cdot q^{\deg Q_i - \deg R_i} \frac{ Q_i(q^{-1} z)}{Q_i(qz)} \frac{R_i (qz)}{R_i (q^{-1} z)}
\end{align*}
in the expansion at $z = \infty$. 
Then the proposition is proven. 
\end{proof}
\end{prop}

\remark 
For the one-dimensional $U_{q,[\bb]}$-module $L_{\Bb} = \CC w$ ($\Bb \in \BB_{[\bb]}$), 
the generating functions of the only one $\ell_{[\bb]}$-weight $\Bg $ is written as 
\begin{align*}
\sum_{s \geq 0} \g_{i, - b_i +s}^+ z^{- b_i +s} = \frac{S_i(z)}{z^{b_i}} = \sum_{ s \geq 0} \g_{i, -s}^- z^{- s}, 
\end{align*} 
where $S_i(z) = \sum_{p=0}^{b_i} \b_{i, - b_i +p} z^p$. 

%%%%%%%%%
\para 
As in the case of the quantum loop algebra $U_q (L\Fsl_m)$, 
we assign a monomial with an $\ell_{[\bb]}$-weight of a finite dimensional $U_{q,[\bb]}$-module  by using 
\eqref{ratinal func ellb wt} as follows. 

For $\z, \eta, a \in \CC^{\times}$, 
we assign 
\begin{align*}
Y_{i,\z} = q \frac{1 - \z q^{-1} z}{1 - \z q z}, 
\quad 
Z_{i, \eta} =\frac{1 - \eta z}{z}, 
\quad 
\tau_{i,a} =a. 
\end{align*}
Then we can express the rational function in \eqref{ratinal func ellb wt} as 
\begin{align}
\label{monomial QRS} 
 q^{\deg Q_i - \deg R_i} \frac{ Q_i ( q^{-1} z)}{ Q_i (q z)} \frac{ R_i (q z)}{ R_i (q^{-1} z)} \frac{ S_i (z) }{ z^{b_i}} 
 = \tau_{i, a_i} \big( \prod_{j=1}^{k_i} Y_{i, \z_{i,j}} \big) \big( \prod_{j'=1}^{l_i} Y_{i, \xi_{i, j'}}^{-1} \big) 
 	\big( \prod_{p=1}^{b_i} Z_{i, \eta_{i,p}}  \big) 
\end{align}
if 
\begin{align*}
& Q_i (z) = (1 - \z_{i,1} z) (1 - \z_{i,2} z) \dots (1 - \z_{i, k_i} z), 
\\
& R_i(z) = (1 - \xi_{i,1} z) (1 - \xi_{i,2} z) \dots ( 1 - \xi_{i, l_i} z), 
\\
& S_i (z) = a_i (1 - \eta_{i,1} z) (1 - \eta_{i,2} z) \dots (1 - \eta_{i, b_i} z).
\end{align*}
%%%
We should note that 
the description of the monomial in \eqref{monomial QRS} is not uniquely determined by the above correspondence  
for a given $\ell_{[\bb]}$-weight $\Bg$,   
namely the monomial depends on a choice of polynomials $Q_i(z)$, $R_i(z)$ and $S_i(z)$ satisfying \eqref{ratinal func ellb wt} 
as following cases. 

\begin{enumerate}
\item 
In the case where $Q_i(z)$ and $R_i(z)$ have a common divisor $(1 - \z z)$, 
we can write $Q_i(z) = (1 - \z z) Q'_i(z)$ and $R_i (z) = (1- \z z) R'_i(z)$. 
Then we have 
\begin{align}
\label{common Q R}
q^{\deg Q_i - \deg R_i} \frac{Q_i (q^{-1} z)}{Q_i(qz)} \frac{ R_i( q z)}{ R_i (q^{-1} z)} \frac{S_i(z)}{z^{b_i}}
= q^{ \deg Q'_i - \deg R'_i} \frac{Q'_i (q^{-1} z)}{Q'_i(qz)} \frac{ R'_i( q z)}{ R'_i (q^{-1} z)} \frac{S_i(z)}{z^{b_i}}. 
\end{align}
This equality corresponds to $Y_{i, \eta} Y_{i, \eta}^{-1} =1$ in monomials.

\item 
In the case where $S_i(z)$ and $Q_i(qz)$ have a common divisor $(1 - \eta z)$, 
put $\wt{Q}_i(z) = (1- q^{-1} \eta  z)^{-1} Q_i(z)$ and $\wt{S}_i(z) = q (1 - q^{-2} \eta z) (1 - \eta z)^{-1} S_i(z)$. 
Then we have 
\begin{align}
\label{common S Q} 
q^{\deg Q_i - \deg R_i} \frac{Q_i (q^{-1} z)}{Q_i(qz)} \frac{ R_i( q z)}{ R_i (q^{-1} z)} \frac{S_i(z)}{z^{b_i}} 
=
q^{\deg \wt{Q}_i - \deg R_i} \frac{\wt{Q}_i (q^{-1} z)}{\wt{Q}_i(qz)} \frac{ R_i( q z)}{ R_i (q^{-1} z)} \frac{\wt{S}_i(z)}{z^{b_i}}. 
\end{align}
This equality corresponds to $Y_{i, q^{-1} \eta} Z_{i, \eta} = \tau_{i, q} Z_{i, q^{-2} \eta}$ in monomials.  

\item 
In the case where $S_i(z)$ and $R_i (q^{-1} z)$ have a common divisor $(1 - \eta z)$, 
put $\wt{R}_i(z) = (1 - q \eta z)^{-1} R_i(z)$ and $\wt{S}_i(z) = q^{-1} (1 - q^2 \eta z) (1 - \eta z)^{-1} S_i(z)$. 
Then we have 
\begin{align*}
q^{\deg Q_i - \deg R_i} \frac{Q_i (q^{-1} z)}{Q_i(qz)} \frac{ R_i( q z)}{ R_i (q^{-1} z)} \frac{S_i(z)}{z^{b_i}} 
= q^{\deg Q_i - \deg \wt{R}_i} \frac{Q_i (q^{-1} z)}{Q_i(qz)} \frac{ \wt{R}_i( q z)}{ \wt{R}_i (q^{-1} z)} \frac{\wt{S}_i(z)}{z^{b_i}}. 
\end{align*} 
This equality corresponds to $Y_{i, q\eta}^{-1} Z_{i, \eta} = \tau_{i, q^{-1}} Z_{i, q^2 \eta}$ in monomials.  

\item 
The variable $\tau_{i,a}$ just represents  the constant term of the polynomial $S_i(z)$, 
but we would treat it as a variable when we consider the $q$-character. 
Then we also have $\tau_{i,a} \tau_{i,b} = \tau_{i, ab}$. 
\end{enumerate}

Thanks to the above arguments, 
we take an ideal $\fI$ of the polynomial ring 
$\ZZ [  Y_{i, \z}^{\pm}, Z_{i, \eta}, \tau_{i,a}]^{\z, \eta, a \in \CC^{\times}}_{1 \leq i \leq m-1} $ 
generated by 
\begin{align*}
Y_{i, q^{-1} \eta} Z_{i, \eta} - \tau_{i,q} Z_{i, q^{-2} \eta}, 
\quad 
Y_{i, q \eta}^{-1} Z_{i, \eta} - \tau_{i, q^{-1}} Z_{i, q^2 \eta}, 
\quad 
\tau_{i,a} \tau_{i,b} - \tau_{i, ab}
\end{align*}
for $1 \leq i \leq m-1$ and $\eta, a,b \in \CC^{\times}$. 

For a finite dimensional $U_{q,[\bb]}$-module $M$ and an $\ell_{[\bb]}$-weight $\Bg$ of $M$, 
we define 
\begin{align*}
X^{\Bg} = \prod_{i=1}^{m-1} \Big( 
	\tau_{i, a_i} \big( \prod_{j=1}^{k_i} Y_{i, \z_{i,j}} \big) \big( \prod_{j'=1}^{l_i} Y_{i, \xi_{i, j'}}^{-1} \big) 
 	\big( \prod_{p=1}^{b_i} Z_{i, \eta_{i,p}}  \big) \Big) 
\in \ZZ [  Y_{i, \z}^{\pm}, Z_{i, \eta}, \tau_{i,a}]^{\z, \eta, a \in \CC^{\times}}_{1 \leq i \leq m-1} / \fI
\end{align*}
by \eqref{ratinal func ellb wt} and \eqref{monomial QRS}. 
Let $M= \bigoplus_{\Bg \in \lbwt M} M_{\Bg}$ be the decomposition of $M$ into the generalized $\ell_{[\bb]}$-weight spaces, 
and we define the $q$-character of $M$ as 
\begin{align*}
\chi_{[\bb]} (M) = \sum_{\Bg \in \lbwt M} (\dim M_{\Bg}) X^{\Bg} 
\in \ZZ [  Y_{i, \z}^{\pm}, Z_{i, \eta}, \tau_{i,a}]^{\z, \eta, a \in \CC^{\times}}_{1 \leq i \leq m-1} / \fI. 
\end{align*}
Then we have the following proposition. 
\begin{prop} 
Let $U_{q,[\bb]}\cmod_f$ be the category of finite dimensional $U_{q,[\bb]}$-modules, 
and $K_0 (U_{q,[\bb]} \cmod_f)$ be its Grothendieck group. 
Then the map 
\begin{align*}
\chi_{[\bb]} : K_0 (U_{q,[\bb]} \cmod_f) 
	\ra \ZZ [  Y_{i, \z}^{\pm}, Z_{i, \eta}, \tau_{i,a}]^{\z, \eta, a \in \CC^{\times}}_{1 \leq i \leq m-1} / \fI
\quad 
(M \mapsto \chi_{[\bb]} (M) ) 
\end{align*} 
is an injective group homomorphism. 
\begin{proof}
We have already seen that the map $\chi_{[\bb]}$ is well-defined in the above argument. 
It is clear that $\chi_{[\bb]}$ is a group homomorphism, 
and the injectivity follows from \cite[Corollary 5.1]{Her23}. 
\end{proof}
\end{prop}

%%%%%%%%%%%%%%%%%%%%%%%%%%%%%%%%%%%%%%%%%%%%%%%%%%%%%%%%%%%%%%%

%%%%%%%%%%%%%%%%%%%%%%%%%%%%%%%%%%%%%%%%%%%%%%%%%%%%%%%%%%%%%%%

\section{Combinatorics} 
\label{comb}
In this section, we prepare some notation for combinatorics which are used to describe 
some finite dimensional simple $U_{q,[\bb]}(L\Fsl_m)$-modules in subsequent sections. 

\para 
For a positive integer $m$, 
let $\vL_n(m)$ be the set of $m$-tuples of non-negative integers such that the sum of entries is equal to $n$, 
namely, 
\begin{align*}
\vL_n (m) = \{ \mu = (\mu_1,\mu_2, \dots, \mu_m) \in (\ZZ_{\geq 0})^m \mid \sum_{i=1}^m \mu_i =n\}. 
\end{align*}
An element of $\vL_n(m)$ is called a composition of $n$ with $m$ parts. 

Let $P= \bigoplus_{i=1}^m \ZZ \ve_i$ be the weight lattice of the general linear Lie algebra $\Fgl_m$. 
Put $\a_i = \ve_i - \ve_{i+1}$ ($1\leq i \leq m-1$), 
then $\{\a_i \mid 1 \leq i \leq m-1\}$ gives a set of simple roots of $\Fgl_m$, and also of $\Fsl_m$. 
We regard the set $\vL_n(m)$ as a subset of the weight lattice $P$ by the injection 
$\vL_n(m) \ra P$ ($(\mu_1,\dots, \mu_m) \mapsto \sum_{i=1}^m \mu_i \ve_i$). 
We denote the dominance order on $P$ by $\geq$. Then, for $\mu,\nu \in \vL_n(m)$,  we have 
\begin{align*}
\mu \geq \nu \text{ if and only if }
\sum_{i=1}^j \mu_i \geq \sum_{i=1}^j \nu_i \text{ for all } 1 \leq j \leq m. 
\end{align*}
%%%
\para 
A partition is  a non-increasing sequence $\la =(\la_1, \la_2, \dots )$ of non-negative integers with only finitely many non-zero terms. 
The size of a partition $\la =(\la_1,\la_2, \dots)$ is the sum of all entries, and we denote it by $|\la| = \sum_{i \geq 1} \la_i$. 
The length of a partition $\la$ is the number of non-zero terms in $\la$, and we denote it by $\ell (\la)$. 
We also denote by $\la \vdash n$ if $\la$ is a partition of  size $n$. 

For positive integers $n$ and $r$, put 
\begin{align*}
\vL_{n,r}^+ = \{ \Bla = (\la^{(1)}, \la^{(2)}, \dots, \la^{(r)}) \mid \la^{(k)} \text{ is a partition } (1 \leq k \leq r), \, 
	\sum_{k=1}^r |\la^{(k)}| =n \}.
\end{align*}
An element of $\vL_{n,r}^+$ is called an $r$-partition of size $n$. 
For positiove integers $n$, $r$ and $m$, we also put 
\begin{align*}
\vL_{n,r}^+ (m) = \{\Bla=(\la^{(1)}, \la^{(2)}, \dots, \la^{(r)}) \in \vL_{n,r}^+ \mid \ell (\la^{(k)}) \leq m \, (1 \leq k \leq r)\}.
\end{align*}

In the case where $r=1$, we denote $\vL_{n,1}^+$ by $\vL_n^+$ (resp. $\vL_{n,1}^+(m)$ by $\vL_n^+(m)$) simply. 
Then, the set $\vL_n^+(m)$ is a subset of $\vL_n(m) \subset P$.
%%%%%

\para 
For $\Bla =(\la^{(1)}, \dots, \la^{(r)}) \in \vL_{n,r}^+$, put 
\begin{align*}
[\Bla] = \{(a,b,c) \in (\ZZ_{>0})^3 \mid 1 \leq b \leq \la_a^{(c)}, \, 1 \leq c \leq r\}, 
\end{align*}
and we call it the diagram of $\Bla$. 
The elements of $[\Bla]$ are called nodes. 
In the case where $r=1$, 
we usually drop the third entries of nodes since it is clear. 
We represent the diagram $[\Bla]$ by arranging the boxes in usual manner. 
For an example, if $\Bla= ((3,2), (2,2,1), (4,1))$, then we represent as 
\begin{align*}
[\Bla] = \left( \, \begin{array}{|c|c|c|} \hline \, & \, & \, \\ \hline \, & \, \\ \cline{1-2} \multicolumn{1}{c}{} \end{array} \, , 
	\, 
	\begin{array}{|c|c|c|} \hline \, & \, \\ \hline \, & \, \\ \hline \, \\ \cline{1-1} \end{array} \, , 
	\, 
	\begin{array}{|c|c|c|c|} \hline \, & \, & \, & \, \\ \hline \, \\ \cline{1-1} \multicolumn{1}{c}{} \end{array} 
	\, 
	\right). 
\end{align*}

%%%
For $\Bla \in \vL_{n,r}^+$, 
a tableau $\bT$ of shape $\Bla$ is a map 
$\bT : [\Bla] \ra \ZZ_{>0} $. 
We also call it $\Bla$-tableau simply. 
For a $\Bla$-tableau $\bT$, put 
\begin{align*}
\owt \bT = (\mu_1,\mu_2,\dots ), 
\text{ where } \mu_i = \sharp \{ x \in [\Bla] \mid \bT(x) =i \} \, (i >0), 
\end{align*}
and we call it the weight of $\bT$. 
From the definition, 
for $\Bla \in \vL_{n,r}^+$ and a $\Bla$-tableau $\bT$, 
we have $\owt \bT \in \vL_n(m) $ for some $m \in \ZZ_{>0}$. 
We can represent a $\Bla$-tableau $\bT$ by writing the number $\bT(x)$ in the corresponding box of the diagram $[\Bla]$ 
for each $x \in [\Bla]$. 
For example, if 
\begin{align}
\label{example bT} 
\bT= \left( \, \begin{array}{|c|c|c|} \hline 1 & 3 & 2 \\ \hline 2 & 4 \\ \cline{1-2} \multicolumn{1}{c}{} \end{array} \, , 
	\, 
	\begin{array}{|c|c|c|} \hline 5 & 1 \\ \hline 3 & 2 \\ \hline 2 \\ \cline{1-1} \end{array} \, , 
	\, 
	\begin{array}{|c|c|c|c|} \hline 1 & 2 & 3 & 4 \\ \hline 2 \\ \cline{1-1} \multicolumn{1}{c}{} \end{array} 
	\, 
	\right), 
\end{align}
we have $\bT((1,1,1))=1$, $\bT((1,2,1)) =3$, $\bT((1,3,1)) =2$, $\bT(2,1,1)=2$, $\bT(2,2,1)=4$, 
$\bT((1,1,2))=5$, $\bT((1,2,2))=1$, $\bT((2,1,2))=3$ and so on. 
In this example, we have 
$\owt \bT = (3, 6, 3, 2,1)$. 

For $\Bla \in \vL_{n,r}^+$ and a $\Bla$-tableau $\bT$, 
we say that $\bT$ is semi-standard  
if $\bT$ satisfies the following two conditions; 
\begin{enumerate} 
\item 
$\bT((a,b,c)) \leq \bT((a, b+1,c))$  if $ (a,b+1,c) \in [\Bla]$ for $(a,b,c) \in [\Bla]$, 

\item 
$\bT((a,b,c)) <  \bT((a+1, b,c))$  if $ (a+1,b,c) \in [\Bla]$ for $(a,b,c) \in [\Bla]$.  
\end{enumerate}
We see that the tableau $\bT$ in \eqref{example bT} is not semi-standard, 
but the tableau 
\begin{align}
\label{ex semi-std}
\bS= \left( \, \begin{array}{|c|c|c|} \hline 1 & 2 & 2 \\ \hline 2 & 4 \\ \cline{1-2} \multicolumn{1}{c}{} \end{array} \, , 
	\, 
	\begin{array}{|c|c|c|} \hline 1 & 1 \\ \hline 2 & 2 \\ \hline 3 \\ \cline{1-1} \end{array} \, , 
	\, 
	\begin{array}{|c|c|c|c|} \hline 1 & 2 & 3 & 4 \\ \hline 2 \\ \cline{1-1} \multicolumn{1}{c}{} \end{array} 
	\, 
	\right) 
\end{align}
is semi-standard. 

For $\Bla \in \vL_{n,r}^+$ and $\mu \in \vL_n(m)$, put 
\begin{align*}
\SStd (\Bla,\mu) = \{ \text{ semi-standard tableau of shape $\Bla$ with weight $\mu$} \},  
\end{align*}
and, for $\Bla \in \vL_{n,r}^+$ and $m \in \ZZ_{>0}$, put 
\begin{align*}
\SStd_m (\Bla) = \bigcup_{\mu \in \vL_n(m)} \SStd (\Bla,\mu). 
\end{align*}
By the definition of semi-standard tableaux, 
we have 
$\Bla \in \vL_{n,r}^+(m)$ if $\SStd_m (\Bla) \not= \emptyset$. 

%%%%%%%%%%%%%%%
\para 
Let $\CC [x_1,x_2,\dots, x_k]$ be the polynomial ring over $\CC$ with indeterminate variables 
$x_1,x_2,\dots, x_k$. 
For a partition $\la$ such that $\ell (\la) \leq k$,  
let $\bm_{\la}(x_1, \dots, x_k) \in \CC [x_1,\dots, x_k]^{\fS_k}$ be the monomial symmetric polynomial associated with $\la$. 
For $t \in \ZZ_{>0}$, we define symmetric polynomials $\Phi_t^{\pm} (x_1, \dots, x_k) \in \CC[x_1,\dots,x_k]^{\fS_k}$ by 
\begin{align*}
\Phi_t^{\pm} (x_1,\dots, x_k) 
= \sum_{\la \vdash t \atop \ell (\la) \leq k} (1 - q^{\mp 2})^{\ell (\la) -1} \bm_{\la} (x_1,\dots, x_k), 
\end{align*}
and we consider a generating function 
\begin{align*}
\wh{\Phi}^{\pm}_{(k)} (z) = 1 + (1- q^{\mp 2}) \sum_{ t >0} \Phi_t^{\pm} (x_1,\dots, x_k) z^t. 
\end{align*}
Then we have the following lemma.  
\begin{lem}[{\cite[Proposition 6.6]{KW21}}] 
For $k \in \ZZ_{>0}$, we have 
\begin{align*}
\wh{\Phi}_{(k)}^{\pm} (z) 
	= \frac{(1 - q^{\mp 2} x_1 z) ( 1 - q^{\mp 2} x_2 z) \dots ( 1 - q^{\mp 2} x_k z)}{ (1 - x_1 z) (1 - x_2 z) \dots ( 1 - x_k z)}. 
\end{align*}
\end{lem}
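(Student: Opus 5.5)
We prove the identity as an equality of formal power series in $z$ with coefficients in $\CC[x_1,\dots,x_k]^{\fS_k}$, by factoring the right-hand side one variable at a time and comparing coefficients. For a single variable,
\begin{align*}
\frac{1 - q^{\mp 2} x z}{1 - x z} = 1 + (1 - q^{\mp 2}) \sum_{a >0} x^a z^a ,
\end{align*}
since $(1 - q^{\mp 2} x z)\sum_{a \geq 0} x^a z^a = 1 + \sum_{a>0} (1 - q^{\mp 2}) x^a z^a$. Taking the product over $j=1,\dots,k$, the right-hand side of the lemma becomes
\begin{align*}
\prod_{j=1}^k \Big( 1 + (1 - q^{\mp 2}) \sum_{a_j > 0} x_j^{a_j} z^{a_j} \Big).
\end{align*}

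Next we expand this product. A term is specified by choosing a subset $J \subset \{1,\dots,k\}$ of variables for which the non-constant summand is taken, together with exponents $a_j >0$ for $j \in J$. Such a term contributes $(1-q^{\mp 2})^{|J|} \prod_{j \in J} x_j^{a_j} z^{\sum_{j\in J} a_j}$. Fix $t>0$ and collect the coefficient of $z^t$. The monomials $\prod_{j\in J} x_j^{a_j}$ with $\sum_{j \in J} a_j = t$ and all $a_j>0$ are exactly the monomials of degree $t$ in $x_1,\dots,x_k$. Grouping them by the partition $\la$ obtained by sorting the exponents, we have $\la \vdash t$, $\ell(\la) = |J| \leq k$, and the sum of the monomials in one group is precisely $\bm_\la(x_1,\dots,x_k)$. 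Each such monomial arises from exactly one choice of $(J,(a_j))$, so there is no overcounting.

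Hence the coefficient of $z^t$ equals
\begin{align*}
\sum_{\la \vdash t,\ \ell(\la)\leq k} (1-q^{\mp 2})^{\ell(\la)} \bm_\la = (1 - q^{\mp 2}) \Phi_t^{\pm}(x_1,\dots,x_k).
\end{align*}
The constant term is $1$, coming from $J = \emptyset$. This matches $\wh{\Phi}^{\pm}_{(k)}(z)$ coefficientwise. The only point needing care is the bijection between the terms of the expansion and the monomials grouped by partition type, which is what produces the factor $(1-q^{\mp2})^{\ell(\la)}$. Alternatively, one can argue by induction on $k$, multiplying by the single-variable factor, but the direct expansion avoids bookkeeping.
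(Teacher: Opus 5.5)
Your proof is correct and complete. The one-variable expansion $\frac{1-q^{\mp 2}xz}{1-xz}=1+(1-q^{\mp 2})\sum_{a>0}x^az^a$ is right. The bookkeeping step also holds: a degree-$t$ monomial determines its support $J$ and exponents $(a_j)_{j\in J}$ uniquely. So each monomial of type $\la$ appears exactly once, with weight $(1-q^{\mp 2})^{\ell(\la)}$, and together these give $(1-q^{\mp 2})^{\ell(\la)}\bm_\la$. One factor $(1-q^{\mp 2})$ is then absorbed by the prefactor in the definition of $\wh{\Phi}^{\pm}_{(k)}(z)$.

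The paper gives no proof of this lemma and only quotes \cite[Proposition 6.6]{KW21}, so there is no in-paper argument to compare with. Your direct coefficient expansion stands as a self-contained proof.

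It is also more economical than the natural alternative of taking logarithms, $\prod_j \frac{1-q^{\mp 2}x_jz}{1-x_jz}=\exp\bigl(\sum_{t>0}\frac{1-q^{\mp 2t}}{t}p_t(x)\,z^t\bigr)$. That route would still require converting power sums into monomial symmetric functions, which your expansion avoids.
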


%%%
\para 
For $k,l,t \in \ZZ_{>0}$, 
we define a polynomial $\Phi_t (x_1,\dots, x_k \mid y_1,\dots, y_l) $ with indeterminate variables 
$x_1,\dots,x_k, y_1,\dots, y_l$ by 
\begin{align*}
&\Phi_t (x_1,\dots, x_k \mid y_1,\dots y_l) 
\\
&= q^{-1} \Phi_t^+ (x_1,\dots,x_k) - q \Phi_t^- (y_1, \dots, y_l) 
	- (q-q^{-1}) \sum_{p=1}^{t-1} \Phi_{t-p}^+ (x_1,\dots x_k) \Phi_p^- (y_1,\dots, y_l).
\end{align*}
For  convenience, we also put 
\begin{align*}
\begin{cases}
\Phi_t (x_1,\dots, x_k \mid 0) = q^{-1} \Phi_t^+ (x_1,\dots, x_k) & \text{ if } l=0, 
\\
\Phi_t (0 \mid y_1,\dots, y_l) = - q \Phi_t^-( y_1,\dots, y_l) & \text{ if } k=0, 
\\
\Phi_t (0 \mid 0) =0 & \text{ if } k=l=0.
\end{cases}
\end{align*}

For $k,l \in \ZZ_{\geq 0}$, we consider a generating function 
\begin{align*}
\wh{\Phi}_{(k|l)} (z) 
= 1 + (q-q^{-1}) \sum_{ t >0} \Phi_t (x_1,\dots, x_k \mid y_1,\dots, y_l) z^t. 
\end{align*}
By direct calculation, we see that 
$\wh{\Phi}_{(k)}^+(z) \wh{\Phi}_{(l)}^-(z) = \wh{\Phi}_{(k|l)} (z)$, 
and we have the following proposition. 

%%%
\begin{prop}
\label{Prop whPhiklz}
For $k,l \in \ZZ_{\geq 0}$, we have 
\begin{align*}
&\wh{\Phi}_{(k|l)} (z) 
\\
&=\frac{ (1- q^{-2} x_1 z) (1- q^{-2} x_2 z) \dots ( 1 - q^{-2} x_k z)}{ (1- x_1 z) (1 - x_2 z) \dots (1 - x_k z)} 
	\cdot 
	\frac{ ( 1- q^2 y_1 z) ( 1- q^2 y_2 z) \dots (1 - q^2 y_l z)}{ (1- y_1 z) (1 - y_2 z) \dots (1 - y_l z)}.
\end{align*}
\end{prop}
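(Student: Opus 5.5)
The plan is to reduce Proposition \ref{Prop whPhiklz} to the lemma quoted from \cite[Proposition 6.6]{KW21} by first establishing the factorization
\begin{align*}
\wh{\Phi}_{(k|l)} (z) = \wh{\Phi}^+_{(k)}(z) \, \wh{\Phi}^-_{(l)}(y;z),
\end{align*}
where $\wh{\Phi}^-_{(l)}(y;z)$ denotes $\wh{\Phi}^-_{(l)}(z)$ with the variables $x_1,\dots,x_l$ renamed $y_1,\dots,y_l$. Once this holds, the lemma evaluates each factor as a product of rational functions. With the upper sign it gives $\prod_{j}(1-q^{-2}x_jz)/(1-x_jz)$, and with the lower sign it gives $\prod_j (1-q^{2}y_jz)/(1-y_jz)$. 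Their product is exactly the right-hand side of the proposition.

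To prove the factorization, I will compare coefficients of $z^t$ for $t>0$. Write $a=1-q^{-2}$ and $b=1-q^{2}$, so that
\begin{align*}
\wh{\Phi}^+_{(k)}(z)=1+a\sum_{t>0}\Phi_t^+ z^t, \qquad \wh{\Phi}^-_{(l)}(z)=1+b\sum_{t>0}\Phi_t^- z^t .
\end{align*}
The coefficient of $z^t$ in the product is then
\begin{align*}
a\Phi_t^+ + b\Phi_t^- + ab\sum_{p=1}^{t-1}\Phi_{t-p}^+\Phi_p^- .
\end{align*}
Next I use the identities $a=q^{-1}(q-q^{-1})$, $b=-q(q-q^{-1})$ and $ab=-(q-q^{-1})^2$. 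Factoring out $(q-q^{-1})$, the coefficient becomes
\begin{align*}
(q-q^{-1})\Big(q^{-1}\Phi_t^+ - q\Phi_t^- - (q-q^{-1})\sum_{p=1}^{t-1}\Phi_{t-p}^+\Phi_p^-\Big) = (q-q^{-1})\,\Phi_t(x\mid y),
\end{align*}
which is exactly the coefficient of $z^t$ in $\wh{\Phi}_{(k|l)}(z)$. Both series also have constant term $1$, so the factorization follows.

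The degenerate cases need a short separate check. When $k=0$ or $l=0$, the corresponding factor is $1$, since there are no variables and the empty product equals $1$. The stated conventions $\Phi_t(x\mid 0)=q^{-1}\Phi_t^+$, $\Phi_t(0\mid y)=-q\Phi_t^-$ and $\Phi_t(0\mid 0)=0$ then match the same computation, with the missing $\Phi^{\pm}$ terms set to zero. The lemma is stated for $k\in\ZZ_{>0}$, but for $k=0$ both sides are trivially $1$.

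There is no real obstacle here. The only care needed is the sign and power-of-$q$ bookkeeping in the identities for $a$, $b$ and $ab$, and making sure the lower-sign version of the lemma is applied with $q^{\mp2}$ read as $q^{2}$.
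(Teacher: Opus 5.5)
Your proposal is correct and follows the paper's argument. The paper likewise obtains $\wh{\Phi}_{(k)}^+(z)\,\wh{\Phi}_{(l)}^-(z)=\wh{\Phi}_{(k|l)}(z)$ ``by direct calculation'' and then applies the preceding lemma; your coefficient comparison, using $1-q^{-2}=q^{-1}(q-q^{-1})$ and $1-q^{2}=-q(q-q^{-1})$ together with the conventions for $k=0$ or $l=0$, is exactly that calculation written out.
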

%%%%%%%%%%%%%%%%%%%%%%%%%%%%%%%%%%%%%%%%%%%%%%%%%%%%%%%%%%%%%%%

%%%%%%%%%%%%%%%%%%%%%%%%%%%%%%%%%%%%%%%%%%%%%%%%%%%%%%%%%%%%%%%

\section{Evaluation representations and their tensor products} 
In this section, we describe finite dimensional simple evaluation representations of the quantum loop algebra $U_q (L \Fsl_m)$ 
using combinatorics explicitly, 
and we also describe their tensor products under a certain separation condition. 

\para 
Let $U_q (\Fgl_m)$ be the quantum group associated with the general linear Lie algebra $\Fgl_m$, 
and we denote the Chevalley generators of $U_q (\Fgl_m)$ by $E_i$, $F_i$ ($1 \leq i \leq m-1$) and $K_j^{\pm}$ ($1 \leq j \leq m$).  
For $\z \in \CC^{\times} $, 
let $\ev_{\z} : U_q (\wh{\Fsl}_m) \ra U_q (\Fgl_m)$ be the evaluation homomorphism at $\z$ defined in \cite{Jim86}. 
More precisely, the homomorphism $\ev_{\z}$ is defined by 
\begin{align*}
\begin{split} 
& e_i \mapsto E_i, \quad f_i \mapsto F_i, \quad k_i^+ \mapsto K_i^+ K_{i+1}^- \quad (1 \leq i \leq m-1), 
\\
& k_0^+ \mapsto K_1^- K_m^+, 
\\
& e_0 \mapsto \z q^{-1} (K_1^+ K_m^+) [F_{m-1}, [F_{m-2}, \dots, [F_2, F_1]_{q^{-1}} \dots ]_{q^{-1}} ]_{q^{-1}}, 
\\
& f_0 \mapsto (-1)^m \z^{-1} q^{m-1} (K_1^- K_m^-) [E_{m-1}, [E_{m-2}, \dots, [E_2,E_1]_{q^{-1}} \dots ]_{q^{-1}}]_{q^{-1}}, 
\end{split} 
\end{align*}
where $[X,Y]_{q^{-1}} = XY - q^{-1} YX$ for $X,Y \in U_q (\Fgl_m)$. 

Recall that we regard $\vL_n^+(m)$ as a subset of the weight lattice of $\Fgl_m$, 
and an element of $\vL_n^+(m)$ is an integral dominant weight of $\Fgl_m$.  
For $\la \in \vL_n^+(m)$, let $V (\la)$ be the simple highest weight $U_q (\Fgl_m)$-module of highest weight $\la$. 
When we regard $V(\la)$ as a $U_q (\wh{\Fsl}_m)$-module through the homomorphism $\ev_{\z}$, 
we denote it by $V(\la)^{\ev_{\z}}$. 
We consider $V(\la)^{\ev_{\z}}$ as a $U_q (L\Fsl_m)$-module through the isomorphism \eqref{iso Uq whsl UqLsl}. 

It is well known that a basis of $V(\la)$ is indexed by the set $\SStd_m(\la)$. 
Thus, for $\Bla =(\la^{(1)}, \la^{(2)}, \dots, \la^{(r)}) \in \vL_{n,r}^+(m)$ and $\Bz =(\z_1, \z_2,\dots, \z_r) \in (\CC^{\times})^r$,  
we see that a basis of $V(\la^{(1)})^{\ev_{\z_1}} \otimes V(\la^{(2)})^{\ev_{\z_2}} \otimes \dots \otimes V (\la^{(r)})^{\ev_{\z_r}}$ 
is indexed by the set $\SStd_m (\Bla)$. 

In the remainder of this section, 
we give an explicit description of  the $U_q (L\Fsl_m)$-action on 
$V(\la^{(1)})^{\ev_{\z_1}} \otimes \dots \otimes V (\la^{(r)})^{\ev_{\z_r}}$ 
using notion of semi-standard tableaux under a separation condition 
\begin{align}
\label{separation cond} 
\prod_{1 \leq i < j \leq r} \prod_{ - n < k < n} (q^{2k} \z_i - \z_j) \not=0.
\end{align}

%%%
\para \label{bTi} 
For $\Bla \in \vL_{n,r}^+(m)$ and $\bT \in \SStd_m(\Bla)$, 
put $\mu_i (\bT) = \sharp \{ x \in [\Bla] \mid \bT(x) =i\}$  ($1\leq i \leq m$). 
Then we have  $\owt \bT =(\mu_1(\bT), \mu_2(\bT), \dots, \mu_m(\bT)) \in \vL_n(m)$. 

For a $\Bla$-tableau $\bT$ and $x \in [\Bla]$, 
we define $\Bla$-tableaux $\bT_x^+$ and $\bT_x^-$ by  
\begin{align*}
\bT_x^{\pm} (y) = \begin{cases} \bT(x) \pm 1 & \text{ if } y=x, \\ \bT(y) & \text{ if } y \not=x \end{cases} 
\quad (y \in [\Bla]), 
\end{align*}
where we do not consider the tableau $\bT_x^-$ if $\bT(x) =1$. 

For $x=(a,b,c) \in [\Bla]$, put 
\begin{align*}
&\fn_a (x) = (a-1, b,c), 
\quad 
\fn_b (x) = (a+1,b,c), 
\\
&\fn_l (x) = (a, b-1,c), 
\quad 
\fn_r (x) = (a, b+1,c). 
\end{align*}
Namely, $\fn_a(x)$ (resp. $\fn_b(x)$, $\fn_l(x)$, $\fn_r(x)$) is the above node 
(resp. the below node, the left node, the right node) of $x$ if it is contained in $[\Bla]$. 
For convenience, we put 
$\bT( \fn_a(x)) = - \infty$, $\bT (\fn_b (x)) = \infty$, $\bT(\fn_l(x)) = - \infty$ and $\bT ( \fn_r(x)) = \infty$ 
if $\fn_{*} (x) \not\in [\Bla]$ ($* \in \{a,b,l,r\}$) respectively.  

For $\bT \in \SStd_m(\Bla)$ and $1 \leq i \leq m$, put 
\begin{align*}
& [\bT]_i = \{ x \in [\Bla] \mid \bT(x) =i\}, 
\\
& [\bT]_i^L = \{ x \in [\Bla] \mid \bT (x) =i, \, \bT(\fn_l(x)) <i\}, 
\\
& [\bT]_i^R = \{ x \in [\Bla] \mid \bT(x) =i, \, \bT(\fn_r(x)) >i\}. 
\end{align*}
Thus, the set 
$[\bT]_i^L$ (resp. $[\bT]_i^R$) 
is a subset of $[\bT]_i$ consisting of the leftmost node $x$ (resp. rightmost node $x$) such that $\bT(x) =i$ 
in each row of $[\Bla]$. 

For $\bT \in \SStd_m(\Bla)$ and $1 \leq i \leq m-1$, we also put  
\begin{align*}
& [\bT]_{\a_i} = \{x \in [\bT]_{i+1} \mid \bT_x^- \in \SStd_m (\Bla)\}, 
\\
& [\bT]_{-\a_i} = \{ x \in [\bT]_i \mid \bT_x^+ \in \SStd_m(\Bla) \}. 
\end{align*}
From the definition of semi-standard tableau, 
we have 
\begin{align*}
& [\bT]_{\a_i} = \{ x \in [\Bla] \mid \bT(x) =i+1, \, \bT (\fn_a (x)) <i, \, \bT(\fn_l(x)) \leq i \} \subset [\bT]_{i+1}^L, 
\\
& [\bT]_{-\a_i} = \{ x \in [\Bla] \mid \bT(x) =i, \, \bT(\fn_b(x)) >i+1, \, \bT( \fn_r(x)) \geq i+1\} \subset [\bT]_i^R. 
\end{align*}
We  also have $\owt \bT_x^- = \owt \bT + \a_i$ for $x \in [\bT]_{\a_i}$,  
and $\owt \bT_x^+ = \owt \bT - \a_i$ for $x \in [\bT]_{-\a_i}$ 
in the weight lattice $P=\bigoplus_{i=1}^m \ZZ \ve_i$.

%%%
\para \label{cBz} 
We take $\Bz = (\z_1, \dots, \z_r) \in (\CC^{\times})^r$ satisfying the condition \eqref{separation cond}, and fix it. 
For $\Bla \in \vL_{n,r}^+(m)$ and $x =(a,b,c) \in [\Bla]$, 
we define the content $c_{\Bz} (x)$ of $x$ with respect to $\Bz$ by 
\begin{align*}
c_{\Bz} (x) = q^{2 (b-a)} \z_c.
\end{align*}
By the condition \eqref{separation cond}, 
we see that, 
for $x=(a,b,c), y =(a', b', c') \in [\Bla]$, 
\begin{align}
\label{czx = czy}
c_{\Bz} (x) = c_{\Bz} (y) 
\text{ if and only if } 
\begin{cases} 
	a'= a+k, 
	\\
	b'=b+k,  
	\\
	c=c' 
\end{cases} 
\text{ for some } k \in \ZZ.  
\end{align}
For $x =(a,b,c) \in [\bT]_{\a_i}$, we have $\bT(x) =i+1$ and $\bT(\fn_a (x)) <i$. 
Then the definition of semi-standard tableaux implies that 
$\bT((a +k, b+k, c)) >i+1$ and $\bT((a-k, b-k, c)) <i$ for $k \in \ZZ_{>0}$. 
By combining with \eqref{czx = czy}, 
we have 
\begin{align}
\label{czx not= czy ai}
c_{\Bz} (x) \not= c_{\Bz} (y) \text{ for any } 
x \in [\bT]_{\a_i} \text{ and } y \in [\bT]_i.
\end{align}
Similarly, we have 
\begin{align}
\label{czx not= czy -ai} 
c_{\Bz} (x) \not= c_{\Bz}(y) 
	\text{ for any } 
	x \in [\bT]_{-\a_i} \text{ and } y \in [\bT]_{i+1}. 
\end{align}

For $\bT \in \SStd_m(\Bla)$ and $x \in [\bT]_{\a_i}$ ($1 \leq i \leq m-1$), we set 
\begin{align*}
B_{\bT}^x = q^{\mu_i (\bT)} 
	\frac{ \prod_{ y \in [\bT]_i^L} \big( c_{\Bz} (x) - q^{-2} c_{\Bz} (y) \big)}
		{ \prod_{y' \in [\bT]_i^R} \big( c_{\Bz} (x) - c_{\Bz} (y') \big)}, 
\end{align*}
where we note that $\prod_{y' \in [\bT]_i^R} \big( c_{\Bz} (x) - c_{\Bz} (y') \big) \not=0$ by \eqref{czx not= czy ai}. 

For $x \in [\bT]_{-\a_i}$ ($1 \leq i \leq m-1$), we also set 
\begin{align*}
C_{\bT}^x = q^{- \mu_{i+1} (\bT)} 
	\frac{ \prod_{y' \in [\bT]_{i+1}^R} \big( c_{\Bz}(x) - q^2 c_{\Bz} (y') \big)}
		{ \prod_{ y \in [\bT]_{i+1}^L} \big( c_{\Bz} (x) - c_{\Bz} (y) \big)}, 
\end{align*}
where we note that $\prod_{ y \in [\bT]_{i+1}^L} \big( c_{\Bz} (x) - c_{\Bz} (y) \big) \not=0$ by \eqref{czx not= czy -ai}.
By direct calculation, we see that 
\begin{align}
\label{BbTx CbTx}
B_{\bT}^x = q^{\mu_i (\bT)} \prod_{y \in [\bT]_i} \frac{(c_{\Bz}(x) - q^{-2} c_{\Bz}(y))}{( c_{\Bz}(x) - c_{\Bz}(y))},  
\quad 
C_{\bT}^x = q^{-\mu_{i+1}(\bT)} \prod_{y \in [\bT]_{i+1}} \frac{ (c_{\Bz}(x) - q^2 c_{\Bz}(y))}{(c_{\Bz}(x) - c_{\Bz}(y))}. 
\end{align}

Finally, for $\bT \in \SStd_m (\Bla)$, $1 \leq i \leq m-1$ and $s \in \ZZ_{>0}$, 
we set 
\begin{align}
\label{def PhiisbT}
\begin{split}
&\Phi_{i,s} (\bT) = \Phi_s \big( c_{\Bz} (x_1), c_{\Bz} (x_2), \dots, c_{\Bz}(x_{\mu_i(\bT)}) 
	\mid c_{\Bz} (y_1), c_{\Bz} (y_2), \dots, c_{\Bz} (y_{\mu_{i+1}(\bT)}) \big), 
\\
& \Phi_{i,-s} (\bT) = \Phi_s \big( c_{\Bz} (y_1)^{-1}, c_{\Bz} (y_2)^{-1}, \dots, c_{\Bz} (y_{\mu_{i+1} (\bT)})^{-1} 
	\mid c_{\Bz} (x_1)^{-1}, c_{\Bz} (x_2)^{-1}, \dots, c_{\Bz} (x_{\mu_i(\bT)})^{-1}  \big)
\end{split}
\end{align} 
when $[\bT]_i =\{x_1,x_2,\dots, x_{\mu_i(\bT)}\}$ and $[\bT]_{i+1} = \{y_1,y_2,\dots, y_{\mu_{i+1}(\bT)}\}$. 

Then we can define the following $U_q (L\Fsl_m)$-module $\D_{\Bz} (\Bla)$. 

\begin{prop}
\label{Prop Def DBz Bla} 
Assume that $\Bz \in (\CC^{\times})^r$ satisfies the separation condition \eqref{separation cond}. 
For $\Bla \in \vL_{n,r}^+(m)$, let $\D_{\Bz} (\Bla)$ be a $\CC$-vector space 
with a basis $\{ v_{\bT} \mid \bT \in \SStd_m(\Bla)\}$. 
Then we can define the action of $U_q (L\Fsl_m)$ on $\D_{\Bz} (\Bla)$ by 
\begin{align*}
& e_{i,t} \cdot v_{\bT} 
	= \sum_{ x \in [\bT]_{\a_i}} B_{\bT}^x (q^i c_{\Bz} ( x))^t  \, v_{\bT_x^-} 
	\quad ( t \in \ZZ), 
\\
& f_{i,t} \cdot v_{\bT} 
	= \sum_{x \in [\bT]_{-\a_i}} C_{\bT}^x (q^i c_{\Bz} (x))^t \, v_{\bT_x^+} 
	\quad (t \in \ZZ),  
\\
& \psi_{i,s}^+ \cdot v_{\bT} 
	= q^{\mu_i (\bT) - \mu_{i+1} (\bT) + s \cdot i} \cdot 
		\begin{cases}
		v_{\bT} & \text{ if } s =0, 
		\\
		(q-q^{-1}) \Phi_{i,s} (\bT) \,  v_{\bT} & \text{ if } s >0, 
		\end{cases}
\\
& \psi_{i, -s}^- \cdot v_{\bT} 
	= q^{- \mu_i (\bT) + \mu_{i+1} (\bT) - s \cdot i} \cdot 
		\begin{cases}
		v_{\bT} & \text{ if } t=0, 
		\\
		(q-q^{-1}) \Phi_{i, -s} (\bT) \, v_{\bT} & \text{ if } s >0. 
		\end{cases} 
\end{align*}
\begin{proof}
It is enough to check that the action in the proposition satisfies the defining relations (U1) - (U8) of $U_q (L\Fsl_m)$, 
and 
we give details of them in \S \, \ref{section proof DBzBla}. 
\end{proof}
\end{prop}

%%%%%%%
\begin{cor}
\label{Cor BbTx not=0} 
In the setting of Proposition \ref{Prop Def DBz Bla}, 
we have the following. 
\begin{enumerate}
\item 
\begin{enumerate}
\item 
For $\bT \in \SStd_m (\Bla)$ and $x \in [\bT]_{\a_i}$, 
we have $B_{\bT}^x \not=0$. 

\item 
For $\bT \in \SStd_m(\Bla)$ and $x \in [\bT]_{-\a_i}$, 
we have $C_{\bT}^x \not=0$. 
\end{enumerate}

\item 
For $1 \leq i \leq m-1$ and $\bT \in \SStd_m (\Bla)$, we have 
\begin{align*}
\psi_i^+(z) \cdot v_{\bT} 
	&= q^{\mu_i(\bT) - \mu_{i+1} (\bT)} 
		\prod_{x \in [\bT]_i} \frac{(1 - q^{i-1} c_{\Bz}(x) q^{-1} z)}{ (1 - q^{i-1} c_{\Bz} (x) q z)} 
		\prod_{ y \in [\bT]_{i+1}} \frac{(1 - q^{i+1} c_{\Bz} (y) q z)}{( 1 - q^{i+1} c_{\Bz} (y) q^{-1} z)} v_{\bT}, 
\\
\psi_i^-(z) \cdot v_{\bT} 
	&= q^{ - \mu_i(\bT) + \mu_{i+1} (\bT)} 
		\\ & \quad \times 
		\prod_{ y \in [\bT]_{i+1}} \frac{ ( 1 -  q^{- (i +1)} c_{\Bz} (y)^{-1}  (qz)^{-1} )}{ ( 1 - q^{-(i+1)} c_{\Bz} (y)^{-1} (q^{-1} z)^{-1})}
		\prod_{ x \in [\bT]_i} \frac{(1 -  q^{-(i-1)} c_{\Bz}(x)^{-1}  (q^{-1}z)^{-1})}{(1 - q^{-(i-1)} c_{\Bz}(x)^{-1}  (qz)^{-1})}.
\end{align*}
\end{enumerate}
\begin{proof}
(\roi). 
For $x \in [\bT]_{\a_i}$, 
we have $ \bT(\fn_a(x)) = \bT_x^- (\fn_a(x)) < \bT_x^-(x) =i$ and 
$\bT(\fn_r(x)) \geq \bT(x) =i+1$ by definitions. 
These imply that $c_{\Bz}(x) \not= q^{-2} c_{\Bz}(y)$ for all $y \in [\bT]_i$, 
and we have $B_{\bT}^x \not=0$. 
Similarly, we have $C_{\bT}^x \not=0$ for $x \in [\bT]_{-\a_i}$ from 
$\bT(\fn_b(x)) = \bT_x^+(\fn_b(x)) > \bT_x^+(x) =i+1$ and $\bT(\fn_l(x)) \leq \bT(x) =i$. 

(\roii) follows from Proposition \ref{Prop whPhiklz} and \eqref{def PhiisbT}. 
\end{proof}
\end{cor}

%%%%%
\para 
For $\Bla = (\la^{(1)}, \la^{(2)}, \dots, \la^{(r)}) \in \vL_{n,r}^+(m)$, 
we define the semi-standard $\Bla$-tableaux $\bT^{\Bla}$ and $\bT_{\Bla}$ by 
\begin{align*}
\bT^{\Bla} ((a,b,c)) =a 
\text{ and } 
\bT_{\Bla} ((a,b,c)) = m - (\la^{(c)})'_{b} +a 
\quad ((a,b,c) \in [\Bla]) 
\end{align*}
respectively, 
where $(\la^{(c)})'$ is the conjugate of $\la^{(c)}$. 
Namely, $(\la^{(c)})'_b$ is the number of nodes in the $b$-th column of the diagram $[\la^{(c)}]$. 
For  example, if $m=5$ and $\Bla= ((3,2), (2,2,1), (4,1))$, then we have 
\begin{align*}
&\bT^{\Bla} 
= \left( \, \begin{array}{|c|c|c|} \hline 1 & 1 & 1 \\ \hline 2 & 2 \\ \cline{1-2} \multicolumn{1}{c}{} \end{array} \, , 
	\, 
	\begin{array}{|c|c|c|} \hline 1 & 1 \\ \hline 2 & 2 \\ \hline 3 \\ \cline{1-1} \end{array} \, , 
	\, 
	\begin{array}{|c|c|c|c|} \hline 1 & 1 & 1& 1 \\ \hline 2 \\ \cline{1-1} \multicolumn{1}{c}{} \end{array} 
	\, 
	\right) 
\text{ and } 
\\
&\bT_{\Bla} 
= \left( \, \begin{array}{|c|c|c|} \hline 4 & 4 & 5 \\ \hline 5 & 5 \\ \cline{1-2} \multicolumn{1}{c}{} \end{array} \, , 
	\, 
	\begin{array}{|c|c|c|} \hline 3 & 4 \\ \hline 4 & 5 \\ \hline 5 \\ \cline{1-1} \end{array} \, , 
	\, 
	\begin{array}{|c|c|c|c|} \hline 4 & 5 & 5 & 5 \\ \hline 5 \\ \cline{1-1} \multicolumn{1}{c}{} \end{array} 
	\, 
	\right).
\end{align*}

%%%%%%

\begin{thm}
\label{Thm DBz Bla}
Assume that $\Bz \in (\CC^{\times})^r$ satisfies the separation condition \eqref{separation cond}. 
For $\Bla =(\la^{(1)},\dots, \la^{(r)}) \in \vL_{n,r}^+(m)$, 
the $U_q (L\Fsl_m)$-module $\D_{\Bz} (\Bla)$ satisfies the following. 

\begin{enumerate}
\item 
\begin{enumerate} 
\item The vector $v_{\bT^{\Bla}}$ is a unique singular vector of $\D_{\Bz} (\Bla)$ up to scalar multiples. 

\item 
The vector $v_{\bT_{\Bla}}$ is a unique $f$-singular vector of $\D_{\Bz} (\Bla)$  up to scalar multiples. 
\end{enumerate} 

\item 
The $q$-character of $\D_{\Bz} (\Bla)$ is given by 
\begin{align*}
&\chi_q(\D_{\Bz}(\Bla)) = \sum_{\bT \in \SStd_m(\Bla)} Y^{\bT}, 
\\
&\text{ where }
Y^{\bT} = \prod_{i=1}^{m-1} 
	\Big( \prod_{x \in [\bT]_i} Y_{i, q^{i-1} c_{\Bz} (x)} \prod_{ y \in [\bT]_{i+1}} Y_{i, q^{i+1} c_{\Bz} (y)}^{-1} \Big).
\end{align*} 

\item 
Any $\ell$-weight of $\D_{\Bz} (\Bla)$ is multiplicity-free. 

\item 
The $U_q (L \Fsl_m)$-module $\D_{\Bz} (\Bla)$ is simple. 
In particular, the vector $v_{\bT^{\Bla}}$ (resp. $v_{\bT_{\Bla}}$) is an $\ell$-highest weight vector 
(resp. an $\ell$-lowest weight vector) of $\D_{\Bz} (\Bla)$. 

\item 
The Drinfeld polynomials $\bP =(P_i(z))_{1 \leq i \leq m-1}$ of $\D_{\Bz} (\Bla)$ are given by 
\begin{align*}
P_i(z) = \prod_{c=1}^r\prod_{k=1}^{\la_i^{(c)} - \la_{i+1}^{(c)}} (1 -   q^{2 \la_i^{(c)} - i+1 -2k} \z_c z)
\quad (1 \leq i \leq m-1).
\end{align*}

\item 
We have 
$\D_{\Bz} (\Bla) \cong V(\la^{(1)})^{\ev_{\z_1}}  \otimes \dots \otimes V (\la^{(r)})^{\ev_{\z_r}}$ as $U_q (L\Fsl_m)$-modules. 
\end{enumerate}

\begin{proof}
(\roi)-(a). 
By definitions, it is clear that $[\bT^{\Bla}]_{\a_i} = \emptyset$ for any $1 \leq i \leq m-1$, 
and we see that $v_{\bT^{\Bla}}$ is a singular vector. 
On the other hand, for $\bT \in \SStd_m(\Bla)$ such that $\bT\not=\bT^{\Bla}$,  
let $x =(a,b,c) \in [\Bla]$ be the topmost and leftmost node among the nodes such that $\bT(x) \not= \bT^{\Bla}(x)$. 
Then we see that 
\begin{align*}
\bT(x) > a, 
\quad 
&\bT(\fn_a (x)) = \begin{cases} a-1 & \text{ if } \fn_a (x) \in [\Bla], \\ - \infty & \text{ if } \fn_a (x) \not\in [\Bla], \end{cases} 
\text{ and }
\\
&\bT(\fn_l(x)) = \begin{cases} a & \text{ if } \fn_l(x) \in [\Bla] \\ - \infty & \text{ if } \fn_l(x) \not\in [\Bla],  \end{cases}
\end{align*} 
and we have $x \in [\bT]_{\a_i}$ if $\bT(x) =i+1$. 
We also have $B_{\bT}^x \not=0$ by Corollary \ref{Cor BbTx not=0} (\roi). 
As a consequence, we see that the vector $v_{\bT}$ is not  singular if $\bT \not= \bT^{\Bla}$, 
and $v_{\bT^{\Bla}}$ is a unique singular vector of $\D_{\Bz}(\Bla)$ up to scalar multiples. 
Similarly we can prove (\roi)-(b). 

%%%
\ps
We obtain the statement (\roii)  from Corollary \ref{Cor BbTx not=0} (\roii). 

%%%
\ps 
(\roiii). 
Thanks to the separation condition \eqref{separation cond}, 
for $x \in [\bT]_i$ and $y \in [\bT]_{i+1}$, 
we see that $q^{i-1} c_{\Bz} (x) = q^{i+1} c_{\Bz}(y)$ if and only if $y = \fn_b (x)$ since $\bT$ is semi-standard. 
Then we can check that (\roii) implies (\roiii). 

%%%
\ps 
(\roiv). 
Thanks to (\roi), 
to prove that $\D_{\Bz}(\Bla)$ is simple, 
it is sufficient to show that $\D_{\Bz} (\Bla)$ is generated by $v_{\bT^{\Bla}}$ as a $U_q (L\Fsl_m)$-module. 
Let $M$ be the $U_q(L\Fsl_m)$-submodule of $\D_{\Bz} (\Bla)$ generated by $v_{\bT^{\Bla}}$, 
and we prove that $v_{\bT} \in M$ for all $\bT \in \SStd_m(\Bla)$ 
by the induction on $\owt \bT$  
with respect to the dominance order. 
It is clear that $\owt \bT^{\Bla}$ is highest in $\{ \owt \bT \mid \bT \in  \SStd_m(\Bla)\}$, 
and $v_{\bT^{\Bla}} \in M$. 

For $\bT \in \SStd_m(\Bla)$ such that $\bT \not= \bT^{\Bla}$, 
by the proof of (\roi)-(a), 
we have $[\bT]_{\a_i} \not= \emptyset$ for some $1 \leq i \leq m-1$. 
For $x \in [\bT]_{\a_i}$, 
we have 
$\bT_x^- \in \SStd_m(\Bla)$ and $\owt \bT_x^- > \owt \bT$ 
since $\owt \bT_x^- = \owt \bT + \a_i$. 
Thus, the vector $v_{\bT_x^-}$ belongs to $M$ by the induction hypothesis. 
We also see that $x \in [\bT_x^-]_{-\a_i}$ since $\bT_x^-(x) = \bT(x) -1 =i$ and $(\bT_x^-)_{x}^+ = \bT \in \SStd_m(\Bla)$. 
Then we have 
\begin{align}
\label{fit vTx-}
f_{i,t} \cdot v_{\bT_x^-} 
= C_{\bT_x^-}^x (q^i c_{\Bz} (x))^t v_{\bT} 
	+ \sum_{ y \in [\bT_x^-]_{-\a_i} \setminus \{x\}} C_{\bT_x^-}^y (q^i c_{\Bz} (y))^t v_{(\bT_x^-)_y^+} 
	\in M.
\end{align}
We remark that the vectors $v_{\bS}$ ($\bS \in \SStd_m(\Bla)$) are $\ell$-weight vevtors, and 
any $\ell$-weight of $\D_{\Bz} (\Bla)$ is multiplicity-free by (\roiii). 
Then \eqref{fit vTx-} implies that $v_{\bT} \in M$, 
and we conclude (\roiv). 

%%%
\ps 
(\rov). 
By the above argument,  
the vector $v_{\bT^{\Bla}}$ is an $\ell$-highest weight vector.
By Corollary \ref{Cor BbTx not=0} (\roii), we have 
\begin{align*}
&\psi_i^+(z) \cdot v_{\bT^{\Bla}} 
\\
&= q^{\mu_i(\bT^{\Bla}) - \mu_{i+1} (\bT^{\Bla})}  
	\prod_{x \in [\bT^{\Bla}]_i} \frac{ (1 -  q^{i-1} c_{\Bz} (x)  q^{-1}z )}{ ( 1 - q^{i-1}c_{\Bz} (x)  qz)} 
	\prod_{y \in [\bT^{\Bla}]_{i+1} } \frac{ (1 - q^{i+1} c_{\Bz} (y) q z)}{ (1 - q^{i+1} c_{\Bz} (y) q^{-1} z)}
	\, v_{\bT}
\\
&= q^{\sum_{c=1}^r \la_i^{(c)} - \sum_{c=1}^r \la_{i+1}^{(c)}} 
	\\ & \qquad \times 
	\Big( \prod_{c=1}^r \prod_{b=1}^{\la_i^{(c)} } 
		\frac{ (1 - q^{i-1} q^{2 ( b - i)} \z_c q^{-1} z) }{ (1 - q^{i-1} q^{2 (b-i)} \z_c q z)} \Big) 
	\Big( \prod_{c=1}^r \prod_{b=1}^{\la_{i+1}^{(c)}} 
		\frac{ (1 - q^{i+1} q^{2 (b - (i+1))} \z_c q z)}{ ( 1 - q^{i+1} q^{2 ( b - (i+1))} \z_c q^{-1} z)} \Big) 
	\, v_{\bT} 
\\
&= \prod_{c=1}^r \Big( q^{\la_i^{(c)} - \la_{i+1}^{(c)}} \prod_{k=1}^{\la_i^{(c)} - \la_{i+1}^{(c)}} 
	\frac{ (1 - q^{i-1} q^{2 ( \la_i^{(c)} - (k-1) -i)} \z_c q^{-1} z)}{ (1 - q^{i-1} q^{2 ( \la_i^{(c)} - (k-1) -i)} \z_c q z)} 
	\Big) v_{\bT} 
\\
&= \prod_{c=1}^r \Big( q^{\la_i^{(c)} - \la_{i+1}^{(c)}} \prod_{k=1}^{\la_i^{(c)} - \la_{i+1}^{(c)}} 
	\frac{ (1 - q^{ 2 \la_i^{(c)} - i +1- 2 k  } \z_c  \, (q^{-1} z))}{ (1 - q^{ 2 \la_i^{(c)} - i +1- 2 k  } \z_c \, ( q z))} 
	\Big) v_{\bT}. 
\end{align*}
Then we obtain (\rov).  
 
%%%
\ps 
(\rovi).  
Let $v_0^{(c)}$ be an $\ell$-highest weight vector of $V(\la^{(c)})^{\ev_{\z_c}}$ ($1 \leq c \leq r$), 
and the submodule of $V(\la^{(1)})^{\ev_{\z_1}} \otimes \dots \otimes V (\la^{(r)})^{\ev_{\z_r}}$ 
generated by $v_0^{(1)} \otimes \dots \otimes v_0^{(r)}$ is an $\ell$-highest weight module 
with an $\ell$-highest weight vector $v_0^{(1)} \otimes \dots \otimes v_0^{(r)}$. 
By \cite{CP94a, FM02}, 
it is known that the Drinfeld polynomials $(P^{(c)}_i(z))_{1 \leq i \leq m-1}$ of the evaluation module $V(\la^{(c)})^{\ev_{\z_c}}$ 
($1 \leq c \leq r$) 
are given by 
\begin{align*}
P_i^{(c)} (z) = \prod_{k=1}^{\la_i^{(c)} - \la_{i+1}^{(c)}} ( 1 - q^{i-1} q^{2 (\la_i^{(c)} - (k-1) -i)} \z_{c} z ) 
\quad (1 \leq i \leq m-1). 
\end{align*}
Thus, there exists a surjective homomorphism from $U_q (L\Fsl_m) \cdot (v_0^{(1)} \otimes \dots \otimes v_0^{(r)})$ 
to $\D_{\Bz}(\Bla)$ by (\roiv) and (\rov). 
By comparing dimensions, the homomorphism gives an isomorphism 
$V(\la^{(1)})^{\ev_{\z_1}} \otimes \dots \otimes V(\la^{(r)})^{\ev_{\z_r}} \cong \D_{\Bz} (\Bla)$. 
\end{proof}
\end{thm}

%%%%%
\remarks\ 
\begin{enumerate}
\item 
For the Yangian $Y(\Fgl_m)$ (resp. the quantum Yangian $Y_q (\Fgl_m)$), 
the action of $Y(\Fgl_m)$ (resp. $Y_q (\Fgl_m)$) on an evaluation module is described explicitly 
by using the Gelfand-Zetlin basis in  \cite{Mol94, NT94}. 

\item 
The $q$-character of the evaluation module $V(\la)^{\ev_{\z}}$ 
has already computed in \cite[Lemma 4.7]{FM02} by using Gelfand-Zetlin schemes. 

\item 
The simplicity of the tensor product $V (\la^{(1)})^{\ev_{\z_1}} \otimes \dots \otimes V(\la^{(r)})^{\ev_{\z_r}} $ 
of evaluation modules under the separation condition \eqref{separation cond} also follows from 
\cite[Theorem 2.2]{MTZ04} and \cite[Theorem1.1]{Her10}. 
\end{enumerate}

%%%%%%%%%%%%%%%%%%%%%%%%%%%%%%%%%%%%%%%%%%%%%%%%%%%%%%%%%%%%%%%

%%%%%%%%%%%%%%%%%%%%%%%%%%%%%%%%%%%%%%%%%%%%%%%%%%%%%%%%%%%%%%%

\section{Some finite dimensional simple modules of $U_{q,[\bb]}(L \Fsl_m)$} 

In this section, 
we discuss  $U_{q,[\bb]}(L \Fsl_m)$-modules 
obtained from the $U_q (L\Fsl_m)$-module $\D_{\Bz} (\Bla)$ by tensoring a one-dimensional $U_{q,[\bb]}$-module. 
Then we describe some finite dimensional simple $U_{q,[\bb]}$-modules explicitly, and give their $q$-characters. 

\begin{lem}
\label{Lemma sing LBb M}
For $\bb, \bc \in (\ZZ_{\geq 0})^{m-1}$, 
we consider a one-dimensional $U_{q,[\bb]}$-module $L_{\Bb} = \CC w$ ($\Bb \in \BB_{[\bb]}$) 
and a finite dimensional simple $U_{q,[\bc]}$-module $M$. 
Let $v_0 $ be an $\ell_{[\bc]}$-highest weight vector of $M$, 
and $\wt{v}_0 $ be an $\ell_{[\bc]}$-lowest weight vector of $M$. 

\begin{enumerate}
\item 
For the $U_{q,[\bb + \bc]}$-module $L_{\Bb} \otimes M$ through the homomorphism 
$\D_{\bb, \bc} : U_{q,[\bb + \bc]} \ra U_{q,[\bb]} \otimes U_{q, [\bc]}$, 
the vector $w \otimes v_0$ is a unique singular vector of $L_{\Bb} \otimes M$ up to scalar multiples. 
In particular, any submodule of $L_{\Bb} \otimes M$ contains $w \otimes v_0$, 
and $L_{\Bb} \otimes M$ has a unique simple submodule. 

\item 
For the $U_{q,[\bb+ \bc]}$-module $M \otimes L_{\Bb}$ through the homomorphism 
$\D_{\bc, \bb} : U_{q, [\bb+\bc]} \ra U_{q, [\bc]} \otimes U_{q, [\bb]}$, 
the vector $\wt{v}_0 \otimes w$ is a unique $f$-singular vector of $M \otimes L_{\Bb}$ up to scalar multiples. 
In particular, any submodule of $M \otimes L_{\Bb} $ contains $ \wt{v}_0 \otimes w $, 
and $M \otimes L_{\Bb}$  has a unique simple submodule. 
\end{enumerate}

\begin{proof}
By Proposition \ref{Prop Ddc} together with \eqref{def LBb}, 
we see that the element $e_{i,t} \in U_{q,[\bb + \bc]}$  ($1 \leq i \leq m-1$, $t \in \ZZ$) acts on $L_{\Bb} \otimes M$ 
by $1 \otimes e_{i,t}$. 
Then the simplicity of $M$ implies (\roi). 
Similarly, we see that the element $f_{i,t} \in U_{q,[\bb+\bc]}$ acts on $M \otimes L_{\Bb}$ by $f_{i,t} \otimes 1$, 
and we obtain (\roii). 
\end{proof} 
\end{lem}

%%%
\para \label{Sing Bla} 
For $\Bla \in \vL_{n,r}^+(m)$ and $\bT \in \SStd_m(\Bla)$, 
put 
\begin{align*}
&\SStd_m(\Bla; \leq \bT) = \{ \bS \in \SStd_m(\Bla) \mid \bS (x) \leq \bT(x) \text{ for all } x \in [\Bla]\}, 
\\
&\SStd_m(\Bla ; \geq \bT)= \{ \bS \in \SStd_m(\Bla) \mid \bS (x) \geq \bT(x) \text{ for all } x \in [\Bla]\}.
\end{align*}
  
For $\Bla \in \vL_{n,r}^+(m)$ and  $\Bb = (\b_{i, - b_i +s})_{1 \leq i \leq m-1}^{0 \leq s \leq b_i}\in \BB_{[\bb]}$, 
we define subsets  
$\Sing_m^{\Bb} (\Bla)$ and $\fSing_m^{\Bb} (\Bla)$  of $\SStd_m (\Bla)$ by 
\begin{align*}
&\Sing_m^{\Bb} (\Bla) 
\\
&= \{ \bT\in \SStd_m(\Bla) \mid 
	\prod_{x \in [\bT]_{\a_i}}  (1 - q^i c_{\Bz} (x) z)  \text{ divides } S_i(z)  \text{ for each }  1 \leq i \leq m-1  \}, 
\\
&\fSing_m^{\Bb} (\Bla) 
\\
&= \{ \bT\in \SStd_m(\Bla) \mid 
	\prod_{x \in [\bT]_{-\a_i}} (1 - q^i c_{\Bz} (x) z)  \text{ divides } S_i(z)  \text{ for each }  1 \leq i \leq m-1 \} 
\end{align*}
respectively, 
where 
$S_i(z) = \sum_{s=0}^{b_i} \b_{i, - b_i +s} z^s$ ($1 \leq i \leq m-1$). 
It is clear that $\bT^{\Bla} \in \Sing_m^{\Bb} (\Bla)$ (resp. $\bT_{\Bla} \in \fSing_m^{\Bb} (\Bla)$) 
since $[\bT^{\Bla}]_{\a_i} =\emptyset$ (resp. $[\bT_{\Bla}]_{-\a_i} = \emptyset$) for all $1 \leq i \leq m-1$. 
%%%

\begin{thm}
\label{Thm simple LBb Dla} 
Assume that $\Bz \in (\CC^{\times})^r$ satisfies the separation condition \eqref{separation cond}. 
For  $\Bb \in \BB_{[\bb]}$ ($\bb \in (\ZZ_{\geq 0})^{m-1}$), let $L_{\Bb} = \CC w$ 
be the corresponding one-dimensional $U_{q,[\bb]}$-module. 
For $\Bla \in \vL_{n,r}^+(m)$, we consider the $U_{q,[\bb]}$-module 
$L_{\Bb} \otimes \D_{\Bz} (\Bla)$
through the homomorphism $\D_{\bb, \mathbf{0}}$. 
Then we have the following. 
\begin{enumerate}
\item 
The vector $w \otimes v_{\bT}$ ($\bT \in \SStd_m(\Bla)$) is an $\ell_{[\bb]}$-weight vector. 
Moreover, any $\ell_{[\bb]}$-weight of $L_{\Bb} \otimes \D_{\Bz} (\Bla)$ is multiplicity-free. 

\item 
For $\bT \in \SStd_m (\Bla)$, 
the vector $w \otimes v_{\bT}$ is $f$-singular 
if and only if 
$\bT \in \fSing_m^{\Bb} (\Bla)$. 

\item 
For $\bT \in \fSing_m^{\Bb} (\Bla)$, 
let $L_{\Bb} \otimes \D_{\Bz}(\Bla ; \leq \bT)$  be the subspace of $L_{\Bb} \otimes \D_{\Bz}(\Bla)$ 
spanned by $\{ w \otimes v_{\bS} \mid \bS \in \SStd_m (\Bla ; \leq \bT) \}$. 
Then $L_{\Bb} \otimes \D_{\Bz} (\Bla ; \leq \bT)$ is a $U_{q,[\bb]}$-submodule of $L_{\Bb} \otimes \D_{\Bz} (\Bla)$. 

\item 
There exists a unique semi-standard tableau $\bT_{\Bla}^{\Bb}  \in \fSing_m^{\Bb} (\Bla)$ 
such that $\fSing_m^{\Bb} (\Bla) \cap \SStd_m (\Bla ; \leq \bT_{\Bla}^{\Bb}) = \{ \bT_{\Bla}^{\Bb} \}$. 

\item 
Put $\D_{\Bz}^{\Bb} (\Bla) = L_{\Bb} \otimes \D_{\Bz}(\Bla ;\leq \bT_{\Bla}^{\Bb})$. 
Then $\D_{\Bz}^{\Bb} (\Bla)$ is a simple $U_{q,[\bb]}$-module with 
an $\ell_{[\bb]}$-highest weight vector $w \otimes v_{\bT^{\Bla}}$ 
and an $\ell_{[\bb]}$-lowest weight vector $w \otimes v_{\bT_{\Bla}^{\Bb}}$. 

\item 
The Drinfeld polynomials $(\bS=(S_i(z))_{1 \leq i \leq m-1}, \bP = (P_i(z))_{1 \leq i \leq m-1})$ 
of $\D_{\Bz}^{\Bb} (\Bla)$ are given by 
\begin{align*}
S_i(z) = \sum_{s=0}^{b_i} \b_{i, - b_i +s} z^s, 
\quad 
P_i(z) = \prod_{c=1}^r\prod_{k=1}^{\la_i^{(c)} - \la_{i+1}^{(c)}} (1 -   q^{2 \la_i^{(c)} - i+1 -2k} \z_c z)
\end{align*}
for $1 \leq i \leq m-1$. 

\item 
Let $S_i(z) = a_i (1 - \eta_{i,1} z) (1 - \eta_{i,2} z) \dots (1 - \eta_{i, b_i} z)$ for $1 \leq i \leq m-1$. 
Then, the $q$-character of $\D_{\Bz}^{\Bb} (\Bla)$ is given by 
\begin{align*}
&\chi_{[\bb]} (\D_{\Bz}^{\Bb} (\Bla)) 
= \sum_{\bT \in \SStd_m(\Bla ; \leq \bT_{\Bla}^{\Bb}) }  X^{\bT}, 
\\
&\text{where } 
X^{\bT} = \prod_{i=1}^{m-1} \Big( \tau_{i, a_i} 
	\prod_{x \in [\bT]_i} Y_{i, q^{i-1} c_{\Bz}(x)} 
	\prod_{y \in [\bT]_{i+1}} Y_{i, q^{i+1} c_{\Bz} (y)}^{-1} 
	\prod_{k=1}^{b_i} Z_{i, \eta_{i,k}} \Big). 
\end{align*}
\end{enumerate}

\begin{proof}
(\roi) follows from Proposition \ref{Prop Ddc} and Theorem \ref{Thm DBz Bla} 
together with  calculations in the proof of Proposition \ref{Prop lb-weight}. 

%%%
\ps 
(\roii). 
By (\roi), 
the vector $w \otimes v_{\bT}$ is an eigenvector for the action of $h_{i, \pm 1}$. 
Then, for $1 \leq i \leq m-1$ and $t \in \ZZ$, 
we see that $f_{i,t} \cdot (w \otimes v_{\bT}) =0$ if and only if $f_{i,0} \cdot (w \otimes v_{\bT})=0$ 
thanks to \eqref{ei spm1 h e}.  
On the other hand,  
we have 
\begin{align*}
f_{i,0} \cdot (w \otimes v_{\bT}) 
&= (\psi_{i,0}^- \cdot w ) \otimes (f_{i, 0} \cdot v_{\bT}) + \sum_{k=1}^{b_i} (\psi_{i, -k}^{+} \cdot w) \otimes ( f_{i, k} \cdot v_{\bT}) 
\end{align*}
by Proposition \ref{Prop Ddc}. 
Then, by direct computation using \eqref{def LBb} and Proposition \ref{Prop Def DBz Bla}, 
we have 
\begin{align}
\label{fi0 w otimes vbT}  
f_{i,0} \cdot (w \otimes v_{\bT}) 
= \sum_{ x \in [\bT]_{-\a_i}} C_{\bT}^x \big( \sum_{k=0}^{b_i} \b_{i,- k} (q^i c_{\Bz} (x))^k \big) \, w \otimes v_{\bT_x^+}, 
\end{align}
where we note that $C_{\bT}^x \not=0$ for $x \in [\bT]_{-\a_i}$. 
Then, we see that 
$f_{i,0} \cdot (w \otimes v_{\bT}) =0$ if and only if 
$q^i c_{\Bz} (x)$ is a root of the polynomial $\sum_{k=0}^{b_i} \b_{i,-k} z^k$
for all $x \in [\bT]_{-\a_i}$.
On the other hand, 
we see that $\sum_{k=0}^{b_i} \b_{i,-k} z^k = z^{b_i} S_i(z^{-1})$. 
Then, we have 
\begin{align*}
S_i(z) &= z^{ b_i} \b_{i, - b_i} (z^{-1} - \g_1) (z^{-1} - \g_2) \dots (z^{-1} - \g_{b_i}) 
\\
&= \b_{i, - b_i} (1 -\g_1 z) (1 - \g_2 z) \dots (1 - \g_{b_i} z)
\end{align*}
if $\sum_{k=0}^{b_i} \b_{i,-k} z^k = \b_{i, - b_i} (z- \g_1) (z -\g_2) \dots (z - \g_{b_i})$. 
Moreover, for $x,x' \in [\bT]_{-\a_i}$, 
we see that $c_{\Bz} (x) \not= c_{\Bz} (x')$  if $x \not=x'$. 
As a consequence, we obtain (\roii). 

%%%
\ps 
(\roiii).  
It is clear that  the space $L_{\Bb} \otimes \D_{\Bz} (\Bla ; \leq \bT)$ is invariant under the actions  
of $e_{i,t}$, $\psi_{i, - b_i +s}^+$, $\psi_{i, -s}^-$ ($1 \leq i \leq m-1$, $t \in \ZZ$, $s \in \ZZ_{\geq 0}$) 
by Proposition \ref{Prop Ddc} and Proposition \ref{Prop Def DBz Bla}. 
For $\bS \in \SStd_m(\Bla; \leq \bT)$, we have 
\begin{align}
\label{fio w otimes vbS} 
f_{i,0} \cdot (w \otimes v_{\bS}) 
= \sum_{ x \in [\bS]_{-\a_i}} C_{\bS}^x \big( \sum_{k=0}^{b_i} \b_{i,- k} (q^i c_{\Bz} (x))^k \big) \, w \otimes v_{\bS_x^+}. 
\end{align} 
On the other hand, 
if $\bS_x^+ \not\in \SStd_m(\Bla; \leq \bT)$ for $x \in [\bS]_{-\a_i}$,  
then we have 
\begin{align*}
\bT(x) = \bS(x) =i, 
\quad 
\bT(\fn_b(x)) \geq \bS (\fn_b (x)) > i+1 
\text{ and } 
\bT(\fn_r (x)) \geq \bS(\fn_r(x)) \geq i+1, 
\end{align*}
and we conclude that $x \in [\bT]_{-\a_i}$. 
In this case, the value 
$q^i c_{\Bz}(x)$ is a root of the polynomial $\sum_{k=0}^{b_i} \b_{i,-k} z^k$ 
by the argument in (\roii) since $\bT \in \fSing_m^{\Bb} (\Bla)$. 
By combining with \eqref{fio w otimes vbS}, 
we have $f_{i,0} \cdot (w \otimes v_{\bS}) \in L_{\Bb} \otimes \D_{\Bz} (\Bla ; \leq \bT)$ 
for $\bS \in \SStd_m (\Bla ; \leq \bT)$. 
Then, we see that the space $L_{\Bb} \otimes \D_{\Bz}(\Bla; \leq \bT)$ is also invariant under the actions of 
$f_{i,t}$ ($1 \leq i \leq m-1$, $t \in \ZZ$) by the induction using \eqref{ei spm1 h e}. 

%%%
\ps 
(\roiv).   
By Lemma \ref{Lemma sing LBb M}, 
$L_{\Bb} \otimes \D_{\Bz} (\Bla)$ has a unique simple submodule, 
and we denote it by $M_0$. 
Then, there exists a unique  semi-standard tableau $\bT_0  \in \fSing_m^{\Bb} (\Bla)$ 
such that $w \otimes v_{\bT_0}$ is an $\ell_{[\bb]}$-lowest weight vector of $M_0$ 
by (\roi) and (\roii).  

If $\fSing_m^{\Bb} (\Bla) \cap \SStd_m(\Bla ; \leq \bT_0)$ contains a semi-standard tableau $\bT_1$ 
which is different from $\bT_0$, 
then a simple submodule $M_1$ of $L_{\Bb} \otimes \D_{\Bz} (\Bla ; \leq \bT_1)$ 
does not contain the vector $w \otimes v_{\bT_0}$ since $\bT_1 (x) < \bT_0 (x)$ for some $x \in [\Bla]$,  
 and we have $M_1 \not= M_0$.  
This contradicts  the uniqueness of the simple submodule $M_0$ of $L_{\Bb} \otimes \D_{\Bz} (\Bla)$. 
Thus, we have $\fSing_m^{\Bb} (\Bla) \cap \SStd_m(\Bla ; \leq \bT_0) = \{\bT_0\}$. 

Similarly, if a semi-standard tableau $\bT_2 \in \fSing_m^{\Bb} (\Bla)$ which is different from $\bT_0$ 
satisfies the condition $\fSing_m^{\Bb} (\Bla) \cap \SStd_m(\Bla ; \leq \bT_2) = \{ \bT_2\}$, 
then we see that a simple submodule $M_2$ of $L_{\Bb} \otimes \D_{\Bz} (\Bla ; \leq \bT_2)$  
does not contain the vector $w \otimes v_{\bT_0}$, 
and we have $M_2 \not=M_0$.  
This is a contradiction. 
As a consequence, we conclude (\roiv). 

%%% 
\ps 
(\rov).  
In order to prove (\rov), 
it is enough to show that $w \otimes v_{\bT} \in M_0$ for all $\bT \in \SStd_m (\la ; \leq \bT_{\Bla}^{\Bb})$. 
We show it by the induction on $\owt \bT$ with respect to the dominance order. 
It is clear that $ w \otimes v_{\bT_{\Bla}^{\Bb}} \in M_0$,  
and we see that 
$\owt \bT_{\Bla}^{\Bb}$ is lowest in $\{ \owt \bT \mid \bT \in \SStd_m(\Bla ; \leq \bT_{\Bla}^{\Bb} ) \}$. 

Suppose that  $\bT \in \SStd_m (\Bla ; \leq \bT_{\Bla}^{\Bb})$ is an element different from $\bT_{\Bla}^{\Bb}$. 
Then, $\bT$ is not $f$-singular by (\roiv), 
and there exists a node $x \in [\bT]_{-\a_i}$ for some $1 \leq i \leq m-1$ 
such that the value $q^i c_{\Bz}(x)$ is not a root of $\sum_{k=0}^{b_i} \b_{i,-k} z^k$ 
by the argument in (\roii). 
This implies  that $\bT_{x}^+ \in \SStd_m (\Bla ; \leq \bT_{\Bla}^{\Bb})$ 
by the argument in (\roiii). 
Since $\owt \bT_x^+ = \owt \bT - \a_i$, we see that $\owt \bT_x^+ < \owt \bT$, 
and we have $w \otimes v_{\bT_x^+} \in M_0$ by the induction hypothesis. 
Note that $x \in [\bT_x^+]_{\a_i}$, and we also have 
\begin{align*}
e_{i,0} \cdot (w \otimes v_{\bT_x^+} )  
= B_{\bT_x^+}^x w \otimes v_{\bT}  
	+ \sum_{y \in [\bT_x^+]_{\a_i} \setminus \{x\} } B_{\bT_x^+}^y  w \otimes  v_{(\bT_x^+)_y^-}
\end{align*}
by Proposition \ref{Prop Ddc} and  Proposition \ref{Prop Def DBz Bla}.  
Since $w \otimes v_{\bT_x^+} \in M_0$ and $B_{\bT_x^+}^x \not=0$,  
we have $w \otimes v_{\bT} \in M_0$ thanks to (\roi). 

%%% 
\ps  
The statement (\rovi) follows from \eqref{hw simple Uqb} together with Theorem \ref{Thm DBz Bla}, 
and the statement  (\rovii) follows from the calculations in the proof of Proposition \ref{Prop lb-weight}  
with Theorem \ref{Thm DBz Bla}.
\end{proof}
\end{thm} 

%%%
\begin{cor}
\label{Cor simple LBb otimes D}
We consider the same setting as in Theorem \ref{Thm simple LBb Dla}. 
Then, we have that 
the $U_{q,[\bb]}$-module $L_{\Bb} \otimes \D_{\Bz} (\Bla)$ is simple if and only if 
$\fSing_m^{\Bb} (\Bla) = \{ \bT_{\Bla}\}$. 
\end{cor}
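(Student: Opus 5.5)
The plan is to prove both directions using Lemma \ref{Lemma sing LBb M} and Theorem \ref{Thm simple LBb Dla}. First observe that $\bT_{\Bla}\in\fSing_m^{\Bb}(\Bla)$ always holds, because $[\bT_{\Bla}]_{-\a_i}=\emptyset$ for every $i$. Next observe that $\SStd_m(\Bla;\leq\bT_{\Bla})=\SStd_m(\Bla)$. This holds because $\bT_{\Bla}$ is the entrywise maximal semi-standard tableau with entries at most $m$. Indeed, in column $b$ of $[\la^{(c)}]$ the entries strictly increase down the column and are bounded by $m$. Hence the node in row $a$ has entry at most $m-(\la^{(c)})'_b+a=\bT_{\Bla}((a,b,c))$.

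For the direction ($\Leftarrow$), suppose $\fSing_m^{\Bb}(\Bla)=\{\bT_{\Bla}\}$. Then the tableau $\bT_{\Bla}^{\Bb}$ of Theorem \ref{Thm simple LBb Dla}~(\roiv) must be $\bT_{\Bla}$. By Theorem \ref{Thm simple LBb Dla}~(\rov), the module
\[
\D_{\Bz}^{\Bb}(\Bla)=L_{\Bb}\otimes\D_{\Bz}(\Bla;\leq\bT_{\Bla})=L_{\Bb}\otimes\D_{\Bz}(\Bla)
\]
is simple.

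For the direction ($\Rightarrow$), suppose $L_{\Bb}\otimes\D_{\Bz}(\Bla)$ is simple and that some $\bT\in\fSing_m^{\Bb}(\Bla)$ satisfies $\bT\neq\bT_{\Bla}$. By Theorem \ref{Thm simple LBb Dla}~(\roiii), $L_{\Bb}\otimes\D_{\Bz}(\Bla;\leq\bT)$ is a submodule. It is nonzero, since it contains $w\otimes v_{\bT}$. It is proper, since it does not contain $w\otimes v_{\bT_{\Bla}}$: we have $\bT_{\Bla}\not\leq\bT$ entrywise, because $\bT\leq\bT_{\Bla}$ entrywise and $\bT\neq\bT_{\Bla}$. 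This contradicts simplicity.

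There is one point to check. The spaces $L_{\Bb}\otimes\D_{\Bz}(\Bla;\leq\bT)$ are spans of basis vectors, so properness is immediate from the basis $\{w\otimes v_{\bS}\}$ of the whole module. With that settled, no real obstacle remains. The only step needing care is the maximality claim that $\bT_{\Bla}$ dominates every element of $\SStd_m(\Bla)$ entrywise, which is the column argument above.
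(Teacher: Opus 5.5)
Your proof is correct and is essentially the argument the paper leaves implicit: the corollary is stated without proof as an immediate consequence of Theorem \ref{Thm simple LBb Dla}, obtained by combining its parts (iii)--(v) with the facts $\bT_{\Bla}\in\fSing_m^{\Bb}(\Bla)$ and $\SStd_m(\Bla;\leq\bT_{\Bla})=\SStd_m(\Bla)$, both of which you justify correctly. The only cosmetic difference is in ($\Rightarrow$): there one could note that simplicity forces $\bT_{\Bla}^{\Bb}=\bT_{\Bla}$ and then invoke (iv), whereas you exhibit the proper nonzero submodule from (iii) directly; both routes are valid.
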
 
%%%%% 

The following theorem is a version of Theorem \ref{Thm simple LBb Dla} which we swap the tensor product. 

\begin{thm}
\label{Thm D Bla otimes LBb}
Under the same setting as in Theorem \ref{Thm simple LBb Dla}, 
for $\Bla \in \vL_{n,r}^+(m)$, 
we consider the $U_{q,[\bb]}$-module 
$\D_{\Bz} (\Bla) \otimes L_{\Bb} $
through the homomorphism $\D_{\mathbf{0}, \bb}$. 
Then we have the following. 
\begin{enumerate}
\item 
The vector $v_{\bT} \otimes w$ ($\bT \in \SStd_m(\Bla)$) is an $\ell_{[\bb]}$-weight vector. 
Moreover, any $\ell_{[\bb]}$-weight of $\D_{\Bz} (\Bla) \otimes L_{\Bb} $ is multiplicity-free. 

\item 
For $\bT \in \SStd_m (\Bla)$, 
the vector $ v_{\bT} \otimes w$ is singular 
if and only if 
$\bT \in \Sing_m^{\Bb} (\Bla)$. 

\item 
For $\bT \in \Sing_m^{\Bb} (\Bla)$, 
let $\D_{\Bz}(\Bla ; \geq \bT) \otimes L_{\Bb} $  be the subspace of $\D_{\Bz}(\Bla) \otimes L_{\Bb} $ 
spanned by $\{  v_{\bS} \otimes w \mid \bS \in \SStd_m (\Bla ; \geq \bT) \}$. 
Then $\D_{\Bz} (\Bla ; \geq \bT) \otimes L_{\Bb} $ is a $U_{q,[\bb]}$-submodule of $\D_{\Bz} (\Bla) \otimes L_{\Bb} $. 

\item 
There exists a unique semi-standard tableau $\bT^{\Bla}_{\Bb}  \in \Sing_m^{\Bb} (\Bla)$ 
such that \break 
$\Sing_m^{\Bb} (\Bla) \cap \SStd_m (\Bla ; \geq \bT^{\Bla}_{\Bb}) = \{ \bT^{\Bla}_{\Bb} \}$. 

\item 
Put $\D_{\Bz}^{\Bb*} (\Bla) = \D_{\Bz}(\Bla ; \geq \bT^{\Bla}_{\Bb}) \otimes L_{\Bb} $. 
Then $\D_{\Bz}^{\Bb *} (\Bla)$ is a simple $U_{q,[\bb]}$-module with 
an $\ell_{[\bb]}$-highest weight vector $ v_{\bT^{\Bla}_{\Bb}} \otimes w$ 
and an $\ell_{[\bb]}$-lowest weight vector $w \otimes v_{\bT_{\Bla}}$. 

\item 
The Drinfeld polynomials $(\wt{\bS}=( \wt{S}_i(z))_{1 \leq i \leq m-1}, \wt{\bP} = (\wt{P}_i(z))_{1 \leq i \leq m-1})$ 
of $\D_{\Bz}^{\Bb*} (\Bla)$ are given by 
\begin{align*}
&\wt{S}_i(z) = q^{\mu_i (\bT_{\Bb}^{\Bla}) - \mu_{i+1} (\bT_{\Bb}^{\Bla}) - \sharp [\bT_{\Bb}^{\Bla}]_i^{\ddag}} 
	\cdot S_i(z) \cdot 
	\frac{ \prod_{y \in [\bT_{\Bb}^{\Bla}]_{i+1}^{\dag R}} (1 - q^{i+2} c_{\Bz}(y) z)}
		{\prod_{ y \in  [\bT_{\Bb}^{\Bla}]_{\a_i}} (1 - q^i c_{\Bz} (y) z)}, 
\\
& \wt{P}_i(z) = \prod_{x \in [\bT_{\Bb}^{\Bla}]_i^{\ddag}} (1 - q^{i-1} c_{\Bz} (x) z) 
\end{align*}
for $1 \leq i \leq m-1$, where $S_i(z) = \sum_{s=0}^{b_i} \b_{i, - b_i +s} z^s$, 
\begin{align*}
&[\bT_{\Bb}^{\Bla}]_{i+1}^\dag =\{ (a,b,c) \in [\bT_{\Bb}^{\Bla}]_{i+1} \mid (a,b-k,c) \in [\bT_{\Bb}^{\Bla}]_{\a_i} 
	\text{ for some } k \in \ZZ_{\geq 0} \}, 
\\
&[\bT_{\Bb}^{\Bla}]_{i+1}^{\dag L} = [\bT_{\Bb}^{\Bla}]_{i+1}^{\dag} \cap [\bT_{\Bb}^{\Bla}]_{i+1}^L 
\quad ([\bT_{\Bb}^{\Bla}]_{i+1}^{\dag L} = [\bT_{\Bb}^{\Bla}]_{\a_i} \text{ by definition}), 
\\
&
[\bT_{\Bb}^{\Bla}]_{i+1}^{\dag R} = [\bT_{\Bb}^{\Bla}]_{i+1}^{\dag} \cap [\bT_{\Bb}^{\Bla}]_{i+1}^R,  
\\
&[\bT_{\Bb}^{\Bla}]_i^{\ddag} 
= [\bT_{\Bb}^{\Bla}]_i \setminus \{ \fn_a (y) \mid y \in [\bT_{\Bb}^{\Bla}]_{i+1} \setminus [\bT_{\Bb}^{\Bla}]_{i+1}^{\dag}\}. 
\end{align*}  

\item 
Let $S_i(z) = a_i (1 - \eta_{i,1} z) (1 - \eta_{i,2} z) \dots (1 - \eta_{i, b_i} z)$ for $1 \leq i \leq m-1$. 
Then, the $q$-character of $\D_{\Bz}^{\Bb*} (\Bla)$ is given by 
\begin{align*}
&\chi_{[\bb]} (\D_{\Bz}^{\Bb *} (\Bla)) 
= \sum_{\bT \in \SStd_m(\Bla ; \geq \bT^{\Bla}_{\Bb}) }  X^{\bT}, 
\\
&\text{where } 
X^{\bT} = \prod_{i=1}^{m-1} \Big( \tau_{i, a_i} 
	\prod_{x \in [\bT]_i} Y_{i, q^{i-1} c_{\Bz}(x)} 
	\prod_{y \in [\bT]_{i+1}} Y_{i, q^{i+1} c_{\Bz} (y)}^{-1} 
	\prod_{k=1}^{b_i} Z_{i, \eta_{i,k}} \Big). 
\end{align*}
\end{enumerate}

\begin{proof}
The statements except (\rovi) are proven in a similar way as in the proof of Theorem \ref{Thm simple LBb Dla}. 
We only give a proof of (\rovi). 
For $1 \leq i \leq m-1$ and $s \in \ZZ_{\geq 0}$, 
let $\g_{i,s}^+ $ be the complex number determined by 
$\psi_{i,s}^+ \cdot v_{\bT_{\Bb}^{\Bla}} = \g_{i,s}^+ v_{\bT_{\Bb}^{\Bla}}$ in $\D_{\Bz} (\Bla)$.  
Then, by direct calculations using Proposition \ref{Prop Ddc} and \eqref{def LBb}, we have  
\begin{align}
\label{psii+z v bTbbBla w} 
\psi^+_{i}(z) \cdot (v_{\bT_{\Bb}^{\Bla}} \otimes w) 
=  z^{-b_i} \big( \sum_{k=0}^{b_i}   \b_{i, - b_i +k} z^{ k}  \big)
	\big( \sum_{s \geq 0} \g_{i, s'}^+ z^s \big) 
	v_{\bT_{\Bb}^{\Bla}} \otimes w.  
\end{align}
Then,  Corollary \ref{Cor BbTx not=0} (\roii) implies that  
\begin{align}
\label{lbwt BBb Bla} 
\begin{split}
&z^{-b_i} \big( \sum_{k=0}^{b_i}   \b_{i, - b_i +k} z^{ k}  \big)
	\big( \sum_{s' \geq 0} \g_{i, s'}^+ z^s \big)  
\\
&= \frac{S_i(z)}{z^{b_i}}  \cdot 
	q^{\mu_i (\bT_{\Bb}^{\Bla}) - \mu_{i+1} (\bT_{\Bb}^{\Bla})} 
	\prod_{x \in [\bT_{\Bb}^{\Bla}]_i}  \frac{(1 - q^{i-1} c_{\Bz} (x) q^{-1} z)}{ (1 - q^{i-1} c_{\Bz}(x) q z)} 
	\prod_{y \in [\bT_{\Bb}^{\Bla} ]_{i+1}} \frac{ (1 - q^{i+1} c_{\Bz} (y) q z )}{ (1 - q^{i+1} c_{\Bz} (y) q^{-1} z )}. 
\end{split}
\end{align}
We also have 
\begin{align}
\label{decom bTBbBla i+1} 
\begin{split}
&\prod_{y \in [\bT_{\Bb}^{\Bla} ]_{i+1}} \frac{ (1 - q^{i+1} c_{\Bz} (y) q z )}{ (1 - q^{i+1} c_{\Bz} (y) q^{-1} z )} 
\\
&= \prod_{y \in [\bT_{\Bb}^{\Bla} ]_{i+1}^{\dag} } \frac{ (1 - q^{i+1} c_{\Bz} (y) q z )}{ (1 - q^{i+1} c_{\Bz} (y) q^{-1} z )} 
	\cdot 
	\prod_{y \in [\bT_{\Bb}^{\Bla}]_{i+1} \setminus [\bT_{\Bb}^{\Bla}]_{i+1}^{\dag} } 
		\frac{ (1 - q^{i+1} c_{\Bz} (y) q z )}{ (1 - q^{i+1} c_{\Bz} (y) q^{-1} z )}, 
\end{split}
\end{align}
and 
\begin{align}
\label{prod bBb Bla i+1 dag}
\prod_{y \in [\bT_{\Bb}^{\Bla} ]_{i+1}^{\dag} } \frac{ (1 - q^{i+1} c_{\Bz} (y) q z )}{ (1 - q^{i+1} c_{\Bz} (y) q^{-1} z )}  
= \frac{ \prod_{y \in [\bT_{\Bb}^{\Bla}]_{i+1}^{\dag R}} (1 - q^{i+2} c_{\Bz}(y) z)}
		{\prod_{ y \in  [\bT_{\Bb}^{\Bla}]_{\a_i}} (1 - q^i c_{\Bz} (y) z)}. 
\end{align}

On the other hand, 
for $y \in ([\bT_{\Bb}^{\Bla}]_{i+1} \setminus [\bT_{\Bb}^{\Bla}]_{i+1}^{\dag}) \cap [\bT_{\Bb}^{\Bla}]_{i+1}^L $, 
we see that 
$\bT_{\Bb}^{\Bla}(\fn_a(y))=i$ and $ \bT_{\Bb}^{\Bla} (\fn_l(\fn_a(y))) < i$.  
This implies that $\bT_{\Bb}^{\Bla}(\fn_a(y)) = i$ for all $y \in [\bT_{\Bb}^{\Bla}]_{i+1} \setminus [\bT_{\Bb}^{\Bla}]_{i+1}^{\dag}$ 
thanks to the definition of semi-standard tableaux. 
Therefore,  we have 
\begin{align}
\label{prod bTBbBla i setminus i+1} 
\begin{split}
&\prod_{x \in [\bT_{\Bb}^{\Bla}]_i}  \frac{(1 - q^{i-1} c_{\Bz} (x) q^{-1} z)}{ (1 - q^{i-1} c_{\Bz}(x) q z)} 
	\cdot \prod_{y \in [\bT_{\Bb}^{\Bla}]_{i+1} \setminus [\bT_{\Bb}^{\Bla}]_{i+1}^{\dag} } 
		\frac{ (1 - q^{i+1} c_{\Bz} (y) q z )}{ (1 - q^{i+1} c_{\Bz} (y) q^{-1} z )}
\\
&= \prod_{x \in [\bT_{\Bb}^{\Bla}]_i^{\ddag} }  \frac{(1 - q^{i-1} c_{\Bz} (x) q^{-1} z)}{ (1 - q^{i-1} c_{\Bz}(x) q z)}, 
\end{split}
\end{align}
where we note that $c_{\Bz} (\fn_a (y)) = q^2 c_{\Bz} (y)$ 
for $y \in [\bT_{\Bb}^{\Bla}]_{i+1} \setminus [\bT_{\Bb}^{\Bla}]_{i+1}^{\dag}$. 
Then, the equations 
\eqref{psii+z v bTbbBla w}, 
\eqref{lbwt BBb Bla}, \eqref{decom bTBbBla i+1}, \eqref{prod bBb Bla i+1 dag} and \eqref{prod bTBbBla i setminus i+1}
imply that 
\begin{align*}
&\psi_{i}^+(z) \cdot (v_{\bT_{\Bb}^{\Bla}} \otimes w) 
\\
&= q^{\mu_i (\bT_{\Bb}^{\Bla}) - \mu_{i+1} (\bT_{\Bb}^{\Bla}) - \sharp [\bT_{\Bb}^{\Bla}]_i^{\ddag}} 
	\frac{ S_i(z)}{z^{b_i}} 
	\cdot \frac{ \prod_{y \in [\bT_{\Bb}^{\Bla}]_{i+1}^{\dag R}} (1 - q^{i+2} c_{\Bz}(y) z)}
		{\prod_{ y \in  [\bT_{\Bb}^{\Bla}]_{\a_i}} (1 - q^i c_{\Bz} (y) z)} 
	\\ & \qquad 
	\times 
	\prod_{x \in [\bT_{\Bb}^{\Bla}]_i^{\ddag} }  \frac{(1 - q^{i-1} c_{\Bz} (x) q^{-1} z)}{ (1 - q^{i-1} c_{\Bz}(x) q z)} 
	\, 
	v_{\bT_{\Bb}^{\Bla}} \otimes w, 
\end{align*}
where we note that 
the polynomial 
$\prod_{y \in  [\bT_{\Bb}^{\Bla}]_{\a_i}} (1 - q^i c_{\Bz} (y) z)$ 
divides $S_i(z)$ since $\bT_{\Bb}^{\Bla} \in \Sing_m^{\Bb} (\Bla) $. 
\end{proof}
\end{thm}

%%%
\begin{cor}
\label{Cor simple D otimes LBb}
We consider the same setting as in Theorem \ref{Thm simple LBb Dla}. 
Then, we have that 
the $U_{q,[\bb]}$-module $ \D_{\Bz} (\Bla) \otimes L_{\Bb}$ is simple if and only if 
$\Sing_m^{\Bb} (\Bla) = \{ \bT^{\Bla}\}$. 
\end{cor}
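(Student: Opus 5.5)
We need to prove Corollary \ref{Cor simple D otimes LBb}: $\D_{\Bz}(\Bla)\otimes L_{\Bb}$ is simple if and only if $\Sing_m^{\Bb}(\Bla)=\{\bT^{\Bla}\}$. The plan is to deduce this from Theorem \ref{Thm D Bla otimes LBb}, mirroring how Corollary \ref{Cor simple LBb otimes D} follows from Theorem \ref{Thm simple LBb Dla}. Recall that $\bT^{\Bla}\in\Sing_m^{\Bb}(\Bla)$ always, since $[\bT^{\Bla}]_{\a_i}=\emptyset$. Also $\SStd_m(\Bla;\geq\bT^{\Bla})=\SStd_m(\Bla)$, because every semi-standard $\bS$ satisfies $\bS((a,b,c))\geq a$ by column strictness. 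Consequently $\D_{\Bz}(\Bla;\geq\bT^{\Bla})\otimes L_{\Bb}$ is the whole module.

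For the direction ($\Leftarrow$), suppose $\Sing_m^{\Bb}(\Bla)=\{\bT^{\Bla}\}$. The unique tableau $\bT^{\Bla}_{\Bb}$ of Theorem \ref{Thm D Bla otimes LBb} (\roiv) must lie in $\Sing_m^{\Bb}(\Bla)$, so $\bT^{\Bla}_{\Bb}=\bT^{\Bla}$. By Theorem \ref{Thm D Bla otimes LBb} (\rov), the module $\D_{\Bz}(\Bla;\geq\bT^{\Bla})\otimes L_{\Bb}$ is simple, and by the remark above this module is all of $\D_{\Bz}(\Bla)\otimes L_{\Bb}$.

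For the direction ($\Rightarrow$), suppose $\Sing_m^{\Bb}(\Bla)$ contains some $\bT\neq\bT^{\Bla}$. By Theorem \ref{Thm D Bla otimes LBb} (\roiii), the subspace $\D_{\Bz}(\Bla;\geq\bT)\otimes L_{\Bb}$ is a submodule. It is nonzero, since it contains $v_{\bT}\otimes w$. It is also proper, because $\bT^{\Bla}\notin\SStd_m(\Bla;\geq\bT)$: indeed $\bT\neq\bT^{\Bla}$ together with $\bT\geq\bT^{\Bla}$ pointwise forces $\bT(x)>\bT^{\Bla}(x)$ at some node $x$. Hence the module is not simple.

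An alternative route for ($\Rightarrow$) is to observe that $v_{\bT}\otimes w$ and $v_{\bT^{\Bla}}\otimes w$ are two linearly independent singular vectors, by Theorem \ref{Thm D Bla otimes LBb} (\roii). This contradicts simplicity, since a finite dimensional simple module is $\ell_{[\bb]}$-highest weight and the weight-space decomposition forces a unique singular line. The submodule argument is more direct, so I would use that one.

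I expect no real obstacle here. The only point needing care is the observation that $\SStd_m(\Bla;\geq\bT^{\Bla})=\SStd_m(\Bla)$, which follows immediately from the column-strict condition in the definition of semi-standard tableaux.
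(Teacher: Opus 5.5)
Your proof is correct and is essentially the argument the paper intends. The paper states this corollary without a separate proof, as an immediate consequence of Theorem \ref{Thm D Bla otimes LBb}, and your deduction from parts (iii)--(v) is exactly that: you use $\SStd_m(\Bla;\geq \bT^{\Bla})=\SStd_m(\Bla)$ for ($\Leftarrow$), and the proper nonzero submodule $\D_{\Bz}(\Bla;\geq\bT)\otimes L_{\Bb}$ for ($\Rightarrow$).
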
 

%%%%%%%%%%%%
\example 
Let $m=3$, $r=1$, $n=3$ and $\la =(2,1) \in \vL_3^+(3)$. 
The set $\SStd_m(\la)$ consits of 
\begin{align*}
&\bT_1= \left( \, \begin{array}{|c|c|} \hline 1 & 1 \\ \hline 2 \\ \cline{1-1} \end{array}\, \right), 
\quad 
\bT_2= \left( \, \begin{array}{|c|c|} \hline 1 & 1 \\ \hline 3 \\ \cline{1-1} \end{array}\, \right), 
\quad 
\bT_3= \left( \, \begin{array}{|c|c|} \hline 1 & 2 \\ \hline 2 \\ \cline{1-1} \end{array}\, \right), 
\quad 
\bT_4= \left( \, \begin{array}{|c|c|} \hline 1 & 2 \\ \hline 3 \\ \cline{1-1} \end{array}\, \right), 
\\
&\bT_5= \left( \, \begin{array}{|c|c|} \hline 1 & 3 \\ \hline 2 \\ \cline{1-1} \end{array}\, \right), 
\quad 
\bT_6= \left( \, \begin{array}{|c|c|} \hline 1 & 3 \\ \hline 3 \\ \cline{1-1} \end{array}\, \right), 
\quad 
\bT_7= \left( \, \begin{array}{|c|c|} \hline 2 & 2 \\ \hline 3 \\ \cline{1-1} \end{array}\, \right), 
\quad 
\bT_8= \left( \, \begin{array}{|c|c|} \hline 2 & 3 \\ \hline 3 \\ \cline{1-1} \end{array}\, \right), 
\end{align*}
where $\bT^{\la} = \bT_1$ and $\bT_{\la} = \bT_8$.  
We also have 
\begin{align*}
c_{\z} ((1,1))= \z, \quad c_{\z} ((1,2)) = q^2 \z  \text{ and } c_{\z} ((2,1)) = q^{-2} \z. 
\end{align*}
Moreover, we see that 
\begin{align*}
&\begin{array}{llll}
[\bT_1]_{-\a_1} = \{(1,2)\}, 
&
[\bT_2]_{-\a_1} = \{(1,2)\}, 
&
[\bT_3]_{-\a_1} = \emptyset, 
&
[\bT_4]_{-\a_1} =\{(1,1)\}, 
\\[0.5em]
[\bT_5]_{-\a_1} = \emptyset, 
& 
[\bT_6]_{-\a_1} = \{(1,1)\}, 
& 
[\bT_7]_{-\a_1} = \emptyset, 
& 
[\bT_8]_{-\a_1} = \emptyset, 
\end{array}
\\[1em]
&\begin{array}{llll}
[\bT_1]_{-\a_2} = \{ (2,1)\}, 
&
[\bT_2]_{-\a_2} = \emptyset, 
&
[\bT_3]_{-\a_2} = \{ (1,2), (2,1)\}, 
&
[\bT_4]_{-\a_2} = \{(1,2)\}, 
\\[0.5em]
[\bT_5]_{-\a_2} = \{(2,1)\}, 
& 
[\bT_6]_{-\a_2} = \emptyset, 
& 
[\bT_7]_{-\a_2} = \{(1,2)\}, 
& 
[\bT_8]_{-\a_2} = \emptyset. 
\end{array}
\\[1em]
&\begin{array}{llll}
[\bT_1]_{\a_1} = \emptyset, 
&
[\bT_2]_{\a_1} = \emptyset, 
&
[\bT_3]_{\a_1} = \{(1,2)\}, 
&
[\bT_4]_{\a_1} = \{(1,2)\}, 
\\[0.5em]
[\bT_5]_{\a_1} = \emptyset, 
& 
[\bT_6]_{\a_1} = \emptyset, 
& 
[\bT_7]_{\a_1} = \{(1,1)\}, 
& 
[\bT_8]_{\a_1} = \{(1,1)\}, 
\end{array}
\\[1em]
&\begin{array}{llll}
[\bT_1]_{\a_2} = \emptyset, 
&
[\bT_2]_{\a_2} = \{(2,1)\}, 
&
[\bT_3]_{\a_2} = \emptyset, 
&
[\bT_4]_{\a_2} = \{(2,1)\}, 
\\[0.5em]
[\bT_5]_{\a_2} = \{(1,2)\}, 
& 
[\bT_6]_{\a_2} = \{ (1,2), (2,1)\}, 
& 
[\bT_7]_{\a_2} = \emptyset, 
& 
[\bT_8]_{\a_2} = \{(1,2)\}. 
\end{array}
\end{align*}

Let $\bb=(1,0)$, $\b_{1,-1}=1 $, $\b_{1,0}= - q \z$ and $\b_{2,0}=1$. 
We have $S_1(z) = (1 - q \z  z)$ and $S_2(z)=1$. 
In this case, we see that 
\begin{align*}
&\fSing_3^{\Bb} (\la) = \{ \bT_6, \bT_8\}, 
\\
&\SStd_3 (\la ; \leq \bT_6) = \{ \bT_1, \bT_2, \bT_3, \bT_4, \bT_5, \bT_6\}, 
\quad 
\SStd_3 (\la ; \leq \bT_8) = \SStd_3 (\la), 
\end{align*}
Then, we have 
$\bT_{\la}^{\Bb} = \bT_6$. 
The Drinfeld polynomials and the $q$-character of $\D_{\Bz}^{\Bb} (\la)$ are given by 
\begin{align*}
&S_1(z) = (1 - q\z  z), \quad P_1(z) = (1- q^{2} \z z), 
\quad 
S_2(z) =1, \quad P_2(z) = (1 - q^{-1} \z z), 
\\
& \chi_{[\bb]} (\D_{\Bz}^{\Bb}(\la)) 
= X^{\bT_1} + X^{\bT_2} + X^{\bT_3} + X^{\bT_4} + X^{\bT_5} + X^{\bT_6} 
\\& \hspace{5em} 
=  
	\big( Y_{1, q^2 \z}   Y_{2, q^{-1} \z} 
		+ Y_{1, \z} Y_{1, q^2 \z} Y_{2, q z}^{-1} 
		+ Y_{1, q^4 \z}^{-1} Y_{2, q^3 \z} Y_{2, q^{-1} \z} 
		\\ & \hspace{8em} 
		+ Y_{1, \z} Y_{1, q^4 \z}^{-1} Y_{2, q^3 \z} Y_{2, q \z}^{-1} 
		+ Y_{2, q^{-1} \z} Y_{2, q^5 \z}^{-1} 
		+ Y_{1, \z} Y_{2, q^5 \z}^{-1} Y_{2, q \z}^{-1} 
	\big) Z_{1, q \z}. 
\end{align*}
We also see that 
\begin{align*}
& \Sing_3^{\Bb} (\la) = \{ \bT_1, \bT_7\}, 
\\
& \SStd_3 (\la ; \geq \bT_1) = \SStd_3(\la), 
\quad 
\SStd_3 (\la ; \geq \bT_7) = \{ \bT_7, \bT_8\}. 
\end{align*}
Then, we have $\bT_{\Bb}^{\la} = \bT_7$, 
\begin{align*}
&[\bT_{\Bb}^{\la}]_2^{\dag} = \{ (1,1), (1,2)\}, 
\quad 
[\bT_{\Bb}^{\la}]_2^{\dag L} = \{(1,1)\}, 
\quad 
[\bT_{\Bb}^{\la}]_2^{\dag R} =\{(1,2)\}, 
\quad 
[\bT_{\Bb}^{\la}]_1^{\ddag} = \emptyset, 
\\
&[\bT_{\Bb}^{\la}]_3^{\dag} = [\bT_{\Bb}^{\la}]_3^{\dag L} = [\bT_{\Bb}^{\la}]_3^{\dag R} = \emptyset,  
\quad 
[\bT_{\Bb}^{\la}]_2^{\ddag} = \{ (1,2)\}.
\end{align*}
The Drinfeld polynomials and the $q$-character of $\D_{\Bz}^{\Bb *}(\la)$ are given by 
\begin{align*}
&\wt{S}_1(z) = q^{-2 } (1 - q^5 \z z), 
\quad 
\wt{P}_1(z)= 1, 
\quad 
\wt{S}_2(z) =1, 
\quad 
\wt{P}_2(z) = (1 - q^3 \z z), 
\\
& \chi_{[\bb]} (\D_{\Bz}^{\Bb*}(\la)) 
	= X^{\bT_7} + X^{\bT_8} 
	\\ & \hspace{5.5em} 
	= \big( Y_{1, q^2 \z}^{-1} Y_{1, q^4 \z}^{-1} Y_{2, q^3 \z} 
			+ Y_{1, q^2\z}^{-1}  Y_{2, q^5 \z}^{-1}  
	\big) Z_{1, q \z} 
	\\ & \hspace{5.5em} 
	= \ta_{1, q^{-2}} \big( Y_{2, q^3 \z} +   Y_{1, q^4 \z} Y_{2, q^5 \z}^{-1} \big) Z_{1, q^5 \z}. 
\end{align*}

\remark 
For representations of shifted Yangians, 
a claim  corresponding to Lemma \ref{Lemma sing LBb M} is stated in \cite[Theorem 4.8]{HZ24}. 
A claim corresponding to Corollary \ref{Cor simple LBb otimes D} and Corollary \ref{Cor simple D otimes LBb} 
is also stated in \cite[Corollary 5.10]{HZ24}.  

%%%%%%%%%%%%%%%%%%%%%%%%%%%%%%%%%%%%%%%%%%%%%%%%%%%%%%%%%%%%%%%

%%%%%%%%%%%%%%%%%%%%%%%%%%%%%%%%%%%%%%%%%%%%%%%%%%%%%%%%%%%%%%%
\section{A proof of Proposition \ref{Prop Def DBz Bla}}
\label{section proof DBzBla}
In this section,  
we check that the action of $U_q (L\Fsl_m)$ on $\D_{\Bz} (\Bla)$ given by Proposition \ref{Prop Def DBz Bla} 
satisfies the defining relations (U1) - (U8) of $U_q (L\Fsl_m)$. 
In order to check them, we prepare some technical lemmas as follows.
We obtain the following two lemmas immediately from definitions. 

%%%
\begin{lem}
\label{Lemma bTx- ai}
For $\Bla \in \vL_{n,r}^+(m)$, $\bT \in \SStd_m(\Bla)$ and $1 \leq i,j \leq m-1$, 
we have the following. 
\begin{enumerate} 
\item 
For $x \in [\bT]_{\a_j}$,  
\begin{align*}
[\bT_x^-]_{\a_i} 
	&= \begin{cases}
	\big( [\bT]_{\a_i} \setminus \{x\} \big) \cup \{ \fn_r(x)\} 
		& \text{ if } j=i \text{ and } \fn_r(x) \in [\bT_x^-]_{\a_i}, 
	\\ 
	[\bT]_{\a_i} \setminus \{x \} & \text{ if } j=i \text{ and } \fn_r (x) \not\in [\bT_x^-]_{\a_i}, 
	\\
	[\bT]_{\a_i} \cup \{\fn_b(x)\} & \text{ if } j=i-1 \text{ and } \fn_b (x) \in [\bT_x^-]_{\a_i}, 
	\\ 
	[\bT]_{\a_i} \cup \{x\} & \text{ if } j=i+1 \text{ and } x \in [\bT_x^-]_{\a_i},  
	\\
	[\bT]_{\a_i} & \text{ otherwise}, 
	\end{cases}
\\
[\bT_x^-]_{-\a_i} 
	&= \begin{cases}
		\big( [\bT]_{-\a_i} \setminus \fn_l(x) \big) \cup \{x\} 
			& \text{ if } j=i \text{ and } \fn_l(x) \in [\bT]_{-\a_i}, 
		\\
		[\bT]_{-\a_i} \cup \{x\} 
			& \text{ if } j=i \text{ and } \fn_l(x) \not\in [\bT]_{-\a_i}, 
		\\
		[\bT]_{-\a_i} \setminus \{x\} & \text{ if } j=i-1 \text{ and } x \in [\bT]_{-\a_i}, 
		\\
		[\bT]_{-\a_i} \setminus \{\fn_a(x)\} & \text{ if } j=i+1 \text{ and } \fn_a (x) \in [\bT]_{-\a_i}, 
		\\ 
		[\bT]_{-\a_i} & \text{ otherwise}. 
	\end{cases}
\end{align*}

\item 
For $x \in [\bT]_{-\a_j}$,  
\begin{align*}
[\bT_x^+]_{-\a_i} 
	&= \begin{cases}
		\big( [\bT]_{-\a_i} \setminus \{x\} \big) \cup \{ \fn_l (x) \} 
			& \text{ if } j=i \text{ and } \fn_l(x) \in [\bT_x^+]_{-\a_i}, 
		\\
		[\bT]_{-\a_i} \setminus \{x \} 
			& \text{ if } j=i \text{ and } \fn_l(x) \not\in [\bT_x^+]_{-\a_i}, 
		\\
		[\bT]_{-\a_i} \cup \{x\} 
			& \text{ if } j=i-1 \text{ and } x \in [\bT_x^+]_{-\a_i}, 
		\\
		[\bT]_{-\a_i} \cup \{ \fn_a (x)\} 
			& \text{ if } j=i+1 \text{ and } \fn_a(x) \in [\bT_x^+]_{-\a_i}, 
		\\
		[\bT]_{-\a_i} 
			& \text{ otherwise}, 
	\end{cases}
\\
[\bT_x^+]_{\a_i} 
	&= \begin{cases}
	\big( [\bT]_{\a_i} \setminus \{\fn_r(x) \} \big) \cup \{x\} 
		& \text{ if } j=i \text{ and } \fn_r (x) \in [\bT]_{\a_i}, 
	\\
	[\bT]_{\a_i} \cup \{ x\} 
		& \text{ if } j=i \text{ and } \fn_r (x) \not\in [\bT]_{\a_i}, 
	\\
	[\bT]_{\a_i} \setminus \{\fn_b (x) \} 
		& \text{ if } j=i-1 \text{ and } \fn_b(x) \in [\bT]_{\a_i}, 
	\\
	[\bT]_{\a_i} \setminus \{x\} 
		& \text{ if } j=i+1 \text{ and } x \in [\bT]_{\a_i}, 
	\\
	[\bT]_{\a_i} 
		& \text{ otherwise}.
	\end{cases}
\end{align*}
\end{enumerate}
\end{lem}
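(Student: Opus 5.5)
This lemma is a case analysis driven by the local characterization of the sets recalled in \ref{bTi}:
\begin{align*}
& [\bT]_{\a_i} = \{ y \mid \bT(y) =i+1, \, \bT(\fn_a(y)) <i, \, \bT(\fn_l(y)) \leq i\}, \\
& [\bT]_{-\a_i} = \{ y \mid \bT(y)=i, \, \bT(\fn_b(y)) > i+1, \, \bT(\fn_r(y)) \geq i+1\}.
\end{align*}
Membership of a node $y$ in $[\bT]_{\pm\a_i}$ depends only on $\bT(y)$ and on the values of $\bT$ at the neighbours of $y$. The operation $\bT \mapsto \bT_x^{\pm}$ changes only the value at $x$. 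Hence membership can change only for $y=x$ or $y \in \{\fn_a(x), \fn_b(x), \fn_l(x), \fn_r(x)\}$. For each such $y$ and each relative position of $i$ and $j$, I will compare the defining conditions before and after the change. Cases with $|i-j|\geq 2$ give ``otherwise'', since all values involved lie outside $\{i, i+1\}$ or the inequalities are unaffected.

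For (i), take $x\in[\bT]_{\a_j}$, so $\bT(x)=j+1$ and $\bT_x^-(x)=j$, and $\bT_x^-$ is semi-standard.

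\textbf{Case $j=i$, the set $[\bT_x^-]_{\a_i}$.} The node $x$ now has value $i$, so it leaves the set. Among the neighbours, only $\fn_r(x)$ can newly enter. Its condition $\bT(\fn_l(\cdot))\le i$ was violated before, because the left neighbour $x$ had value $i+1$, and it may be satisfied now. The node $\fn_b(x)$ cannot enter: its above-neighbour now has value $i$, which is not $<i$.

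\textbf{Case $j=i$, the set $[\bT_x^-]_{-\a_i}$.} The node $x$ now has value $i$. Its right neighbour is $\ge i+1$ and its below neighbour is $> i+2$ by semi-standardness of $\bT$, so $x$ enters. The node $\fn_l(x)$, if it had value $i$, loses membership, since its right neighbour is now $i$. No other neighbour is affected.

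\textbf{Cases $j=i\pm1$.} These are handled in the same way. For $j=i-1$, $x$ goes from value $i$ to $i-1$, so $x$ leaves $[\cdot]_{-\a_i}$, and $\fn_b(x)$ may enter $[\cdot]_{\a_i}$ because its above-value drops below $i$. For $j=i+1$, $x$ goes from value $i+2$ to $i+1$, so $x$ may enter $[\cdot]_{\a_i}$, and $\fn_a(x)$ leaves $[\cdot]_{-\a_i}$ because its below-value becomes $i+1$. In each of these cases I will check that the other neighbours ($\fn_l$, $\fn_r$, and so on) cannot change status, using the row and column inequalities of $\bT$.

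Part (ii) follows by the symmetric argument with $\bT_x^+$. Alternatively, apply (i) to $\bS=\bT_x^+$ and the node $x\in[\bS]_{\a_j}$, noting $\bS_x^-=\bT$, and solve for the sets of $\bS$ in terms of those of $\bT$. The main obstacle is purely bookkeeping: making sure, in each subcase, that no neighbour other than the listed one changes status. This uses the strictness of columns, for example $\bT(\fn_a(x))<i$ for $x \in [\bT]_{\a_i}$, to exclude spurious changes.
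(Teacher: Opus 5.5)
Your proposal is correct and takes the same route as the paper, which simply states that the lemma follows immediately from the definitions; your neighbourhood-by-neighbourhood case check (and the alternative of getting (ii) by applying (i) to $\bS=\bT_x^+$ and inverting) is exactly the routine verification the paper leaves to the reader. One small slip: in the case $j=i$ for $[\bT_x^-]_{-\a_i}$, semi-standardness gives $\bT(\fn_b(x)) \geq i+2$, i.e.\ $\bT(\fn_b(x)) > i+1$, not $> i+2$; the weaker inequality is all that membership requires, so the argument is unaffected.
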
 

%%%

\begin{lem}
\label{Lemma Tx- iL} 
For $\Bla \in \vL_{n,r}^+(m)$, $\bT \in \SStd_m(\Bla)$ and $1 \leq i,j \leq m-1$, 
we have the following. 
\begin{enumerate} 
\item 
For $x \in [\bT]_{\a_j}$,  
\begin{align*}
[\bT_x^-]_i^L 
	&= \begin{cases}
		[\bT]_i^L \cup\{x\} & \text{ if } j=i \text{ and } \bT(\fn_l(x))< i, 
		\\
		\big( [\bT]_i^L \setminus \{x\} \big) \cup \{ \fn_r (x) \} 
			& \text{ if } j=i-1 \text{ and } \bT(\fn_r(x))=i, 
		\\
		[\bT]_i^L \setminus \{x\} 
			& \text{ if } j=i-1 \text{ and } \bT(\fn_r(x))>i, 
		\\
		[\bT]_i^L 
			& \text{ otherwise}, 
	\end{cases}
\\
[\bT_x^-]_i^R 
	&= \begin{cases}
		\big( [\bT]_i^R \setminus \{\fn_l(x) \} \big) \cup \{x\} 
			& \text{ if } j=i \text{ and } \bT(\fn_l(x)) =i, 
		\\
		[\bT]_i^R \cup \{x\} 
			& \text{ if } j=i \text{ and } \bT(\fn_l(x)) < i, 
		\\
		[\bT]_i^R \setminus \{x\} 
			& \text{ if } j=i-1 \text{ and } \bT(\fn_r(x))>i, 
		\\
		[\bT]_i^R 
			& \text{ otherwise}, 
	\end{cases}
\end{align*}
where we note that 
$\bT(\fn_l(x)) \leq i$ if $j=i$, and $\bT (\fn_r(x)) \geq i$  if $j=i-1$ since $x \in [\bT]_{\a_j} $.

\item 
For $x \in [\bT]_{-\a_j}$,  
\begin{align*}
[\bT_x^+]_i^L 
	&= \begin{cases}
		[\bT]_i^L \setminus \{x\} 
			& \text{ if }j=i \text{ and } \bT(\fn_l(x)) <i, 
		\\
		\big( [\bT]_i^L \setminus \{\fn_r(x)\} \big) \cup \{x\} 
			& \text{ if } j=i-1 \text{ and } \bT(\fn_r(x)) =i, 
		\\
		[\bT]_i^L \cup \{x\} 
			& \text{ if } j=i-1 \text{ and } \bT(\fn_r(x)) >i, 
		\\
		[\bT]_i^L 
			& \text{ otherwise},  
	\end{cases}
\\
[\bT_x^+]_i^R 
	& = \begin{cases}
	\big( [\bT]_i^R \setminus \{x\} \big) \cup \{ \fn_l(x) \} 
		& \text{ if } j=i \text{ and } \bT (\fn_l(x)) =i, 
	\\
	[\bT]_i^R \setminus \{x\} 
		& \text{ if } j=i \text{ and } \bT (\fn_l(x)) < i, 
	\\
	[\bT]_i^R \cup \{x\} 
		& \text{ if } j=i-1 \text{ and } \bT(\fn_r(x))>i, 
	\\
	[\bT]_i^R 
		& \text{ otherwise}, 
	\end{cases}
\end{align*}
where we note that 
$\bT(\fn_l(x)) \leq i$ if $j=i$, and $\bT(\fn_r(x)) \geq i$ if $j=i-1$ since $x \in [\bT]_{-\a_j}$. 
\end{enumerate} 
\end{lem}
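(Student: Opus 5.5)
The plan is to verify each case of Lemma \ref{Lemma Tx- iL} directly from the definitions of $[\bT]_i^L$ and $[\bT]_i^R$. The key observation is that $\bT_x^-$ (resp. $\bT_x^+$) differs from $\bT$ only at the single node $x$. Membership of a node $y$ in $[\bS]_i^L$ depends only on $\bS(y)$ and $\bS(\fn_l(y))$, and membership in $[\bS]_i^R$ only on $\bS(y)$ and $\bS(\fn_r(y))$. So membership can change only for $y \in \{x, \fn_r(x)\}$ in the $L$ case, and for $y \in \{x, \fn_l(x)\}$ in the $R$ case. Every other node keeps its status, which already accounts for the ``otherwise'' branches.

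I will treat part (\roi) first, with $x \in [\bT]_{\a_j}$, so $\bT(x)=j+1$ and $\bT_x^-(x)=j$. By the description of $[\bT]_{\a_j}$ in \S\ref{bTi}, we also know $\bT(\fn_l(x)) \leq j$, $\bT(\fn_a(x))<j$ and $\bT(\fn_r(x)) \geq j+1$. The value at $x$ changes from $j+1$ to $j$, so only $i=j$ or $i=j+1$ (that is, $j=i-1$) can be affected; for all other $i$ nothing changes.

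For $i=j$ and the $L$-set, $x$ enters $[\bT_x^-]_i^L$ exactly when $\bT(\fn_l(x))<i$. The node $\fn_r(x)$ has value at least $i+1$, so it is unaffected. For the $R$-set, $x$ enters because $\bT(\fn_r(x))>i$. If $\bT(\fn_l(x))=i$, then $\fn_l(x)$ was in $[\bT]_i^R$ and now drops out; if $\bT(\fn_l(x))<i$ there is nothing to remove.

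For $j=i-1$, the node $x$ had value $i$ and now has value $i-1$, so $x$ leaves both $[\cdot]_i^L$ and $[\cdot]_i^R$ whenever it belonged to them. Before the change, $x\in[\bT]_i^L$ automatically, since $\bT(\fn_l(x))\leq i-1$. For the $L$-set, if $\bT(\fn_r(x))=i$ then $\fn_r(x)$ becomes a new leftmost $i$; if $\bT(\fn_r(x))>i$ it does not. For the $R$-set, $x$ was rightmost exactly when $\bT(\fn_r(x))>i$. In the remaining case $\bT(\fn_r(x))=i$, the node $x$ was not rightmost and no new rightmost node appears, which is the ``otherwise'' branch.

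Part (\roii) is the mirror argument. Here $x \in [\bT]_{-\a_j}$, the value at $x$ changes from $j$ to $j+1$, and we have $\bT(\fn_r(x))\geq j+1$ and $\bT(\fn_l(x))\leq j$. Equivalently, I can apply part (\roi) to $\bS=\bT_x^+$ with $x\in[\bS]_{\a_j}$ and $\bS_x^-=\bT$, then invert the resulting set identities. The main care point, which is the only real obstacle, is keeping track of which boundary conditions are forced by semi-standardness. These are the notes $\bT(\fn_l(x))\leq i$ and $\bT(\fn_r(x))\geq i$ recorded in the statement. They guarantee that the listed cases are exhaustive and that the node $x$ itself was (or was not) in the relevant set before the move.
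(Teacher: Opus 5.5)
Your proposal is correct and follows the paper's approach. The paper says this lemma follows immediately from the definitions, and your case check is that verification: only $x$ and $\fn_r(x)$ (for the $L$-sets), resp. $x$ and $\fn_l(x)$ (for the $R$-sets), can change status, and only when $i\in\{j,j+1\}$. Deriving part (ii) from part (i) via $\bT_x^+$ also works, provided you record the membership facts you flag, e.g. that $x$ lies in $[\bT_x^+]_i^L$ and $\fn_r(x)$ does not when $j=i-1$ and $\bT(\fn_r(x))=i$.
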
 

%%%

By direct calculations using Lemma \ref{Lemma Tx- iL}, we have the following three lemmas.   

\begin{lem}
\label{Lemma BbTx- y} 
For $\Bla \in \vL_{n,r}^+(m)$, $\bT \in \SStd_m(\Bla)$ and $1 \leq i,j \leq m-1$, 
we have the following. 
\begin{enumerate} 
\item 
For $x \in [\bT]_{\a_j}$ and $y \in [\bT]_{\a_i} \cap [\bT_x^-]_{\a_i}$,  
we have $B_{\bT_x^-}^y = B_{\bT}^y D_{\bT_x^-}^y$, 
where 
\begin{align*}
D_{\bT_x^-}^y = \begin{cases}
	\dis \frac{ q c_{\Bz} (y) - q^{-1} c_{\Bz} (x) }{ c_{\Bz} (y) - c_{\Bz} (x)} 
		& \text{ if } j=i,  
	\\[1em] 
	\dis \frac{c_{\Bz} (y) - c_{\Bz} (x)}{ q c_{\Bz} (y) - q^{-1} c_{\Bz} (x)} 
		& \text{ if } j= i-1, 
	\\[1em] 
	1 & \text{ if } j \not= i, i-1.
	\end{cases}
\end{align*}

\item 
For $x \in [\bT]_{-\a_j}$ and $y \in [\bT]_{-\a_i} \cap [\bT_x^+]_{-\a_i}$, 
we have  $C_{\bT_x^+}^y = C_{\bT}^y D_{\bT_x^+}^y$, 
where 
\begin{align*}
D_{\bT_x^+}^y 
= \begin{cases}
	\dis \frac{q^{-1} c_{\Bz} (y) - q c_{\Bz} (x)}{ c_{\Bz}(y) - c_{\Bz} (x)} 
		& \text{ if } j=i, 
	\\[1em] 
	\dis \frac{ c_{\Bz} (y) - c_{\Bz} (x)}{ q^{-1} c_{\Bz} (y) - q c_{\Bz} (x) } 
		& \text{ if } j=i+1, 
	\\[1em] 
	1 & \text{ if } j \not=i, i+1.
\end{cases}
\end{align*}
\end{enumerate}
\end{lem}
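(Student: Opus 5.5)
The plan is to compute both sides directly from the defining formula of $B_{\bT}^x$, using the product form \eqref{BbTx CbTx}. The alternative form is not helpful here, because it only depends on the set $[\bT]_i$ of nodes with entry $i$ and on $\mu_i(\bT)=\sharp[\bT]_i$. Fix $x \in [\bT]_{\a_j}$ and $y \in [\bT]_{\a_i}\cap[\bT_x^-]_{\a_i}$. By \eqref{BbTx CbTx},
\begin{align*}
B_{\bT}^y = q^{\mu_i(\bT)} \prod_{u \in [\bT]_i} \frac{c_{\Bz}(y) - q^{-2} c_{\Bz}(u)}{c_{\Bz}(y) - c_{\Bz}(u)}.
\end{align*}
The same formula holds for $B_{\bT_x^-}^y$, with $\bT$ replaced by $\bT_x^-$. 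Note that $y\in[\bT_x^-]_{\a_i}$, so $B_{\bT_x^-}^y$ is defined. So the ratio $B_{\bT_x^-}^y/B_{\bT}^y$ is governed entirely by how $[\bT]_i$ changes when $\bT(x)=j+1$ is lowered to $j$.

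Next I compare $[\bT_x^-]_i$ with $[\bT]_i$. If $j=i$, then $x$ changes from entry $i+1$ to entry $i$, so $[\bT_x^-]_i=[\bT]_i\cup\{x\}$ and $\mu_i$ increases by $1$. If $j=i-1$, then $x$ changes from entry $i$ to $i-1$, so $[\bT_x^-]_i=[\bT]_i\setminus\{x\}$ and $\mu_i$ decreases by $1$. For all other $j$ the set $[\bT]_i$ is unchanged, so $D=1$. This is consistent with Lemma \ref{Lemma Tx- iL}; using the full products avoids the $L/R$ bookkeeping.

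The ratio in the case $j=i$ is
\begin{align*}
q \cdot \frac{c_{\Bz}(y)-q^{-2}c_{\Bz}(x)}{c_{\Bz}(y)-c_{\Bz}(x)} = \frac{q c_{\Bz}(y)-q^{-1}c_{\Bz}(x)}{c_{\Bz}(y)-c_{\Bz}(x)}.
\end{align*}
The case $j=i-1$ gives the reciprocal, which matches $D_{\bT_x^-}^y$. The only point needing care is that the denominators do not vanish. For $j=i$, the node $x$ lies in $[\bT_x^-]_i$ and $y\in[\bT_x^-]_{\a_i}$, so \eqref{czx not= czy ai} applied to $\bT_x^-$ gives $c_{\Bz}(y)\neq c_{\Bz}(x)$. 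For $j=i-1$, the node $x$ lies in $[\bT]_i$, so \eqref{czx not= czy ai} for $\bT$ gives the same nonvanishing. Also $qc_{\Bz}(y)-q^{-1}c_{\Bz}(x)\neq0$ there, by Corollary \ref{Cor BbTx not=0}(i) applied to $\bT_x^-$. Hence the identity is well defined.

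Part (ii) is symmetric. I use the product form of $C_{\bT}^y$ over $[\bT]_{i+1}$, with prefactor $q^{-\mu_{i+1}(\bT)}$. For $x\in[\bT]_{-\a_j}$ with $j=i$, the node $x$ moves from entry $i$ to $i+1$, so it joins $[\bT]_{i+1}$. This yields
\begin{align*}
q^{-1}\frac{c_{\Bz}(y)-q^2c_{\Bz}(x)}{c_{\Bz}(y)-c_{\Bz}(x)}.
\end{align*}
For $j=i+1$, the node $x$ leaves $[\bT]_{i+1}$, giving the reciprocal; otherwise the ratio is $1$. Nonvanishing follows from \eqref{czx not= czy -ai}. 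There is no real obstacle: the whole proof is the observation that the product form isolates the single changed node. The only care needed is with the sign and $q$-power normalisations and with the nonvanishing of denominators.
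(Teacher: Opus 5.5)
Your argument is correct. It is organized differently from the paper's, which obtains this lemma ``by direct calculations using Lemma~\ref{Lemma Tx- iL}''. That is, the paper works from the defining form of $B_{\bT}^y$ and $C_{\bT}^y$ through the endpoint sets $[\bT]_i^L,[\bT]_i^R$ (resp.\ $[\bT]_{i+1}^L,[\bT]_{i+1}^R$). That route must split according to the neighbours of $x$: for $j=i$, whether $\bT(\fn_l(x))=i$ or $<i$; for $j=i-1$, whether $\bT(\fn_r(x))=i$ or $>i$. In each subcase the changed endpoint factors collapse to $D_{\bT_x^-}^y$ only after using $c_{\Bz}(\fn_l(x))=q^{-2}c_{\Bz}(x)$ or $c_{\Bz}(\fn_r(x))=q^{2}c_{\Bz}(x)$.

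You instead pass to the telescoped form \eqref{BbTx CbTx}, in which exactly one node enters or leaves $[\bT]_i$ (resp.\ $[\bT]_{i+1}$). This absorbs all those subcases at once and reduces the lemma to a single ratio computation. The cost is twofold. First, you rely on \eqref{BbTx CbTx}, which the paper states without proof, though it is just a row-by-row telescoping. Second, you need every denominator $c_{\Bz}(y)-c_{\Bz}(u)$ in the full products to be nonzero, for $\bT$ and for $\bT_x^-$ (resp.\ $\bT_x^+$). This is \eqref{czx not= czy ai} (resp.\ \eqref{czx not= czy -ai}) applied to both tableaux, and you check it.

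One citation in your denominator remark needs fixing. For $j=i-1$ the node $x$ has entry $i-1$ in $\bT_x^-$, so Corollary~\ref{Cor BbTx not=0}(i) applied to $\bT_x^-$ says nothing about $x$. The nonvanishing of $qc_{\Bz}(y)-q^{-1}c_{\Bz}(x)$ comes from that corollary applied to $\bT$ instead. Since all denominators of the full product for $B_{\bT}^y$ are nonzero, $B_{\bT}^y\neq 0$ forces every numerator factor $c_{\Bz}(y)-q^{-2}c_{\Bz}(u)$ with $u\in[\bT]_i$ to be nonzero, in particular for $u=x$. Likewise, in part (ii) with $j=i+1$, the denominator $q^{-1}c_{\Bz}(y)-qc_{\Bz}(x)$ is nonzero by Corollary~\ref{Cor BbTx not=0}(i)(b) for $\bT$, not by \eqref{czx not= czy -ai}.
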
 

%%%
\begin{lem} 
\label{Lemma wtDbTxy}
For $\Bla \in \vL_{n,r}^+(m)$, $\bT \in \SStd_m(\Bla)$, $1 \leq i,j \leq m-1$, 
$x \in [\bT]_{\a_i}$ and $y \in [\bT]_{- \a_j}$, 
put 
\begin{align*}
\wt{D}_{\bT}^{x,y} 
	= \begin{cases}
		\dis \frac{ c_{\Bz} (y) - c_{\Bz} (x)}{ q c_{\Bz} (y) - q^{-1} c_{\Bz} (x)} 
			& \text{ if } j=i, 
		\\[1em] 
		\dis \frac{ q c_{\Bz} (y) - q^{-1} c_{\Bz} (x)}{ c_{\Bz}(y) - c_{\Bz} (x)} 
			& \text{ if } j=i-1, 
		\\[1em] 
		1 & \text{ if } j \not=i, i-1.
	\end{cases}
\end{align*} 
Then, we have the following.  
\begin{enumerate} 
\item 
If $x \in [\bT]_{\a_i} \cap [\bT_y^+]_{\a_i}$, 
	we have $B_{\bT_y^+}^x = B_{\bT}^x \wt{D}_{\bT}^{x,y}$. 

\item 
If $ y \in [\bT]_{-\a_j} \cap [\bT_x^-]_{-\a_j}$, 
	we have $C_{\bT_x^-}^y =C_{\bT}^y \wt{D}_{\bT}^{x,y}$. 
\end{enumerate} 
\end{lem}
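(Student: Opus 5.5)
The plan is to compare the two products defining $B_{\bT_y^+}^x$ and $B_{\bT}^x$ (resp. $C_{\bT_x^-}^y$ and $C_{\bT}^y$) factor by factor. I will use the form \eqref{BbTx CbTx}, in which the products run over all of $[\bT]_i$ (resp. $[\bT]_{i+1}$) rather than over $[\bT]_i^L$, $[\bT]_i^R$. This avoids tracking the boundary sets of Lemma \ref{Lemma Tx- iL} directly, although that lemma can serve as a cross-check.

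For (i), I write $B_{\bT}^x = q^{\mu_i(\bT)} \prod_{u \in [\bT]_i} (c_{\Bz}(x) - q^{-2}c_{\Bz}(u))/(c_{\Bz}(x)-c_{\Bz}(u))$. Passing from $\bT$ to $\bT_y^+$ only changes the entry at $y$ from $j$ to $j+1$, and $x$ is still in $[\bT_y^+]_{\a_i}$ by hypothesis. So $[\bT_y^+]_i$ differs from $[\bT]_i$ only in the following cases.
\begin{enumerate}
\item If $j=i$, then $y$ is removed from $[\bT]_i$. The exponent drops by one and we divide by the factor $(c_{\Bz}(x)-q^{-2}c_{\Bz}(y))/(c_{\Bz}(x)-c_{\Bz}(y))$. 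Multiplying by $q^{-1}$ gives $(c_{\Bz}(x)-c_{\Bz}(y))/(q c_{\Bz}(x) - q^{-1}c_{\Bz}(y))$.
\item If $j=i-1$, then $y$ is added to $[\bT]_i$, and we obtain the reciprocal factor with an extra $q$.
\item If $j\neq i,i-1$, then $[\bT]_i$ is unchanged. Note that $j=i+1$ alters $[\bT]_{i+1}$ but not $[\bT]_i$.
\end{enumerate}

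The main point to watch is that these factors should match $\wt{D}_{\bT}^{x,y}$ as displayed. The roles of $q$ and $q^{-1}$ attached to $c_{\Bz}(x)$ versus $c_{\Bz}(y)$ must be checked carefully. If they come out swapped, I would recheck the definitions. Since only ratios of contents matter, the two forms $(c(y)-c(x))/(qc(y)-q^{-1}c(x))$ and $(c(x)-c(y))/(qc(x)-q^{-1}c(y))$ are genuinely different, so this is the step where I expect any discrepancy to surface. I would resolve it by matching against the $j=i$ computation $q^{-1}(c(x)-c(y))/(c(x)-q^{-2}c(y))$, multiplying numerator and denominator by $-q$.

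I also need the denominators to be nonzero. This follows from \eqref{czx not= czy ai} and \eqref{czx not= czy -ai}, together with the fact that $y$ is not on the same diagonal as $x$ in the relevant adjacent position. The latter is guaranteed by the membership hypotheses and by Corollary \ref{Cor BbTx not=0}.

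For (ii), I use $C_{\bT}^y = q^{-\mu_{j+1}(\bT)}\prod_{u\in[\bT]_{j+1}}(c(y)-q^2c(u))/(c(y)-c(u))$. Passing to $\bT_x^-$ changes the entry at $x$ from $i+1$ to $i$.
\begin{enumerate}
\item If $j=i$, then $x$ leaves $[\bT]_{i+1}$. We get the factor $q(c(y)-c(x))/(c(y)-q^2c(x)) = (c(y)-c(x))/(q^{-1}c(y)-qc(x))$.
\item If $j+1=i$, i.e.\ $j=i-1$, then $x$ joins $[\bT]_{j+1}=[\bT]_i$, giving the reciprocal factor.
\item Otherwise nothing changes.
\end{enumerate}
Comparing with $\wt{D}_{\bT}^{x,y}$ again reduces to rewriting the rational factor so that it takes the displayed normalization. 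The final step is to verify that (i) and (ii) produce the same expression, which is exactly the consistency that Lemma \ref{Lemma wtDbTxy} asserts and is what is later used for relation (U6).
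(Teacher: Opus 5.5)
Your ratio computations are correct. Using the telescoped products \eqref{BbTx CbTx} instead of the boundary sets of Lemma \ref{Lemma Tx- iL} is the same direct calculation the paper intends.

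The step that fails is the last one, ``rewriting the rational factor so that it takes the displayed normalization'': no such rewriting exists. For $j=i$ you obtain
\[
\frac{B_{\bT_y^+}^x}{B_{\bT}^x}=q^{-1}\frac{c_{\Bz}(x)-c_{\Bz}(y)}{c_{\Bz}(x)-q^{-2}c_{\Bz}(y)}=\frac{c_{\Bz}(y)-c_{\Bz}(x)}{q^{-1}c_{\Bz}(y)-q\,c_{\Bz}(x)},
\]
which is exactly what your multiplication by $-q$ produces. The printed value is instead $(c_{\Bz}(y)-c_{\Bz}(x))/(q\,c_{\Bz}(y)-q^{-1}c_{\Bz}(x))$, and the two coincide only if $c_{\Bz}(x)=-c_{\Bz}(y)$.

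A concrete instance: take $m=2$, $\Bla=((1),(1))$, $\bT=((2),(1))$, $x=(1,1,1)$, $y=(1,1,2)$, $i=j=1$. All hypotheses of (i) hold. We have $B_{\bT}^x=q(\z_1-q^{-2}\z_2)/(\z_1-\z_2)$ and $B_{\bT_y^+}^x=1$. So the ratio is $(\z_1-\z_2)/(q\z_1-q^{-1}\z_2)$, which for generic $\z_1,\z_2$ differs from $(\z_2-\z_1)/(q\z_2-q^{-1}\z_1)$. The same interchange occurs in the printed case $j=i-1$. The displayed $\wt{D}_{\bT}^{x,y}$ is therefore misprinted: $x$ and $y$ (equivalently $q$ and $q^{-1}$) are interchanged.

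What your argument actually proves is the lemma with
\[
\wt{D}_{\bT}^{x,y}=\frac{c_{\Bz}(x)-c_{\Bz}(y)}{q\,c_{\Bz}(x)-q^{-1}c_{\Bz}(y)}\ \ (j=i),\qquad
\wt{D}_{\bT}^{x,y}=\frac{q\,c_{\Bz}(x)-q^{-1}c_{\Bz}(y)}{c_{\Bz}(x)-c_{\Bz}(y)}\ \ (j=i-1),
\]
and $\wt{D}_{\bT}^{x,y}=1$ otherwise. Your computations for (i) and (ii) give precisely these factors. For $j=i$, your (ii) factor $(c_{\Bz}(y)-c_{\Bz}(x))/(q^{-1}c_{\Bz}(y)-q\,c_{\Bz}(x))$ equals the (i) factor.

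The misprint is harmless for the paper. In the check of (U6), the cross terms of \eqref{eiz fjw vbT} and \eqref{fjw eiz vbT} cancel only because (i) and (ii) produce the same factor; the explicit form of $\wt{D}_{\bT}^{x,y}$ is never used. So replace your ``match the displayed normalization'' step by stating the corrected factor, or simply the equality of the two ratios.

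For the denominators, be specific:
\begin{enumerate}
\item In (i), $c_{\Bz}(x)\neq c_{\Bz}(y)$ follows from \eqref{czx not= czy ai} applied to $\bT$ if $j=i$, and to $\bT_y^+$ if $j=i-1$.
\item In (ii), $c_{\Bz}(x)\neq c_{\Bz}(y)$ follows from \eqref{czx not= czy -ai} applied to $\bT$ if $j=i$, and to $\bT_x^-$ if $j=i-1$.
\item For $j=i$, the conditions $c_{\Bz}(x)\neq q^{-2}c_{\Bz}(y)$ in (i) and $c_{\Bz}(y)\neq q^{2}c_{\Bz}(x)$ in (ii) follow from the diagonal argument in the proof of Corollary \ref{Cor BbTx not=0}.
\end{enumerate}
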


%%%
\begin{lem} 
\label{Lemma BbT x+ x} 
For $\Bla \in \vL_{n,r}^+(m)$, $\bT \in \SStd_m(\Bla)$ and $1 \leq i \leq m-1$, 
we have the following. 
\begin{enumerate} 
\item 
For $x \in [\bT]_{-\a_i}$, we have $ x \in [\bT_x^+]_{\a_i}$ and 
\begin{align*}
B_{\bT_x^+}^x = q^{\mu_i(\bT) -1} 
	\prod_{y \in [\bT]_i \setminus \{x\}} \frac{ c_{\Bz} (x) - q^{-2} c_{\Bz} (y)}{c_{\Bz} (x) - c_{\Bz} (y)}. 
\end{align*}

\item 
For $x \in [\bT]_{\a_i}$, we have $x \in [\bT_x^-]_{-\a_i}$ and  
\begin{align*}
C_{\bT_x^-}^x = q^{-\mu_{i+1} (\bT) +1} 
	\prod_{ y \in [\bT]_{i+1} \setminus \{x\}} \frac{ c_{\Bz} (x) - q^2 c_{\Bz} (y)}{ c_{\Bz} (x) - c_{\Bz} (y)}.  
\end{align*}
\end{enumerate}
\end{lem}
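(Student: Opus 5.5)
Both parts come straight from the closed formula \eqref{BbTx CbTx} for $B$ and $C$. I treat (i) and get (ii) by the symmetric argument.

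For (i), let $x \in [\bT]_{-\a_i}$. First I check that $x \in [\bT_x^+]_{\a_i}$. We have $\bT_x^+(x) = i+1$, and $(\bT_x^+)_x^- = \bT \in \SStd_m(\Bla)$. So by the definition of $[\,\cdot\,]_{\a_i}$, the node $x$ lies in $[\bT_x^+]_{\a_i}$.

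Next I compute $B_{\bT_x^+}^x$ from \eqref{BbTx CbTx} applied to the tableau $\bT_x^+$:
\begin{align*}
B_{\bT_x^+}^x = q^{\mu_i(\bT_x^+)} \prod_{y \in [\bT_x^+]_i} \frac{c_{\Bz}(x) - q^{-2} c_{\Bz}(y)}{c_{\Bz}(x) - c_{\Bz}(y)} .
\end{align*}
Passing from $\bT$ to $\bT_x^+$ only changes the entry at $x$, from $i$ to $i+1$. Hence $[\bT_x^+]_i = [\bT]_i \setminus \{x\}$ and $\mu_i(\bT_x^+) = \mu_i(\bT) - 1$. Substituting these gives exactly the claimed formula.

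Part (ii) is the mirror image. For $x \in [\bT]_{\a_i}$ we have $\bT_x^-(x) = i$ and $(\bT_x^-)_x^+ = \bT$ is semi-standard, so $x \in [\bT_x^-]_{-\a_i}$. Moreover $[\bT_x^-]_{i+1} = [\bT]_{i+1} \setminus \{x\}$ and $\mu_{i+1}(\bT_x^-) = \mu_{i+1}(\bT) - 1$. Inserting these into the formula \eqref{BbTx CbTx} for $C$ gives the stated expression.

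The one step that needs care is whether \eqref{BbTx CbTx} may be used for $\bT_x^\pm$. The point to check is that no denominator $c_{\Bz}(x) - c_{\Bz}(y)$ vanishes, and also that the identity between the $L/R$ product form and the full product form really holds for $\bT_x^+$ and $\bT_x^-$.

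I would first try to get this directly. Since $x \in [\bT_x^+]_{\a_i}$ and $\bT_x^+$ is semi-standard, \eqref{czx not= czy ai} applied to $\bT_x^+$ gives $c_{\Bz}(x) \neq c_{\Bz}(y)$ for every $y \in [\bT_x^+]_i$. For the $L/R$ identity, I would use that contents along a row form a $q^2$-geometric progression. Then the product over the $i$-entries of a single row telescopes to the ratio of its leftmost factor (with $q^{-2}$) to its rightmost factor, and this telescoping is exactly the content of \eqref{BbTx CbTx}.

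If instead one prefers to work from the $L/R$ definitions, I would use Lemma \ref{Lemma Tx- iL} (ii) with $j = i$ and split into the cases $\bT(\fn_l(x)) = i$ and $\bT(\fn_l(x)) < i$. In the first case, removing $x$ shifts the rightmost node of its row to $\fn_l(x)$, and the telescoping of that row accounts for this. In the second case, $x$ is both the leftmost and rightmost $i$-node of its row, so the whole row drops out.
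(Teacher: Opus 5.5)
Your proof is correct, but it follows a different route from the paper's. The paper derives this lemma, together with Lemmas \ref{Lemma BbTx- y} and \ref{Lemma wtDbTxy}, ``by direct calculations using Lemma \ref{Lemma Tx- iL}''. That means working from the defining $L/R$ formulas for $B$ and $C$ and tracking how $[\bT]_i^L$ and $[\bT]_i^R$ change when the entry at $x$ is raised. This forces the case split on $\bT(\fn_l(x))$ (resp.\ $\bT(\fn_r(x))$) that you sketch as your fallback, and in effect redoes the row telescoping for the row of $x$.

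Your main route instead applies the full-product form \eqref{BbTx CbTx} to the semi-standard tableau $\bT_x^+$ (resp.\ $\bT_x^-$). This is legitimate, because \eqref{BbTx CbTx} holds for every semi-standard tableau and every node of $[\,\cdot\,]_{\a_i}$ (resp.\ $[\,\cdot\,]_{-\a_i}$). You correctly check that $\bT_x^{\pm}$ is semi-standard and that $x\in[\bT_x^+]_{\a_i}$, resp.\ $x\in[\bT_x^-]_{-\a_i}$.

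The full product runs over all of $[\bT_x^+]_i=[\bT]_i\setminus\{x\}$ (resp.\ $[\bT_x^-]_{i+1}=[\bT]_{i+1}\setminus\{x\}$), with the exponent of $q$ shifted by one. So the lemma becomes a one-line substitution with no case analysis, because the telescoping was absorbed once into \eqref{BbTx CbTx}. The only cost is reliance on that identity. You correctly identify its justification: the $q^2$-geometric contents along a row, and the nonvanishing of $c_{\Bz}(x)-c_{\Bz}(y)$ from \eqref{czx not= czy ai} applied to $\bT_x^+$.
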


%%%%%
\para 
For $\Bla \in \vL_{n,r}^+(\bm)$, $\bT \in \SStd_m(\Bla)$ and $1 \leq i \leq m-1$, 
we define a rational function 
\begin{align}
\label{Def fFibT z} 
\fF_{i,\bT}(z) = \prod_{ x \in [\bT]_i} \frac{ (1 - q^{-2} c_{\Bz} (x) q^i z)}{ (1 - c_{\Bz} (x) q^i z)} 
				\prod_{y \in [\bT]_{i+1}} \frac{(1 - q^2 c_{\Bz} (y) q^i z)}{( 1 - c_{\Bz} (y) q^i z )}. 
\end{align}
Put $\ti{z} = z^{-1}$ and $\ti{\fF}_{i, \bT}(\ti{z}) = \fF_{i,\bT} (\ti{z}^{-1})$. 
Then, we have 
\begin{align*}
\ti{\fF}_{i,\bT} (\ti{z}) 
&= \prod_{ x \in [\bT]_i} \frac{ (1 - q^{-2} c_{\Bz} (x) q^i \ti{z}^{-1})}{ (1 - c_{\Bz} (x) q^i \ti{z}^{-1})} 
				\prod_{y \in [\bT]_{i+1}} \frac{(1 - q^2 c_{\Bz} (y) q^i \ti{z}^{-1})}{( 1 - c_{\Bz} (y) q^i \ti{z}^{-1} )}
\\
&=q^{-2 \mu_i(\bT) + 2 \mu_{i+1} (\bT)} 
	\prod_{ x \in [\bT]_i} \frac{ (1 - q^{2} c_{\Bz} (x)^{-1} q^{-i} \ti{z})}{ (1 - c_{\Bz} (x)^{-1} q^{- i} \ti{z})} 
				\prod_{y \in [\bT]_{i+1}} \frac{(1 - q^{-2} c_{\Bz} (y)^{-1} q^{ - i} \ti{z})}{( 1 - c_{\Bz} (y)^{-1} q^{-i} \ti{z} )}. 
\end{align*}
This implies that 
\begin{align}
\label{tifF i bT z-1}
\ti{\fF}_{i,\bT} (z^{-1}) 
= q^{-2 \mu_i(\bT) + 2 \mu_{i+1} (\bT)} 
	\prod_{ x \in [\bT]_i} \frac{ (1 - q^{2} c_{\Bz} (x)^{-1} q^{-i} z^{-1} )}{ (1 - c_{\Bz} (x)^{-1} q^{- i} z^{-1} )} 
				\prod_{y \in [\bT]_{i+1}} \frac{(1 - q^{-2} c_{\Bz} (y)^{-1} q^{ - i} z^{-1} )}{( 1 - c_{\Bz} (y)^{-1} q^{-i} z^{-1} )}. 
\end{align}

We denote the residue of a rational function $F(z)$ at $z=c$ by $\Res_{z=c} (F(z))$. 
Then, we have 
\begin{align}
\label{fFibT z Res}
\fF_{i,\bT}(z) = \sum_{t \in \ZZ} \Res_{z=0} (z^{- t-1} \fF_{i,\bT}(z)) z^t.  
\end{align}
We also have 
$\ti{\fF}_{i, \bT} (\ti{z}) =\sum_{t \in \ZZ} \Res_{\ti{z}=0} (\ti{z}^{- t-1} \ti{\fF}_{i,\bT}(\ti{z})) \ti{z}^t$, 
where we remark that 
\begin{align*}
\Res_{z=\infty} (z^{-t-1} \fF_{i,\bT}(z)  dz )
= \Res_{\ti{z}=0} (\ti{z}^{t+1} \ti{\fF}_{i,\bT}(\ti{z}) ( - \frac{d \ti{z}}{\ti{z}^2}) )
= - \Res_{\ti{z}=0} (\ti{z}^{t-1} \ti{\fF}_{i,\bT} (\ti{z})  d \ti{z}). 
\end{align*}
This implies that 
\begin{align}
\label{fFibTz Res infty}
\fF_{i,\bT}(z) = \ti{\fF}_{i,\bT} (\ti{z})= - \sum_{t \in \ZZ} \Res_{z=\infty} (z^{-t-1} \fF_{i,\bT}(z) )  z^t. 
\end{align}

%%%
We have the following lemma by Corollary \ref{Cor BbTx not=0} (\roii) together with 
\eqref{Def fFibT z},\eqref{tifF i bT z-1}, \eqref{fFibT z Res} and \eqref{fFibTz Res infty}. 

%%%%%

\begin{lem} 
\label{Lemma psiiz vbT}
For $\Bla \in \vL_{n,r}^+(m)$, $\bT \in \SStd_m(\Bla)$ and $1 \leq i \leq m-1$, we have 
\begin{align}
\label{psi i z+ vbT Res}
\begin{split}
\psi_i^+(z) \cdot v_{\bT} 
	&= q^{\mu_i(\bT) - \mu_{i+1}(\bT)} \fF_{i,\bT}(z) v_{\bT} 
	\\
	&= q^{\mu_i(\bT) - \mu_{i+1} (\bT)} 
		\big( \sum_{t \in \ZZ} \Res_{z=0} (z^{- t-1} \fF_{i,\bT}(z)) z^t \big) v_{\bT}
\end{split}
\end{align}
and 
\begin{align}
\label{psi i - z vbT Res}
\begin{split}
\psi_i^-(z) \cdot v_{\bT} 
	&= q^{\mu_i(\bT) - \mu_{i+1} (\bT)} \ti{\fF}_{i,\bT} (z^{-1}) v_{\bT} 
	\\
	&= q^{\mu_i(\bT) - \mu_{i+1} (\bT)} 
		\big( - \sum_{t \in \ZZ} \Res_{z=\infty} (z^{-t-1} \fF_{i,\bT}(z) ) z^t \big) v_{\bT}. 
\end{split}
\end{align}
\end{lem}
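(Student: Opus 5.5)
The plan is to compare the closed formulas of Corollary \ref{Cor BbTx not=0} (\roii) with the definition \eqref{Def fFibT z} factor by factor, and then to identify the two series expansions with the residue formulas \eqref{fFibT z Res} and \eqref{fFibTz Res infty}. No relation of $U_q(L\Fsl_m)$ is needed: once the closed forms are matched, the statement is a bookkeeping identity about expansions of a single rational function.

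First, for $\psi_i^+(z)$, I will rewrite each factor in Corollary \ref{Cor BbTx not=0} (\roii). For $x\in[\bT]_i$ the factor is $(1-q^{i-2}c_{\Bz}(x)z)/(1-q^{i}c_{\Bz}(x)z)$, which equals $(1-q^{-2}c_{\Bz}(x)q^iz)/(1-c_{\Bz}(x)q^iz)$. For $y\in[\bT]_{i+1}$ the factor is $(1-q^{i+2}c_{\Bz}(y)z)/(1-q^{i}c_{\Bz}(y)z)$, which equals $(1-q^{2}c_{\Bz}(y)q^iz)/(1-c_{\Bz}(y)q^iz)$. Hence $\psi_i^+(z)\cdot v_{\bT}=q^{\mu_i(\bT)-\mu_{i+1}(\bT)}\fF_{i,\bT}(z)v_{\bT}$, where the right-hand side is understood as the expansion in nonnegative powers of $z$. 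Since $\fF_{i,\bT}$ is regular at $z=0$, this expansion is the Laurent expansion at $0$, and its coefficients are $\Res_{z=0}(z^{-t-1}\fF_{i,\bT}(z))$; these vanish automatically for $t<0$. This is exactly \eqref{fFibT z Res}, and gives \eqref{psi i z+ vbT Res}.

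Second, for $\psi_i^-(z)$, I will do the same manipulation with the second formula of Corollary \ref{Cor BbTx not=0} (\roii). The $y$-factor simplifies to $(1-q^{-2}c_{\Bz}(y)^{-1}q^{-i}z^{-1})/(1-c_{\Bz}(y)^{-1}q^{-i}z^{-1})$, and the $x$-factor to $(1-q^{2}c_{\Bz}(x)^{-1}q^{-i}z^{-1})/(1-c_{\Bz}(x)^{-1}q^{-i}z^{-1})$. These are precisely the products in \eqref{tifF i bT z-1}. For the scalar prefactor, the corollary gives $q^{-\mu_i(\bT)+\mu_{i+1}(\bT)}$, and this equals $q^{\mu_i(\bT)-\mu_{i+1}(\bT)}\cdot q^{-2\mu_i(\bT)+2\mu_{i+1}(\bT)}$. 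Hence $\psi_i^-(z)\cdot v_{\bT}=q^{\mu_i(\bT)-\mu_{i+1}(\bT)}\ti{\fF}_{i,\bT}(z^{-1})v_{\bT}$, which is the first line of \eqref{psi i - z vbT Res}.

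Finally, $\psi_i^-(z)$ is a series in nonnegative powers of $z^{-1}$, so the right-hand side must be read as the expansion of $\fF_{i,\bT}$ at $z=\infty$. By \eqref{fFibTz Res infty}, obtained from the change of variable $\ti z=z^{-1}$ and regularity of $\ti{\fF}_{i,\bT}$ at $\ti z=0$, its coefficients are $-\Res_{z=\infty}(z^{-t-1}\fF_{i,\bT}(z))$. The only place requiring care is keeping track of which expansion, at $0$ or at $\infty$, each formal series denotes, together with the sign in the residue at infinity. That is already handled in the paragraph preceding the lemma, so I expect no real obstacle beyond checking the powers of $q$ in the prefactors.
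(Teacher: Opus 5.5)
Your proposal is correct and is essentially the paper's argument: the paper obtains the lemma directly from Corollary \ref{Cor BbTx not=0} (ii) by matching it with the definition \eqref{Def fFibT z} of $\fF_{i,\bT}(z)$ and with \eqref{tifF i bT z-1}, and then reading off coefficients via \eqref{fFibT z Res} and \eqref{fFibTz Res infty}. Your factor-by-factor matching and the prefactor identity $q^{-\mu_i(\bT)+\mu_{i+1}(\bT)} = q^{\mu_i(\bT)-\mu_{i+1}(\bT)}\, q^{-2\mu_i(\bT)+2\mu_{i+1}(\bT)}$ are both accurate.
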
 

We need the following lemma when we check the relation (U6). 
\begin{lem} 
\label{Lemma pole z-t-1 fFibTz} 
Assume that $\Bz \in (\CC^{\times})^r$ satisfies the separation condition \eqref{separation cond}. 
For $\Bla \in \vL_{n,r}^+(m)$, $\bT \in \SStd_m(\Bla)$, $1 \leq i \leq m-1$ and $t \in \ZZ$, 
\begin{align*}
\{\text{pole of }z^{-t-1} \fF_{i,\bT} (z)\} 
	\subset \{ (c_{\Bz}(x) q^i)^{-1} \mid x \in [\bT]_{-\a_i} \cup [\bT]_{\a_i}\} \cup \{0, \infty\}.
\end{align*}
\begin{proof} 
From the definition \eqref{Def fFibT z}, we see that 
\begin{align*}
\{ \text{pole of }  \fF_{i,\bT} (z) \} \subset \{ (c_{\Bz} (x) q^i)^{-1} \mid x \in [\bT]_i \cup [\bT]_{i+1}\}. 
\end{align*}

For $x,x' \in [\bT]_i$ and $y,y' \in [\bT]_{i+1}$, we see that 
\begin{align}
\label{cBz x not= cBz x'}
\begin{split}
&c_{\Bz} (x) \not= c_{\Bz} (x') \text{ if } x \not=x', 
\quad 
c_{\Bz} (y) \not= c_{\Bz} (y') \text{ if } y \not=y', 
\\
& c_{\Bz} (x) = c_{\Bz} (y) \text{ if and only if } y = \fn_b (\fn_r(x)) 
\end{split}
\end{align}
thanks to the separation condition \eqref{separation cond}.  
Here, we note that $x \not\in [\bT]_i^R$ and $y \not\in [\bT]_{i+1}^L$ if $y= \fn_b(\fn_r(x))$ 
since $\bT$ is semi-standard. 
Moreover, we have the following. 
\begin{itemize} 
\item 
For $x \in [\bT]_i \setminus [\bT]_i^R$, we have 
$\fn_r(x) \in [\bT]_i$ and $c_{\Bz} (x) = q^{-2} c_{\Bz} (\fn_r(x))$. 
 
\item 
For $y \in [\bT]_{i+1} \setminus [\bT]_{i+1}^L$, we have  
$\fn_l(x) \in [\bT]_{i+1}$ and $c_{\Bz} (y) = q^2 c_{\Bz} (\fn_l(y))$.  

\item 
For $x \in [\bT]_i^R \setminus [\bT]_{-\a_i}$, we have  
$\fn_b(x) \in [\bT]_{i+1}$ and $c_{\Bz} (x) = q^2 c_{\Bz} (\fn_b(x))$. 

\item 
For $y \in [\bT]_{i+1}^L \setminus [\bT]_{\a_i}$, 
we have $\fn_a(y) \in [\bT]_i$ and $c_{\Bz} (y) = q^{-2} c_{\Bz}(\fn_a(y))$. 
\end{itemize} 
These, combined with \eqref{cBz x not= cBz x'}, imply that 
the rational function $ \fF_{i, \bT}(z)$  is regular at $z= (c_{\Bz}(x) q^i)^{-1}$  
for $x \in ([\bT]_i \setminus [\bT]_{-\a_i}) \cup ([\bT]_{i+1} \setminus [\bT]_{\a_i})$. 
Then, we conclude this lemma. 
\end{proof}
\end{lem} 

%%%
\para  
It is clear that the action in Proposition \ref{Prop Def DBz Bla} satisfies the relation (U1).  

We check the relation (U2).  
From the definition given in Proposition \ref{Prop Def DBz Bla}, 
we can calculate that 
\begin{align*}
&(w- q^{a_{ij}} z) e_i(z) e_j(w) \cdot v_{\bT} 
\\
&= (w - q^{a_{ij}} z) \sum_{s,t \in \ZZ} \Big\{ 
	\sum_{x \in [\bT]_{\a_j}} \sum_{y \in [\bT_x^-]_{\a_i}} 
		q^{i \cdot t + j \cdot s} c_{\Bz} (x)^s c_{\Bz} (y)^t B_{\bT}^x B_{\bT_x^-}^y v_{(\bT_x^-)_y^-} \Big\} z^t w^s 
\\
&= \sum_{s,t \in \ZZ} 
	\Big\{ \sum_{x \in [\bT]_{\a_j}} \sum_{y \in [\bT_x^-]_{\a_i}} 
		q^{i \cdot (t-1) + j \cdot (s-1)} c_{\Bz} (x)^{s-1} c_{\Bz} (y)^{t-1} 
	\\ & \hspace{10em} \times 
	\big( q^i c_{\Bz} (y) - q^{a_{ij}} q^j c_{\Bz} (x) \big) B_{\bT}^x B_{\bT_x^-}^y v_{(\bT_x^-)_y^-} 
	\Big\} z^t w^s. 
\end{align*} 
Applying Lemma \ref{Lemma bTx- ai}, we have 
\begin{align*}
&(w- q^{a_{ij}} z) e_i(z) e_j(w) \cdot v_{\bT} 
\\
&= \sum_{s,t \in \ZZ}  
	\Big\{ \sum_{x \in [\bT]_{\a_j}} \sum_{y \in [\bT]_{\a_i} \atop y \not=x}
		q^{i \cdot (t-1) + j \cdot (s-1)} c_{\Bz} (x)^{s-1} c_{\Bz} (y)^{t-1} 
			\\ & \hspace{10em} \times 
			\big( q^i c_{\Bz} (y) - q^{a_{ij}} q^j c_{\Bz} (x) \big) B_{\bT}^x B_{\bT_x^-}^y v_{(\bT_x^-)_y^-} 
	\\ & \hspace{5em} 
	+ \d_{j,i} \sum_{ x \in [\bT]_{\a_j} \atop \fn_r(x) \in [\bT_x^-]_{\a_i}} 
		q^{i \cdot (t-1) + j \cdot (s-1)} c_{\Bz} (x)^{s-1} c_{\Bz} (\fn_r(x))^{t-1} 
			\\ & \hspace{10em} \times 
			\big( q^i c_{\Bz} (\fn_r(x)) - q^{a_{ij}} q^j c_{\Bz} (x) \big) B_{\bT}^x B_{\bT_x^-}^{\fn_r(x)} v_{(\bT_x^-)_{\fn_r(x)}^-} 
	\\ & \hspace{5em} 
	+ \d_{j, i-1} \sum_{x \in [\bT]_{\a_j} \atop \fn_b(x) \in [\bT_x^-]_{\a_i}}
		q^{i \cdot (t-1) + j \cdot (s-1)} c_{\Bz} (x)^{s-1} c_{\Bz} (\fn_b(x))^{t-1} 
			\\ & \hspace{10em} \times 
			\big( q^i c_{\Bz} (\fn_b(x)) - q^{a_{ij}} q^j c_{\Bz} (x) \big) B_{\bT}^x B_{\bT_x^-}^{\fn_b(x)} v_{(\bT_x^-)_{\fn_b(x)}^-} 
	\\ & \hspace{5em} 
	+ \d_{ j, i+1} \sum_{x \in [\bT]_{\a_j} \atop x \in [\bT_x^-]_{\a_i}} 
		q^{i \cdot (t-1) + j \cdot (s-1)} c_{\Bz} (x)^{s-1} c_{\Bz} (x)^{t-1} 
			\\ & \hspace{10em} \times 
			\big( q^i c_{\Bz} (x) - q^{a_{ij}} q^j c_{\Bz} (x) \big) B_{\bT}^x B_{\bT_x^-}^x v_{(\bT_x^-)_x^-} 
	\Big\} z^t w^s, 
\end{align*}
where we note that 
$
\begin{cases} 
q^i c_{\Bz} (\fn_r(x)) = q^{a_{ij}} q^j c_{\Bz} (x) &\text{ if } j=i, 
\\
q^i c_{\Bz} (\fn_b(x)) = q^{a_{ij}} q^j c_{\Bz} (x) & \text{ if } j=i-1,  
\\
q^i c_{\Bz} (x) - q^{a_{ij}} q^j c_{\Bz} (x) & \text{ if } j=i+1. 
\end{cases}
$
As a consequence, we have 
\begin{align*}
&(w- q^{a_{ij}} z) e_i(z) e_j(w) \cdot v_{\bT} 
\\
&= \sum_{s,t \in \ZZ}  
	\Big\{ \sum_{(x,y) \in [\bT]_{\a_j} \times [\bT]_{\a_i} \atop x \not=y} 
		q^{i \cdot (t-1) + j \cdot (s-1)} c_{\Bz} (x)^{s-1} c_{\Bz} (y)^{t-1} 
			\\ & \hspace{10em} \times 
			\big( q^i c_{\Bz} (y) - q^{a_{ij}} q^j c_{\Bz} (x) \big) B_{\bT}^x B_{\bT_x^-}^y v_{(\bT_x^-)_y^-} 
	\Big\} z^t w^s. 
\end{align*}
Applying Lemma \ref{Lemma BbTx- y} to the right-hand side of this equation, we have 
\begin{align}
\label{eiz ejw cbT} 
\begin{split}
&(w- q^{a_{ij}} z) e_i(z) e_j(w) \cdot v_{\bT} 
\\
&= \sum_{s,t \in \ZZ}  
	\Big\{ \sum_{(x,y) \in [\bT]_{\a_j} \times [\bT]_{\a_i} \atop x \not=y} 
		q^{i \cdot (t-1) + j \cdot (s-1)} c_{\Bz} (x)^{s-1} c_{\Bz} (y)^{t-1} 
			\\ & \hspace{10em} \times 
			\big( q^i c_{\Bz} (y) - q^{a_{ij}} q^j c_{\Bz} (x) \big) B_{\bT}^x B_{\bT}^y D_{\bT_x^-}^y v_{(\bT_x^-)_y^-} 
	\Big\} z^t w^s, 
\end{split}
\end{align}
where we note that, for $(x,y)\in [\bT]_{\a_j} \times [\bT]_{\a_i}$ such that $x \not=y$, 
we have $y \in [\bT_x^-]_{\a_i}$  by Lemma \ref{Lemma bTx- ai}. 

On the other hand, we can compute that 
\begin{align*}
&(q^{a_{ij}} w -z) e_j(w) e_i(z) \cdot v_{\bT} 
\\
&= \sum_{s,t \in \ZZ} \Big\{ 
	\sum_{y \in [\bT]_{\a_i}} \sum_{x \in [\bT_y^-]_{\a_j}} 
		q^{i \cdot (t-1) + j \cdot (s-1)} c_{\Bz} (x)^{s-1} c_{\Bz} (y)^{t-1} 
			\\ & \hspace{10em} \times 
			\big( q^{a_{ij}} q^i c_{\Bz} (y) - q^j c_{\Bz} (x) \big) B_{\bT}^y B_{\bT_y^-}^x v_{(\bT_y^-)_x^-} 
	\Big\} z^t w^s. 
\end{align*}
Applying Lemma \ref{Lemma bTx- ai}, we have 
\begin{align*}
&(q^{a_{ij}} w -z) e_j(w) e_i(z) \cdot v_{\bT} 
\\
&= \sum_{s,t \in \ZZ} \Big\{ 
	\sum_{(x,y) \in [\bT]_{\a_j} \times [\bT]_{\a_i} \atop x \not=y} 
		q^{i \cdot (t-1) + j \cdot (s-1)} c_{\Bz} (x)^{s-1} c_{\Bz} (y)^{t-1} 
			\\ & \hspace{10em} \times 
			\big( q^{a_{ij}} q^i c_{\Bz} (y) - q^j c_{\Bz} (x) \big) B_{\bT}^y B_{\bT_y^-}^x v_{(\bT_y^-)_x^-} 
	\Big\} z^t w^s,  
\end{align*}
where we note that 
$\begin{cases}
q^{a_{ij}} q^i c_{\Bz} (y) = q^j c_{\Bz} (\fn_r(y)) & \text{ if } i=j, 
\\
q^{a_{ij}} q^i c_{\Bz} (y) = q^j c_{\Bz} (\fn_b (y)) & \text{ if } i = j-1, 
\\
q^{a_{ij}} q^i c_{\Bz} (y) = q^j c_{\Bz} (y) & \text{ if } i =j+1.
\end{cases}$
Applying Lemma \ref{Lemma BbTx- y}, we obtain 
\begin{align}
\label{ejw eiz vbT} 
\begin{split}
&(q^{a_{ij}} w -z) e_j(w) e_i(z) \cdot v_{\bT} 
\\
&= \sum_{s,t \in \ZZ} \Big\{ 
	\sum_{(x,y) \in [\bT]_{\a_j} \times [\bT]_{\a_i} \atop x \not=y} 
		q^{i \cdot (t-1) + j \cdot (s-1)} c_{\Bz} (x)^{s-1} c_{\Bz} (y)^{t-1} 
			\\ & \hspace{10em} \times 
			\big( q^{a_{ij}} q^i c_{\Bz} (y) - q^j c_{\Bz} (x) \big) B_{\bT}^y B_{\bT}^x D_{\bT_y^-}^x v_{(\bT_y^-)_x^-} 
	\Big\} z^t w^s. 
\end{split}
\end{align}
Moreover, for $(x,y) \in [\bT]_{\a_j} \times [\bT]_{\a_i}$ such that $x \not=y$, 
we can check that  \break 
$v_{(\bT_x^-)_y^-} = v_{(\bT_y^-)_x^-}$ 
and 
$\big( q^i c_{\Bz} (y) - q^{a_{ij}} q^j c_{\Bz} (x) \big) D_{\bT_x^-}^y 
	= \big( q^{a_{ij}} q^i c_{\Bz} (y) - q^j c_{\Bz} (x) \big)D_{\bT_y^-}^x$ 
by direct calculation. 
Then, the equations \eqref{eiz ejw cbT} and \eqref{ejw eiz vbT} imply 
\begin{align*}
(w- q^{a_{ij}} z) e_i(z) e_j(w) \cdot v_{\bT}  = (q^{a_{ij}} w -z) e_j(w) e_i(z) \cdot v_{\bT}.
\end{align*}

We can also check the relation (U3) in a similar way.  

%%%%%%%%%
\para 
We check the relation (U4). 
By the definition given in Proposition \ref{Prop Def DBz Bla} and \eqref{psi i z+ vbT Res}, 
we have 
\begin{align*}
&(w- q^{a_{ij}} z) \psi_i^+ (z) e_j(w) \cdot v_{\bT} 
\\
&= (w- q^{a_{ij}} z) \sum_{t \in \ZZ} \Big\{ \sum_{x \in [\bT]_{\a_j}} B_{\bT}^x (q^j c_{\Bz} (x))^t  
	\big( q^{\mu_i( \bT_x^-) - \mu_{i+1} (\bT_x^-)} \fF_{i, \bT_x^-} (z) \big) v_{\bT_x^-} \Big\} w^t 
\\
&= \sum_{t \in \ZZ} \Big\{ \sum_{x \in [\bT]_{\a_j}} 
		B_{\bT}^x (q^j c_{\Bz}(x))^{t-1} 
		q^{\mu_i(\bT) - \mu_{i+1} (\bT) + a_{ij}} (1 - q^{a_{ij}} q^j c_{\Bz} (x) z) \fF_{i,\bT_x^-}(z) v_{\bT_x^-} \Big\} w^t.  
\end{align*}
By direct calculation using Lemma \ref{Lemma Tx- iL}, we obtain 
\begin{align*}
\fF_{i,\bT_x^-}(z)= \frac{( 1 - q^{-a_{ij}} c_{\Bz} (x) q^j z)}{(1 - q^{a_{ij}} c_{\Bz} (x) q^j z)} \fF_{i, \bT}(z) 
\end{align*}
for $x \in [\bT]_{\a_j}$. Thus, we conclude that 
\begin{align*}
&(w- q^{a_{ij}} z) \psi_i^+ (z) e_j(w) \cdot v_{\bT} 
\\
&= q^{\mu_i(\bT) - \mu_{i+1} (\bT)} \sum_{t \in \ZZ} \Big\{ \sum_{x \in [\bT]_{\a_j}} B_{\bT}^x (q^j c_{\Bz} (x))^{t-1} 
	(q^{a_{ij}} - q^j c_{\Bz}(x) z) \fF_{i,\bT} (z) v_{\bT_x^-}\Big\} w^t.
\end{align*}

On the other hand, we also have 
\begin{align*}
&(q^{a_{ij}} w - z) e_j(w) \psi_i^+(z) \cdot v_{\bT} 
\\
&= (q^{a_{ij}} w - z) q^{\mu_i(\bT) - \mu_{i+1}(\bT)} \fF_{i,\bT} (z) 
	\sum_{t \in \ZZ} \Big\{ \sum_{x \in [\bT]_{\a_j}} B_{\bT}^x (q^j c_{\Bz} (x))^t v_{\bT_x^-} \Big\} w^t 
\\
&= q^{\mu_i(\bT) - \mu_{i+1} (\bT)} \sum_{t \in \ZZ} \Big\{ \sum_{x \in [\bT]_{\a_j}} B_{\bT}^x (q^j c_{\Bz} (x))^{t-1} 
	(q^{a_{ij}} - q^j c_{\Bz}(x) z) \fF_{i,\bT} (z) v_{\bT_x^-}\Big\} w^t.
\end{align*}
As a consequence, we have  
\begin{align*} 
(w- q^{a_{ij}} z) \psi_i^+ (z) e_j(w) \cdot v_{\bT}  = (q^{a_{ij}} w - z) e_j(w) \psi_i^+(z) \cdot v_{\bT}. 
\end{align*} 

In a similar way, 
we can check the relation replaced $\psi_i^+(z)$ with $\psi_i^-(z)$ in the above equation. We can also check the relation (U5). 

%%%%%%
\para 
We check the relation (U6).  
By the definition given in Proposition \ref{Prop Def DBz Bla}, we have  
\begin{align*}
&e_i(z) f_j(w) \cdot v_{\bT} 
\\
&= \sum_{s \in \ZZ} \Big\{ \sum_{y \in [\bT]_{-\a_j}} C_{\bT}^y (q^j c_{\Bz} (y))^s 
	\Big( \sum_{t \in \ZZ} \big\{ \sum_{x \in [\bT_y^+]_{\a_i}} B_{\bT_y^+}^x (q^i c_{\Bz} (x))^t v_{(\bT_y^+)_x^-} \big\} z^t \Big) 
	\Big\} w^s 
\\
&= \sum_{s,t \in \ZZ} \Big\{ 
	\sum_{y \in [\bT]_{-\a_j}} \sum_{x \in [\bT_y^+]_{\a_i}} 
		q^{i \cdot t + j \cdot s} c_{\Bz} (y)^s c_{\Bz} (x)^t C_{\bT}^y B_{\bT_y^+}^x  v_{(\bT_y^+)_x^-} \Big\} w^s z^t. 
\end{align*}
Thanks to Lemma \ref{Lemma bTx- ai}, 
we see that $[\bT_y^+]_{\a_i} = 
	\begin{cases} 
		\big( [\bT]_{\a_i} \cap [\bT_y^+]_{\a_i} \big) \cup \{y\} & \text{ if } j=i, 
		\\
		[\bT]_{\a_i} \cap [\bT_y^+]_{\a_i} & \text{ if } j \not=i
	\end{cases} $
for $x \in [\bT]_{-\a_j}$. 
This implies that 
\begin{align*}
&e_i(z) f_j(w) \cdot v_{\bT}
\\
&= \sum_{s, t \in \ZZ} \Big\{ \sum_{y \in [\bT]_{-\a_j}} \sum_{x \in [\bT]_{\a_i} \cap [\bT_y^+]_{\a_i}} 
		q^{i \cdot t + j \cdot s} c_{\Bz} (y)^s c_{\Bz} (x)^t C_{\bT}^y B_{\bT_y^+}^x  v_{(\bT_y^+)_x^-} 
	\\ & \hspace{4em} 
	+ \d_{i,j} \sum_{ y \in [\bT]_{-\a_i}} q^{i \cdot (s+t) } c_{\Bz} (y)^{s+t} C_{\bT}^y B_{\bT_y^+}^y  v_{(\bT_y^+)_y^-} 
	\Big\} w^s z^t. 
\end{align*}
Furthermore, applying Lemma \ref{Lemma wtDbTxy}, 
we have 
\begin{align}
\label{eiz fjw vbT}
\begin{split}
&e_i(z) f_j(w) \cdot v_{\bT}
\\
&= \sum_{s, t \in \ZZ} \Big\{ \sum_{y \in [\bT]_{-\a_j}} \sum_{x \in [\bT]_{\a_i} \cap [\bT_y^+]_{\a_i}} 
		q^{i \cdot t + j \cdot s} c_{\Bz} (y)^s c_{\Bz} (x)^t C_{\bT}^y B_{\bT}^x \wt{D}_{\bT}^{x,y}  v_{(\bT_y^+)_x^-} 
	\\ & \hspace{4em} 
	+ \d_{i,j} \sum_{ y \in [\bT]_{-\a_i}} q^{i \cdot (s+t) } c_{\Bz} (y)^{s+t} C_{\bT}^y B_{\bT_y^+}^y  v_{\bT} 
	\Big\} w^s z^t. 
\end{split} 
\end{align}
Similarly, the definition in Proposition \ref{Prop Def DBz Bla}, Lemma \ref{Lemma bTx- ai} and Lemma \ref{Lemma wtDbTxy} 
imply that 
\begin{align}
\label{fjw eiz vbT} 
\begin{split}
&f_j(w) e_i(z) \cdot v_{\bT} 
\\
&= \sum_{s,t \in \ZZ} \Big\{ 
	\sum_{x \in [\bT]_{\a_i}} \sum_{y \in [\bT]_{-\a_j} \cap [\bT_x^-]_{-\a_j} } 
		q^{i \cdot t + j \cdot s} c_{\Bz} (x)^t c_{\Bz} (y)^s B_{\bT}^x C_{\bT}^y \wt{D}_{\bT}^{x,y} v_{(\bT_x^-)_y^+} 
	\\ & \hspace{4em} 
	+ \d_{i,j} \sum_{x \in [\bT]_{\a_i}} q^{i \cdot (s+t)} c_{\Bz} (x)^{s+t} B_{\bT}^x C_{\bT_x^-}^x v_{\bT} 
	\Big\} w^s z^t. 
\end{split}
\end{align} 

For  $x \in [\bT]_{\a_i}$ and $y \in [\bT]_{-\a_j}$, 
we see that 
$x \in [\bT_y^+]_{\a_i}$ if and only if $ y \in [\bT_x^-]_{-\a_j}$ 
by  Lemma \ref{Lemma bTx- ai}.  
Thus, there exists a bijection 
\begin{align*}
\{(y,x) \in [\bT]_{-\a_j} \times [\bT]_{\a_i} \mid x \in [\bT_y^+]_{\a_i}\} 
\ra 
\{(x,y) \in [\bT]_{\a_i} \times [\bT]_{-\a_j} \mid y \in  [\bT_x^-]_{-\a_j} \} 
\end{align*}
given by $(y,x) \mapsto (x,y)$. 
Combined with \eqref{eiz fjw vbT} and \eqref{fjw eiz vbT}, we have 
\begin{align*}
&(e_i(z) f_j(w) - f_j(w) e_i(z) ) \cdot v_{\bT} 
\\
&= \d_{i,j} \sum_{s,t \in \ZZ} \Big\{
	\Big( \sum_{y \in [\bT]_{-\a_i}} q^{i \cdot (s+t)} c_{\Bz} (y)^{s+t} C_{\bT}^y B_{\bT_y^+}^y  
			- \sum_{x \in [\bT]_{\a_i}} q^{i \cdot (s+t)} c_{\Bz} (x)^{s+t} B_{\bT}^x C_{\bT_x^-}^x \Big) v_{\bT} 
	\Big\} w^s z^t 
\\
&= \d_{i,j} \sum_{s,t' \in \ZZ} \Big\{
	\Big( \sum_{y \in [\bT]_{-\a_i}} q^{i \cdot t' } c_{\Bz} (y)^{t'} C_{\bT}^y B_{\bT_y^+}^y  
			- \sum_{x \in [\bT]_{\a_i}} q^{i \cdot t'} c_{\Bz} (x)^{t'} B_{\bT}^x C_{\bT_x^-}^x \Big) v_{\bT} 
	\Big\} w^s z^{-s} z^{t'}, 
\end{align*}
where we put $t'=s+t$. 
As a consequence, we have 
\begin{align}
\label{eiz fjw - fjw eiz vbT}
\begin{split}
&(e_i(z) f_j(w) - f_j(w) e_i(z) ) \cdot v_{\bT} 
\\
&= \d_{i,j} \Big( \d \left( \frac{w}{z} \right) 
	\sum_{t \in \ZZ} \Big\{ \sum_{y \in [\bT]_{-\a_i}} (q^i c_{\Bz}(y))^t C_{\bT}^y B_{\bT_y^+}^y 
						- \sum_{x \in [\bT]_{\a_i}} (q^i c_{\Bz} (x))^t B_{\bT}^x C_{\bT_x^-}^x \Big\} z^t 
	\Big) v_{\bT}. 
\end{split}
\end{align}

On the other hand,  
we have 
\begin{align}
\label{psii+z - psii-z vbT}
\begin{split}
& (\psi_i^+(z) - \psi_i^- (z) ) \cdot v_{\bT}  
\\
&= q^{\mu_i(\bT) - \mu_{i+1} (\bT)} \Big(  \sum_{t \in \ZZ} 
	\Big\{ \Res_{z=0} (z^{-t-1} \fF_{i,\bT}(z) ) + \Res_{z= \infty} (z^{-t-1} \fF_{i,\bT} (z)) \Big\} z^t  \Big) v_{\bT}  
\end{split} 
\end{align}
by Lemma \ref{Lemma psiiz vbT}.  
Applying the residue theorem together with Lemma \ref{Lemma pole z-t-1 fFibTz}, 
we have 
\begin{align}
\label{Resz=0} 
\begin{split}
& \Res_{z=0} (z^{-t-1} \fF_{i,\bT}(z) ) + \Res_{z= \infty} (z^{-t-1} \fF_{i,\bT} (z)) 
\\
&= - \sum_{x \in [\bT]_{-\a_i} \cup [\bT]_{\a_i}} \Res_{z= (c_{\Bz} (x) q^i)^{-1}} (z^{-t-1} \fF_{i,\bT}(z)).
\end{split}
\end{align}
Put 
\begin{align*}
&\fP(z) = \prod_{y \in [\bT]_i} (1 - q^{-2} c_{\Bz}(y) q^i z) \prod_{y' \in [\bT]_{i+1}} (1- q^2 c_{\Bz}(y') q^i z), 
\\
& \fQ(z) = \prod_{y \in [\bT]_i} (1 - c_{\Bz}(y) q^i z) \prod_{y' \in [\bT]_{i+1}} (1 - c_{\Bz} (y') q^i z). 
\end{align*}
Then we have $\fF_{i,\bT}(z) = \fP(z) \cdot \fQ(z)^{-1}$. 
Then, for $x  \in [\bT]_{-\a_i} \cup [\bT]_{\a_i}$, 
we have 
\begin{align*}
\Res_{z = (c_{\Bz}(x)q^i)^{-1}} (z^{-t-1} \fF_{i,\bT}(z)) 
&= \lim_{z \to (c_{\Bz}(x)q^i)^{-1} } (z- (c_{\Bz}(x)q^i)^{-1}) \frac{ z^{-t-1} \fP(z)}{\fQ(z)} 
\\
&= \frac{ (c_{\Bz}(x) q^i)^{t+1} \fP ((c_{\Bz}(x)q^i)^{-1})}{ \fQ'( (c_{\Bz}(x)q^i)^{-1})}
\end{align*}
thanks to \eqref{cBz x not= cBz x'}. 
Therefore, we can compute that 
\begin{align*}
&\Res_{z = (c_{\Bz}(x)q^i)^{-1}} (z^{-t-1} \fF_{i,\bT}(z)) 
\\
&= \begin{cases}
	\dis 
	- q^{-1} (q-q^{-1}) (c_{\Bz}(x)q^i)^{t} 
		\prod_{y \in [\bT]_i \setminus \{x\}} \frac{(c_{\Bz} (x) - q^{-2} c_{\Bz}(y) ) }{ (c_{\Bz} (x) - c_{\Bz}(y) ) }
	 	\prod_{y' \in [\bT]_{i+1}} \frac{ (c_{\Bz} (x) - q^2 c_{\Bz}(y') )}{ (c_{\Bz} (x) - c_{\Bz}(y')  ) }
	\hspace{-6em}
	\\
	& \text{ if } x \in [\bT]_{-\a_i}, 
	\\[1em]
	\dis 
	q (q-q^{-1}) (c_{\Bz}(x)q^i)^{t} 
		\prod_{y \in [\bT]_i} \frac{  (c_{\Bz} (x) - q^{-2} c_{\Bz}(y) ) }{ (c_{\Bz} (x) - c_{\Bz}(y) ) }
		\prod_{y' \in [\bT]_{i+1} \setminus \{x\}} \frac{ (c_{\Bz} (x) - q^2 c_{\Bz}(y'))}{(c_{\Bz} (x) - c_{\Bz}(y'))} 
	\hspace{-6em}
	\\
	& \text{ if } x \in [\bT]_{\a_i}.
	\end{cases}
\end{align*}
Combimed with \eqref{BbTx CbTx} and Lemma \ref{Lemma BbT x+ x}, we have 
\begin{align}
\label{Res z=cbZxqi -1}
\begin{split}
&\Res_{z = (c_{\Bz}(x)q^i)^{-1}} (z^{-t-1} \fF_{i,\bT}(z)) 
\\
&= \begin{cases}
	- q^{-\mu_i(\bT) + \mu_{i+1} (\bT) } (q-q^{-1}) (c_{\Bz}(x) q^i)^t  B_{\bT_{x}^+}^{x} C_{\bT}^x 
		& \text{ if } x \in [\bT]_{-\a_i}, 
	\\[0.5em]
	q^{- \mu_i(\bT) + \mu_{i+1} (\bT)} (q-q^{-1}) (c_{\Bz} (x) q^i)^t  B_{\bT}^x  C_{\bT_x^-}^x 
		& \text{ if } x \in [\bT]_{\a_i}. 
\end{cases}
\end{split}
\end{align}
The equations \eqref{psii+z - psii-z vbT}, \eqref{Resz=0} and \eqref{Res z=cbZxqi -1} imply that 
\begin{align}
\label{psii+ psii- vbT}
\begin{split}
&(\psi_i^+(z) - \psi_i^-(z)) \cdot v_{\bT} 
\\
&= (q-q^{-1}) \Big( \sum_{t \in \ZZ} \Big\{ 
	\sum_{y \in [\bT]_{-\a_i}} (c_{\Bz}(y) q^i)^t B_{\bT_y^+}^y C_{\bT}^y 
	- \sum_{x \in [\bT]_{\a_i}} (c_{\Bz} (x) q^i)^t B_{\bT}^x C_{\bT_x^-}^x 
	\Big\} z^t \Big) v_{\bT}. 
\end{split} 
\end{align}
Then, by comparing \eqref{eiz fjw - fjw eiz vbT} and \eqref{psii+ psii- vbT}, we have 
\begin{align*}
(e_i(z) f_j(w) - f_j(w) e_i(z)) \cdot v_{\bT} 
= \frac{\d_{i,j}}{q-q^{-1}} \d \left( \frac{w}{z} \right) (\psi_i^+(z) - \psi_i^-(z)) \cdot v_{\bT}. 
\end{align*}

%%%%%
\para 
We check the relation (U7). 

If $j \not=i, i \pm 1$,  
we can easily check $\big( e_i(z) e_j(w) \big) \cdot v_{\bT} = \big( e_j(w) e_i(z) \big) \cdot v_{\bT}$ directly. 

By definition, we can calculate that 
\begin{align*}
& e_{i+1} (w) e_i(z_1) e_i(z_2) \cdot v_{\bT} 
\\
&= \sum_{t_1,t_2,s\in \ZZ} q^{i \cdot (t_1+t_2) + (i+1) \cdot s} 
	\Big\{ \sum_{x_2 \in [\bT]_{\a_i}} \sum_{x_1 \in [\bT_{x_2}^-]_{\a_i}} \sum_{y \in [(\bT_{x_2}^-)_{x_1}^-]_{\a_{i+1}}} 
		c_{\Bz}(x_2)^{t_2} c_{\Bz} (x_1)^{t_1}   c_{\Bz} (y)^s 
		\\ & \qquad \times 
		B_{\bT}^{x_2} B_{\bT_{x_2}^-}^{x_1} B_{(\bT_{x_2}^-)_{x_1}^-}^y v_{((\bT_{x_2}^-)_{x_1}^-)_y^-} \Big\} 
		z_1^{t_1} z_2^{t_2} w^s. 
\end{align*}

Put $f(x_1,x_2,y)= c_{\Bz} (x_1)^{t_1}   c_{\Bz}(x_2)^{t_2}  c_{\Bz} (y)^s 
	B_{\bT_{x_2}^-}^{x_1} B_{\bT}^{x_2}  B_{(\bT_{x_2}^-)_{x_1}^-}^y v_{((\bT_{x_2}^-)_{x_1}^-)_y^-} $, 
and we have the following equations by  Lemma \ref{Lemma bTx- ai}. 
\begin{align*}
&\sum_{x_2 \in [\bT]_{\a_i}} \sum_{x_1 \in [\bT_{x_2}^-]_{\a_i}} \sum_{y \in [(\bT_{x_2}^-)_{x_1}^-]_{\a_{i+1}}} 
	f(x_1,x_2,y) 
\\
&= \sum_{x_2 \in [\bT]_{\a_i}} \sum_{x_1 \in [\bT]_{\a_i} \atop x_1 \not=x_2} \sum_{y \in [(\bT_{x_2}^-)_{x_1}^-]_{\a_{i+1}}} 
		f(x_1,x_2,y)
	+ \sum_{x_2 \in [\bT]_{\a_i} \atop \fn_r(x_2) \in [\bT_{x_2}^-]_{\a_i}}  \sum_{y \in [(\bT_{x_2}^-)_{\fn_r(x_2)}^-]_{\a_{i+1}}}
		f(\fn_r(x_2),x_2,y)
\\
&= \sum_{x_2 \in [\bT]_{\a_i}} \sum_{x_1 \in [\bT]_{\a_i} \atop x_1 \not=x_2} \sum_{y \in [\bT_{x_2}^-]_{\a_{i+1}}}
		f(x_1,x_2,y)
	+ \sum_{x_2 \in [\bT]_{\a_i}} \sum_{x_1 \in [\bT]_{\a_i} \atop x_1 \not=x_2, \, \fn_b(x_1) \in [(\bT_{x_2}^-)_{x_1}^-]_{\a_{i+1}}} 
		f(x_1,x_2, \fn_b(x_1))
	\\ & \quad 
	+ \sum_{x_2 \in [\bT]_{\a_i} \atop \fn_r(x_2) \in [\bT_{x_2}^-]_{\a_i}}  \sum_{y \in [\bT_{x_2}^-]_{\a_{i+1}}}
		f(\fn_r(x_2),x_2,y) 
	\\ & \hspace{3em} 
	+ \sum_{x_2 \in [\bT]_{\a_i} 
			\atop \fn_r(x_2) \in [\bT_{x_2}^-]_{\a_i}, \, \fn_b (\fn_r(x_2)) \in [(\bT_{x_2}^-)_{\fn_r(x_2)}^-]_{\a_{i+1}}}  
		f(\fn_r(x_2),x_2,\fn_b (\fn_r(x_2))), 
\end{align*}
where we note that 
\begin{itemize}
\item 
$\fn_b(x_1) \in [(\bT_{x_1}^-)_{x_2}^-]_{\a_{i+1}} $ if and only if $\fn_b(x_1) \in [\bT_{x_1}^-]_{\a_{i+1}}$ 
for $x_1,x_2 \in [\bT]_{\a_i}$ such that $x_1 \not= x_2$ by Lemma \ref{Lemma bTx- ai},  
where we note that $x_1=x_2$ if $\fn_b(x_1) = \fn_b(x_2) \in [(\bT_{x_1}^-)_{x_2}^-]_{\a_{i+1}}$. 

\item 
$\fn_b (\fn_r(x_2)) \not\in [(\bT_{x_2}^-)_{\fn_r(x_2)}^-]_{\a_{i+1}}$ for $x_2 \in [\bT]_{\a_i}$ 
since $(\bT_{x_2}^-)_{\fn_r(x_2)}^- (\fn_l( \fn_b(\fn_r(x_2)))) = (\bT_{x_2}^-)_{\fn_r(x_2)}^- ( \fn_b(x_2)) =\bT (\fn_b(x_2)) > \bT(x_2) =i+1$.
\end{itemize}
Then, we have 
\begin{align*}
&\sum_{x_2 \in [\bT]_{\a_i}} \sum_{x_1 \in [\bT_{x_2}^-]_{\a_i}} \sum_{y \in [(\bT_{x_2}^-)_{x_1}^-]_{\a_{i+1}}} 
	f(x_1,x_2,y) 
\\
&= \sum_{x_2 \in [\bT]_{\a_i}} \sum_{x_1 \in [\bT]_{\a_i} \atop x_1 \not=x_2} \sum_{y \in [\bT_{x_2}^-]_{\a_{i+1}}}
		f(x_1,x_2,y)
	+ \sum_{x_2 \in [\bT]_{\a_i}} \sum_{x_1 \in [\bT]_{\a_i} \atop x_1 \not=x_2, \, \fn_b(x_1) \in [\bT_{x_1}^-]_{\a_{i+1}}} 
		f(x_1,x_2, \fn_b(x_1))
	\\ & \quad 
	+ \sum_{x_2 \in [\bT]_{\a_i} \atop \fn_r(x_2) \in [\bT_{x_2}^-]_{\a_i}}  \sum_{y \in [\bT_{x_2}^-]_{\a_{i+1}}}
		f(\fn_r(x_2),x_2,y) 
\\
&= \sum_{x_2 \in [\bT]_{\a_i}} \sum_{x_1 \in [\bT]_{\a_i} \atop x_1 \not=x_2} \sum_{y \in [\bT]_{\a_{i+1}}}
		f(x_1,x_2,y)
	+\sum_{x_2 \in [\bT]_{\a_i} \atop \fn_b(x_2) \in [\bT_{x_2}^-]_{\a_{i+1}}} \sum_{x_1 \in [\bT]_{\a_i} \atop x_1 \not=x_2} 
		f(x_1,x_2,\fn_b(x_2))
	\\ & \quad 
	+ \sum_{x_2 \in [\bT]_{\a_i}} \sum_{x_1 \in [\bT]_{\a_i} \atop x_1 \not=x_2, \, \fn_b(x_1) \in [\bT_{x_1}^-]_{\a_{i+1}}} 
		f(x_1,x_2, \fn_b(x_1))
	\\ & \quad 
	+ \sum_{x_2 \in [\bT]_{\a_i} \atop \fn_r(x_2) \in [\bT_{x_2}^-]_{\a_i}}  \sum_{y \in [\bT]_{\a_{i+1}}}
		f(\fn_r(x_2),x_2,y) 
	+ \sum_{x_2 \in [\bT]_{\a_i} \atop \fn_r(x_2) \in [\bT_{x_2}^-]_{\a_i}, \, \fn_b(x_2) \in [\bT_{x_2}^-]_{\a_{i+1}}}  
		f(\fn_r(x_2),x_2, \fn_b(x_2)) 
\\
&= \sum_{(x_1,x_2,y) \in [\bT]_{\a_i} \times [\bT]_{\a_i} \times [\bT]_{\a_{i+1}} \atop x_1 \not=x_2} 
		f (x_1,x_2,y) 
	\\ & \quad 
	+ \sum_{(x_1,x_2) \in [\bT]_{\a_i} \times [\bT]_{\a_i} \atop x_1 \not=x_2, \, \fn_b(x_2) \in [\bT_{x_2}^-]_{\a_{i+1}}} 
		f (x_1,x_2, \fn_b(x_2)) 
	+ \sum_{(x_1,x_2) \in [\bT]_{\a_i} \times [\bT]_{\a_i} \atop x_1 \not=x_2, \, \fn_b(x_2) \in [\bT_{x_2}^-]_{\a_{i+1}}} 
		f (x_2,x_1, \fn_b(x_2))  
	\\ & \quad 
	+ \sum_{ (x,y) \in [\bT]_{\a_i} \times [\bT]_{\a_{i+1}} \atop \fn_r (x) \in [\bT_{x}^-]_{\a_i}} 
		f (\fn_r(x), x, y) 
	+ \sum_{x \in [\bT]_{\a_i} \atop \fn_r(x) \in [\bT_x^-]_{\a_i}, \, \fn_b(x) \in [\bT_x^-]_{\a_{i+1}}} 
		f (\fn_r(x), x, \fn_b(x)).  
\end{align*}
Furthermore, 
applying Lemma \ref{Lemma BbTx- y}, we have 
\begin{align}
\label{ei+1w eiz1 eiz2 vbT} 
& e_{i+1} (w) e_i(z_1) e_i(z_2) \cdot v_{\bT} 
\\
\notag
&= \sum_{t_1,t_2,s \in \ZZ} q^{ i \cdot (t_1+t_2) + (i+1) \cdot s} 
	\\ \notag
	& \quad \times \Big\{ 
	\sum_{(x_1,x_2,y) \in [\bT]_{\a_i} \times [\bT]_{\a_i} \times [\bT]_{\a_{i+1}} \atop x_1 \not=x_2} 
		c_{\Bz} (x_1)^{t_1} c_{\Bz}(x_2)^{t_2} c_{\Bz} (y)^s 
		D_{\bT_{x_2}^-}^{x_1} D_{(\bT_{x_2}^-)_{x_1}^-}^y D_{\bT_{x_2}^-}^y
		\\ \notag
		& \hspace{5em} \times 
		 B_{\bT}^{x_1}  B_{\bT}^{x_2} B_{\bT}^y \, v_{((\bT_{x_2}^-)_{x_1}^-)_y^-} 
	\\ \notag & \hspace{3em} 
	+ \sum_{(x_1,x_2) \in [\bT]_{\a_i} \times [\bT]_{\a_i} \atop x_1 \not=x_2, \, \fn_b(x_2) \in [\bT_{x_2}^-]_{\a_{i+1}}} 
		c_{\Bz} (x_1)^{t_1} c_{\Bz}(x_2)^{t_2} c_{\Bz} (\fn_b(x_2))^s 
		D_{\bT_{x_2}^-}^{x_1} D_{(\bT_{x_2}^-)_{x_1}^-}^{\fn_b(x_2)}
		\\ & \notag \hspace{5em} \times 
		B_{\bT}^{x_1} B_{\bT}^{x_2}  B_{\bT_{x_2}^-}^{\fn_b(x_2)} v_{((\bT_{x_2}^-)_{x_1}^-)_{\fn_b(x_2)}^-} 
	\\ & \notag \hspace{3em} 
	+ \sum_{(x_1,x_2) \in [\bT]_{\a_i} \times [\bT]_{\a_i} \atop x_1 \not= x_2, \, \fn_b(x_2) \in [\bT_{x_2}^-]_{\a_{i+1}}}
		c_{\Bz} (x_2)^{t_1} c_{\Bz}(x_1)^{t_2} c_{\Bz} (\fn_b(x_2))^s 
		D_{\bT_{x_1}^-}^{x_2} D_{(\bT_{x_2}^-)_{x_1}^-}^{\fn_b(x_2)} 
		\\ & \notag \hspace{5em} \times 
		B_{\bT}^{x_2} B_{\bT}^{x_1}  B_{\bT_{x_2}^-}^{\fn_b(x_2)} v_{((\bT_{x_1}^-)_{x_2}^-)_{\fn_b (x_2)}^-}  
	\\ & \notag \hspace{3em} 
	+ \sum_{(x,y) \in [\bT]_{\a_i} \times [\bT]_{\a_{i+1}} \atop \fn_r(x) \in [\bT_x^-]_{\a_i}} 
		c_{\Bz} (\fn_r(x))^{t_1} c_{\Bz}(x)^{t_2} c_{\Bz} (y)^s 
		D_{(\bT_{x}^-)_{\fn_r(x)}^-}^y D_{\bT_{x}^-}^y 
		\\ & \notag \hspace{5em} \times 
		B_{\bT_{x}^-}^{\fn_r(x)} B_{\bT}^{x} B_{\bT}^y v_{((\bT_{x}^-)_{\fn_r(x)}^-)_y^-} 
	\\ & \notag \hspace{3em} 
	+ \sum_{x \in [\bT]_{\a_i} \atop \fn_r(x) \in [\bT_x^-]_{\a_i}, \, \fn_b(x) \in [\bT_x^-]_{\a_{i+1}}} 
		c_{\Bz} (\fn_r(x))^{t_1} c_{\Bz}(x)^{t_2} c_{\Bz} (\fn_b(x))^s 
		D_{(\bT_{x}^-)_{\fn_r(x)}^-}^{\fn_b(x)}
		\\ & \notag \hspace{5em} \times 
		B_{\bT_{x}^-}^{\fn_r(x)} B_{\bT}^{x}  B_{\bT_{x}^-}^{\fn_b(x)}  v_{((\bT_{x}^-)_{\fn_r(x)}^-)_{\fn_b(x)}^-}   
	\Big\} z_1^{t_1} z_2^{t_2} w^s.
\end{align}
Similarly, we have 
\begin{align}
\label{eiz1 eiz2 ei+1w vbT} 
&e_i(z_1) e_i(z_2) e_{i+1}(w) \cdot v_{\bT} 
\\
& \notag
= \sum_{t_1,t_2,s \in \ZZ} q^{i \cdot (t_1+t_2) + (i+1) \cdot s} 
	\\ & \notag \quad \times \Big\{ 
	\sum_{(x_1,x_2,y) \in [\bT]_{\a_i} \times [\bT]_{\a_i} \times [\bT]_{\a_{i+1}} \atop x_1 \not= x_2 } 
		c_{\Bz} (x_1)^{t_1} c_{\Bz} (x_2)^{t_2} c_{\Bz} (y)^s 
		D_{\bT_y^-}^{x_1} D_{\bT_y^-}^{x_2} D_{(\bT_y^-)_{x_2}^-}^{x_1} 
		%\\ & \hspace{5em} \times 
		B_{\bT}^{x_1} B_{\bT}^{x_2} B_{\bT}^y \, v_{((\bT_{x_1}^-)_{x_2}^-)_y^-} 
	\\ \notag & \hspace{3em} 
	+ \sum_{(x,y) \in [\bT]_{\a_i} \times [\bT]_{\a_{i+1}} \atop \fn_r (x) \in [\bT_x^-]_{\a_i}} 
		c_{\Bz} (\fn_r (x))^{t_1} c_{\Bz} (x)^{t_2} c_{\Bz} (y)^s 
		D_{(\bT_x^-)_y^-}^{\fn_r(x)} D_{\bT_y^-}^x 
		%\\ & \hspace{5em} \times 
		B_{\bT_x^-}^{\fn_r(x)} B_{\bT}^x B_{\bT}^y \, v_{((\bT_x^-)_y^-)_{\fn_r(x)}^-} 
	\\  \notag & \hspace{3em} 
	+ \sum_{(x,y) \in [\bT]_{\a_i} \times [\bT]_{\a_{i+1}} \atop y \in [\bT_y^-]_{\a_i}} 
		c_{\Bz} (x)^{t_1} c_{\Bz} (y)^{t_2} c_{\Bz} (y)^s 
		D_{\bT_y^-}^x D_{(\bT_y^-)_y^-}^x 
		%\\ & \hspace{5em} \times 
		B_{\bT_y^-}^y B_{\bT}^x B_{\bT}^y \, v_{((\bT_x^-)_y^-)_y^-} 
	\\  \notag & \hspace{3em} 
	+ \sum_{(x,y) \in [\bT]_{\a_i} \times [\bT]_{\a_{i+1}} \atop y \in [\bT_y^-]_{\a_i}} 
		c_{\Bz} (y)^{t_1} c_{\Bz} (x)^{t_2} c_{\Bz} (y)^s 
		D_{\bT_y^-}^x D_{(\bT_y^-)_x^-}^y 
		%\\ & \hspace{5em} \times 
		B_{\bT_y^-}^y B_{\bT}^x B_{\bT}^y \, v_{((\bT_x^-)_y^-)_y^-} 
	\\ \notag & \hspace{3em} 
	+ \sum_{x \in [\bT]_{\a_i} \atop \fn_r(x) \in [\bT]_{\a_{i+1}} \cap [(\bT_x^-)_{\fn_r(x)}^-]_{\a_i}} 
		c_{\Bz}(\fn_r(x))^{t_1} c_{\Bz} (x)^{t_2} c_{\Bz} (\fn_r(x))^s 
		D_{\bT_{\fn_r(x)}^-}^x 
		\\ \notag & \hspace{5em} \times 
		B_{(\bT_{\fn_r(x)}^-)_x^-}^{\fn_r(x)} B_{\bT}^x B_{\bT}^{\fn_r(x)} \, v_{((\bT_x^-)_{\fn_r(x)}^-)_{\fn_r(x)}^-}  
	\Big\} z_1^{t_1} z_2^{t_2} w^s, 
\end{align}
\begin{align}
\label{e_iz1 ei+1w e_iz2 vbT} 
& e_i(z_1) e_{i+1} (w) e_i(z_2) \cdot v_{\bT} 
\\
&\notag 
= \sum_{t_1,t_2, s \in \ZZ} q^{ i \cdot (t_1 + t_2) + (i+1) \cdot s} 
	\\ \notag & \quad \times \Big\{  
		\sum_{( x_1, x_2, y) \in [\bT]_{\a_i} \times [\bT]_{\a_i} \times [\bT]_{\a_{i+1}} \atop x_1 \not= x_2 } 
		c_{\Bz} (x_1)^{t_1} c_{\Bz} (x_2)^{t_2} c_{\Bz} (y)^s 
		D_{\bT_{x_2}^-}^{x_1} D_{(\bT_{x_2}^-)_y^-}^{x_1} D_{\bT_{x_2}^-}^y 
		\\\notag  & \hspace{5em} \times 
		B_{\bT}^{x_1} B_{\bT}^{x_2} B_{\bT}^y \, v_{((\bT_{x_1}^-)_{x_2}^-)_y^-} 
	\\ \notag & \hspace{3em} 
	+ \sum_{(x_1,x_2) \in [\bT]_{\a_i} \times [\bT]_{\a_i} \atop x_1 \not= x_2, \, \fn_b (x_2) \in [\bT_{x_2}^-]_{\a_{i+1}}} 
		c_{\Bz} (x_1)^{t_1} c_{\Bz} (x_2)^{t_2} c_{\Bz} (\fn_b (x_2))^s 
		D_{\bT_{x_2}^-}^{x_1} D_{(\bT_{x_2}^-)_{\fn_b (x_2)}^-}^{x_1} 
		\\ \notag & \hspace{5em} \times 
		B_{\bT_{x_2}^-}^{\fn_b (x_2)} B_{\bT}^{x_1} B_{\bT}^{x_2} \, v_{((\bT_{x_1}^-)_{x_2}^-)_{\fn_b(x_2)}^-} 
	\\\notag  & \hspace{3em} 
	+ \sum_{(x,y) \in [\bT]_{\a_i} \times [\bT]_{\a_{i+1}} \atop \fn_r(x) \in [\bT_x^-]_{\a_i}} 
		c_{\Bz} (\fn_r (x))^{t_1} c_{\Bz} (x)^{t_2} c_{\Bz} (y)^s 
		D_{(\bT_{x}^-)_y^-}^{\fn_r (x)} D_{\bT_x^-}^y 
		% \\ & \hspace{5em} \times 
		B_{\bT_x^-}^{\fn_r(x)} B_{\bT}^x B_{\bT}^y \, v_{((\bT_x^-)_y^-)_{\fn_r(x)}^-} 
	\\ \notag & \hspace{3em} 
	+ \sum_{(x,y) \in [\bT]_{\a_i} \times [\bT]_{\a_{i+1}} \atop y \in [\bT_y^-]_{\a_i}} 
		c_{\Bz} (y)^{t_1} c_{\Bz}(x)^{t_2} c_{\Bz}(y)^s 
		D_{(\bT_y^-)_x^-}^y D_{\bT_x^-}^y 
		% \\ & \hspace{5em} \times 
		B_{\bT_y^-}^y B_{\bT}^x B_{\bT}^y \, v_{((\bT_x^-)_y^-)_y^-} 
	\\ \notag & \hspace{3em} 
	+ \sum_{ x \in [\bT]_{\a_i} \atop \fn_b(x) \in [\bT_x^-]_{\a_{i+1}}, \, \fn_r (x) \in [\bT_x^-]_{\a_i}} 
		c_{\Bz} (\fn_r (x))^{t_1} c_{\Bz} (x)^{t_2} c_{\Bz}( \fn_b (x))^s 
		D_{(\bT_x^-)_{\fn_b(x)}^-}^{\fn_r (x)} 
		\\ \notag & \hspace{5em} \times 
		B_{\bT_x^-}^{\fn_r (x)} B_{\bT_x^-}^{\fn_b(x)} B_{\bT}^x \, v_{((\bT_x^-)_{\fn_b(x)}^-)_{\fn_r(x)}^-} 
	\\ \notag & \hspace{3em} 
	+ \sum_{x \in [\bT]_{\a_i} \atop \fn_r(x) \in [\bT]_{\a_{i+1}} \cap [(\bT_x^-)_{\fn_r(x)}^-]_{\a_i}} 
		c_{\Bz} (\fn_r(x))^{t_1} c_{\Bz} (x)^{t_2} c_{\Bz} (\fn_r (x))^s 
		D_{\bT_x^-}^{\fn_r (x)} 
		\\ \notag & \hspace{5em} \times 
		B_{(\bT_x^-)_{\fn_r(x)}^-}^{\fn_r(x)} B_{\bT}^{\fn_r(x)} B_{\bT}^x \, v_{((\bT_x^-)_{\fn_r(x)}^-)_{\fn_r(x)}^-} 
	\Big\} z_1^{t_1} z_2^{t_2} w^s. 
\end{align}

By \eqref{ei+1w eiz1 eiz2 vbT} and \eqref{eiz1 eiz2 ei+1w vbT}, 
we have 
\begin{align}
\label{w z1 z2 z2 z1 z1 z2 z2 z1 w}
&\Big( e_{i+1} (w) \big( e_i (z_1) e_i(z_2) + e_i(z_2) e_i(z_1) \big) 
 	+ \big( e_i(z_1) e_i(z_2) + e_i(z_2) e_i (z_1) \big) e_{i+1} (w) \Big) \cdot v_{\bT} 
\\
&\notag
= \sum_{t_1,t_2, s \in \ZZ} q^{i \cdot (t_1 + t_2) + (i+1) \cdot s} 
	\\ \notag & \quad \times \Big\{  
	\sum_{(x_1,x_2,y) \in [\bT]_{\a_i} \times [\bT]_{\a_i} \times [\bT]_{\a_{i+1}} \atop x_1 \not= x_2} 
		\big( c_{\Bz}(x_1)^{t_1} c_{\Bz} (x_2)^{t_2} + c_{\Bz} (x_1)^{t_2} c_{\Bz} (x_2)^{t_1} \big) c_{\Bz} (y)^s 
		\\ \notag & \hspace{5em} \times 
		\big( D_{\bT_{x_2}^-}^{x_1} D_{(\bT_{x_2}^-)_{x_1}^-}^y D_{\bT_{x_2}^-}^y 
			+ D_{\bT_y^-}^{x_1} D_{\bT_y^-}^{x_2} D_{(\bT_y^-)_{x_2}^-}^{x_1} \big)  
		B_{\bT}^{x_1} B_{\bT}^{x_2} B_{\bT}^y 
		\, v_{((\bT_{x_1}^-)_{x_2}^-)_y^-} 
	\\ \notag & \hspace{3em} 
	+ \sum_{(x_1,x_2) \in [\bT]_{\a_i} \times [\bT]_{\a_i} \atop x_1 \not= x_2, \, \fn_b(x_2) \in [\bT_{x_2}^-]_{\a_{i+1}}} 
		\big( c_{\Bz}(x_1)^{t_1} c_{\Bz} (x_2)^{t_2} + c_{\Bz} (x_1)^{t_2} c_{\Bz} (x_2)^{t_1} \big) c_{\Bz} (\fn_b (x_2))^s 
		\\ \notag & \hspace{5em} \times 
		\big( D_{\bT_{x_2}^-}^{x_1} D_{(\bT_{x_2}^-)_{x_1}^-}^{\fn_b (x_2)} 
			+ D_{\bT_{x_1}^-}^{x_2} D_{(\bT_{x_2}^-)_{x_1}^-}^{\fn_b (x_2)} \big) 
		B_{\bT}^{x_1} B_{\bT}^{x_2} B_{\bT_{x_2}^-}^{\fn_b(x_2)} 
		\, v_{((\bT_{x_1}^-)_{x_2}^-)_{\fn_b(x_2)}^-} 
	\\ \notag & \hspace{3em} 
	+ \sum_{(x,y) \in [\bT]_{\a_i} \times [\bT]_{\a_{i+1}} \atop \fn_r (x) \in [\bT_x^-]_{\a_i}} 
		\big( c_{\Bz} (\fn_r (x))^{t_1} c_{\Bz} (x)^{t_2} + c_{\Bz} (\fn_r(x))^{t_2} c_{\Bz} (x)^{t_1} \big) c_{\Bz} (y)^s 
		\\ \notag & \hspace{5em} \times 
		\big( D_{(\bT_x^-)_{\fn_r(x)}^-}^y D_{\bT_x^-}^y + D_{(\bT_x^-)_y^-}^{\fn_r(x)} D_{\bT_y^-}^x \big) 
		B_{\bT}^x B_{\bT}^y B_{\bT_x^-}^{\fn_r(x)} 
		\, v_{((\bT_x^-)_y^-)_{\fn_r(x)}^-} 
	\\ \notag & \hspace{3em} 
	+ \sum_{(x,y) \in [\bT]_{\a_i} \times [\bT]_{\a_{i+1}} \atop y \in [\bT_y^-]_{\a_i}} 
		\big( c_{\Bz} (x)^{t_1} c_{\Bz} (y)^{t_2} + c_{\Bz}(x)^{t_2} c_{\Bz} (y)^{t_1} \big) c_{\Bz} (y)^s 
		\\ \notag & \hspace{5em} \times 
		\big( D_{\bT_y^-}^x D_{(\bT_y^-)_y^-}^x + D_{\bT_y^-}^x D_{(\bT_y^-)_x^-}^y \big)
		B_{\bT}^x B_{\bT}^y B_{\bT_y^-}^y 
		\, v_{((\bT_x^-)_y^-)_y^-} 
	\\ \notag & \hspace{3em} 
	+ \sum_{x \in [\bT]_{\a_i} \atop \fn_r(x) \in [\bT_x^-]_{\a_i}, \, \fn_b(x) \in [\bT_x^-]_{\a_{i+1}}} 
		\big( c_{\Bz} (\fn_r (x))^{t_1} c_{\Bz} (x)^{t_2} + c_{\Bz} (\fn_r (x))^{t_2} c_{\Bz} (x)^{t_1} \big) c_{\Bz} (\fn_b(x))^s  
		\\ \notag & \hspace{5em} \times 
		D_{(\bT_x^-)_{\fn_r (x)}^-}^{\fn_b(x)} 
		B_{\bT}^x B_{\bT_x^-}^{\fn_r(x)} B_{\bT_x^-}^{\fn_b(x)} 
		\, v_{((\bT_x^-)_{\fn_r(x)}^-)_{\fn_b(x)}^-} 
	\\ \notag & \hspace{3em} 
	+ \sum_{x \in [\bT]_{\a_i} \atop \fn_r(x) \in [\bT]_{\a_{i+1}}\cap [(\bT_x^-)_{\fn_r(x)}^-]_{\a_i}} 
		\big( c_{\Bz} (\fn_r (x))^{t_1} c_{\Bz} (x)^{t_2} + c_{\Bz}(\fn_r(x))^{t_2} c_{\Bz} (x)^{t_1} \big) c_{\Bz} (\fn_r(x))^s 
		\\ \notag & \hspace{5em} \times 
		D_{\bT_{\fn_r(x)}^-}^x B_{\bT}^x B_{\bT}^{\fn_r(x)} B_{(\bT_{\fn_r(x)}^-)_x^-}^{\fn_r(x)} 
		\, v_{((\bT_x^-)_{\fn_r(x)}^-)_{\fn_r(x)}^-} 
	\Big\} z_1^{t_1} z_2^{t_2} w^s. 
\end{align}
By \eqref{e_iz1 ei+1w e_iz2 vbT}, we also have 
\begin{align}
\label{z1 w z2 z2 w z1}
& \Big( e_i(z_1) e_{i+1} (w) e_i(z_2) + e_i(z_2) e_{i+1} (w) e_i(z_1) \Big) \cdot v_{\bT} 
\\
&\notag 
= \sum_{t_1,t_2,s \in \ZZ} q^{i \cdot (t_1 + t_2) + (i+1) \cdot s} 
	\\ \notag & \quad \times \Big\{ 
	\sum_{( x_1, x_2, y) \in [\bT]_{\a_i} \times [\bT]_{\a_i} \times [\bT]_{\a_{i+1}} \atop x_1 \not= x_2 } 
		\big( c_{\Bz} (x_1)^{t_1} c_{\Bz} (x_2)^{t_2} + c_{\Bz} (x_1)^{t_2} c_{\Bz} (x_2)^{t_1} \big) c_{\Bz} (y)^s 
		\\ \notag & \hspace{5em} \times 
		D_{\bT_{x_2}^-}^{x_1} D_{(\bT_{x_2}^-)_y^-}^{x_1} D_{\bT_{x_2}^-}^y 
		B_{\bT}^{x_1} B_{\bT}^{x_2} B_{\bT}^y 
		\, v_{((\bT_{x_1}^-)_{x_2}^-)_y^-} 
	\\ \notag & \hspace{3em} 
	+ \sum_{(x_1,x_2) \in [\bT]_{\a_i} \times [\bT]_{\a_i} \atop x_1 \not= x_2, \, \fn_b (x_2) \in [\bT_{x_2}^-]_{\a_{i+1}}} 
		\big( c_{\Bz} (x_1)^{t_1} c_{\Bz} (x_2)^{t_2} + c_{\Bz} (x_1)^{t_2} c_{\Bz} (x_2)^{t_1} \big) c_{\Bz} (\fn_b (x_2))^s 
		\\ \notag & \hspace{5em} \times 
		D_{\bT_{x_2}^-}^{x_1} D_{(\bT_{x_2}^-)_{\fn_b (x_2)}^-}^{x_1} 
		B_{\bT}^{x_1} B_{\bT}^{x_2}  B_{\bT_{x_2}^-}^{\fn_b (x_2)} 
		\, v_{((\bT_{x_1}^-)_{x_2}^-)_{\fn_b(x_2)}^-} 
	\\ \notag & \hspace{3em} 
	+ \sum_{(x,y) \in [\bT]_{\a_i} \times [\bT]_{\a_{i+1}} \atop \fn_r(x) \in [\bT_x^-]_{\a_i}} 
		\big( c_{\Bz} (\fn_r (x))^{t_1} c_{\Bz} (x)^{t_2} +  c_{\Bz} (\fn_r (x))^{t_2} c_{\Bz} (x)^{t_1} \big) c_{\Bz} (y)^s 
		\\ \notag & \hspace{5em} \times 
		D_{(\bT_{x}^-)_y^-}^{\fn_r (x)} D_{\bT_x^-}^y 
		B_{\bT}^x B_{\bT}^y B_{\bT_x^-}^{\fn_r(x)}  
		\, v_{((\bT_x^-)_y^-)_{\fn_r(x)}^-} 
	\\ \notag & \hspace{3em} 
	+ \sum_{(x,y) \in [\bT]_{\a_i} \times [\bT]_{\a_{i+1}} \atop y \in [\bT_y^-]_{\a_i}} 
		\big( c_{\Bz}(x)^{t_1} c_{\Bz} (y)^{t_2}  + c_{\Bz}(x)^{t_2} c_{\Bz} (y)^{t_1}   \big) c_{\Bz}(y)^s 
		\\ \notag & \hspace{5em} \times 
		D_{(\bT_y^-)_x^-}^y D_{\bT_x^-}^y 
		B_{\bT}^x B_{\bT}^y B_{\bT_y^-}^y 
		\, v_{((\bT_x^-)_y^-)_y^-} 
	\\ \notag & \hspace{3em} 
	+ \sum_{ x \in [\bT]_{\a_i} \atop\fn_r (x) \in [\bT_x^-]_{\a_i}, \, \fn_b(x) \in [\bT_x^-]_{\a_{i+1}} } 
		\big( c_{\Bz} (\fn_r (x))^{t_1} c_{\Bz} (x)^{t_2} + c_{\Bz} (\fn_r (x))^{t_2} c_{\Bz} (x)^{t_1} \big) c_{\Bz}( \fn_b (x))^s 
		\\ \notag & \hspace{5em} \times 
		D_{(\bT_x^-)_{\fn_b(x)}^-}^{\fn_r (x)} 
		B_{\bT}^x  B_{\bT_x^-}^{\fn_r (x)} B_{\bT_x^-}^{\fn_b(x)} 
		\, v_{((\bT_x^-)_{\fn_r(x)}^- )_{\fn_b(x)}^-}
	\\ \notag & \hspace{3em} 
	+ \sum_{x \in [\bT]_{\a_i} \atop \fn_r(x) \in [\bT]_{\a_{i+1}} \cap [(\bT_x^-)_{\fn_r(x)}^-]_{\a_i}} 
		\big( c_{\Bz} (\fn_r(x))^{t_1} c_{\Bz} (x)^{t_2} + c_{\Bz} (\fn_r(x))^{t_2} c_{\Bz} (x)^{t_1} \big) c_{\Bz} (\fn_r (x))^s 
		\\ \notag & \hspace{5em} \times 
		D_{\bT_x^-}^{\fn_r (x)} 
		B_{\bT}^x  B_{\bT}^{\fn_r(x)}  B_{(\bT_x^-)_{\fn_r(x)}^-}^{\fn_r(x)} 
		\, v_{((\bT_x^-)_{\fn_r(x)}^-)_{\fn_r(x)}^-} 
	\Big\} z_1^{t_1} z_2^{t_2} w^s. 
\end{align}
Furthermore, 
we can obtain the following equations by direct calculations. 
\begin{itemize}
\item 
For $(x_1,x_2,y), (x_2,x_1,y) \in [\bT]_{\a_i} \times [\bT]_{\a_i} \times [\bT]_{\a_{i+1}}$ such that $x_1 \not=x_2$, 
we have $v_{((\bT_{x_1}^-)_{x_2}^-)_y^-} = v_{((\bT_{x_2}^-)_{x_1}^-)_y^-}$, and 
\begin{align*}
&\big( D_{\bT_{x_2}^-}^{x_1} D_{(\bT_{x_2}^-)_{x_1}^-}^y D_{\bT_{x_2}^-}^y 
		+ D_{\bT_y^-}^{x_1} D_{\bT_y^-}^{x_2} D_{(\bT_y^-)_{x_2}^-}^{x_1} \big) 
	+ \big( D_{\bT_{x_1}^-}^{x_2} D_{(\bT_{x_1}^-)_{x_2}^-}^y D_{\bT_{x_1}^-}^y 
		+ D_{\bT_y^-}^{x_2} D_{\bT_y^-}^{x_1} D_{(\bT_y^-)_{x_1}^-}^{x_2} \big) 
\\
&= [2] \Big( \frac{ c_{\Bz}(y) - c_{\Bz}(x_1) }{ q c_{\Bz}(y) - q^{-1} c_{\Bz} (x_1) } 
			\cdot \frac{ c_{\Bz}(y) - c_{\Bz} (x_2) }{ q c_{\Bz} (y) - q^{-1} c_{\Bz} (x_2) } +1 \Big)
\\
&= [2] \big( D_{\bT_{x_2}^-}^{x_1} D_{(\bT_{x_2}^-)_y^-}^{x_1} D_{\bT_{x_2}^-}^y 
			+  D_{\bT_{x_1}^-}^{x_2} D_{(\bT_{x_1}^-)_y^-}^{x_2} D_{\bT_{x_1}^-}^y \big). 
\end{align*}

%%%%
\item 
For $(x_1,x_2) \in [\bT]_{\a_i} \times [\bT]_{\a_i}$ such that $x_1 \not=x_2$ and $\fn_b (x_2) \in [\bT_{x_2}^-]_{\a_{i+1}}$, 
\begin{align*}
D_{\bT_{x_2}^-}^{x_1} D_{(\bT_{x_2}^-)_{x_1}^-}^{\fn_b(x_2)} + D_{\bT_{x_1}^-}^{x_2} D_{(\bT_{x_2}^-)_{x_1}^-}^{\fn_b (x_2)} 
&= [2] \frac{  q c_{\Bz} (x_1) - q^{-1} c_{\Bz} (x_2)  }{  c_{\Bz} (x_1) - c_{\Bz} (x_2) } 
= [2] D_{\bT_{x_2}^-}^{x_1} D_{(\bT_{x_2}^-)_{\fn_b(x_2)}^-}^{x_1}.  
\end{align*}

%%%
\item  
For $(x,y) \in [\bT]_{\a_i} \times [\bT]_{\a_{i+1}} $ such that  $\fn_r (x) \in [\bT_x^-]_{\a_i}$, 
\begin{align*}
D_{(\bT_x^-)_{\fn_r(x)}^-}^y D_{\bT_x^-}^y + D_{(\bT_x^-)_y^-}^{\fn_r(x)} D_{\bT_y^-}^x 
= [2] \frac{ c_{\Bz} (y) - c_{\Bz}(x)}{ q c_{\Bz}(y) - q^{-1} c_{\Bz} (x)} 
=[2] D_{(\bT_x^-)_y^-}^{\fn_r(x)} D_{\bT_x^-}^y. 
\end{align*}

%%%
\item 
For $(x,y) \in [\bT]_{\a_i} \times [\bT]_{\a_{i+1}}$ such that $y \in [\bT_y^-]_{\a_i}$, 
\begin{align*}
D_{\bT_y^-}^x D_{(\bT_y^-)_y^-}^x + D_{\bT_y^-}^x D_{(\bT_y^-)_x^-}^y 
= [2] 
= [2] D_{(\bT_y^-)_x^-}^y D_{\bT_x^-}^y. 
\end{align*}

%%%
\item 
For $x \in [\bT]_{\a_i}$ such that $\fn_r(x) \in [\bT_x^-]_{\a_i}$ and $\fn_b(x) \in [\bT_x^-]_{\a_{i+1}}$, 
\begin{align*}
D_{(\bT_x^-)_{\fn_r(x)}^-}^{\fn_b(x)} 
= [2] 
=[2] D_{(\bT_x^-)_{\fn_b(x)}^-}^{\fn_r(x)}. 
\end{align*}

%%%
\item  
For $x \in [\bT]_{\a_i}$ such that $\fn_r(x) \in [\bT]_{\a_{i+1}} \cap [(\bT_x^-)_{\fn_r(x)}^-]_{\a_i}$, 
\begin{align*}
D_{\bT_{\fn_r(x)}^-}^x =1 = [2] D_{\bT_x^-}^{\fn_r(x)}. 
\end{align*}
\end{itemize} 

Combining these equations with \eqref{w z1 z2 z2 z1 z1 z2 z2 z1 w} and \eqref{z1 w z2 z2 w z1}, 
we conclude that 
\begin{align*}
&\Big( e_{i+1} (w) \big( e_i (z_1) e_i(z_2) + e_i(z_2) e_i(z_1) \big) 
 	+ \big( e_i(z_1) e_i(z_2) + e_i(z_2) e_i (z_1) \big) e_{i+1} (w) \Big) \cdot v_{\bT} 
\\
&= [2] \Big( e_i(z_1) e_{i+1} (w) e_i(z_2) + e_i(z_2) e_{i+1} (w) e_i(z_1) \Big) \cdot v_{\bT}. 
\end{align*}

We can check another relation in (U7) and also relations in (U8) in a similar way.

%%%%%%%%%%%%%%%%%%%%%%%%%%%%%%%%%%%%%%%%%%%%%%%%%%%%%%%%%%%%%%%

%%%%%%%%%%%%%%%%%%%%%%%%%%%%%%%%%%%%%%%%%%%%%%%%%%%%%%%%%%%%%%%

\section{Cyclotomic $q$-Schur algebras} 

In this section, we recall the definition of cyclotomic $q$-Schur algebras and their cellular basis given in \cite{DJM98}, 
see also \cite{Mat04} and \cite{JM00}. 
Here, we modify some of combinatorial notation in \cite{DJM98} 
in order to discuss the connection with shifted quantum affine algebras through the Schur-Weyl duality obtained in \cite{Wad24}. 

\para 
We take parameters $q, Q_0,Q_1,\dots, Q_{r-1} \in \CC^{\times}$, and put $\bQ=(Q_0,\dots, Q_{r-1})$. 
We assume that $q$ is not a root of unity. 
The Ariki-Koike algebra $\sH_{n,r} = \sH_{n,r}(q,\bQ)$ associated with the complex reflection group 
$\fS_n \ltimes (\ZZ / r \ZZ)^n$ of type $G(r,1,n)$  is an associative algebra over $\CC$ generated by 
$T_0, T_1,\dots, T_{n-1}$ subject to the following defining relations: 
\begin{align*}
&(T_0- Q_0) (T_0-Q_1) \dots (T_0-Q_{r-1})=0, 
	\quad 
	(T_i-q) (T_i + q^{-1})=0 
	\quad (1 \leq i \leq n-1), 
\\
& T_0 T_1 T_0 T_1 = T_1 T_0 T_1 T_0, 
	\quad 
	T_i T_{i+1} T_i = T_{i+1} T_i T_{i+1} \quad (1 \leq i \leq n-2), 
\\
& T_i T_j = T_j T_i \quad (|i-j| >1). 
\end{align*} 

Let $\ast : \sH_{n,r} \ra \sH_{n,r}$ ($h \mapsto  h^{\ast}$) be the algebra anti-automorphism such that 
$T_i^{\ast} = T_i$ ($0 \leq i \leq n-1$). 

For $1 \leq i \leq n-1$, let $s_i =(i,i+1)$ be the $i$-th adjacent transposition. 
For $w \in \fS_n$, put $T_w = T_{i_1} T_{i_2} \dots T_{i_l}$ when $w=s_{i_1} s_{i_2} \dots s_{i_l}$ is reduced expression.  

We define the elements $\TT_{i,j}, \wt{\TT}_{j,i} \in \sH_{n,r}$ ($1 \leq i \leq j \leq n-1$) by 
$\TT_{i,j} = T_i T_{i+1} \dots T_j$ and $\wt{\TT}_{j,i} = T_j T_{j-1} \dots T_i$ respectively. 
For convenience, we put $\TT_{i,i-1} = \wt{\TT}_{i-1,i} =1$. 
We also put 
$L_i = \wt{\TT}_{i-1,1} T_0 \TT_{i,i-1}$ ($1 \leq i \leq n$). 

%%%%
\para 
Put $\w=(\underbrace{1,1,\dots,1}_{n})$. 
For $\Bla \in \vL_{n,r}^+$, let $\Tab_{\w}(\Bla)$ be the set of $\Bla$-tableaux with weight $\w$. 
Then, an element $\bt \in \Tab_{\w}(\Bla)$ is a bijection $\bt: [\Bla] \ra \{1,2,\dots,n\}$. 
We say that a $\Bla$-tableau $\bt $  is standard if $\bt$ is semi-standard and belongs to $\Tab_w(\Bla)$. 
Let $\Std(\Bla)$ be the set of standard tableaux of shape $\Bla$. 
For $\Bla \in \vL_{n,r}^+$, we denote by $\bt^{\Bla}$ the standard $\Bla$-tableau given by 
\begin{align*}
\bt^{\Bla} ((a,b,c)) = \sum_{k=1}^{c-1} |\la^{(k)}| + \sum_{i=1}^{a-1} \la_i^{(c)} + b 
\quad ((a,b,c) \in [\Bla]).
\end{align*}
For example, 
if $\Bla =((3,2), (2,2,1), (4,1) ) \in \vL_{15,3}^+$, we have 
\begin{align*}
\bt^{\Bla}
= \left( \, \begin{array}{|c|c|c|} \hline 1 & 2 & 3 \\ \hline 4 & 5 \\ \cline{1-2} \multicolumn{1}{c}{} \end{array} \, , 
	\, 
	\begin{array}{|c|c|c|} \hline 6 & 7 \\ \hline 8 & 9 \\ \hline 10 \\ \cline{1-1} \end{array} \, , 
	\, 
	\begin{array}{|c|c|c|c|} \hline 11 & 12 & 13 & 14 \\ \hline 15 \\ \cline{1-1} \multicolumn{1}{c}{} \end{array} 
	\, 
	\right). 
\end{align*}

Let $\fS_n$ be the symmetric group on $\{1,2,\dots, n\}$, 
where $\fS_n$ acts on $\{1,2,\dots, n\}$ from right. 
The symmetric group $\fS_n$ acts on $\Tab_\w(\Bla)$ from right 
by permuting the numbers 
$\bt ((a,b,c))$ for  $(a,b,c) \in [\Bla]$. 
Namely, we have $(\bt \cdot \s) ((a,b,c)) = \bt((a,b,c)) \cdot \s $ 
for $\bt \in \Tab_\w(\Bla)$, $\s \in \fS_n$ and $(a,b,c) \in [\Bla]$. 
Then, for $\bt \in \Tab_\w (\Bla) $, we can define the permutation $d(\bt) \in \fS_n$ by $\bt= \bt^{\Bla} \cdot d (\bt)$. 

For $\Bla \in \vL_{n,r}^+$ and $\bfs, \bt \in \Std (\Bla)$, 
we define the element $m_{\bfs \bt} \in \sH_{n,r}$ by 
\begin{align*}
m_{\bfs \bt} = T_{d(\bfs)}^{\ast} m_{\Bla} T_{d (\bt)}.
\end{align*}
Then, 
the set $\{ m_{\bfs \bt} \mid \bfs, \bt \in \Std (\Bla), \,\Bla \in \vL_{n,r}^+\}$ 
gives a cellular basis of $\sH_{n,r}$ by \cite[Theorem 3.26]{DJM98}. 
%%%%%

\para  
For $\bm=(m_1,m_2,\dots, m_r) \in \ZZ_{>0}^r$ such that $m_1+m_2+\dots + m_r =m$, 
put $m_{\lan 0 \ran} =0$ and $m_{\lan k \ran} = \sum_{j=1}^k m_j$ ($1 \leq k \leq r$).  
For $\mu=(\mu_1, \dots, \mu_m) \in \vL_n(m)$ and $ 0 \leq k \leq r$, 
set $a_k^{\mu} (\bm) = \sum_{i=1}^{m_{\lan k \ran}} \mu_i$. 
We define an element $m_{\mu} \in \sH_{n,r}$ ($\mu \in \vL_n(m)$)  by 
\begin{align*}
m_{\mu} = \Big( \sum_{ w \in \fS_{\mu}} q^{\ell (w)} T_w \Big) \Big( \prod_{k=1}^{r-1} \prod_{i=1}^{a_k^{\mu} (\bm)} (L_i - Q_k) \Big), 
\end{align*} 
where $\fS_{\mu}$ is the parabolic subgroup of $\fS_n$ with respect to the composition $\mu$, 
and $\ell(w)$ is the length of $w$. 
Then, we define the cyclotomic $q$-Schur algebra 
$\sS_{n,r}(\bm)$ with respect to $\bm$ by 
\begin{align*}
\sS_{n,r} (\bm) = \End_{\sH_{n,r}^{\opp}} \Big( \bigoplus_{\mu \in \vL_n(m)} M_{[\bm]}^{\mu} \Big), 
\text{ where } M_{[\bm]}^{\mu} = m_{\mu} \sH_{n,r}.
\end{align*}

%%%
\para 
We take $\bm=(m_1,m_2,\dots, m_r) \in (\ZZ_{>0})^r$ such that $m_1+m_2+\dots + m_r =m$. 
For $\Bla \in \vL_{n,r}^+$ and $\Bla$-tableau $\bT$, 
we say that $\bT$ is semi-standard with respect to $\bm=(m_1, \dots, m_r)$ if 
$\bT$ is a semi-standard tableau satisfying the following additional condition 
\begin{align*}
\bT((a,b,c)) > m_{\lan c-1 \ran} =m_1+\dots+m_{c-1} 
\text{ for } (a,b,c) \in [\Bla].  
\end{align*}

The semi-standard tableau in \eqref{ex semi-std} is not semi-standard with respect to $(1,1,2)$, 
but the tableau 
\begin{align*}
\bS'= \left( \, \begin{array}{|c|c|c|} \hline 1 & 2 & 2 \\ \hline 2 & 4 \\ \cline{1-2} \multicolumn{1}{c}{} \end{array} \, , 
	\, 
	\begin{array}{|c|c|c|} \hline 2 & 2 \\ \hline 3 & 4 \\ \hline 4 \\ \cline{1-1} \end{array} \, , 
	\, 
	\begin{array}{|c|c|c|c|} \hline 3 & 3 & 4 & 4 \\ \hline 4 \\ \cline{1-1} \multicolumn{1}{c}{} \end{array} 
	\, 
	\right) 
\end{align*}
is semi-standard with respect to $(1,1,2)$.

For $\Bla \in \vL_{n,r}^+$ and $\mu \in \vL_n(m)$,  
let $\SStd^{[\bm]}(\Bla,\mu)$ be the set of semi-standard tableaux with respect to $\bm$ 
of shape $\Bla$ with weight $\mu$, 
and put 
\begin{align*}
\SStd_m^{[\bm]} (\Bla) = \bigcup_{\mu \in \vL_n(m)} \SStd^{[\bm]} (\Bla, \mu).
\end{align*}  
We also put 
\begin{align*}
&\wt{\vL}_{n,r}^+[\bm] = \{ \Bla \in \vL_{n,r}^+ \mid \SStd_m^{[\bm]} (\Bla) \not=\emptyset \}, 
\\
&\vL_{n,r}^+[\bm] = \{\Bla \in \vL_{n,r}^+ \mid \ell (\la^{(k)}) \leq m_k \text{ for all } 1  \leq k \leq r\}.
\end{align*}

For $\Bla \in \vL_{n,r}^+[\bm]$, 
we define the $\Bla$-tableau $\bT^{\Bla}_{[\bm]}$ by 
$\bT_{[\bm]}^{\Bla}((a,b,c)) = m_{\lan c-1 \ran} +a$ 
for $(a,b,c) \in [\Bla]$. 
For  example, 
for $\Bla =((3,2), (2,2,1), (4,1)) \in \vL_{15,3}^+$ and $\bm=(2,4,2)$, we have 
\begin{align*}
\bT_{[\bm]}^{\Bla}
= \left( \, \begin{array}{|c|c|c|} \hline 1 & 1 & 1 \\ \hline 2 & 2 \\ \cline{1-2} \multicolumn{1}{c}{} \end{array} \, , 
	\, 
	\begin{array}{|c|c|c|} \hline 3 & 3 \\ \hline 4 & 4 \\ \hline 5 \\ \cline{1-1} \end{array} \, , 
	\, 
	\begin{array}{|c|c|c|c|} \hline 7 & 7 & 7 & 7 \\ \hline 8 \\ \cline{1-1} \multicolumn{1}{c}{} \end{array} 
	\, 
	\right). 
\end{align*}
Then, we see that $\bT_{[\bm]}^{\Bla} \in \SStd_m^{[\bm]} (\Bla)$, 
and this implies that $\vL_{n,r}^+[\bm] \subset \wt{\vL}_{n,r}^+[\bm]$.  
It is clear that 
$\vL_{n,r}^+ = \vL_{n,r}^+[\bm]$ if $m_k \geq n$ for all $1 \leq k \leq r$. 
%%%

\remarks\ 
\begin{enumerate} 
\item 
Put $\vG[\bm] =\{(i,k) \in \ZZ^2 \mid 1 \leq i \leq m_k, \, 1 \leq k \leq r\}$, 
and we identify the set $\vG[\bm]$ with the set $\{1,2, \dots, m\}$ by the bijection 
\begin{align*}
\xi : \vG[\bm] \ra \{1,2,\dots, m\}, \quad (i,k) \mapsto \sum_{p=1}^{k-1} m_p +i.
\end{align*}
Then, for $\bT \in \SStd_m^{[\bm]}(\Bla)$, 
the map $\xi^{-1} \circ \bT : [\Bla] \ra \vG[\bm]$ is a semi-standard tableau in the sense of \cite{DJM98}. 

\item 
We have the injective map 
\begin{align*}
\hat{\xi} : \, & \vL_{n,r}^+[\bm] \ra \vL_n(m), 
\\
&\Bla =(\la^{(1)}, \dots, \la^{(r)}) 
\mapsto ( \la^{(1)}_1,\dots, \la^{(1)}_{m_1}, \la_1^{(2)},\dots, \la_{m_2}^{(2)}, \dots, \la_1^{(r)}, \dots, \la_{m_r}^{(r)}), 
\end{align*}
and we regard $\vL_{n,r}^+[\bm]$ as a subset of $\vL_n(m)$ by this injection. 
Then we see that $\owt \bT_{[\bm]}^{\Bla} = \hat{\xi} (\Bla)$. 
\end{enumerate}
%%%%

\para 
For $\mu =(\mu_1,\mu_2,\dots, \mu_m)\in \vL_n(m)$, 
put $N_0^{\mu} =0$ and $N_k^{\mu} = \sum_{i=1}^k \mu_i$ ($1 \leq k \leq m$). 
We note that $N_m^{\mu} =n$. 
For $\Bla \in \vL_{n,r}^+$, $\mu \in \vL_n(m)$ and $\bt \in \Std(\Bla)$, 
we define the $\Bla$-tableau $\mu (\bt)$ by 
\begin{align*}
\mu(\bt) ((a,b,c)) = k 
\text{ if } N_{k-1}^{\mu} < \bt((a,b,c)) \leq N_k^{\mu}. 
\end{align*}
By definition, we have $\owt (\mu(\bt)) = \mu$. 

For $\Bla \in \vL_{n,r}^+$, $\mu,\nu \in \vL_n(m)$, $\bS \in \SStd^{[\bm]}(\Bla,\mu)$ and $\bT \in \SStd^{[\bm]} (\Bla, \nu)$, 
we define the element $m_{\bS \bT} \in \sH_{n,r}$ by  
\begin{align*}
m_{\bS \bT} = \sum_{\bfs, \bt \in \Std(\Bla) \atop \bS = \mu (\bfs), \, \bT = \nu(\bt)} 
q^{\ell (d (\bfs)) + \ell (d (\bt))} m_{\bfs \bt}, 
\end{align*}
and also define the element $\vf_{\bS \bT} \in \sS_{n,r}(\bm)$ by 
\begin{align*}
\vf_{\bS \bT} (m_{\tau} h)= \d_{\tau, \nu} m_{\bS \bT} h 
\quad (\tau \in \vL_n(m), \, h \in \sH_{n,r}).
\end{align*}

We define the partial order $\trreq$ on $\vL_{n,r}^+$ by 
\begin{align*}
\Bla \trreq \Bm  \text{ if and only if } 
\sum_{k=1}^{c-1} |\la^{(k)}| + \sum_{i=1}^j \la^{(c)}_i 
	\geq \sum_{k=1}^{c-1} |\mu^{(k)}| + \sum_{i=1}^j \mu_i^{(c)} 
\end{align*}
for all $1 \leq c \leq r$ and $j \geq 0$. 
Then, 
the set 
\begin{align*}
\{\vf_{\bS \bT} \mid \bS, \bT \in \SStd_m^{[\bm]}(\Bla), \, \Bla \in \wt{\vL}_{n,r}^+[\bm] \}
\end{align*} 
gives a cellular basis of $\sS_{n,r}(\bm)$ with respect to the order $\trreq$ on $\wt{\vL}_{n,r}^+[\bm]  \subset \vL_{n,r}^+$ 
by \cite[Theorem 6.6 and Theorem 6.12]{DJM98} (see also \cite[Theorem 4.11]{Mat04}). 

%%%
\para 
By general theory of cellular algebras in \cite{GL96}, we can consider the cell modules of $\sS_{n,r}(\bm)$ as follows. 

For $\Bla \in \wt{\vL}_{n,r}^+[\bm]$, 
let $\sS_{n,r}^{\trr \Bla}$ be the subspace of $\sS_{n,r}(\bm)$ spanned by 
\begin{align*}
\{\vf_{\bS \bT} \mid \bS, \bT \in \SStd_m^{[\bm]}(\Bm) \text{ for } \Bm \in \wt{\vL}_{n,r}^+[\bm] \text{ such that } 
	\Bm \trr \Bla \}.
\end{align*}
Then, the subspace $\sS_{n,r}^{\trr \Bla}$ becomes a two-sided ideal of $\sS_{n,r}(\bm)$. 

For $\Bla \in \vL_{n,r}^+[\bm]$ and $\bT \in \SStd_m^{[\bm]}(\Bla)$, 
we denote by $\vf_{\bT}$ 
the element $\vf_{\bT \bT^{\Bla}_{[\bm]}} + \sS_{n,r}^{\trr \Bla}$ of $\sS_{n,r}(\bm) / \sS_{n,r}^{\trr \Bla}$. 
Let $W(\Bla)$ be the subspace of $\sS_{n,r}(\bm)/ \sS_{n,r}^{\trr \Bla}$ spanned by 
$\{\vf_{\bT} \mid \bT \in \SStd_m^{[\bm]}(\Bla)\}$, 
and $W(\Bla)$ becomes a left $\sS_{n,r}(\bm)$-submodule of $\sS_{n,r}(\bm)/ \sS_{n,r}^{\trr \Bla}$. 
We call it the cell module of $\sS_{n,r}(\bm)$. 

%%%%%%%%%%%%%%%%
\remark 
In the case where $m_k \geq n$ for all $1 \leq k \leq r$, 
we see that $\vL_{n,r}^+ = \vL_{n,r}^+[\bm]$, 
and the cyclotomic $q$-Schur algebra $\sS_{n,r}(\bm)$ is a quasi-hereditary algebra 
(see \cite[Theorem 4.14]{Mat04}). 
This case is the most interesting in the representation theory of cyclotomic $q$-Schur algebras. 
However, in this paper, we are interested in the representations of shifted quantum affine algebras. 
Thus, we also consider cases without this condition.  

%%%%%%%%%%%%%%%%%%%%%%%%%%%%%%%%%%%%%%%%%%%%%%%%%%%%%%%%%%%%%%%

%%%%%%%%%%%%%%%%%%%%%%%%%%%%%%%%%%%%%%%%%%%%%%%%%%%%%%%%%%%%%%%

\section{Cell modules of  cyclotomic $q$-Schur algebras through the Schur-Weyl dualities}

In this section, 
we regard the cell module $W(\Bla)$ ($\Bla \in \vL_{n,r}^+[\bm]$) of $\sS_{n,r}(\bm)$ 
as a module of shifted quantum affine algebra $U_{q,[\bb_{\bm}]}(L\Fsl_m)$ 
through the Schur-Weyl duality in \cite{Wad24}. 
Then, we give the $q$-character of $W(\Bla)$ explicitly using combinatorics. 
We also give the explicit action of $U_{q,[\bb_{\bm}]}$ on $W(\Bla)$ 
in the case where the parameters $\bQ=(Q_0,\dots, Q_{r-1})$ satisfies the separation condition. 

%%%%%
\para \label{uqnnm Scnr} 
For $\bm =(m_1,m_2,\dots, m_r) \in (\ZZ_{>0})^r$ such that $m=m_1+m_2+\dots + m_r$, 
we take $\bb_{\bm} =(b_1,b_2,\dots, b_{m-1}) \in \{0,1\}^{m-1}$ as 
\begin{align*}
b_i = \begin{cases} 
	1 & \text{ if } \xi^{-1} (i) = (m_k,k) \text{ for some } k, 
	\\
	0 & \text{ otherwise}, 
\end{cases} 
\end{align*}
and we consider the shifted quantum affine algebra $U_{q,[\bb_{\bm}]}= U_{q,[\bb_{\bm}]} (L \Fsl_m)$ of the shift $\bb_{\bm}$. 
By \cite[(8.6.2) and Corollary 8.8]{Wad24}, we have a homomorphism 
\begin{align*}
\rho_{n,r} : U_{q,[\bb_{\bm}]} \ra \sS_{n,r}(\bm)
\end{align*} 
which follows from the action given in \cite[Proposition 8.7]{Wad24}.  
We  regard a $\sS_{n,r}(\bm)$-module as a $U_{q,[\bb_{\bm}]}$-module through the homomorphism $\rho_{n,r}$. 
We remark that the left action of $U_{q,[\bb_{\bm}]}$ on 
$\bigoplus_{\mu \in \vL_n(m)} M_{[\bm]}^{\mu} = \bigoplus_{\mu \in \vL_n(m)} m_{\mu} \sH_{n,r}$ 
commutes with the right action of $\sH_{n,r}$.  
The following proposition is obtained from \cite[Proposition 8.7]{Wad24} together with \eqref{ei spm1 h e} and \eqref{psiis+} 
using the same computation as in \cite[\S7]{Wad16}. 

%%%
\begin{prop} 
\label{Prop act uqbm Mmu} 
For $\mu \in \vL_n(m)$, $1 \leq i \leq m-1$, $t \in \ZZ$ and $s \in \ZZ_{> 0}$, we have the following. 
\begin{enumerate}
\item 
If $b_i=0$, we have 
\begin{align*}
&e_{i,t} \cdot m_{\mu} 
	= q^{t \cdot i} q^{- \mu_{i+1} +1}  m_{\mu + \a_i} L_{N_i^{\mu}+1}^t 
		\Big( \sum_{k=1}^{\mu_{i+1}} q^{k-1} \TT_{N_i^{\mu} +1, \, N_i^{\mu} + k-1}\Big), 
\\
&f_{i,t} \cdot m_{\mu} 
	= q^{t \cdot i} q^{- \mu_i +1} m_{\mu - \a_i} L_{N_i^{\mu}}^t 
		\Big( \sum_{k=1}^{\mu_i} q^{k-1} \wt{\TT}_{N_i^{\mu}-1, \, N_i^{\mu} -k +1} \Big), 
\\
&\psi_{i, s}^+ \cdot m_{\mu} 
	\\ &= \begin{cases}
		q^{\mu_i - \mu_{i+1}} m_{\mu} &  \text{ if } s=0, 
		\\
		q^{s \cdot i} q^{\mu_i-\mu_{i+1}} (q-q^{-1}) m_{\mu} 
			\\ \quad \times 
			\Phi_s (L_{N_{i-1}^{\mu}+1}, L_{N_{i-1}^{\mu} +2}, \dots, L_{N_{i-1}^{\mu} + \mu_i} 
					\mid L_{N_i^{\mu}+1}, L_{N_i^{\mu} +2}, \dots, L_{N_i^{\mu} + \mu_{i+1}})
			 & \text{ if } s >0,
		\end{cases} 
\\
& \psi_{i, -s}^- \cdot m_{\mu} 
	\\ &= \begin{cases}
		q^{-(\mu_i - \mu_{i+1})} m_{\mu} & \text{ if } s=0, 
		\\ 
		q^{- s \cdot i} q^{-(\mu_i - \mu_{i+1})} (q-q^{-1}) m_{\mu} 
			\\ \quad \times 
			\Phi_s (L_{N_i^{\mu} +1}^{-1}, L_{N_i^{\mu}+2}^{-1}, \dots, L_{N_i^{\mu}+\mu_{i+1}}^{-1} 
				\mid L_{N_{i-1}^{\mu} +1}^{-1}, L_{N_{i-1}^{\mu} +2}^{-1}, \dots, L_{N_{i-1}^{\mu} +\mu_i}^{-1}) 
			& \text{ if } s >0, 
		\end{cases}
\end{align*}
where we put $m_{\mu+\a_i} =0$ (resp. $m_{\mu -\a_i} =0$) if $\mu+\a_i \not\in \vL_n(m)$  (resp. $\mu - \a_i \not\in \vL_n(m)$). 

\item 
If $b_i=1$ and $\xi^{-1} (i) =(m_k,k)$, we have 
\begin{align*}
&e_{i,t} \cdot m_{\mu} 
	= q^{t \cdot i} q^{- \mu_{i+1} +1}  m_{\mu + \a_i} L_{N_i^{\mu}+1}^t 
		\Big( \sum_{k=1}^{\mu_{i+1}} q^{k-1} \TT_{N_i^{\mu} +1, \, N_i^{\mu} + k-1}\Big), 
\\
&f_{i,t} \cdot m_{\mu} 
	= q^{t \cdot i} q^{- \mu_i +1} m_{\mu - \a_i} L_{N_i^{\mu}}^t 
		(-Q_k^{-1})  (L_{N_i^{\mu}} - Q_k) \Big( \sum_{k=1}^{\mu_i} q^{k-1} \wt{\TT}_{N_i^{\mu}-1, \, N_i^{\mu} -k +1} \Big), 
\\
&\psi_{i, - b_i +s}^+ 
	\\&= \begin{cases}
		- q^{-i} Q_k^{-1} q^{\mu_i - \mu_{i+1}} m_{\mu} 
			& \text{ if } s=0, 
		\\ 
		q^{\mu_i-\mu_{i+1}} m_{\mu} 
		\\ \quad  \times 
		\Big( 1 - Q_k^{-1} (q-q^{-1}) 
			\Phi_1 (L_{N_{i-1}^{\mu}+1}, \dots, L_{N_{i-1}^{\mu} +\mu_i} \mid L_{N_i^{\mu} +1}, \dots, L_{N_i^{\mu} + \mu_{i+1}}) 
		\Big) \hspace{-5em} 
			\\ & \text{ if } s =1, 
		\\
		q^{ (s-1)\cdot i} q^{\mu_i - \mu_{i+1}} (q-q^{-1}) m_{\mu} 
		\\ \quad \times 
		\Big( \Phi_{s-1} (L_{N_{i-1}^{\mu}+1}, \dots, L_{N_{i-1}^{\mu} +\mu_i} 
						\mid L_{N_i^{\mu} +1}, \dots, L_{N_i^{\mu} + \mu_{i+1}}) 
			\\ \hspace{3em} 
			- Q_k^{-1} \Phi_s (L_{N_{i-1}^{\mu}+1}, \dots, L_{N_{i-1}^{\mu} +\mu_i} 
							\mid L_{N_i^{\mu} +1}, \dots, L_{N_i^{\mu} + \mu_{i+1}}) \Big)
			 & \text{ if } s >1,
	\end{cases}
\\
& \psi_{i,-s}^- \cdot m_{\mu} 
	\\ &= \begin{cases}
		q^{- (\mu_i - \mu_{i+1})} m_{\mu} 
			& \text{ if } s=0, 
		\\ 
		q^{- i} q^{ - (\mu_i - \mu_{i+1})} m_{\mu} 
		\\ \quad \times 
		\Big( (q-q^{-1}) \Phi_1 (L_{N_i^{\mu}+1}^{-1}, \dots, L_{N_i^{\mu} + \mu_{i+1}}^{-1} 
				\mid L_{N_{i-1}^{\mu} +1}^{-1}, \dots, L_{N_{i-1}^{\mu} + \mu_i}^{-1} )  
			- Q_k^{-1} \Big) \hspace{-3em} 
			\\ & \text{ if } s=1,  
		\\ 
		q^{-s \cdot i} q^{- (\mu_i - \mu_{i+1})} (q-q^{-1}) m_{\mu} 
		\\ \quad \times 
		\Big( \Phi_s (L_{N_i^{\mu}+1}^{-1}, \dots, L_{N_i^{\mu} + \mu_{i+1}}^{-1} 
					\mid L_{N_{i-1}^{\mu} +1}^{-1}, \dots, L_{N_{i-1}^{\mu} + \mu_i}^{-1} )  
			\\ \hspace{3em} 
			- Q_k^{-1} \Phi_{s-1}  (L_{N_i^{\mu}+1}^{-1}, \dots, L_{N_i^{\mu} + \mu_{i+1}}^{-1} 
								\mid L_{N_{i-1}^{\mu} +1}^{-1}, \dots, L_{N_{i-1}^{\mu} + \mu_i}^{-1} ) \Big)
			& \text{ if } s>1.
	\end{cases}
\end{align*}
\end{enumerate}
\end{prop}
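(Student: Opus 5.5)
Proposition \ref{Prop act uqbm Mmu} is a translation result: \cite[Proposition 8.7]{Wad24} gives the action of a generating set of $U_{q,[\bb_{\bm}]}$ on $m_{\mu}$, and we want formulas for every generator $e_{i,t}$, $f_{i,t}$, $\psi^{\pm}$. The plan is to start from those generators and propagate.

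From \cite[Proposition 8.7]{Wad24} we read off:
\begin{itemize}
\item the actions of $e_{i,0}$, $f_{i,0}$ (with the extra factor $(-Q_k^{-1})(L_{N_i^{\mu}}-Q_k)$ for $f_{i,0}$ when $b_i=1$);
\item the actions of $\psi^+_{i,-b_i}$ and $\psi^-_{i,0}$;
\item the actions of $h_{i,\pm 1}$ as $m_\mu$ times a symmetric expression in the Jucys--Murphy elements $L_{N_{i-1}^\mu+1},\dots,L_{N_{i+1}^\mu}$, plus a constant term involving $Q_k^{-1}$ when $b_i=1$.
\end{itemize}

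First I would treat $e_{i,t}$ and $f_{i,t}$ by induction on $|t|$, using \eqref{ei spm1 h e}: $e_{i,t\pm1}=\frac{1}{[2]}[h_{i,\pm1},e_{i,t}]$. Suppose $e_{i,t}\cdot m_\mu$ has the claimed form $m_{\mu+\a_i}L^t_{N_i^\mu+1}X$, where $X=\sum_k q^{k-1}\TT_{N_i^\mu+1,N_i^\mu+k-1}$. We then need to commute $h_{i,\pm1}$ past $m_{\mu+\a_i}L^t_{N_i^\mu+1}X$. Since $h_{i,\pm1}$ acts on $m_{\nu}$ by right multiplication with a symmetric polynomial in the relevant $L_j$'s, the computation reduces to the standard Ariki--Koike identity. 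That identity says a symmetric function in $L_{a+1},\dots,L_b$ commutes with $T_w$ for $w\in\fS_{(a,b]}$, so its difference between the blocks of $\mu+\a_i$ and $\mu$ picks up exactly the factor $L_{N_i^\mu+1}^{\pm1}$ (up to $q^{\pm i}$ and the $[2]$ normalization) when moved across $X$. This is precisely the computation carried out in \cite[\S7]{Wad16} for the unshifted case, and I would reproduce it verbatim. When $b_i=1$, the extra constant in $h_{i,\pm1}$ commutes with everything, and the factor $(L_{N_i^\mu}-Q_k)$ commutes with $L_{N_i^\mu}^t$, so the same induction goes through.

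Next, for $\psi^\pm$ I would use \eqref{psiis+}: $\psi^+_{i,s}=(q-q^{-1})[e_{i,s},f_{i,0}]$ for $s>0$, and its analogues for $\psi^-$ and for the range $-b_i\le s\le0$. I would compute $e_{i,s}f_{i,0}m_\mu-f_{i,0}e_{i,s}m_\mu$ with the formulas just obtained. Alternatively, and more cleanly, I would exponentiate the known $h_{i,\pm t}$ action. If the $h$'s act by power sums $\frac{[t]}{t}\bigl(\sum q^{ti}L^t-\dots\bigr)$, then the generating function $\exp((q-q^{-1})\sum h_{i,t}z^t)$ becomes a product of the form in Proposition \ref{Prop whPhiklz}. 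Expanding this product gives the coefficients $\Phi_s(\,\cdot\mid\cdot\,)$ via the definitions of $\Phi_t^\pm$ and $\wh{\Phi}_{(k|l)}(z)=\wh{\Phi}^+_{(k)}(z)\wh{\Phi}^-_{(l)}(z)$. For $b_i=1$, we multiply the series by the factor $(1-Q_k^{-1}z^{-1})$ (after normalizing by $\psi^+_{i,-1}=-q^{-i}Q_k^{-1}q^{\mu_i-\mu_{i+1}}$). This yields the shifted combinations $\Phi_{s-1}-Q_k^{-1}\Phi_s$, and the $s=0,1$ cases come out separately. The $\psi^-$ side is the same computation at $z=\infty$ with inverted $L$'s.

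The main obstacle I expect is the commutation step for $e_{i,t}$/$f_{i,t}$: tracking $L_{N_i^\mu+1}$ through $\sum q^{k-1}\TT$ requires the relations $T_jL_jT_j=L_{j+1}$ and the symmetrization over $\fS_\mu$. Getting the exact scalar $q^{t\cdot i}$ also depends on Wada's normalization of $h_{i,\pm1}$, so I would verify the $t=\pm1$ case by hand before running the induction.
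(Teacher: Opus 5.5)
Your plan matches the paper's proof, which is a single sentence: the action of the generators in \cite[Proposition 8.7]{Wad24} is propagated to all $e_{i,t}$, $f_{i,t}$ via \eqref{ei spm1 h e} and to the $\psi^{\pm}$ via \eqref{psiis+}, with the Hecke-algebra computation being that of \cite[\S7]{Wad16}. Two adjustments: keep the \eqref{psiis+} route as your primary one, since exponentiating needs the action of every $h_{i,\pm t}$ and your list only has $h_{i,\pm 1}$; and note that the real work is reducing $[e_{i,s},f_{i,0}]\cdot m_{\mu}$ to the $\Phi_s$ expressions, whereas the $e_{i,t}$, $f_{i,t}$ induction is immediate, because the element by which $h_{i,\pm1}$ acts on $m_\mu$ commutes with $L_{N_i^{\mu}+1}^{t}\sum_k q^{k-1}\TT_{N_i^{\mu}+1,\,N_i^{\mu}+k-1}$ (and with the analogous factor for $f_{i,t}$), so the commutator reduces to the difference of the two block sums of Jucys--Murphy elements.
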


%%%%%
\para  
For $\Bla \in \vL_{n,r}^+$, $\bT \in \SStd_m^{[\bm]}$ and $1 \leq i \leq m$, 
let $\bT_{\da i}$  denote the subtableau of $\bT$ consisting of all entries $j \leq i$. 
By the definition of semi-standard tableaux, we see that $\shape (\bT_{\da i}) \in \vL_{\mu_1+\dots+\mu_i,\, r}^+$ 
if $\owt \bT=\mu=(\mu_1,\dots, \mu_m)$. 
For  example, 
if 
\begin{align*}
\bT= \left( \, \begin{array}{|c|c|c|} \hline 1 & 2 & 2 \\ \hline 2 & 4 \\ \cline{1-2} \multicolumn{1}{c}{} \end{array} \, , 
	\, 
	\begin{array}{|c|c|c|} \hline 2 & 2 \\ \hline 3 & 4 \\ \hline 4 \\ \cline{1-1} \end{array} \, , 
	\, 
	\begin{array}{|c|c|c|c|} \hline 3 & 3 & 4 & 4 \\ \hline 4 \\ \cline{1-1} \multicolumn{1}{c}{} \end{array} 
	\, 
	\right), 
\end{align*}
then we have 
\begin{align*}
&\bT_{\da 1}= \left( \, \begin{array}{|c|c|c|} \hline 1 \\ \hline  \end{array} \, , 
	\, 
	\emptyset, 
	\, 
	\emptyset 
	\, 
	\right), 
\quad 
\bT_{\da 2}= \left( \, \begin{array}{|c|c|c|} \hline 1 & 2 & 2 \\ \hline 2  \\ \cline{1-1}  \end{array} \, , 
	\, 
	\begin{array}{|c|c|c|} \hline 2 & 2 \\ \hline \multicolumn{1}{c}{}   \end{array} \, , 
	\, 
	\emptyset 
	\, 
	\right), 
\\
&\bT_{\da 3}= \left( \, \begin{array}{|c|c|c|} \hline 1 & 2 & 2 \\ \hline 2  \\ \cline{1-1}  \end{array} \, , 
	\, 
	\begin{array}{|c|c|c|} \hline 2 & 2 \\ \hline 3  \\ \cline{1-1}  \end{array} \, , 
	\, 
	\begin{array}{|c|c|c|c|} \hline 3 & 3 \\ \cline{1-2} \multicolumn{1}{c}{} \end{array} 
	\, 
	\right), 
\quad 
\bT_{\da i} = \bT \quad (i \geq 4).
\end{align*}

For $\Bla \in \vL^+_{n,r}$ and $\mu \in \vL_n(m)$, 
we define the partial order $\trreq$ on $\SStd^{[\bm]} (\Bla, \mu)$ by 
\begin{align*} 
\bT \trreq \bS  \text{ if and only if } \shape ( \bT_{\da i}) \trreq \shape (\bS_{\da i}) \text{ for all } 1 \leq i \leq m.
\end{align*}

We can prove the following proposition in a similar way as in \cite[proof of Theorem 3.10]{JM00} 
by using Proposition \ref{Prop act uqbm Mmu}. 

%%%%%

\begin{prop}
\label{psi vf bT}
For $\Bla \in \vL_{n,r}^+[\bm]$, $\mu \in \vL_n(m)$ and $\bT \in \SStd^{[\bm]} (\Bla,\mu)$, 
we have the following equations in the cell module $W(\Bla)$. 

\begin{enumerate}
\item 
For $1 \leq i \leq m-1$, we have 
$\psi_{i,0}^- \cdot \vf_{\bT} = q^{- (\mu_i - \mu_{i+1})} \vf_{\bT}$. 

\item 
For $1 \leq i \leq m-1$ and $s \in \ZZ_{\geq 0}$, we have the following. 
\begin{enumerate} 
\item 
If $b_i=0$, we have 
\begin{align*}
&\psi_{i,s}^+ \cdot \vf_{\bT} 
\\
&= \begin{cases}
		q^{\mu_i - \mu_{i+1}} \vf_{\bT} & \text{ if } s=0, 
		\\ \dis 
		q^{s \cdot i} q^{\mu_i - \mu_{i+1}}  (q-q^{-1}) \Phi_{i,s} (\bT) \vf_{\bT} 
			+ \sum_{\bS \in \SStd^{[\bm]}(\Bla, \mu) \atop \bS \trr \bT} r_{\bS} \vf_{\bS}
			\quad (r_{\bS} \in \CC) 
		& \text{ if } s >0. 
	\end{cases}
\end{align*}

\item 
If $b_i=1$ and $\xi^{-1} (i) = (m_k,k)$, we have 
\begin{align*}
& \psi_{i, -b_i +s}^+ \cdot \vf_{\bT} 
\\
&= \begin{cases}
	- q^{-i} Q_k^{-1} q^{\mu_i - \mu_{i+1}} \vf_{\bT} & \text{ if } s=0, 
	\\ \dis 
	q^{\mu_i - \mu_{i+1}} \Big( 1 - Q_k^{-1} (q-q^{-1}) \Phi_{i,1} (\bT) \Big) \vf_{\bT} 
		+ \sum_{\bS \in \SStd^{[\bm]}(\Bla, \mu) \atop \bS \trr \bT} r_{\bS} \vf_{\bS}
			\quad (r_{\bS} \in \CC)  
		& \text{ if } s=1, 
	\\ \dis 
	q^{(s-1) \cdot i} q^{\mu_i - \mu_{i+1}} (q-q^{-1}) \Big( \Phi_{i,s-1} (\bT) - Q_k^{-1} \Phi_{i, s} (\bT) \Big) \vf_{\bT} 
		\\\dis
		\quad 
		+ \sum_{\bS \in \SStd^{[\bm]}(\Bla, \mu) \atop \bS \trr \bT} r_{\bS} \vf_{\bS}
		\quad (r_{\bS} \in \CC)  
		& \text{ if } s >1.
\end{cases}
\end{align*}
\end{enumerate}
For $\Phi_{i,s} (\bT)$ in the above equations, 
we use the content $c_{\bQ}(x) = q^{2 (b-a)} Q_{c-1}$ of $x=(a,b,c) \in [\Bla]$ with respect to $\bQ=(Q_0,Q_1,\dots, Q_{r-1})$. 
\end{enumerate}
\end{prop}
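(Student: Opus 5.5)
We follow the strategy of the proof of \cite[Theorem 3.10]{JM00}: realize $\vf_{\bT}$ through the elements $m_{\bS\bT}$ and compute the action on $m_{\mu}$ using Proposition \ref{Prop act uqbm Mmu}. By definition, $\vf_{\bT} = \vf_{\bT \bT^{\Bla}_{[\bm]}} + \sS_{n,r}^{\trr \Bla}$, and $\vf_{\bT\bT^{\Bla}_{[\bm]}}$ sends $m_{\hat{\xi}(\Bla)}$ to $m_{\bT \bT^{\Bla}_{[\bm]}}$. Since the $U_{q,[\bb_{\bm}]}$-action commutes with the right $\sH_{n,r}$-action, it suffices to compute $\psi\cdot m_{\bT\bT^{\Bla}_{[\bm]}}$ modulo the span of the $m_{\bS'\bT'}$ of shapes $\Bm\trr\Bla$. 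Write $m_{\bT\bT^{\Bla}_{[\bm]}} = m_{\mu} h$, with $h$ built from $T_{d(\bt)}$ and $m_{\Bla}$ (using $m_{\bfs\bt}=T^{\ast}_{d(\bfs)}m_{\Bla}T_{d(\bt)}$ and summing over $\bfs$ with $\mu(\bfs)=\bT$). Then Proposition \ref{Prop act uqbm Mmu} gives $\psi\cdot (m_{\mu}h) = m_{\mu}\, F(L_1,\dots,L_n)\, h$, where $F$ is the explicit symmetric expression in the Jucys--Murphy elements of the blocks $N_{i-1}^{\mu}+1,\dots,N_i^{\mu}$ and $N_i^{\mu}+1,\dots,N_{i+1}^{\mu}$.

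Part (i) is immediate, since $\psi_{i,0}^-$ acts on $m_\mu$ by the scalar $q^{-(\mu_i-\mu_{i+1})}$.

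For (ii), the key input is the triangularity of Jucys--Murphy elements on the Murphy-type basis. By \cite{JM00}, which builds on \cite[\S3]{DJM98}, for a symmetric polynomial $f$ in the $L_j$ with $j$ in the block $\{N_{k-1}^{\mu}+1,\dots,N_k^{\mu}\}$ one has
\[
m_{\mu}\, f \, m_{\bS\bT}\text{-type}=\Big(f\text{ evaluated at } c_{\bQ}(x),\ x\in[\bT]_k\Big)\, m_{\bT\,\cdot} + \text{(terms } m_{\bS\,\cdot}\text{ with } \bS\trr\bT \text{ of the same shape, or of shape }\trr\Bla).
\]
The proof goes by induction on $k$: the subtableau $\bT_{\da k}$ determines the cell module of the smaller algebra, $L_j$ acts there by the content of the node labeled $j$ modulo more dominant terms, and symmetric functions of a block act through $\shape(\bT_{\da k})\setminus\shape(\bT_{\da k-1})$. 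I would state and prove this as a lemma, and I expect it to be the main obstacle: one has to check carefully that the error terms are $\trr \bT$ in the order on $\SStd^{[\bm]}(\Bla,\mu)$ defined through the shapes $\shape(\bT_{\da i})$, and that lower terms of other shapes lie in $\sS^{\trr\Bla}_{n,r}$.

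Granting the lemma, substitute the contents $c_{\bQ}(x)$, $x\in[\bT]_i$, and $c_{\bQ}(y)$, $y\in[\bT]_{i+1}$, into the $\Phi_s(\cdot\mid\cdot)$ expressions of Proposition \ref{Prop act uqbm Mmu}. This gives exactly $\Phi_{i,s}(\bT)$ as defined in \eqref{def PhiisbT}, with $\Bz$ replaced by $\bQ$. Case (a), $b_i=0$, and case (b), $b_i=1$ with $s=0,1,>1$, then follow termwise from the corresponding formulas, since the extra $Q_k^{-1}$ terms are just scalar combinations of $\Phi_{i,s-1}$ and $\Phi_{i,s}$.
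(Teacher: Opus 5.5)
Your proposal is correct and follows the paper's route. The paper proves this proposition in one line, by combining Proposition \ref{Prop act uqbm Mmu}, together with the fact that the $U_{q,[\bb_{\bm}]}$-action commutes with the right $\sH_{n,r}$-action, with the triangularity argument from \cite[proof of Theorem 3.10]{JM00}; that argument is the Jucys--Murphy triangularity lemma you isolate, including the check that the correction terms are indexed by $\bS \trr \bT$ (strictly) or lie in $\sS_{n,r}^{\trr \Bla}$.
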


%%%
\para  
For $\Bla \in \vL_{n,r}^+[\bm]$, put $\hat{\xi}(\Bla) =(\la_1,\la_2, \dots, \la_m)$.  
For $1 \leq i,j \leq m-1$ and $t \in \ZZ$, 
we have 
\begin{align*}
\psi_{j,0}^- \cdot (e_{i,t} \cdot \vf_{\bT_{[\bm]}^{\Bla}}) 
= q^{-a_{ji}} e_{i,t} \cdot (\psi_{j,0}^- \cdot \vf_{\bT_{[\bm]}^{\Bla}}) 
= q^{- (\la_i - \la_{i+1} + a_{ji})}  ( e_{i,t} \cdot \vf_{\bT^{\Bla}_{[\bm]}}) 
\end{align*}
by Proposition \ref{psi vf bT} (\roi) and the relation (U4'). 
Thus, we see that $e_{i,t} \cdot \vf_{\bT_{[\bm]}^{\Bla}}$ is a linear combination of 
$\{ \vf_{\bT} \mid \bT \in \SStd^{[\bm]}(\Bla, \hat{\xi} (\Bla) + \a_i)\}$ by  Proposition \ref{psi vf bT} (\roi) again. 
However, we also see that  $\SStd^{[\bm]} (\Bla, \hat{\xi} (\Bla) +\a_i)$ is an empty set 
by the definition of semi-standard tableaux with respect to $\bm$. 
As a consequence, we have $e_{i,t} \cdot \vf_{\bT^{\Bla}_{[\bm]}}=0$ for all $1 \leq i \leq m-1$ and $t \in \ZZ$. 
Thus, $\vf_{\bT^{\Bla}_{[\bm]}}$ is a singular vector. 
We also see that $\SStd^{[\bm]}(\Bla, \hat{\xi} (\Bla)) = \{ \bT^{\Bla}_{[\bm]} \}$. 
Thus, $\vf_{\bT^{\Bla}_{[\bm]}}$ is an $\ell_{[\bb_{\bm}]}$-weight vector by Proposition \ref{psi vf bT} (\roii). 
(We can obtain  similar equations for the action of $\psi_{i,-s}^-$ from Proposition \ref{Prop act uqbm Mmu}.) 
Let $L(\Bla)$ denote the simple quotient of the $U_{q,[\bb_{\bm}]}$-submodule 
$U_{q,[\bb_{\bm}]} \cdot \vf_{\bT^{\Bla}_{[\bm]}} \subset W(\Bla)$ generated by $\vf_{\bT^{\Bla}_{[\bm]}}$. 
Then, we have the following theorem.  

%%%%%
\begin{thm}
\label{Thm qcha cell}
For $\Bla \in \vL_{n,r}^+[\bm]$, we have the following.  

\begin{enumerate}
\item 
The $q$-character of $W(\Bla)$ is given by 
\begin{align*}
&\chi_{[\bb_{\bm}]} (W(\Bla)) = \sum_{\bT \in \SStd_m^{[\bm]} (\Bla)} X^{\bT}, 
\\
&\text{ where }
X^{\bT} = \prod_{i=1}^{m-1} \Big( \prod_{x \in [\bT]_i} Y_{i, \, q^{i-1} c_{\bQ}(x)} 
				\prod_{y \in [\bT]_{i+1}} Y^{-1}_{i, \, q^{i+1} c_{\bQ} (y)} \Big) 
		\\ & \hspace{7em} \times 
		\prod_{k=1}^{r-1} \tau_{m_{\lan k \ran}, \, - q^{- m_{\lan k \ran}} Q_k^{-1}} 
				Z_{m_{\lan k \ran}, \, q^{m_{\lan k \ran}} Q_k}. 
\end{align*}

\item 
The vector $\vf_{\bT^{\Bla}_{[\bm]}} \in L(\Bla)$ is an $\ell_{[\bb_{\bm}]}$-highest weight vector.  
The Drinfeld polynomials $(\bS=(S_i(z))_{1 \leq i \leq m-1}, \, \bP =(P_i(z))_{1 \leq i \leq m-1} )$ of $L(\Bla)$ 
are given by 
\begin{align*}
&S_i(z)= \begin{cases}
		1 & \text{ if } b_i=0, 
		\\
		- q^{- \la_1^{(k+1)} -i} Q_k^{-1}  + q^{ \la_1^{(k+1)} }  z 
		& \text{ if } b_i=1, 
	\end{cases}
\\
& 
P_i(z) = \prod_{p=1}^{\la_j^{(k)} - \la_{j+1}^{(k)}} (1 - q^{ 2 \la_j^{(k)}  - i+1 - 2p} q^{ 2 m_{\lan k-1 \ran}} Q_{k-1} z), 
\end{align*}
where  $\xi^{-1} (i) =(j,k)$. 
(Note that $\la_{m_k +1}^{(k)} =0$ since $\Bla \in \vL_{n,r}^+[\bm]$.) 
\end{enumerate}

\begin{proof}
(\roi). 
By Proposition \ref{Prop whPhiklz}, 
for $1 \leq i \leq m-1$, 
we have 
\begin{align*}
&q^{\mu_i - \mu_{i+1}} \big( z^0 + \sum_{s>0} q^{s \cdot i} (q-q^{-1}) \Phi_{i,s} (\bT) \big) 
\\
&= q^{\mu_i - \mu_{i+1}} \prod_{x \in [\bT]_i} \frac{(1 - q^{i-1} c_{\bQ}(x) q^{-1} z)}{ (1 - q^{i-1} c_{\bQ} (x) q z)} 
	\prod_{y \in [\bT]_{i+1}} \frac{ (1 - q^{i+1} c_{\bQ} (y) q z ) }{ (1 - q^{i+1} c_{\bQ} (y) q^{-1} z)}  
\end{align*}
if $b_i=0$, and 
\begin{align*}
& q^{\mu_i - \mu_{i+1}} \Big( - q^{-i} Q_k^{-1} z^{-1} + \big( 1 - Q_k^{-1} (q-q^{-1}) \Phi_{i,1} (\bT) \big) z^0 
	\\ & \hspace{5em} 
	+ \sum_{s >1} \Big( q^{(s-1) \cdot i} (q-q^{-1}) \big( \Phi_{i,s-1} (\bT) - Q_k^{-1} \Phi_{i,s} (\bT) \big) z^{s-1} \Big) 
\\
&= q^{\mu_i - \mu_{i+1}} \big( z^0 + \sum_{s >0} q^{ s \cdot i} (q-q^{-1}) \Phi_{i,s} (\bT) z^s \big) 
	\\ & \quad 
	- q^{\mu_i -\mu_{i+1}} q^{-i} Q_k^{-1} z^{-1} 
		\big( z^0 + q^i(q-q^{-1}) \Phi_{i,1} (\bT) z + \sum_{s >1} q^{s \cdot i } (q-q^{-1}) \Phi_{i,s} (\bT) z^s \big) 
\\
&= q^{\mu_i - \mu_{i+1}} \prod_{x \in [\bT]_i} \frac{(1 - q^{i-1} c_{\bQ}(x) q^{-1} z)}{ (1 - q^{i-1} c_{\bQ} (x) q z)} 
	\prod_{y \in [\bT]_{i+1}} \frac{ (1 - q^{i+1} c_{\bQ} (y) q z ) }{ (1 - q^{i+1} c_{\bQ} (y) q^{-1} z)}  
	\\ & \qquad \times 
	 (- q^{-i} Q_k^{-1}) \frac{(1 - q^i Q_k z)}{z}
\end{align*}
if $b_i=1$ and $\xi^{-1} (i)=(m_k,k)$. 
We note that $\xi^{-1}(i) = (m_k,k)$ if and only if $i=m_{\lan k \ran}$. 
Then, Proposition \ref{psi vf bT} implies (\roi). 

%%%

(\roii). 
We have already seen that $\vf_{\bT^{\Bla}_{[\bm]}}$ is an $\ell_{[\bb_{\bm}]}$-highest weight vector of $L(\Bla)$. 
By the definition of $\bT_{[\bm]}^{\Bla}$, we see that 
\begin{align*}
&[\bT^{\Bla}_{[\bm]}]_i= \{ (j, p, k) \mid 1 \leq p \leq \la_j^{(k)} \},  
\\
& [\bT^{\Bla}_{[\bm]}]_{i+1} = 
	\begin{cases}
	\{ (j+1, p , k) \mid 1 \leq p \leq \la_{j+1}^{(k)}\} & \text{ if } b_i=0, 
	\\ 
	\{(1, p ,k+1) \mid 1 \leq p \leq \la_1^{(k+1)}\} & \text{ if } b_i=1 
	\end{cases}
\end{align*}
when $\xi^{-1} (i) = (j,k)$. 
Thus, we have 
\begin{align*}
&\prod_{x \in [\bT^{\Bla}_{[\bm]}]_i} \frac{(1 - q^{i-1} c_{\bQ}(x) q^{-1} z)}{ (1 - q^{i-1} c_{\bQ} (x) q z)} 
	\prod_{y \in [\bT^{\Bla}_{[\bm]}]_{i+1}} \frac{ (1 - q^{i+1} c_{\bQ} (y) q z ) }{ (1 - q^{i+1} c_{\bQ} (y) q^{-1} z)}  
\\
&= \begin{cases}
	\dis 
	\prod_{p=1}^{\la_j^{(k)} } \frac{ (1 - q^{i + 2 (p-j)-1} Q_{k-1} q^{-1} z)}{ (1 - q^{i +2 (p-j) -1} Q_{k-1}  q z )} 
		\prod_{p'=1}^{\la_{j+1}^{(k)}} \frac{ (1 - q^{i+ 2 (p'- j-1) +1} Q_{k-1} q z)}{ (1 - q^{ i+ 2 (p'-j-1) +1} Q_{k-1} q^{-1} z)} 
		& \text{ if } b_i=0, 
	 \\ \dis 
	 \prod_{p=1}^{\la_{m_k}^{(k)} } \frac{ (1 - q^{i + 2 (p-j)-1} Q_{k-1} q^{-1} z)}{ (1 - q^{i +2 (p-j) -1} Q_{k-1}  q z )} 
	 \prod_{p'=1}^{\la_1^{(k+1)}} \frac{(1 - q^{i + 2 (p' - 1) +1} Q_k q  z)}{ ( 1 - q^{i+ 2 (p' - 1) +1} Q_k q^{-1} z)} 
	 & \text{ if } b_i=1 
	 \end{cases}
\\
&= \begin{cases}
	\dis 
	\prod_{p=1}^{\la_j^{(k)} - \la_{j+1}^{(k)}} 
		\frac{ ( 1 - q^{i+ 2 (\la_{j+1}^{(k)} +p -j)-1} Q_{k-1} q^{-1} z)}{(1 - q^{i + 2 (\la_{j+1}^{(k)} +p -j) -1} Q_{k-1} q z)} 
		& \text{ if } b_i=0,  
	\\ \dis 
	\prod_{p=1}^{\la_{m_k}^{(k)} } \Big( \frac{ (1 - q^{i + 2 (p-j)-1} Q_{k-1} q^{-1} z)}{ (1 - q^{i +2 (p-j) -1} Q_{k-1}  q z )} \Big)  
		\cdot \frac{(1 - q^{i + 2 \la_1^{(k+1)} } Q_k z)}{ (1 - q^i Q_k z)} 
		& \text{ if } b_i=1, 
	\end{cases}
\end{align*}
where we note that 
$i = m_{\lan k-1 \ran} +j$, 
\begin{align*}
\begin{cases}
i+ 2 (\la_{j+1}^{(k)} +p-j)-1 = 2 \la_j^{(k)} +  2 m_{\lan k-1 \ran} - i+1 - 2 (\la_j^{(k)} - \la_{j+1}^{(k)} - p +1) 
	& \text{ if } b_i=0, 
\\
i + 2 (p-j) -1 = 2 \la_{m_k}^{(k)} + 2 m_{\lan k-1 \ran} - i +1 - 2 (\la_{m_k}^{(k)} - p+1) 
	& \text{ if } b_i=1  
\end{cases} 
\end{align*} 
and $\la_{m_k +1}^{(k)} = 0$ since $\Bla \in \vL_{n,r}^+[\bm]$. 
We also have 
\begin{align*}
\mu_i(\bT_{[\bm]}^{\Bla}) = \la_j^{(k)} \text{ and } 
	\mu_{i+1} (\bT_{[\bm]}^{\Bla}) = \begin{cases} \la_{j+1}^{(k)} & \text{ if } b_i=0, \\ \la_1^{(k+1)} & \text{ if } b_i=1. \end{cases}  
\end{align*}
Then, the calculation in (\roi) for $\bT=\bT^{\Bla}_{[\bm]}$ imply (\roii). 
\end{proof}
\end{thm}

%%%
\remark 
In \cite[Proposition D.12]{KW21}, 
we have already calculated the  Drinfeld polynomials of $L(\Bla)$ 
in terms of $(q,\bQ)$-current algebras. 

%%%%%%%
\para 
We consider the separation condition 
\begin{align}
\label{sep cond bQ}
\prod_{0 \leq i < j \leq r-1} \prod_{ -n < k <n} (q^{ 2k } Q_i - Q_j) \not=0 
\end{align}
for parameters $\bQ=(Q_0,Q_1,\dots, Q_{r-1})$. 
Under this condition, we can describe the action of $U_{q,[\bb_{\bm}]}$ on the cell module $W(\Bla)$ ($\Bla \in \vL_{n,r}^+[\bm]$) 
explicitly like as Proposition \ref{Prop Def DBz Bla}. 
For this description, we prepare some combinatorial notations.  

For $\Bla \in \vL_{n,r}^+[\bm]$, $\bT \in \SStd_m^{[\bm]}(\Bla)$ and $1 \leq i \leq m-1$, 
put 
\begin{align*}
[\bT]_{\a_i}^{[\bm]} = \{ x \in [\bT]_{i+1} \mid \bT_x^- \in \SStd_m^{[\bm]}(\Bla)\}, 
\quad 
[\bT]_{-\a_i}^{[\bm]} = \{ x \in [\bT]_i \mid \bT_x^+ \in \SStd_m^{[\bm]} (\Bla)\}. 
\end{align*}

For  $x \in [\bT]_{\a_i}^{[\bm]}$ and $y \in [\bT]_{-\a_i}^{[\bm]}$ ($1 \leq i \leq m-1$), 
we also set 
\begin{align*}
&B_{\bT;[\bm]}^x = B_{\bT}^x \cdot \begin{cases}
	1 & \text{ if } b_i=0, 
	\\
	( 1 - Q_k^{-1} c_{\bQ} (x) ) & \text{ if } b_i=1 \text{ and } \xi^{-1} (i) =(m_k,k), 
	\end{cases}
\\
&C_{\bT ;[\bm]}^{y} = C_{\bT}^y. 
\end{align*}

\begin{prop}
Assume the separation condition \eqref{sep cond bQ}. 
For $\Bla \in \vL_{n,r}^+[\bm]$, 
let $\D_{\bQ}^{[\bm]}(\Bla)$ be a $\CC$-vector space with a basis $\{ v_{\bT} \mid \bT \in \SStd_m^{[\bm]} (\Bla)\}$. 
Then, we can define the action of $U_{q,[\bb_{\bm}]}$ on $\D_{\bQ}^{[\bm]} (\Bla)$ by 
\begin{align*}
& e_{i,t} \cdot v_{\bT} = \sum_{ x \in [\bT]_{\a_i}^{[\bm]}} B_{\bT ; [\bm]}^x (q^i c_{\bQ}(x))^t v_{\bT_x^-} 
	\quad (1 \leq i \leq m-1, \, t \in \ZZ), 
\\
& f_{i,t} \cdot v_{\bT} = \sum_{x \in [\bT]_{-\a_i}^{[\bm]}} C_{\bT;[\bm]}^x (q^i c_{\bQ} (x))^t v_{\bT_x^+} 
	\quad ( 1 \leq i \leq m-1, \,  t \in \ZZ), 
\\
& \psi_{i,s}^+ \cdot v_{\bT} = q^{\mu_i(\bT) - \mu_{i+1} (\bT) + s \cdot i} 
	\cdot 
	\begin{cases}
	v_{\bT} 
		& \text{ if }  s=0, 
	\\
	(q-q^{-1}) \Phi_{i,s} (\bT) v_{\bT} 
		& \text{ if }  s >0 
	\end{cases}
	\\ & \text{ for } 1 \leq i \leq m-1 \text{ such that } b_i =0, 
\\ 
&\psi_{i, - b_i +s}^+ \cdot v_{\bT} = q^{\mu_i(\bT) - \mu_{i+1} (\bT) + (-b_i +s) \cdot i} 
	\begin{cases}
		(- Q_k^{-1}) v_{\bT} 
			& \text{ if }  s=0, 
		\\ 
		\big( 1 - Q_k^{-1} (q-q^{-1}) \Phi_{i,1} (\bT) \big) v_{\bT} 
			& \text{ if } s=1, 
		\\
		(q-q^{-1} ) \big( \Phi_{i,s-1} (\bT) - Q_k^{-1} \Phi_{i,s} (\bT) \big) v_{\bT} 
			& \text{ if } s >1 
	\end{cases}
	\\ & \text{ for } i=m_{\lan k \ran} \, 
	 ( \text{in this case, we have } b_i=1 \text{ and } \xi^{-1} (i) =(m_k,k)), 
\\
& \psi_{i, -s}^- \cdot v_{\bT} = q^{- \mu_i(\bT) + \mu_{i+1} (\bT) - s \cdot i} 
	\cdot 
	\begin{cases}
		v_{\bT} 
			& \text{ if } s=0, 
		\\
		(q-q^{-1}) \Phi_{i, -s} (\bT) v_{\bT} 
			& \text{ if } s >0
	\end{cases}
	\\ & 
	\text{ for } 1 \leq i \leq m-1 \text{ such that } b_i =0, 
\\
& \psi_{i, -s}^- \cdot v_{\bT} = q^{-\mu_i(\bT) + \mu_{i+1} (\bT) - s \cdot i} 
	\cdot 
	\begin{cases}
		v_{\bT} 
			& \text{ if } s=0,  
		\\ 
		\big( (q-q^{-1}) \Phi_{i, -1} (\bT) - Q_k^{-1} \big) v_{\bT} 
			& \text{ if } s=1, 
		\\ 
		(q-q^{-1}) \big( \Phi_{i,-s} (\bT) - Q_k^{-1} \Phi_{i, - s+1} (\bT) 
			& \text{ if } s>1
	\end{cases}
	\\ & \text{ for } i = m_{\lan k \ran}. 
\end{align*}
\begin{proof}
We can check the well-definedness of this action in a similar way as in the proof of Proposition \ref{Prop Def DBz Bla}.
Here, we give a mention for the relation (U6). 

For $\bT \in \SStd_m^{[\bm]}(\Bla)$ and $1 \leq i \leq m-1$, 
we consider a rational function  
\begin{align*}
\fF_{i,\bT}^{[\bm]} (z) 
= \fF_{i,\bT}(z) \cdot 
	\begin{cases}
	1 & \text{ if } b_i=0, 
	\\ \dis 
	(- q^{-i} Q_k^{-1}) \frac{(1 - q^i Q_k z)}{z}
		& \text{ if } b_i=1 \text{ and } \xi^{-1} (i)=(m_k,k),
	\end{cases}
\end{align*}
where we set $\Bz = \bQ$. 
We note that $\xi^{-1} (i) = (m_k,k)$ if and only if $i=m_{\lan k \ran}$. 
By the calculation in the proof of Theorem \ref{Thm qcha cell} (\roi), 
we have 
\begin{align*}
\psi_i^+(z) \cdot v_{\bT} 
&= q^{\mu_i(\bT) - \mu_{i+1} (\bT)} \fF_{i,\bT}^{[\bm]} (z) v_{\bT}
\\
&= q^{\mu_i(\bT) - \mu_{i+1} (\bT)} \big( \sum_{t \in \ZZ} \Res_{z=0} (z^{-t-1} \fF_{i, \bT}^{[\bm]}(z)) z^t \big) v_{\bT}.
\end{align*}
We also have 
\begin{align*}
\psi_i^-(z) \cdot v_{\bT} = q^{\mu_i(\bT) - \mu_{i+1} (\bT)} 
	\big( - \sum_{t \in \ZZ} \Res_{z=\infty} (z^{-t-1} \fF_{i,\bT}^{[\bm]}(z)) z^t\big) v_{\bT}. 
\end{align*}
It is clear that 
$\{ \text{pole of } \fF_{i, \bT}^{[\bm]} (z)\} \subset \{(c_{\bQ} (x) q^i)^{-1} \mid x \in [\bT]_i \cup [\bT]_{i+1}\} \cup \{0 , \infty\}$. 

By Lemma \ref{Lemma pole z-t-1 fFibTz} and the definition of $\fF_{i,\bT}^{[\bm]}(z)$, 
both the rational functions $\fF_{i, \bT}(z)$ and  $\fF_{i,\bT}^{[\bm]} (z)$ are regular at $z= (c_{\bQ}(x)q^i)^{-1}$ for 
$x \in ([\bT]_i \setminus [\bT]_{-\a_i}) \cup ([\bT]_{i+1} \setminus [\bT]_{\a_i})$. 

By definitions of $\SStd_m^{[\bm]}(\Bla)$ and of $[\bT]_{\pm \a_i}^{[\bm]}$, 
we can check that 
$[\bT]_{-\a_i}^{[\bm]} = [\bT]_{-\a_i}$, 
$[\bT]_{\a_i}^{[\bm]} \subseteq [\bT]_{\a_i}$ and 
\begin{align*}
[\bT]_{\a_i} \setminus [\bT]_{\a_i}^{[\bm]} = \begin{cases} 
\{(1,1,k) \} & \text{ if } i= m_{\lan k-1 \ran} \text{ and } 
 	\bT((1,1,k)) = m_{\lan k-1 \ran} +1, 
\\ \emptyset & \text{ otherwise}.  
\end{cases}
\end{align*}
Moreover, 
for $ (1,1,k) \in [\bT]_{\a_i} \setminus [\bT]_{\a_i}^{[\bm]}$ ($i= m_{\lan k-1 \ran}$), 
we see that 
\begin{align*}
&\fF_{i, \bT}^{[\bm]} (z) 
\\
&= \fF_{i, \bT}(z) \cdot (-q^{-i} Q_{k-1}^{-1}) \frac{ (1- q^i Q_{k-1} z)}{z} 
\\
&= \prod_{ x \in [\bT]_i} \frac{ (1 - q^{-2} c_{\bQ} (x) q^i z)}{ (1 - c_{\bQ} (x) q^i z)} 
				\prod_{y \in [\bT]_{i+1}} \frac{(1 - q^2 c_{\bQ} (y) q^i z)}{( 1 - c_{\bQ} (y) q^i z )}
	\cdot (-q^{-i} Q_{k-1}^{-1}) \frac{ (1- c_{\bQ} ((1,1,k)) q^i  z)}{z}. 
\end{align*}
Thus, we conclude that $\fF_{i,\bT}^{[\bm]}(z)$ is regular at $z= (c_{\bQ} (y) q^i)^{-1}$ 
although $\fF_{i,\bT} (z)$ is not regular at $z= (c_{\bQ} (y) q^i)^{-1}$.
As a consequence, 
we have 
\begin{align*}
\{ \text{pole of } z^{-t-1} \fF_{i, \bT}^{[\bm]}(z) \} 
	\subset \{ (c_{\bQ}(x) q^i )^{-1} \mid x \in [\bT]_{- \a_i}^{[\bm]} \cup [\bT]_{\a_i}^{[\bm]} \} 
		\cup \{0, \infty\}.
\end{align*}
Then, we can check the relation (U6) in a similar way as in the proof of Proposition \ref{Prop Def DBz Bla}. 
\end{proof}
\end{prop}

%%%%%
\begin{thm}
\label{Thm DbQ bm iso W Bla}
Assume the separation condition \eqref{sep cond bQ}.  
For $\Bla \in \vL_{n,r}^+[\bm]$, 
the $U_{q,[\bb_{\bm}]}$-module $\D_{\bQ}^{[\bm]} (\Bla)$ is  a simple $\ell_{[\bb_{\bm}]}$-highest weight module 
with a highest weight vector $v_{\bT^{\Bla}_{[\bm]}}$. 
Moreover, we have  $\D_{\bQ}^{[\bm]}(\Bla) \cong W(\Bla)$ as $U_{q,[\bb_{\bm}]}$-modules. 
Then, the Drinfeld polynomials and  the $q$-character of $\D_{\bQ}^{[\bm]}(\Bla) \cong W(\Bla)$ 
are ones in Theorem \ref{Thm qcha cell}. 

\begin{proof}
We can prove that $\D_{\bQ}^{[\bm]}(\Bla)$ is simple in a similar way as in the proof of Theorem \ref{Thm DBz Bla}. 
We can also calculate the Drinfeld polynomials of $\D_{\bQ}^{[\bm]}(\Bla)$ in the same way 
as in the proof of Theorem \ref{Thm qcha cell}. 
By comparing Drinfeld polynomials, 
we have $\D_{\bQ}^{[\bm]}(\Bla) \cong L(\Bla)$. 
On the other hand, 
we have $\dim \D_{\bQ}^{[\bm]}(\Bla) = \sharp \SStd_m^{[\bm]}(\Bla) = \dim W(\Bla)$, 
and we conclude that $\D_{\bQ}^{[\bm]} (\Bla) \cong W(\Bla)$. 
\end{proof}
\end{thm}

%%%
\para 
Under the separation condition \eqref{sep cond bQ}, 
we can  construct $U_{q,[\bb_{\bm}]}$-module $W(\Bla)$ ($\Bla \in \vL_{n,r}^+[\bm]$) 
as the simple submodule (resp. the simple quotient) of a tensor product of some evaluation modules 
and a one-dimensional $U_{q,[\bb_{\bm}]}$-module 
as the following proposition. 

\begin{prop}
\label{Prop WBla D Bla}
Assume that $\bQ=(Q_0,Q_1,\dots, Q_{r-1}) \in (\CC^{\times})^r$ satisfies the separation condition \eqref{sep cond bQ}. 
Define $\Bb_{[\bm]} =(\b_{i, - b_i +s})^{0 \leq s \leq b_i}_{1 \leq i \leq m-1} \in \BB_{ [ \bb_{\bm}]}$ by 
\begin{align*}
\b_{i,0} =1 \text{ if } b_i=0, 
\quad 
\begin{cases}
\b_{i,-1} = - q^{-i} Q_k^{-1}, 
\\
\b_{i,0} = 1 
\end{cases}
\text{ if } b_i=1 \text{ and } \xi^{-1} (i) =(m_k,k). 
\end{align*}
Then, for $\Bla \in \vL_{n,r}^+[\bm] \subset \vL_{n,r}^+(m)$, we have the following. 

\begin{enumerate}
\item 
\begin{enumerate}
\item 
$\bT^{\Bla}_{[\bm]} \in \Sing_m^{\Bb_{[\bm]}} (\Bla)$. 

\item 
$\SStd_m (\Bla ; \geq \bT_{[\bm]}^{\Bla}) = \SStd_m^{[\bm]}(\Bla)$. 

\item 
We have $W(\Bla) \cong \D_{\bQ}(\Bla ; \geq \bT_{[\bm]}^{\Bla}) \otimes L_{\Bb_{[\bm]}}$ as $U_{q,[\bb_{\bm}]}$-modules. 
Moreover, we have 
$\Sing_m^{\Bb_{[\bm]}}(\Bla) \cap \SStd_m (\Bla ; \geq \bT_{[\bm]}^{\Bla}) =\{\bT_{[\bm]}^{\Bla}\}$. 
In particular, we have  
$\D_{\bQ}^{\Bb_{[\bm]} *} (\Bla) = \D_{\bQ}(\Bla ; \geq \bT_{[\bm]}^{\Bla}) \otimes L_{\Bb_{[\bm]}}$. 
\end{enumerate}

\item 
\begin{enumerate}
\item 
For $2 \leq k \leq r$, 
let $\bT^\sharp_{\Bla;k} \in \SStd_m(\Bla)$ be the semi-standard tableau given by 
\begin{align*}
\bT^{\sharp}_{\Bla ; k} ((a,b,c)) 
	= \begin{cases} 
		m_{\lan k-1 \ran} & \text{ if } (a,b,c)= (1,1,k), 
		\\
		m - (\la^{(c)})'_b +a & \text{ otherwise}.
	\end{cases}
\end{align*}
Then, we have $\bT^{\sharp}_{\Bla;k} \in \fSing_m^{\Bb_{[\bm]}}(\Bla)$ for $2 \leq k \leq r$. 

\item 
$\SStd_m(\Bla) \setminus \big( \bigcup_{k=2}^r \SStd_m(\Bla; \leq \bT^{\sharp}_{\Bla;k} )\big) = \SStd_m^{[\bm]}(\Bla)$. 

\item 
Put $\D_{\bQ}^{\Bb_{[\bm]} \sharp} (\Bla) 
	= \big( L_{\Bb_{[\bm]}} \otimes \D_{\bQ}(\Bla) \big) 
			/ \big( \sum_{k=2}^r L_{\Bb_{[\bm]}} \otimes \D_{\bQ} (\Bla ; \leq \bT^{\sharp}_{\Bla;k}) \big)$. 
Then we have $W(\Bla) \cong \D_{\bQ}^{\Bb_{[\bm]} \sharp}(\Bla)$ as $U_{q,[\bb_{\bm}]}$-modules.  
\end{enumerate}
\end{enumerate}
\begin{proof}
We consider the simple $U_q (L\Fsl_m)$-module $\D_{\bQ} (\Bla)$ in Proposition \ref{Prop Def DBz Bla} 
and the one-dimensional $U_{q,[\bb_{\bm}]}$-module $L_{\Bb_{[\bm]}} = \CC w$ associated with $\Bb_{[\bm]}$.

(\roi)-(a). 
By the definition of $\bT_{[\bm]}^{\Bla}$, we have 
\begin{align}
\label{bT bm Bla ai} 
[\bT_{[\bm]}^{\Bla}]_{\a_i} = 
	\begin{cases}
	\{ (1,1,k) \} & \text{ if } i=m_{\lan k-1 \ran} \text{ for some } 2 \leq k \leq r, 
	\\
	\emptyset & \text{ otherwise}. 
	\end{cases}
\end{align}
We also have 
\begin{align}
\label{qicQ 11k}
\begin{split}
&q^i c_{\bQ} ((1,1,k)) = q^i Q_{k-1}, 
\\
& S_i(z) = \b_{i,-1} + \b_{i,0} z = - q^{-i} Q_{k-1}^{-1} + z = - q^{-i} Q_{k-1}^{-1} (1 - q^i Q_{k-1} z)
\end{split}
\end{align}
if $i= m_{\lan k-1 \ran}$. 
Thus, we see that $\prod_{x \in [\bT_{[\bm]}^{\Bla}]_{\a_i}} (1 - q^i c_{\bQ}(x) z)$ divides $S_i(z)$ for each $1 \leq i \leq m-1$, 
and we have $\bT_{[\bm]}^{\Bla} \in \Sing_m^{\Bb_{[\bm]}} (\Bla)$. 

(\roi)-(b). It is clear from definitions. 

(\roi)-(c). 
By (\roi)-(a) and Theorem \ref{Thm D Bla otimes LBb}, 
the vector $v_{\bT_{[\bm]}^{\Bla}} \otimes w$ is singular and an $\ell_{[\bb_{\bm}]}$-weight vector. 
Let  $M$ be the $U_{q,[\bb_{\bm}]} $-submodule of 
$\D_{\bQ}(\Bla ; \geq \bT_{[\bm]}^{\Bla}) \otimes L_{\Bb_{[\bm]}}$ 
generated by $v_{\bT_{[\bm]}^{\Bla}} \otimes w$, 
and let $L$ be the simple quotient of $M$.

By the definition of $\bT_{[\bm]}^{\Bla}$ together with \eqref{bT bm Bla ai}, for $1 \leq i \leq m-1$, we see that 
\begin{align*}
&[\bT_{[\bm]}^{\Bla}]_{i+1}^{\dag}  
	= \begin{cases} 
	\{  (1,b,c) \mid 1 \leq b \leq \la^{(c)}_1 \} & \text{ if }  i=m_{\lan c-1 \ran} \text{ for some } 2 \leq c \leq r, 
	\\
	\emptyset & \text{ otherwise}, 
	\end{cases}
\\
& [\bT_{[\bm]}^{\Bla}]_{i+1}^{\dag R} 
	= \begin{cases} 
	\{  (1,\la_1^{(c)}, c)  \} & \text{ if }  i=m_{\lan c-1 \ran} \text{ for some } 2 \leq c \leq r, 
	\\
	\emptyset & \text{ otherwise}, 
	\end{cases}
\\
& [\bT_{[\bm]}^{\Bla}]_i^{\ddag} 
	= \{ (a,b,c) \mid \la_{a+1}^{(c)} < b \leq \la_a^{(c)} \} 
		\text{ if } i = m_{\lan c-1 \ran} +a.
\end{align*}
Then, by the same calculation as in the proof of Theorem \ref{Thm D Bla otimes LBb} (\rovi), 
we see that the Drinfeld polynomials $(\wt{\bS} =(\wt{S}_i(z))_{1 \leq i \leq m-1}$, $\wt{P}=(\wt{P}_i(z))_{1 \leq i \leq m-1}) $ 
of $L$ are given by 
\begin{align*}
\wt{S}_i(z) 
	&= q^{\mu_i(\bT_{[\bm]}^{\Bla}) - \mu_{i+1} (\bT_{[\bm]}^{\Bla}) - \sharp [\bT_{[\bm]}^{\Bla}]_i^{\ddag}}
		\cdot S_i(z) \cdot \frac{ \prod_{y \in [\bT_{[\bm]}^{\Bla}]_{i+1}^{\dag R}} (1 - q^{i+2} c_{\bQ}(y) z)}
						{ \prod_{ y \in [\bT_{[\bm]}^{\Bla}]_{\a_i}} (1 - q^i c_{\bQ} (y) z)}
	\\
	&= \begin{cases}
		\dis q^{\la^{(c-1)}_{m_{c-1}} - \la_1^{(c)} - \la^{(c-1)}_{m_{c-1}}} 
			\cdot \big(  - q^{-i} Q_{c-1}^{-1} (1 - q^i Q_{c-1} z) \big) 
			\cdot \frac{ (1 - q^{i+2} q^{2 (\la_1^{(c)} -1)} Q_{c-1} z)}{ (1 - q^i Q_{c-1} z)}  
			\hspace{-15em} 
			\\
			& \text{ if } i = m_{\lan c-1 \ran} \text{ for some } 2 \leq c \leq r, 
		\\ 
		1 & \text{ otherwise}, 
	\end{cases}
	\\
	&= \begin{cases}
		- q^{- \la_1^{(c)} -i} Q_{c-1}^{-1} + q^{\la_1^{(c)} }  z 
			& \text{ if } i = m_{\lan c-1 \ran} \text{ for some } 2 \leq c \leq r, 
		\\ 
		1 & \text{ otherwise}, 
	\end{cases} 
\\
\wt{P}_i(z) 
	&= \prod_{x \in [\bT_{[\bm]}^{\Bla}]_i^{\ddag}} (1 - q^{i-1} c_{\bQ}(x) z) 
	= \prod_{p=1}^{\la_a^{(c)} - \la_{a+1}^{(c)}} (1 - q^{i-1} q^{2 (\la_{a+1}^{(c)} + p -a)} Q_{c-1} z) 
	\\
	&= \prod_{p=1}^{\la_a^{(c)} - \la_{a+1}^{(c)}} (1 -  q^{2 \la_a^{(c)}  - i + 1 - 2 p} q^{2 m_{\lan c-1 \ran}}Q_{c-1} z) 
	\text{ if } i=m_{\lan c-1 \ran} +a,
\end{align*}
where we note that 
\begin{align*}
& i= m_{\lan c-1 \ran} \text{ if and only if } \xi^{-1} (i) = (m_{c-1}, c-1 ), 
\\
&i + 2 (\la_{a+1}^{(c)} + 2 p - 2a) -1 
	= 2 \la_a^{(c)} + 2 m_{\lan c-1 \ran} - i + 1 - 2 (\la_a^{(c)} - \la_{a+1}^{(c)} - p +1)
	\\ & \text{ if } i= m_{\lan c-1 \ran} +a.
\end{align*}
This implies that $L \cong W(\Bla)$  
thanks to Theorem \ref{Thm qcha cell} and Theorem \ref{Thm DbQ bm iso W Bla}. 
Then, by comparing the dimensions using (\roi)-(b), 
we have $W(\Bla) \cong L = \D_{\bQ}(\Bla ; \geq \bT_{[\bm]}^{\Bla}) \otimes L_{\Bb_{[\bm]}}$. 
As a consequence, we obtain (\roi)-(c). 

%%%%%
(\roii)-(a). 
By the definition of $\bT^{\sharp}_{\Bla;k}$, we see that 
\begin{align*}
[ \bT^{\sharp}_{\Bla;k} ]_{-\a_i} 
	= \begin{cases}
		\{(1,1,k)\} & \text{ if } i=m_{\lan k-1 \ran}, 
		\\
		\emptyset & \text{ otherwise} 
	\end{cases}
\end{align*}
for $1 \leq i \leq m-1$. 
Then, we see that $\prod_{x \in [\bT^{\sharp}_{\Bla;k}]_{-\a_i}} (1 - q^i c_{\bQ}(x) z)$ divides $S_i(z)$ by \eqref{qicQ 11k}, 
and we have $\bT^{\sharp}_{\Bla;k} \in \fSing_m^{\Bb_{[\bm]}}(\Bla)$ for $2 \leq k \leq r$. 

(\roii)-(b). 
By definition, we see that 
\begin{align*}
\SStd_m (\Bla ; \leq \bT^{\sharp}_{\Bla;k}) = \{ \bT \in \SStd_m(\Bla) \mid \bT ((1,1,k)) \leq m_{\lan k-1 \ran} \} 
\end{align*}
for $2 \leq k \leq r$. These equations imply (\roii)-(b).  

(\roii)-(c). 
We see that $\ol{w \otimes v_{\bT^{\Bla}_{[\bm]}}} \in \D_{\bQ}^{\Bb_{[\bm]}\sharp}(\Bla)$ is a singular vector by (\roii)-(b). 
We also see that the $\ell_{[\bb_{\bm}]}$-weight of 
$\ol{w \otimes v_{\bT^{\Bla}_{[\bm]}}} \in \D_{\bQ}^{\Bb_{[\bm]}\sharp}(\Bla)$ 
coincides with one of $v_{\bT^{\Bla}_{[\bm]}} \otimes w$ by Proposition \ref{Prop Ddc}. 
Then, we can prove (\roii)-(c) in the same way as in the proof of (\roi)-(c). 
\end{proof}
\end{prop}

%%%%%%%%%%%%
\para 
We have another construction of $U_{q,[\bb_{\bm}]}$-module $W(\Bla)$ ($\Bla \in \vL_{n,r}^+[\bm]$) as follows. 

For $\bQ=(Q_0,Q_1,\dots, Q_{r-1}) \in (\CC^{\times})^r$, we set  $\wh{\bQ}_{[\bm]} =(\wh{Q}_0, \wh{Q}_1,\dots, \wh{Q}_{r-1}) $ by 
$\wh{Q}_{k-1} = q^{2 m_{\lan k-1 \ran}} Q_{k-1}$ for $1 \leq k \leq r$. 

For $\Bla =(\la^{(1)}, \dots, \la^{(r)}) \in \vL_{n,r}^+[\bm]$, we define the $r$-partition 
$\wh{\Bla}_{[\bm]} =(\wh{\la}^{(1)}, \dots, \wh{\la}^{(r)}) $ by 
\begin{align*}
\wh{\la}^{(k)} = ( \underbrace{\la_1^{(k)}, \dots, \la_1^{(k)}}_{m_{\lan k-1 \ran}}, \la_1^{(k)}, \la_2^{(k)}, \dots, \la_{m_k}^{(k)}) 
\quad (1 \leq k \leq r). 
\end{align*}
For example, if $\bm= (2,3,3)$ and $\Bla=((3,2), (4,3,1), (3,3,2))$, we have 
\begin{align*}
\wh{\Bla}_{[\bm]}= ((3,2), (4,4,4,3,1), (3,3,3,3,3,3,3,2)).
\end{align*}  
Clearly, we have $ \wh{\Bla}_{[\bm]} \in \vL_{n,r}^+(m)$. 
There exists an injective map 
\begin{align*}
\Om_{[\bm]} : [\Bla] \ra [\wh{\Bla}_{[\bm]}] 
\text{ such that }
\Om_{[\bm]} ((a,b,c)) = (m_{\lan c-1 \ran} + a, b,c). 
\end{align*} 
 
We also set $\Bb^{\Bla}_{[\bm]}=(\b_{i, - b_i +s})^{0 \leq s \leq b_i}_{1 \leq i \leq m-1} \in \BB_{ [ \bb_{\bm}]}$ by 
\begin{align}
\label{Bb bm Bla}
\b_{i,0}=1\text{ if } b_i =0, \quad 
\begin{cases}
\b_{i,-1} = - q^{-\la_1^{(k+1)} -i} Q_k^{-1}, 
\\
\b_{i,0} = q^{\la_1^{(k+1)}} 
\end{cases}
\text{ if } b_i=1 \text{ and } \xi^{-1} (i)=(m_k,k).
\end{align}

%%%
\begin{prop}
\label{Prop WBla D wh Bla}
Assume that  both $\bQ$ and $\wh{\bQ}_{[\bm]}$ satisfy the separation condition \eqref{sep cond bQ}. 
For $\Bla \in \vL_{n,r}^+[\bm]$, we have the following. 
\begin{enumerate} 
\item 
We have $W(\Bla)  \cong \D_{\wh{\bQ}_{[\bm]}}^{\Bb^{\Bla}_{[\bm]}} (\wh{\Bla}_{[\bm]}) $
as  $U_{q,[\bb_{\bm}]}$-modules, 
where $\D_{\wh{\bQ}_{[\bm]}}^{\Bb^{\Bla}_{[\bm]}} (\wh{\Bla}_{[\bm]})$ is the simple submodule of 
$L_{\Bb_{[\bm]}^{\Bla}} \otimes \D_{\wh{\bQ}_{[\bm]}}(\wh{\Bla}_{[\bm]})$ given in Theorem \ref{Thm simple LBb Dla}. 

\item 
Let $\wh{\bT}^{\sharp}  \in \SStd_m (\wh{\Bla}_{[\bm]})$ be the semi-standard tableau given by 
\begin{align*}
\wh{\bT}^{\sharp} ((a,b,c)) = \begin{cases}
	a & \text{ if } a \leq m_{\lan c-1 \ran}, 
	\\
	m - (\wh{\la}^{(c)})'_b +a & \text{ if } a > m_{\lan c-1 \ran}. 
	\end{cases}
\end{align*}
Then, there exists  a bijection 
\begin{align*}
\wt{\Om}_{[\bm]} : \SStd_m (\wh{\Bla}_{[\bm]} ; \leq \wh{\bT}^{\sharp}) \ra \SStd_m^{[\bm]}(\Bla)
\end{align*} 
such that 
$\wt{\Om}_{[\bm]} ( \bT) : [\Bla] \ra \ZZ_{ > 0}$ is  the restriction of $\bT : [\wh{\Bla}_{[\bm]}] \ra \ZZ_{>0}$ 
through the injective map $\Om_{[\bm]} : [\Bla] \ra [\wh{\Bla}_{[\bm]}]$. 
In particular, we have 
$\wt{\Om}_{[\bm]} (\bT^{\wh{\Bla}_{[\bm]}}) = \bT^{\Bla}_{[\bm]}$.  

\item 
We have $\D_{\wh{\bQ}_{[\bm]}}^{\Bb^{\Bla}_{[\bm]}} (\wh{\Bla}_{[\bm]}) 
=L_{\Bb_{[\bm]}^{\Bla}} \otimes \D_{\wh{\bQ}_{[\bm]}}(\wh{\Bla}_{[\bm]} ; \leq \wh{\bT}^{\sharp})$. 
In particular, we have 
$\fSing_m^{\Bb_{[\bm]}^{\Bla}} (\wh{\Bla}_{[\bm]}) \cap \SStd_m (\wh{\Bla}_{[\bm]} ; \leq \wh{\bT}^{\sharp}) 
	= \{ \wh{\bT}^{\sharp } \}$. 
\end{enumerate} 
\begin{proof} 
(\roi). 
By definition,  we see that 
\begin{align}
\label{wh la c i} 
\wh{\la}_i^{(k)} = \begin{cases}   
	\wh{\la}_{i+1}^{(k)} & \text{ if } i \leq m_{\lan k-1 \ran}, 
	\\
	\la_j^{(k)} & \text{ if } i =m_{\lan k-1 \ran} +j \leq m_{\lan k \ran}, 
	\\ 
	0 & \text{ if } i > m_{\lan k \ran} 
	\end{cases}
\end{align}
for $1 \leq k \leq r$. 

For $\Bla \in \vL_{n,r}^+[\bm]$, we consider the simple $U_q(L\Fsl_m)$-module $\D_{\wh{\bQ}_{[\bm]}} (\wh{\Bla}_{[\bm]})$ 
in Proposition \ref{Prop Def DBz Bla}. 
Then, by Theorem \ref{Thm DBz Bla} (\rov) together with \eqref{wh la c i}, 
the Drinfeld polynomials $\bP=( P_i(z))_{1 \leq i \leq m-1}$ of $\D_{\wh{\bQ}_{[\bm]}}(\wh{\Bla}_{[\bm]})$ are given by 
\begin{align}
\label{Piz DbQ whBlabm} 
\begin{split}
P_i(z) 
= \prod_{k=1}^r \prod_{p=1}^{\wh{\la}^{(k)}_i - \wh{\la}^{(k)}_{i+1}} (1 - q^{2 \wh{\la}_i^{(k)} - i +1 - 2 p} \wh{Q}_{k-1} z) 
= \prod_{p=1}^{\la_j^{(k)} - \la_{j+1}^{(k)}} (1 - q^{2 \la_j^{(k)}   - i+1 - 2p} q^{2 m_{\lan k-1 \ran}} Q_{k-1} z) 
\end{split}
\end{align}
where $\xi^{-1}(i) =(j,k)$.  
Then, Theorem \ref{Thm simple LBb Dla}, Theorem \ref{Thm qcha cell}, Themre \ref{Thm DbQ bm iso W Bla}, 
the equations \eqref{Bb bm Bla} and \eqref{Piz DbQ whBlabm}
 imply (\roi). 

(\roii). 
By definitions, we see that 
\begin{align*}
\SStd_m (\wh{\Bla}_{[\bm]}; \leq \wh{\bT}^{\sharp}) 
= \left\{ \bT \in \SStd_m(\wh{\Bla}_{[\bm]}) \mid 
\begin{array}{l}
\bT((a,b,c)) =  a  \text{ if } a \leq m_{\lan c-1 \ran}, 
\\
\bT ((a,b,c)) > m_{\lan c-1 \ran} \text{ if } a > m_{\lan c-1 \ran} 
\end{array}
\right\}, 
\end{align*} 
and this implies the bijection $\wt{\Om}_{[\bm]}$. 

(\roiii). 
For $1 \leq i \leq m-1$, we have 
\begin{align*}
[\wh{\bT}^{\sharp}]_{-\a_i} = 
\begin{cases}
\{ (m_{\lan k-1 \ran}, \la_1^{(k)}, k) \} & \text{ if } i = m_{\lan k -1 \ran} \text{ for some } 2 \leq k <r,   
\\
\{ (m_{\lan r-1 \ran}, \la_1^{(r)}, r) \} & \text{ if } i=m_{\lan r-1 \ran} \text{ and } \la_1^{(r)} > \la_{m_r}^{(r)}, 
\\
\emptyset & \text{ otherwise}.
\end{cases}
\end{align*}
We also have 
\begin{align*}
&q^i c_{\wh{\bQ}_{[\bm]}}((m_{\lan k-1 \ran}, \la_1^{(k)}, k)) 
	= q^i q^{ 2 (\la_1^{(k)} - m_{\lan k-1 \ran})} \wh{Q}_{k-1} = q^{2 \la_1^{(k)} + i}  Q_{k-1}, 
\\
& S_i(z) = \b_{i,-1} + \b_{i,0} z = - q^{-\la_1^{(k)} -i} Q_{k-1}^{-1} + q^{\la_1^{(k)}} z
	= - q^{-\la_1^{(k)} -i} Q_{k-1}^{-1} ( 1 - q^{2 \la_1^{(k)} +i} Q_{k-1} z) 
\end{align*}
if $i= m_{\lan k-1 \ran} $. 
(Note that $ \xi^{-1} (i) = (m_{k-1}, k-1)$ if $i=m_{\lan k-1 \ran}$.)  
Thus, we see that $\prod_{x \in [\bT^{\sharp}]_{-\a_i}} (1 - q^i c_{\wh{\bQ}_{[\bm]}} (x) z)$ 
divides $S_i(z)$ for each $1 \leq i \leq m-1$. 
This implies $\wh{\bT}^{\sharp} \in \fSing_m^{\Bb_{[\bm]}^{\Bla}} (\wh{\Bla}_{[\bm]})$. 
Then, we have that 
 $\D_{\wh{\bQ}_{[\bm]}}^{\Bb^{\Bla}_{[\bm]}} (\wh{\Bla}_{[\bm]}) 
 \subset L_{\Bb_{[\bm]}^{\Bla}} \otimes \D_{\wh{\bQ}_{[\bm]}}(\wh{\Bla}_{[\bm]} ; \leq \wh{\bT}^{\sharp})$ 
by Theorem \ref{Thm simple LBb Dla}. 
By comparing the dimensions using (\roi) and (\roii), 
we obtain (\roiii). 
\end{proof}
\end{prop} 

%%%%%%%

%%%%%%%%%%%%%%%%%%%%%%%%%%%%%%%%%%%%%%%%%%%%%%%%%%%%%%%%%%%%%%%

%%%%%%%%%%%%%%%%%%%%%%%%%%%%%%%%%%%%%%%%%%%%%%%%%%%%%%%%%%%%%%%

%%%%%%%%%%%%%%%%%%%%%%%%%%%%%%%%%%%%%%%%%%%%%%%%%%%%%%%%%%%%%%%

\end{document}